\theoremstyle{plain}
\newtheorem{lemma}{Lemma}[section]
\newtheorem{thm}{Theorem}[section]
\newtheorem{prop}{Proposition}[section]
\newtheorem{mydef}{Definition}[section]
\newtheorem{remark}{Remark}[section]
\numberwithin{equation}{section}
\newcommand{\non}{\nonumber}
\date{}
\renewcommand{\d}{\mathrm{d}}
\newcommand{\R}{\mathbb{R}}
\newcommand{\Z}{\mathbb{Z}}
\newcommand{\ffi}{\varphi}
\newcommand{\e}{\varepsilon}
\newcommand{\dr}{\partial}
\newcommand{\dive}{\mathrm{div}}
\renewcommand{\div}{\mathrm{div}}
\newcommand{\Ll}{\mathscr{L}}
\newcommand{\tr}{\mathrm{tr}}
\newcommand{\T}{\mathbf{T}}
\renewcommand{\L}{\mathrm{L}}
\newcommand{\Lb}{\underline{L}}
\newcommand{\D}{\mathbf{D}}
\newcommand{\half}{\frac{1}{2}}
\newcommand{\h}{\mathfrak{h}}
\newcommand{\F}{\mathfrak{F}}
\newcommand{\g}{\mathbf{g}}
\renewcommand{\gg}{\mathfrak{g}}
\newcommand{\A}{\mathbf{A}}
\newcommand{\B}{\mathbf{B}}
\newcommand{\C}{\mathbf{C}}
\newcommand{\tBox}{\Tilde{\Box}}
\newcommand{\la}{\lambda}
\renewcommand{\a}{\alpha}
\renewcommand{\b}{\beta}
\newcommand{\si}{\sigma}
\newcommand{\HH}{\mathcal{H}}
\newcommand{\MM}{\mathcal{M}}
\newcommand{\LL}{\mathcal{L}}
\newcommand{\RR}{\mathcal{R}}
\newcommand{\nab}{\nabla}
\newcommand{\Si}{\Sigma}
\newcommand{\ga}{\gamma}
\newcommand{\Ga}{\Gamma}
\newcommand{\ka}{\kappa}
\renewcommand{\th}{\theta}
\newcommand{\De}{\Delta}
\newcommand{\vDe}{\dot{\De}}
\newcommand{\tffi}{\widetilde{\varphi}}
\newcommand{\tX}{\widetilde{X}}
\newcommand{\PP}{\mathcal{P}}
\renewcommand{\i}{\mathbf{i}}
\renewcommand{\j}{\mathbf{j}}
\newcommand{\de}{\delta}
\newcommand{\osc}{\mathrm{osc}}
\newcommand{\Pol}[2]{\mathscr{P}\left[ #1 \middle| #2\right]}
\newcommand{\GO}[1]{O\left( #1 \right)}
\renewcommand{\l}{\left\|}
\renewcommand{\r}{\right\|}
\newcommand{\enstq}[2]{\left\{#1~\middle|~#2\right\}} 
\newcommand{\saut}{\par\leavevmode\par}
\newcommand{\pth}[1]{\left( #1 \right)}
\newcommand{\cro}[1]{\left\{ #1 \right\}}
\DeclareFontFamily{U}{mathx}{\hyphenchar\font45}
\DeclareFontShape{U}{mathx}{m}{n}{
      <5> <6> <7> <8> <9> <10>
      <10.95> <12> <14.4> <17.28> <20.74> <24.88>
      mathx10
      }{}
\DeclareSymbolFont{mathx}{U}{mathx}{m}{n}
\DeclareMathAccent{\widecheck}{0}{mathx}{"71}
\newcommand{\wc}[1]{\widecheck{ #1 }}
\title{The reverse Burnett conjecture for null dusts}
\author{Arthur Touati \thanks{CNRS \& Institut de Mathématiques de Bordeaux, Bordeaux, France (\href{mailto:arthur.touati@math.u-bordeaux.fr}{arthur.touati@math.u-bordeaux.fr})}}
\begin{document}

\maketitle

\begin{abstract}
Given a regular solution $\g_0$ of the Einstein-null dusts system without restriction on the number of dusts, we construct families of solutions $(\g_\la)_{\la\in(0,1]}$ of the Einstein vacuum equations such that $\g_\la-\g_0$ and $\dr(\g_\la-\g_0)$ converges respectively strongly and weakly to 0 when $\la\to0$. Our construction, based on a multiphase geometric optics ansatz, thus extends the validity of the reverse Burnett conjecture without symmetry to a large class of massless kinetic spacetimes. In order to deal with the finite but arbitrary number of direction of oscillations we work in a generalised wave gauge and control precisely the self-interaction of each wave but also the interaction of waves propagating in different null directions, relying crucially on the non-linear structure of the Einstein vacuum equations. We also provide the construction of oscillating initial data solving the vacuum constraint equations and which are consistent with the spacetime ansatz.
\end{abstract}

\tableofcontents

\section{Introduction}

In this article we construct oscillatory solutions of the Einstein vacuum equations
\begin{align}
R_{\mu\nu}(\g)=0\label{EVE},
\end{align}
where $\g$ is a Lorentzian metric on a 4-dimensional manifold $\mathcal{M}$ and $R_{\mu\nu}(\g)$ is the Ricci tensor of $\g$. These solutions, denoted $\g_\la$, depend on a small wavelength $\la>0$ and oscillate at frequency $\la^{-1}$. We are particularly interested in their behaviour in the high-frequency limit $\la\to 0$ and our construction, based on a geometric optics expansion of the solutions, seems to be the first to allow a finite but arbitrary number of directions of oscillation with no symmetry assumption. It builds on previous works by the author \cite{Touati2023a,Touati2023b}, where only one direction of oscillation was allowed. It is motivated by Burnett's conjecture in general relativity and by the study of high-frequency gravitational waves initiated by Choquet-Bruhat. We discuss further these two aspects in Sections \ref{section burnett} and \ref{section GO} below where we also give two rough versions of our main result (see Theorem \ref{theo evol} for the precise version) from these two perspectives.

\subsection{The Burnett conjecture}\label{section burnett}

The first motivation of our work is the description of the closure of the set of solutions to \eqref{EVE} under appropriate weak limits. This question was first raised by Burnett in \cite{Burnett1989}. In this article, he considers sequences $(\g_\la)_{\la\in(0,1]}$ of solutions to \eqref{EVE} on a given manifold $\mathcal{M}$ such that the following convergence holds when $\la\to 0$:
\begin{equation}\label{CV}
\begin{aligned}
\g_\la & \rightarrow \g_0 \quad\text{uniformly on compact sets},
\\ \dr\g_\la & \rightharpoonup \dr \g_0 \quad\text{weakly in $L^2_{loc}$},
\end{aligned}
\end{equation}
where $\dr$ denotes a partial derivative in any coordinates system on $\mathcal{M}$ and where $\g_0$ is a metric on $\mathcal{M}$. The question of interest here is the following: what matter model can the limit metric $\g_0$ describe? More precisely, what are the possible effective stress-energy tensors $R_{\mu\nu}(\g_0) - \half R(\g_0)(\g_0)_{\mu\nu}$? If the convergence of the derivatives $\dr\g_\la$ towards $\dr\g_0$ is strong then $\g_0$ also solves \eqref{EVE}, but if this convergence is only weak then products of derivatives of the metric in the Ricci tensor might produce backreaction, i.e a non-trivial effective stress-energy tensor. Burnett proposed a two-sided conjecture to answer the above questions:
\begin{itemize}
\item \textbf{The direct Burnett conjecture.} The effective stress-energy tensor is the one of a massless Vlasov field on $\mathcal{M}$, i.e there exists a density $f_0:\mathcal{M}\times T\mathcal{M}\longrightarrow \R$ such that $(\g_0,f_0)$ solves the massless Einstein-Vlasov system on $\mathcal{M}$
\begin{equation}\label{EV}
\left\{
\begin{aligned}
R_{\mu\nu}(\g_0) - \half R(\g_0)(\g_0)_{\mu\nu} & = \int_{\g_0(p,p)=0}f_0^2 p_\mu p_\nu,
\\ p^\alpha \dr_\alpha f_0 - p^\alpha p^\beta \Gamma(g_0)^\rho_{\alpha\beta} \dr_{p^\rho}f_0 & =0.
\end{aligned}
\right.
\end{equation}
\item \textbf{The reverse Burnett conjecture.} For every solution $(\mathcal{M},\g_0,f_0)$ of \eqref{EV}, there exists a sequence $(\g_\la)_{\la\in(0,1]}$ of solutions of \eqref{EVE} on $\mathcal{M}$ with the convergence properties \eqref{CV}.
\end{itemize}
Put together, these two parts of Burnett's conjecture affirm that the closure of the set of solutions to \eqref{EVE} for the topology of the convergence \eqref{CV} identifies with the set of solutions to \eqref{EV}. 

\saut
The direct conjecture has been first considered in \cite{Green2011} and then proved in $\mathbb{U}(1)$ symmetry, i.e when $\mathcal{M}$ admits a spacelike Killing field, in \cite{Huneau2019} and \cite{Guerra2021}, where the effective stress-energy tensor is identified by means of microlocal defect measures. The reverse conjecture, which is this article's topic, is opposite in spirit to the direct one since it requires solving \eqref{EVE} with a given target $\g_0$. However, not only does it give a more complete and satisfying understanding of backreaction as a generic phenomenon, it is also highly connected to the direct conjecture. Indeed, the lack of strong convergence in \eqref{CV} does not allow boundedness of the sequence $(\g_\la)_{\la\in(0,1]}$ in $H^2_{loc}$ so that the celebrated bounded $L^2$ curvature conjecture proved in \cite{Klainerman2015} does not apply and the mere existence of sequences $(\g_\la)_{\la\in(0,1]}$ of solutions to \eqref{EVE} displaying the pathological behaviour \eqref{CV} is not guaranteed by the general theory. Solving the reverse conjecture precisely amounts to producing examples of such sequences and thus prevents the theorems on the direct conjecture to be empty.

\saut
The reverse conjecture has been proved first in $\mathbb{U}(1)$ symmetry in \cite{Huneau2018a} where the authors consider targets $\g_0$ solving a discretized version of \eqref{EV}, namely the Einstein-null dusts system
\begin{equation}\label{null dusts}
\left\{
\begin{aligned}
R_{\mu\nu}(\g_0) & = \sum_\A F_\A^2 \dr_\mu u_\A \dr_\nu u_\A,
\\ \g_0^{-1}(\d u_\A , \d u_\A) &  = 0,
\\ -2 L_\A F_\A + (\Box_{\g_0}u_\A)F_\A & = 0,
\end{aligned}
\right.
\end{equation}
where $\A$ runs through a finite set $\mathcal{A}$ of arbitrary cardinal denoted $|\mathcal{A}|$ in the sequel. In the setting of $\mathbb{U}(1)$ symmetry, the same authors prove in \cite{Huneau2024} that one can take the limit $|\mathcal{A}|\to+\infty$ and reach generic solutions to \eqref{EV} as targets for backreaction. In both these works, the $\mathbb{U}(1)$ symmetry plays an important role since it allows for the construction of an elliptic gauge, in which \eqref{EVE} reduces to a wave-map system coupled to semi-linear elliptic equations for the metric.

\saut
Outside of symmetry, the two sides of Burnett's conjecture have been tackled in double null gauge in \cite{Luk2020}, where the authors also shed new lights on the well-posedness theory for null dust shell and the formation of trapped surfaces. Their proof relies on a low-regularity result for \eqref{EVE} in double null gauge proved in \cite{Luk2017}, where derivatives of the metric in the two null directions are allowed to be only in $L^2_{loc}$. Due to the structure of \eqref{EVE} in double null gauge, this has to be compensated by higher regularity in the angular directions. Therefore, \cite{Luk2020} proves both sides of Burnett's conjecture with limits $\g_0$ solving \eqref{null dusts} with $|\mathcal{A}|=2$ and thus leaves open the question of whether there exists a setting for \eqref{EVE} capable of handling more than two null dusts. The present article's main goal is to show that wave gauges provide such a setting. From this perspective, we obtain the following:

\begin{thm}\label{theo rough burnett}[Rough version of the result from the point of view of Burnett's conjecture]
The reverse Burnett conjecture holds true when $\g_0$ is a regular solution of \eqref{null dusts} in wave gauge with arbitrary $|\mathcal{A}|<+\infty$.
\end{thm}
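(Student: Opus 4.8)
The plan is to realise $\g_\la$ as an explicit high-frequency approximate solution corrected by a small exact error, extending the single-phase constructions of \cite{Touati2023a,Touati2023b} to $|\mathcal{A}|$ phases. First I would fix a generalised wave gauge relative to $\g_0$, in which \eqref{EVE} reduces to a quasilinear system of wave equations of the schematic form $\Box_{\g_\la}\g_{\la,\mu\nu} = \mathcal{N}_{\mu\nu}(\g_\la)(\dr\g_\la,\dr\g_\la) + (\text{gauge source terms})$. Setting $\g_\la = \g_0 + h_\la$ and using that $\g_0$ solves \eqref{null dusts}, so that $R_{\mu\nu}(\g_0) = \sum_\A F_\A^2\,\dr_\mu u_\A\,\dr_\nu u_\A$, the null dust stress-energy enters as a fixed source in the equation satisfied by $h_\la$. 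I would then posit a multiphase geometric optics ansatz
\[
 h_\la \;=\; \sum_{\vec k}\;\sum_{j\ge 1}\la^{j}\,\mathrm{Re}\!\left[H^{(j)}_{\vec k}\,e^{i\left(\sum_\A k_\A u_\A\right)/\la}\right],
\]
where $\vec k = (k_\A)_{\A\in\mathcal{A}}$ runs over integer vectors (to be truncated in word length and in the order $j$), the leading term being $\la\sum_\A\mathrm{Re}[H_\A e^{iu_\A/\la}]$ with each $H_\A$ a polarisation $2$-tensor.

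The heart of the argument is the resonance analysis of the combined phases $\phi_{\vec k} = \sum_\A k_\A u_\A$. Since every $u_\A$ is $\g_0$-null, one has $\g_0^{-1}(\d\phi_{\vec k},\d\phi_{\vec k}) = \sum_{\A\ne\B}k_\A k_\B\,\g_0^{-1}(\d u_\A,\d u_\B)$, which splits the formal hierarchy into two regimes. For single-letter words $\vec k = \pm n\,e_\A$ the phase is characteristic, so at the critical order in $\la$ one obtains a transport equation along the null congruence of $u_\A$: for $\vec k = e_\A$ the order-$\la^{-1}$ equation yields the eikonal relation (already valid) together with the polarisation constraints on $H_\A$, and the order-$\la^{0}$ equation yields a transport equation whose relevant component must be exactly $-2L_\A F_\A + (\Box_{\g_0}u_\A)F_\A = 0$; the non-oscillating word $\vec k = \vec 0$ obeys a genuine wave equation whose order-$\la^{0}$ source is the averaged quadratic self-interaction of the waves, and matching this source against $-2\sum_\A F_\A^2\,\dr_\mu u_\A\,\dr_\nu u_\A$ (an Isaacson-type averaging computation, consistent in sign because the effective stress-energy satisfies the null energy condition) simultaneously fixes $|H_\A|^2$ in terms of $F_\A^2$ and produces the backreaction. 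For multi-letter words — at least two nonzero $k_\A$ — genericity of $\g_0$, concretely that the $u_\A$ are pairwise non-proportional $\g_0$-null functions so that $\g_0^{-1}(\d u_\A,\d u_\B)\ne0$ for $\A\ne\B$, forces $\g_0^{-1}(\d\phi_{\vec k},\d\phi_{\vec k})\ne0$, and the coefficients $H^{(j)}_{\vec k}$ are then determined \emph{algebraically}, order by order, with no transport and no small divisors. Solving the truncated hierarchy up to a large order $N$ yields an approximate solution $\g_\la^{\mathrm{app}}$ with $R_{\mu\nu}(\g_\la^{\mathrm{app}}) = O(\la^N)$ in appropriate $\la$-weighted norms.

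It remains to upgrade $\g_\la^{\mathrm{app}}$ to an exact solution. I would construct initial data solving the vacuum constraint equations, oscillating at frequency $\la^{-1}$ and agreeing with the induced data of $\g_\la^{\mathrm{app}}$ up to $O(\la^N)$, by a perturbative (fixed-point, or conformal-method) solution of the constraints around the data of $\g_\la^{\mathrm{app}}$, which solves them modulo $O(\la^N)$. Solving the reduced Einstein equations with this data and writing $\g_\la = \g_\la^{\mathrm{app}} + \la^N\theta_\la$, one closes an energy estimate for $\theta_\la$ in norms carrying powers of $\la$ that absorb the high-frequency derivative losses, giving a solution on a $\la$-independent time interval; the generalised wave gauge is preserved by the standard propagation-of-constraints argument for \eqref{EVE}. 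Then $\g_\la - \g_0 = O(\la)$ uniformly, so $\g_\la\to\g_0$ uniformly on compact sets, while $\dr(\g_\la - \g_0) = O(1)$ is a finite sum of terms $\mathrm{Re}[(\cdot)\,e^{i\phi_{\vec k}/\la}]$ with non-stationary phases, hence $\dr(\g_\la - \g_0)\rightharpoonup 0$ in $L^2_{loc}$ by non-stationary phase. This is precisely \eqref{CV}, which proves the reverse Burnett conjecture for the given generic $\g_0$ (the quantitative statement being Theorem \ref{theo evol}).

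The main obstacle is controlling the \textbf{multiphase wave interactions}. One must make the genericity condition on $\g_0$ precise and propagate it along the evolution so that every multi-letter word stays non-resonant; and, more delicately, one must exhibit that the quadratic terms in the reduced Einstein equations which violate the classical null condition are, for each resonant phase $u_\A$, tangential to that phase's light cone and hence harmless after averaging — this use of the specific non-linear structure of \eqref{EVE} is exactly what forces the \emph{generalised} (rather than harmonic) wave gauge, which alone retains enough polarisation freedom to match generic null dusts for arbitrary $|\mathcal{A}|$. A secondary difficulty is the simultaneous bookkeeping of transport along $|\mathcal{A}|$ distinct null congruences together with high-frequency energy estimates carrying the correct $\la$-weights, so that the error $\theta_\la$ stays bounded as $\la\to0$.
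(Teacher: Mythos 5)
Your skeleton follows the same general route as the paper (multiphase geometric optics ansatz around $\g_0$, transport along the null phases, algebraic determination of the non-characteristic mixed harmonics under a generic coherence assumption, oscillating constraint data, weak convergence by non-stationary phase), but two steps that you treat as routine are precisely where the work lies, and as stated they do not close. First, the "truncate at high order $N$ and correct by $\la^N\theta_\la$ with a weighted energy estimate" step fails at this amplitude. Writing $\g_\la=\g^{\mathrm{app}}_\la+\la^N\theta_\la$, the quasilinear term $(\g_\la^{\mu\nu}-\g^{\mathrm{app},\mu\nu}_\la)\dr_\mu\dr_\nu\g^{\mathrm{app}}_\la$ produces, after dividing by $\la^N$, a source of size $\la^{-1}$ of the schematic form $\cos\pth{\frac{u_\A}{\la}}\,\theta_{L_\A L_\A}F^{(1)}_\A$, because $\dr^2\g^{\mathrm{app}}_\la=\GO{\la^{-1}}$ no matter how large $N$ is. A Gronwall argument then gives growth like $e^{t/\la}$, so no choice of $N$ or of $\la$-weights rescues the scheme; one must use the structure of this term. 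This is exactly why the paper couples the wave equation for the remainder $\h_\la$ to additional $\la$-dependent transported amplitudes $\F_\A$, absorbing $\Pi_\leq\pth{(\h_\la)_{L_\A L_\A}}F^{(1)}_\A$ into the transport equation \eqref{eq F} and keeping only $\la^{-1}\Pi_\geq\pth{(\h_\la)_{L_\A L_\A}}F^{(1)}_\A$ in \eqref{eq h}, and then resolves the resulting apparent derivative loss via Bernstein and commutator estimates (Section \ref{section solving the system}). Your proposal contains no substitute for this device, and it is the analytic heart of the theorem.

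Second, your claim that the multi-letter amplitudes are determined "algebraically, order by order, with no transport and no small divisors" hides a genuine structural obstruction. At order $\la^0$ the operator acting on the first mixed amplitude $F^{(2,\pm)}_{\A\B}$ is not multiplication by $\g_0^{-1}(\d v,\d v)$ but the operator $\PP_v$ of Definition \ref{def pola}, whose range is the proper subspace $\{\Pol{A}{v}=0\}$ (Lemma \ref{lem Pv}); solvability therefore requires the nontrivial cancellation $\Pol{\mathcal{I}^{(0,\pm)}_{\A\B}}{u_\A\pm u_\B}=0$, a transparency identity special to the Einstein nonlinearity proved in Lemma \ref{lem F2pm}, and at order $\la^1$ the paper must further modify the gauge (the $\gg^{(3,e)}$ terms placed in $\Upsilon^\rho$) so that only the full-range part $-\g_0^{-1}(\d v,\d v)\mathrm{Id}$ survives. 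Likewise the forbidden self-harmonics ($\cos(2u_\A/\la)$ at order $\la^0$, $\cos(3u_\A/\la)$ at order $\la^1$) are only removable because they arise with the tensorial structure $\dr_{(\a}u_\A Q_{\b)}$; you assert the analogous tangentiality "after averaging" but never establish it. Two further points you wave at but that require real arguments in this setting: the oscillating initial data must be matched to the ansatz even at orders where the ansatz is no longer free (e.g.\ $F^{(2,\pm)}_{\A\B}$ is algebraically fixed by \eqref{def F2pm}, which forces the specific choices of free conformal data in Section \ref{section initial data}), and the polarization conditions \eqref{pola F21}--\eqref{pola F22} are not covered by the "standard propagation-of-constraints argument"; the paper propagates them through the contracted Bianchi identities combined with weak limits against oscillating weights (Section \ref{section propa 2}). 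Also note that keeping the expansion at low order, as the paper does, is what confines the resonance assumptions to the fixed finite sets $\mathcal{I}$ and $\mathcal{Z}$; an expansion to arbitrary order $N$ would require non-degeneracy of ever larger families of words, which sits uneasily with the fixed genericity hypothesis of the statement.
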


The intuitive reason behind the fact that wave gauges are good candidates to handle the superposition of more than two null dusts is that in these gauges the Einstein vacuum equations \eqref{EVE} rewrite as a system of quasi-linear wave equations which doesn't single out any null direction, as opposed to double null gauges. We mention some limitations of the present work and future directions of research:
\begin{itemize}
\item As we will see, in this article we construct sequences $(\g_\la)_{\la\in(0,1]}$ where the desired lack of strong convergence is purely due to oscillations. One of the strengths of \cite{Luk2020} is to include concentration effects as the cause of weak convergence. It is an interesting open problem to prove Burnett's reverse conjecture outside of symmetry with $|\mathcal{A}|\geq 3$ and with concentration-based backreaction.
\item More importantly, this article completely leaves open the question of sending the number of null dusts to infinity and thus proving the reverse Burnett conjecture with the highest level of generality. In particular, our local existence result in generalised wave gauge is not uniform in $|\mathcal{A}|$ and obtaining such uniformity seems to require ideas beyond the present work.
\item The regularity mentioned in Theorem \ref{theo rough burnett} refers to both spacetime regularity of the metric and the densities $F_\A$ and also regularity of the foliations induced by the eikonal functions $u_\A$ in \eqref{null dusts} (see Section \ref{section BG} for the precise definition of the class of allowed background spacetime). Proving the reverse Burnett conjecture with relaxed assumptions on the background is a very interesting problem, and we mention for instance the case of measure-valued null dusts as considered in \cite{Luk2020} or the presence of caustics in the null foliations (see the reviews \cite{Huneau2024a,Touati2025} for more open problems related to both the direct and reverse Burnett conjecture).
\end{itemize}

\subsection{Multiphase geometric optics}\label{section GO}

The second motivation of our work is the multiphase geometric optics approximation for the Einstein vacuum equations \eqref{EVE}. Geometric optics seeks a description of how waves propagate as solutions to a given system of PDEs. In general relativity, one can wonder how gravitational waves propagate as solutions to \eqref{EVE}. As explained in depth in Chapter 35 of \cite{Misner1973}, one would also like to go beyond the linearized gravity setting which by definition cannot describe the energy of gravitational waves, a quadratic quantity by nature, and thus misses the global impact of gravitational waves on a background spacetime. The framework of geometric optics, as presented in \cite{Rauch2012} or \cite{Metivier2009}, provides such a setting, as we will now describe.

\saut
In \cite{ChoquetBruhat1969}, Choquet-Bruhat is the first to construct WKB approximate solutions of \eqref{EVE}. More precisely, she constructs a family of metrics of the form
\begin{align}\label{rough CB}
\g_\la(x) & = \g_0(x) + \la \g^{(1)}(x,\th)_{|_{\th=\frac{\phi(x)}{\la}}} + \la^2 \g^{(2)}(x,\th)_{|_{\th=\frac{\phi(x)}{\la}}} 
\end{align} 
where the $\g^{(i)}$'s are periodic in the $\th$ variable and such that $R_{\mu\nu}(\g_\la) = \GO{\la}$. The typical geometric optics phenomena are recovered: $\phi$ must solve the background eikonal equation and $\g^{(1)}$ is transported along the rays. Less common is the fact that there should exist a scalar function $\tau>0$ such that $R_{\mu\nu}(\g_0) = \tau \dr_\mu \phi \dr_\nu \phi$, thus making \cite{ChoquetBruhat1969} one of the earliest examples of backreaction. As pointed out in \cite{Metivier2009}, it remained to prove or disprove the stability of the geometric optics approximation, i.e to answer the following: does the approximate solution \eqref{rough CB} stay close to an exact one on a uniform time scale in $\la$? A positive answer has been given in the articles \cite{Touati2023a,Touati2023b}. As the approximate construction of \cite{ChoquetBruhat1969}, these articles deal with the singlephase geometric optics approximation, when only one direction of oscillation is allowed. We extend these results to the following:

\begin{thm}\label{theo rough GO}[Rough version of the result from the point of view of geometric optics]
Under a strong coherence assumption, the multiphase geometric optics approximation is stable for \eqref{EVE}.
\end{thm}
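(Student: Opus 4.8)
The plan is to upgrade the single-phase stability argument of \cite{Touati2023a,Touati2023b} to a finite but arbitrary number of phases. I would first fix a \emph{generalised} wave gauge, so that \eqref{EVE} becomes a quasi-linear system of wave equations of the schematic form $\Box_{\g_\la}\g_\la = \mathbf{N}(\g_\la)(\dr\g_\la,\dr\g_\la) + (\text{gauge source terms})$, together with the fact that, once the generalised gauge condition holds on the initial data, the associated gauge-violation one-form solves a homogeneous wave equation and hence stays zero. I would then posit the multiphase WKB ansatz
\[
\g_\la(x) = \g_0(x) + \sum_{j\geq 1} \la^{j}\, \g^{(j)}\!\pth{x,\tfrac{u_\bullet(x)}{\la}},
\]
where $u_\bullet = (u_\A)_{\A\in\mathcal{A}}$, each profile $\g^{(j)}(x,\th)$ is a trigonometric polynomial in $\th=(\th_\A)_{\A\in\mathcal{A}}\in\R^{|\mathcal{A}|}$, and the $u_\A$ solve the eikonal equations of \eqref{null dusts}. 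The \emph{strong coherence assumption} enters precisely here: it ensures that every Fourier mode $e^{i\k\cdot\th}$, $\k\in\Z^{|\mathcal{A}|}$, generated by the nonlinearity is either resonant (i.e. $\k\cdot u_\bullet$ is one of the $u_\A$, handled by the transport equations) or carries a phase that is \emph{uniformly non-characteristic} for $\Box_{\g_0}$, so that $\Box_{\g_0}$ acting on such a mode is invertible with a gain of $\la^{2}$; this replaces the classical small-divisors obstruction of multiphase geometric optics.

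Next I would solve the WKB hierarchy order by order. The $\la^{0}$ equations give the eikonal equations $\g_0^{-1}(\d u_\A,\d u_\A)=0$; the $\la^{1}$ equations give linear transport equations along the null flow of $\g_0$ for the oscillating parts of $\g^{(1)}$, encoding the polarisation and the amplitudes $F_\A$; and the solvability condition for the non-oscillating part at order $\la^{2}$ forces $R_{\mu\nu}(\g_0) = \sum_\A F_\A^{2}\,\dr_\mu u_\A\,\dr_\nu u_\A$, i.e. $\g_0$ solves \eqref{null dusts}. Iterating and truncating at a large order $N$, and using the non-characteristic inversion above to absorb at each stage both the self-interaction of each wave (where the generalised-gauge freedom is used) and the interaction of waves in distinct null directions, I obtain a metric $\g_\la^{\mathrm{app}}$, exactly in the generalised wave gauge, with $R_{\mu\nu}(\g_\la^{\mathrm{app}})=\GO{\la^{N}}$.

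For the stability statement proper, I would set $\g_\la = \g_\la^{\mathrm{app}} + \la^{K}\,\h_\la$ for a suitable $0<K<N$ with small remainder data, derive the quasi-linear wave equation satisfied by $\h_\la$ — whose source is $\GO{\la^{N-K}}$ plus terms linear and of higher order in $\h_\la$ with coefficients built from $\g_\la^{\mathrm{app}}$ and its $\GO{\la^{-1}}$ derivatives — and prove a uniform-in-$\la$ energy estimate for $\h_\la$ on a time interval $[0,T]$ with $T$ independent of $\la$, working in the $\la$-weighted high-frequency Sobolev norms of \cite{Touati2023a,Touati2023b}. Since $\dr\h_\la$ is only controlled by $\la^{-1}$ times the energy, the estimate can only close because of (i) the null structure of the reduced equations, which ensures the genuinely quadratic derivative terms carry a null form, and (ii) the strong coherence assumption, which forces the dangerous interaction terms to oscillate with non-characteristic phases so that an integration by parts recovers a factor of $\la$. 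A continuity/bootstrap argument then closes the estimate, and passing to the limit yields $\g_\la-\g_\la^{\mathrm{app}}\to0$ in $C^{1}$, hence \eqref{CV} with $\g_0$ solving \eqref{null dusts}; this is the asserted stability, the precise version being Theorem \ref{theo evol}.

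The step I expect to be the main obstacle is obtaining the energy estimate for $\h_\la$ \emph{uniformly} while $|\mathcal{A}|$ is fixed but arbitrary: one must simultaneously control all self- and cross-interactions — which is exactly why the \emph{generalised} rather than harmonic wave gauge is needed, the extra gauge freedom being spent to cancel the would-be resonant self-interaction of each wave — and verify that the wave equation for the gauge-violation one-form also closes in the high-frequency norms, so that the reduced system genuinely solves \eqref{EVE}. The eikonal and transport theory, the order-by-order construction, and the final bootstrap are then structurally as in the single-phase case, modulo systematic bookkeeping over the index set $\mathcal{A}$ and over the lattice $\Z^{|\mathcal{A}|}$ of resulting phases.
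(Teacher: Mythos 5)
Your architecture (construct a high-order multiphase WKB approximation $\g^{\mathrm{app}}_\la$ with $R_{\mu\nu}(\g^{\mathrm{app}}_\la)=\GO{\la^N}$, then close a uniform energy estimate for a small remainder $\la^K\h_\la$) is genuinely different from the paper's, and its crucial step fails as described. When you linearize the quasi-linear part $\g^{\mu\nu}\dr_\mu\dr_\nu\g$ around $\g^{\mathrm{app}}_\la$, the term $(\de\g)^{\mu\nu}\dr_\mu\dr_\nu\g^{\mathrm{app}}_\la$ produces, in the equation for $\h_\la$, a zeroth-order term of size $\la^{-1}$ whose coefficient oscillates with the \emph{characteristic} phases $u_\A$ (in the paper this is exactly the term $(\h_\la)_{L_\A L_\A}(F^{(1)}_\A)_{\a\b}$ in \eqref{R 1 null}). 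Neither of your two closing mechanisms touches it: it is not a quadratic derivative term, so null structure is irrelevant, and its phase is characteristic, so the non-characteristic integration by parts gains nothing (integrating by parts along a transversal direction instead transfers a derivative onto $\h_\la$ and loses a power of $\la$). A naive Gronwall argument then gives $e^{Ct/\la}$ growth, which no accuracy $\la^{N-K}$ of the approximate solution can beat. This is precisely why the paper (and already the singlephase work \cite{Touati2023a}, which does \emph{not} proceed by high-order WKB plus small remainder) makes the second-order amplitude $\F_\A$ $\la$-dependent and couples it to $\h_\la$ through the low/high-frequency projections $\Pi_\leq,\Pi_\geq$ in \eqref{eq F}--\eqref{eq h}; the price is an apparent loss of one derivative, which is then recovered via Bernstein and the commutator structure $[L_\A,\Box_{\g_0}]$, $[\,\cdot\,,\Pi_\leq]$. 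Your sketch contains no substitute for this mechanism.

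Two further points. First, your order-by-order construction requires every lattice phase $\k\cdot u_\bullet$ generated up to order $N$ to be uniformly non-characteristic; the paper's assumption \eqref{coherence} only concerns the finite explicit set $\mathcal{I}$ (and $\mathcal{Z}$ for \eqref{spatial}), precisely because the ansatz is truncated at low order and everything else is pushed into the exact remainder — as $N$ grows your hypothesis becomes genuinely stronger and its genericity for $|\mathcal{A}|\geq 3$ is not addressed. Second, you never construct oscillating initial data solving the vacuum constraint equations and matching the ansatz (the transport equations and the algebraic definition of the mixed-phase profiles leave no freedom in their first, resp. zeroth, order data); without the constraints on $\Si_0$ the gauge-violation one-form does not satisfy a homogeneous Cauchy problem, so your gauge-propagation step has no basis. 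In the paper this occupies Section \ref{section initial data}, and the propagation of the polarization conditions additionally requires the Bianchi-identity/weak-limit argument of Section \ref{section propa 2}, not merely the standard wave-gauge argument.
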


Note that this result cannot follow from general results on multiphase geometric optics such as \cite{Joly1993} mainly because the hyperbolicity of \eqref{EVE} comes at the cost of a gauge choice, due to the invariance by diffeomorphism of the Ricci tensor. Here, the gauge is part of the geometric optics construction and in particular the metrics $\g_\la$ will be such that $\Box_{\g_\la}x^\a$, i.e the term defining the wave gauge, is oscillating and of order $\la$. Note that geometric optics for semi-linear gauge invariant equations have been studied in \cite{Jeanne2002}, here the situation is different since \eqref{EVE} are quasi-linear.

\subsection{A discussion of transparency}\label{section transparence}

The two motivations described above are obviously very much connected. Both are concerned with describing the non-linear interactions through \eqref{EVE} of small scale inhomogeneities in the metric, and geometric optics is used as a strategy to attack Burnett's reverse conjecture. The non-linear structure of the Ricci tensor plays a central role in our proof, and we would like to bring to the reader's attention a truly astounding aspect of \eqref{EVE}, illustrated by Theorems \ref{theo rough burnett} and \ref{theo rough GO}. 

\saut
As it was already the case in the approximate construction of \cite{ChoquetBruhat1969}, the transport equation along the rays of the optical function for the first profile in the WKB ansatz constructed in the present article is linear. At first glance, this is very surprising since the Einstein vacuum equations have a complicated non-linear structure. In the geometric optics literature, the fact that the transport equations for the first profile in a WKB ansatz turns out to be linear despite the waves interacting non-linearly is called transparency, see \cite{Joly2000}. Note that transparency can also be seen from Burnett's conjecture's perspective, since the transport equation for the density in \eqref{EV} is linear.  However, Burnett's conjecture describes a truly non-linear effect since the effective stress-energy tensor, or alternatively the energy of the gravitational wave $\tau$ in Choquet-Bruhat's approximate construction, precisely originates in the quadratic self-interaction of derivatives of the metric. It is truly astounding that this interaction produces a global quadratic effect while being linearly propagated. 

\saut
As pointed out in \cite{Lannes2013}, transparency is directly linked to the null condition introduced in \cite{Christodoulou1986,Klainerman1986} for the study of global existence for small data for non-linear wave equations. Elaborating on \cite{ChoquetBruhat1969} in her article \cite{ChoquetBruhat2000}, Choquet-Bruhat uses geometric optics to show that \eqref{EVE} cannot truly satisfy the null condition, otherwise we would have $\tau=0$ above or alternatively Burnett's conjecture would reduce to an uninteresting statement about vacuum spacetimes necessarily approaching vacuum spacetimes. Later, Lindblad and Rodnianski introduced in \cite{Lindblad2003} a weakened version of the null condition, rightly called the weak null condition. They show that \eqref{EVE} in wave gauge satisfy this condition and uses this to prove the stability of Minkowski in wave gauge in \cite{Lindblad2010}, a result already proved in the seminal \cite{Christodoulou1993}. However, as noticed in Section 3.1.6 in \cite{Touati2023a}, there exist systems with the weak null condition but without the transparency property. The Einstein vacuum equations \eqref{EVE} thus seem to be very much unique in the sense that both a linear and a non-linear behaviour can be simultaneously exhibited.

\subsection{Notations and tools}\label{section notation}

In this section we introduce various notations and tools which will be used throughout the article.

\paragraph{Geometric notations.} 
\begin{itemize}
\item Our construction takes place on the fixed manifold $\mathcal{M}\vcentcolon=[0,1]\times\R^3$, endowed with the standard coordinates $(t,x^1,x^2,x^3)$. We define $\Si_t\vcentcolon=\{t\}\times \R^3$ for $t\in[0,1]$. On $\mathcal{M}$ we denote by $\mathbf{m}$ the Minkowski metric while on each $\Si_t$ the Euclidean metric is denoted by $e$. Greek indices will refer to the coordinates $(t,x^1,x^2,x^3)$ and will thus run from 0 to 3. Latin indices will refer to the coordinates $(x^1,x^2,x^3)$ and will thus run from 1 to 3. Repeated indices (with one up and one down) will be always summed over. If $r\geq 0$ then $B_r$ denotes the closed ball in $\R^3$ centered at 0 and of radius $r$ in the Euclidean metric, i.e $B_r=\{|x|\leq r \}$. 
\item If $T$ and $S$ are symmetric 2-tensors and if $\g$ is a Lorentzian metric on $\mathcal{M}$ then we define
\begin{align*}
|T\cdot S|_{\g} & = \g^{\a\b}\g^{\mu\nu}T_{\a\mu}S_{\b\nu},
\\ |T|_{\g} & = \sqrt{ \g^{\a\b}\g^{\mu\nu}T_{\a\mu}T_{\b\nu}},
\\ \tr_\g T & = \g^{\a\b}T_{\a\b}.
\end{align*}
These notations have their natural 1-tensor equivalent: $|X\cdot Y|_\g = \g^{\a\b}X_\a Y_\b$. We don't distinguish between the covariant or contravariant forms of tensors, and if $T$ is a general 2-tensor then its symmetric and anti-symmetric part are denoted
\begin{align*}
T_{(\a\b)} & = T_{\a\b} + T_{\b\a},
\\ T_{[\a \b]} & = T_{\a\b} - T_{\b\a}.
\end{align*}
All these notations extend naturally to tensors defined only on $\Si_0$ endowed with a Riemannian metric. 
\item For $f$ a scalar function on $\mathcal{M}$, $\nab f$ will denote the Euclidean gradient of $f$ or, alternatively and with a slight abuse of notations, any spatial derivatives $\dr_i f$ while $\dr f$ will denote any spacetime derivatives $\dr_\a f$. We associate to a Lorentzian metric $\g$ on $\mathcal{M}$ its wave operator $\Box_\g$ which acts on scalar function as $\Box_\g f  = \g^{\mu\nu}\pth{ \dr_\mu \dr_\nu f - \Ga(\g)^{\rho}_{\mu\nu}\dr_\rho f}$, where $\Ga(\g)^{\rho}_{\mu\nu}$ denotes the usual Christoffel symbols of the metric $\g$. If $h$ is a Riemannian metric the same definition in spacelike coordinates defined its Laplace-Beltrami operator $\De_h$. If $v$ is a scalar function on $\Si_0$ then we define its gradient with respect to the metric $h$ by the vector field $\nab_h v = h^{ij}\dr_i v \dr_j$.
\end{itemize}

\paragraph{Analytic tools.}
\begin{itemize}
\item On each slice $\Si_t$ we define the usual Lebesgues and Sobolev spaces $L^p$ and $W^{k,p}$ with respect to the Euclidean element of integration. We also define the weighted Sobolev spaces $W^{k,p}_\de$ to be the completion of smooth and compactly supported functions on $\Si_t$ for the norm
\begin{align}
\l f \r_{W^{k,p}_\de} = \sum_{0\leq |m| \leq k} \l \pth{1+|x|^2}^{\frac{\de+|m|}{2}} \nab^m f  \r_{L^p}\label{norm WSS}
\end{align}
with the standard special case $H^k_\de\vcentcolon = W^{k,2}_\de$ and $L^2_\de=H^0_\de$. By replacing $L^p$ in \eqref{norm WSS} by $L^\infty$ we define the weighted Hölder spaces $C^k_\de$. These norms are extended to tensors by summing over their components in the coordinates $(t,x^1,x^2,x^3)$.
\item From \cite{ChoquetBruhat2009}, these spaces satisfy the continuous embeddings
\begin{align*}
W^{s_1,p}_{\de_1}\times W^{s_2,p}_{\de_2} \subset W^{s,p}_\de
\end{align*}
where $s\leq \min(s_1,s_2)$, $s<s_1+s_2-\frac{3}{p}$ and $\de<\de_1+\de_2+\frac{3}{p}$ and
\begin{align*}
W^{s,p}_\de \subset C^m_{\de+\frac{3}{p}}
\end{align*}
where $m<s-\frac{3}{p}$. From \cite{ChoquetBruhat2009} we also get that $\De:H^2_\de\longrightarrow L^2_{\de+2}$ is an isomorphism if $-\frac{3}{2}<\de<-\half$ and where $\De$ is the flat Laplacian, sometimes denoted $\De_e$.
\item Estimates with no mention of the time will always refer to estimates holding uniformly in time over $[0,1]$, i.e $\l f\r_X \leq C$ will denote 
\begin{align*}
\sup_{t\in[0,1]}\l f\r_{X(\Si_t)} \leq C,
\end{align*}
where $X$ is any of the function spaces defined here.
\end{itemize}

\paragraph{High-frequency and schematic notations.} Since our construction is based on a geometric optics expansion of the metric, we will encounter objects defined or expressed via an expansion in terms of $\la$ with oscillating coefficients, where by object we mean tensors of any types including scalar functions, vector fields, 1-forms and higher order tensors. We introduce some notations to describe and manipulate these expansions.
\begin{itemize}
\item If an object $S$ admits an expansion in terms of powers (non-negative or negative) of $\la$, we denote by $S^{(i)}$ the coefficient of $\la^i$ in this expansion. If $j\in\Z$, we define 
\begin{align*}
S^{(\geq j)} = \sum_{k\geq j}\la^{k-j}S^{(k)},
\end{align*}
so that
\begin{align*}
S = \sum_{k\leq j-1} \la^k S^{(k)} + \la^j S^{(\geq j)}.
\end{align*}
\item A coefficient $S^{(i)}$ in the expansion of an object $S$ might oscillate at the frequency $\la^{-1}$, i.e be a linear combination of terms of the form
\begin{align*}
T\pth{\frac{z}{\la}}S^{(i,T,z)}
\end{align*}
for some $S^{(i,T,z)}$ independent of $\la$ (the notation $S^{(i,T,z)}$ will not be used systematically, it just serves our purpose here), $T\in\{\cos,\sin\}$ and $z$ a phase, i.e a scalar function on $\mathcal{M}$.
\item In order to manipulate complicated non-linear expressions where high-frequency expansions might be differentiated and multiplied, we introduce a schematic notation.  In what follows, components of tensors and partial derivatives are defined with respect to a fixed coordinates system that will be properly introduced in Section \ref{section BG} below. Let $S$ and $S'$ be two quantities depending (in a tensorial way or not) on coordinate indexes.
\begin{itemize}
\item We denote by $\{S\}$ any linear combination of components of $S$.
\item We denote by $\{SS'\}$ any linear combination of products of components of $S$ and $S'$.
\item We denote by $\{\dr S\}$ any linear combination of partial derivatives of any components of $S$.
\end{itemize}
Moreover, if $f_\a$ is a quantity depending (in a tensorial way or not) on the coordinate index $\a$ and of the form $\{S\}$ (resp. $\{SS'\}$ or $\{\dr S\}$), we might also write $f_\a=\{S\}_\a$ (resp. $f_\a=\{SS'\}_\a$ or $f_\a=\{\dr S\}_\a$). This obviously extends to any number of indices. For instance the Christoffel symbol $\Ga(\g)^\rho_{\mu\nu}$ of a metric $\g$ might be rewritten schematically as $\cro{\g^{-1}\dr \g}$ or as $\cro{\g^{-1}\dr \g}^\rho_{\mu\nu}$.
\item We extend this schematic notation to include undescribed oscillations:  $\cro{S}^\osc$ will denote any quantity of the form
\begin{align*}
\sum_{\substack{i\geq 0 \\ z \text{ scalar function}\\T\in\{1,\cos,\sin\}}} \la^i T\pth{\frac{z}{\la}} \cro{S}.
\end{align*}
This obviously extends to the bracket notation with indexes such as $\{\cdot\}_\a$ introduced above.
\end{itemize}

\paragraph{Some important tensor operators.} We conclude this section by introducing several operators acting on symmetric 2-tensors and related to the action of the Ricci tensor on oscillating tensors.

\begin{mydef}\label{def pola}
Let $v$ be a scalar function on $\mathcal{M}$ and $S$ a symmetric 2-tensor.
\begin{itemize}
\item[(i)] We define the polarization tensor of $S$ with respect to $v$ by
\begin{align*}
\Pol{S}{v}_\a \vcentcolon = \g_0^{\mu\nu}S_{\a\mu}\dr_\nu v  - \half \tr_{\g_0}S \dr_\a v .
\end{align*}
\item[(ii)] We define the operator $\mathcal{P}_v$ by setting
\begin{align*}
\mathcal{P}_v(S)_{\a\b} \vcentcolon = - \g_0^{-1}(\d v,\d v) S_{\a\b} + \dr_{(\a}v \Pol{S}{v}_{\b)}.
\end{align*}
\end{itemize}
\end{mydef}

Note that if $\g_0^{-1}(\d u,\d u)=0$, then in any null frame $\pth{L,\Lb,e^{(1)},e^{(2)}}$ associated to $u$ (see the next section for the definition of a null frame) we have
\begin{align}
\Pol{S}{u}_{L} & = - S_{L L} , \label{Pol L}
\\ \Pol{S}{u}_{e^{(\i)}} & = - S_{L e^{(\i)}} , \label{Pol e}
\\ \Pol{S}{u}_{\Lb} & = - S_{e^{(1)} e^{(1)}} - S_{e^{(2)} e^{(2)}} . \label{Pol Lb}
\end{align}

\begin{remark}
In our previous work \cite{Touati2023a}, $\Pol{S}{v}$ was denoted by $\mathrm{Pol}(S)$ (there was no need to include the phase function in the notation since only one was considered). This latter notation can be misleading since a 2-tensor satisfying $\mathrm{Pol}(S)=0$ is physically polarized, while the notation suggests that it has 'zero' polarization. We thank Thibault Damour for this remark, according to which we changed our notation. The term "polarization" originates in the analogy with linearized gravity, where gravitational waves admit two possible polarizations corresponding here to the two degrees of freedom of $S_{XY}$ for $X,Y\in\{L,e^{(1)},e^{(2)}\}$ in the case $\Pol{S}{u}=0$ with $\g_0^{-1}(\d u,\d u)=0$ (see also Remark \ref{remark seed} below).
\end{remark}

The operator $\mathcal{P}_v$ will play a very important role in our construction in the case $\g_0^{-1}(\d v, \d v)\neq 0$ since it will describe the leading term in the Ricci tensor when acting on a tensor oscillating with the phase function $v$. The next lemma shows how one can invert this operator modulo its kernel (which we won't need). Its straightforward proof is left to the reader, it relies on the identity
\begin{align}\label{Pol dv S}
\Pol{\dr_{(\cdot}vQ_{\cdot)}}{v}=\g_0^{-1}(\d v,\d v) Q,
\end{align}
which holds for any 1-form $Q$.

\begin{lemma}\label{lem Pv}
Let $v$ be a scalar function on $\mathcal{M}$ with $\g_0^{-1}(\d v, \d v)\neq0$. We have
\begin{align*}
\mathrm{ran}\mathcal{P}_v = \enstq{A\text{ symmetric 2-tensor}}{ \Pol{A}{v}=0}.
\end{align*}
Moreover, if $ \Pol{A}{v}=0$ then
\begin{align*}
\PP_v\pth{-\frac{1}{\g_0^{-1}(\d v,\d v)}A}=A.
\end{align*}
\end{lemma}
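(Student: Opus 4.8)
The plan is to prove both the characterisation of $\mathrm{ran}\,\PP_v$ and the inversion formula by direct computation, exploiting the linearity of $S\mapsto\Pol{S}{v}$ and the algebraic identity \eqref{Pol dv S} exactly as indicated. The only nontrivial ingredient is \eqref{Pol dv S} itself, which I would verify first: expanding $\Pol{\dr_{(\cdot}vQ_{\cdot)}}{v}$ according to Definition \ref{def pola}, the contraction $\g_0^{\mu\nu}\bigl(\dr_\a v\,Q_\mu+\dr_\mu v\,Q_\a\bigr)\dr_\nu v$ splits into $\bigl(\g_0^{\mu\nu}Q_\mu\dr_\nu v\bigr)\dr_\a v + \g_0^{-1}(\d v,\d v)\,Q_\a$, while the trace term equals $-\half\tr_{\g_0}\!\bigl(\dr_{(\cdot}vQ_{\cdot)}\bigr)\dr_\a v = -\bigl(\g_0^{\mu\nu}Q_\mu\dr_\nu v\bigr)\dr_\a v$, cancelling the first piece and leaving $\g_0^{-1}(\d v,\d v)\,Q$. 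Care is needed only with the symmetrisation convention $T_{(\a\b)}=T_{\a\b}+T_{\b\a}$ (no factor $\half$), which controls the bookkeeping of the numerical constants.

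For the inclusion $\mathrm{ran}\,\PP_v\subseteq\enstq{A\text{ symmetric 2-tensor}}{\Pol{A}{v}=0}$, I would take an arbitrary symmetric $2$-tensor $S$ and apply $\Pol{\cdot}{v}$ to $\PP_v(S)=-\g_0^{-1}(\d v,\d v)S+\dr_{(\cdot}v\,\Pol{S}{v}_{\cdot)}$. By linearity the first summand produces $-\g_0^{-1}(\d v,\d v)\Pol{S}{v}$, and \eqref{Pol dv S} applied with $Q=\Pol{S}{v}$ shows the second summand produces $+\g_0^{-1}(\d v,\d v)\Pol{S}{v}$; the two cancel, so $\Pol{\PP_v(S)}{v}=0$.

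For the reverse inclusion and the explicit formula, which I would handle simultaneously, I would take a symmetric $2$-tensor $A$ with $\Pol{A}{v}=0$ and compute $\PP_v\!\bigl(-\tfrac{1}{\g_0^{-1}(\d v,\d v)}A\bigr)$ straight from the definition: the first term of $\PP_v$ returns $A$ exactly, and the second term is $-\tfrac{1}{\g_0^{-1}(\d v,\d v)}\dr_{(\cdot}v\,\Pol{A}{v}_{\cdot)}=0$. This both exhibits $A$ as an element of $\mathrm{ran}\,\PP_v$ and yields the stated identity; this is the only place where the hypothesis $\g_0^{-1}(\d v,\d v)\neq 0$ is used, namely to make the division legitimate.

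I do not anticipate any real obstacle: the whole statement is pointwise linear algebra on each cotangent space, so no regularity of $v$ or geometry of $\mathcal{M}$ intervenes, and the argument reduces to the short computation establishing \eqref{Pol dv S} plus two one-line substitutions. The only thing to watch is consistency of the symmetrisation and trace conventions so that the cancellations come out exactly.
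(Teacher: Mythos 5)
Your argument is correct and follows exactly the route the paper indicates: verify the identity \eqref{Pol dv S} by a direct pointwise computation with the convention $T_{(\a\b)}=T_{\a\b}+T_{\b\a}$, use it with $Q=\Pol{S}{v}$ to show $\Pol{\PP_v(S)}{v}=0$, and then check the inversion formula on any $A$ with $\Pol{A}{v}=0$, dividing by $\g_0^{-1}(\d v,\d v)\neq 0$. This is precisely the "straightforward proof left to the reader" relying on \eqref{Pol dv S}, so nothing further is needed.
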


\subsection{The background null dusts spacetime}\label{section BG}

We assume that a regular solution of the Einstein-null dusts system is given on the manifold $\mathcal{M}$. More precisely, we consider given on $\mathcal{M}$ a Lorentzian metric $\g_0$ and two families of scalar functions $\pth{ u_\A}_{\A\in\mathcal{A}}$ and $\pth{ F_\A}_{\A\in\mathcal{A}}$, where the index $\A$ runs through a finite set $\mathcal{A}$, solving the Einstein-null dusts system
\begin{equation}\label{BG system}
\left\{
\begin{aligned}
R_{\mu\nu}(\g_0) & = \sum_\A F_\A^2 \dr_\mu u_\A \dr_\nu u_\A,
\\ \g_0^{-1}(\d u_\A , \d u_\A) &  = 0,
\\ -2 L_\A F_\A + (\Box_{\g_0}u_\A)F_\A & = 0,
\end{aligned}
\right.
\end{equation}
where $L_\A \vcentcolon = - \g_0^{\alpha\beta}\dr_\alpha u_\A \dr_\beta$ is the spacetime gradient and is assumed to be future-directed. Thanks to the eikonal equation in \eqref{BG system}, $L_\A$ is null and geodesic i.e $\g_0(L_\A,L_\A)=0$ and
\begin{align}
\D_{L_\A}L_\A = 0 \label{geodesic equation},
\end{align}
where $\D$ denotes the Levi-Civita connection associated to $\g_0$. 

\paragraph{Gauge condition.} We assume that $\g_0$ satisfies the wave gauge condition
\begin{align}
\g_0^{\mu\nu}\Gamma(\g_0)^\rho_{\mu\nu} =0\label{wave condition BG},
\end{align}
in the coordinates $(t,x^1,x^2,x^3)$ of $\mathcal{M}$. This allows us to rewrite the first equation of \eqref{BG system} as
\begin{align}
-\tBox_{\g_0}(\g_0)_{\alpha\beta} + P_{\alpha\beta}(\g_0)(\dr \g_0,\dr \g_0) = 2 \sum_\A F_\A^2 \dr_\mu u_\A \dr_\nu u_\A. \label{eq BG coordinates}
\end{align}
where
\begin{align*}
P_{\alpha\beta}(\g_0)(\dr \g_0 ,\dr \g_0) & = \g_0^{\mu\rho}\g_0^{\nu\sigma} \bigg( \dr_{(\alpha}(\g_0)_{\rho\sigma}\dr_\mu (\g_0)_{\beta)\nu}  - \frac{1}{2} \dr_\alpha (\g_0)_{\rho\sigma} \dr_\beta (\g_0)_{\mu\nu}  
\\&\hspace{3cm} - \dr_\rho (\g_0)_{\alpha\nu}\dr_\sigma (\g_0)_{\beta\mu} +  \dr_\rho (\g_0)_{\sigma\alpha} \dr_\mu (\g_0)_{\nu\beta} \bigg).
\end{align*}

\paragraph{Regularity and decay.} We will measure the regularity by an integer $N\geq 10$ and the decay by a real constant $\de$ satisfying $-\frac{3}{2}<\de<-\half$. We also introduce a smallness constant $\e>0$.
\begin{itemize}
\item We assume that the metric $\g_0$ is close to Minkowski in the following sense 
\begin{align}\label{estim g0}
\l \g_0 - \mathbf{m} \r_{H^{N+1}_{\de}} + \l \dr_t \g_0 \r_{H^N_{\de+1}} + \l \dr_t^2\g_0 \r_{H^{N-1}_{\de+2}} \leq \e,
\end{align}
where $\mathbf{m}$ is the Minkowski metric on $\mathcal{M}$.
\item We assume that there exist constant non-zero vector fields $\mathfrak{z}_\A$ in $\R^3$ such that for all $\A\in\mathcal{A}$ we have
\begin{align}\label{estim uA}
\l \nab u_\A - \mathfrak{z}_\A \r_{H^N_{\de+1}} \leq \e.
\end{align}
We associate to each $u_\A$ a transport operator acting on tensors of all type
\begin{align*}
\Ll_\A \vcentcolon = -2\D_{L_\A} + \Box_{\g_0} u_\A.
\end{align*}
\item We assume that all densities $F_{\A|_{\Si_0}}$ are initially supported in a ball $B_R\subset \Si_0$ for some fixed $R>0$. The transport equation in \eqref{BG system} then implies that each $F_\A$ is supported in $J^+_0\pth{ B_R}$, i.e the causal future associated to $\g_0$. Thanks to \eqref{estim g0}-\eqref{estim uA} there exists a constant $C_{\mathrm{supp}}>0$ such that $J^+_0\pth{ B_R}\subset \enstq{(t,x)\in\mathcal{M}}{|x|\leq C_{\mathrm{supp}}R}$. We also assume that for all $\A\in\mathcal{A}$ we have
\begin{align}\label{estim FA}
\l F_\A \r_{H^N} \leq \e.
\end{align}
\end{itemize} 

\paragraph{Strong coherence.} We introduce some sets of phases. First, the null harmonics that will appear in the metric $\g_\la$:
\begin{align*}
 \mathcal{N}_k & \vcentcolon = \enstq{ k u_\A}{\A\in\mathcal{A}}.
\\ \mathcal{N} & \vcentcolon = \mathcal{N}_1  \cup \mathcal{N}_2 \cup\mathcal{N}_3.
\end{align*}
Second, the mixed harmonics that will appear in the metric $\g_\la$:
\begin{align*}
\mathcal{I}_2 & \vcentcolon = \enstq{u_\A\pm u_\B}{\A\neq \B \text{ and } \pm\in \{-1,+1\}} ,
\\ \mathcal{I}_3 & \vcentcolon = \enstq{u_\A\pm 2 u_\B}{\A\neq \B \text{ and } \pm\in \{-1,+1\}}
\\&\hspace{1cm} \cup \enstq{u_\A\pm_1 u_\B \pm_2 u_\C}{\A, \B, \C \text{ all distincts and } \pm_1,\pm_2\in \{-1,+1\}} ,
\\ \mathcal{I} & \vcentcolon = \mathcal{I}_2 \cup \mathcal{I}_3.
\end{align*}
We also define $\mathcal{W} \vcentcolon =\mathcal{N}\cup\mathcal{I}$. Finally, the mixed harmonics that will appear using the contracted Bianchi identities in Section \ref{section propa 2}:
\begin{align*}
\mathcal{I}_4 & \vcentcolon =\enstq{u_\A\pm 3 u_\B}{\A\neq \B \text{ and } \pm\in \{-1,+1\}} 
\\&\hspace{1cm} \cup \enstq{u_\A\pm_1 u_\B \pm_2 2 u_\C}{\A, \B, \C \text{ all distincts and } \pm_1,\pm_2\in \{-1,+1\}}
\\&\hspace{1cm} \cup \enstq{u_\A\pm_1 u_\B \pm_2 u_\C \pm_3 u_\D}{\A, \B, \C ,\D \text{ all distincts and } \pm_1,\pm_2,\pm_3\in \{-1,+1\}},
\\ \mathcal{I}_5 & \vcentcolon = \enstq{u_\A\pm 4u_\B}{\A\neq \B \text{ and } \pm\in \{-1,+1\}} 
\\&\hspace{1cm}\cup \enstq{3u_\A\pm 2u_\B}{\A\neq \B \text{ and } \pm\in \{-1,+1\}} 
\\&\hspace{1cm} \cup \enstq{u_\A\pm_1 2u_\B \pm_2 2u_\C}{\A, \B, \C \text{ all distincts and } \pm_1,\pm_2\in \{-1,+1\}}
\\&\hspace{1cm} \cup \enstq{u_\A\pm_1 u_\B \pm_2 3u_\C}{\A, \B, \C \text{ all distincts and } \pm_1,\pm_2\in \{-1,+1\}}
\\&\hspace{1cm} \cup \enstq{u_\A\pm_1 u_\B \pm_2 u_\C \pm_3 2u_\D}{\A, \B, \C ,\D \text{ all distincts and } \pm_1,\pm_2,\pm_3\in \{-1,+1\}},
\end{align*}
We also define $\mathcal{Z}\vcentcolon = \mathcal{W} \cup \mathcal{I}_4\cup\mathcal{I}_5  $. Our geometric optics construction will rely on the following strong coherence assumption: there exists a constant $c_{\mathrm{coherence}}>0$ such that 
\begin{align}
\min_{v\in\mathcal{I}} |\g_0^{-1}(\d v,\d v)| \geq c_{\mathrm{coherence}}. \label{coherence}
\end{align}
We also assume that there exists a constant $c_{\mathrm{spatial}}>0$ such that
\begin{align}\label{spatial}
\min_{z\in\mathcal{Z}}\pth{ \inf_{\R^3}|\nab z| } > c_{\mathrm{spatial}}.
\end{align}

\begin{remark}\label{remark adapted}
The assumptions \eqref{coherence}-\eqref{spatial} can be proved to hold under the assumption that the phases $u_\A$ are initially angularly separated, as was first noticed in \cite{Huneau2018a}. More precisely, if there exists $\eta\in(0,1)$ such that for $\A\neq \B$ we have
\begin{align*}
\frac{\nabla u_\A \cdot \nabla u_\B}{|\nabla u_\A| |\nabla u_\B|} \leq 1 - \eta
\end{align*}
on $\Si_0$ and if \eqref{spatial} holds on $\Si_0$, then Huneau and Luk show that if $\e$ is small enough then there exists positive constants $c_\A$ such that if we replace the phases $u_\A$ by $u_\A'\vcentcolon = c_\A u_\A$ then \eqref{coherence}-\eqref{spatial} hold for some constants $c_{\mathrm{coherence}}$ and $c_{\mathrm{spatial}}$ (note that $(\g_0,c_\A^{-1} F_\A,  u_\A')$ would still solve \eqref{BG system}). Note that strictly speaking \cite{Huneau2018a} consider less mixed phases in $\mathcal{Z}$ than the present article but their argument extends easily. Finally, note that ensuring angular separation and \eqref{spatial} on $\Si_0$ can be done by choosing some angularly separated $\mathfrak{z}_\A$ and then choosing $\e$ small enough in \eqref{estim uA}. 
\end{remark}

We mention a important consequence of \eqref{spatial}: together with a stationary phase argument we can show that for all $T:\R\longrightarrow\R$ smooth, $2\pi$-periodic with $\int_0^{2\pi}T=0$ and for all $z\in\mathcal{Z}$ the sequence of functions $\pth{T\pth{\frac{z}{\la}}}_{\la\in(0,1]}$ converges weakly to 0 in $L^2(K)$ when $\la\to 0$ and where $K$ is any compact subset of $\R^3$.

\paragraph{Initial data.} The background spacetime $(\mathcal{M},\g_0,u_\A,F_\A)$ induces an initial data set \[\pth{\Si_0,g_0,k_0,u_{\A|_{\Si_0}},F_{\A|_{\Si_0}}}\] on $\Si_0$ solving the null dusts constraint equations
\begin{align}
R(g_0) - |k_0|^2_{g_0} + (\tr_{g_0} k_0)^2 & = 2\sum_\A (\dr_t u_\A)^2 F_\A^2 , \label{BG hamiltonian}
\\ (\div_{g_0} k_0)_i - \dr_i \tr_{g_0} k_0 & = -\sum_\A \dr_t u_\A F_\A^2  \dr_i u_\A, \label{BG momentum}
\end{align}
and satisfying
\begin{align}\label{estim IDS}
\l g_0 - e \r_{H^{N+1}_{\de}} + \l k_0 \r_{H^N_{\de+1}}  \lesssim \e.
\end{align}
To simplify the resolution of the constraint equations on $\Si_0$ and the definition of initial data for the geometric optics hierarchy of equations, we make the generic assumption that $\dr_t$ is the future-directed unit normal to $\Si_0$ for $\g_0$. This implies that the second fundamental form $k_0$ of $\Si_0$ in $(\mathcal{M},\g_0)$ is given in coordinates by $(k_0)_{ij}=-\half \dr_t(g_0)_{ij}$ and that the spatial components of \eqref{wave condition BG} rewrite
\begin{align}
 \dr_t(\g_0)_{0\ell}   & = g_0^{ij}\pth{\dr_{i}(g_0)_{j\ell} - \half \dr_\ell (g_0)_{ij} },\label{dt g 0i}
\end{align}
on $\Si_0$. This simplifying assumption also has consequences regarding optical functions $u_\A$. Since they solve the eikonal equations and that their spacetime gradient is assumed to be future-directed we have on $\Si_0$ and for all $\A$
\begin{align}\label{eikonal init}
\dr_t u_\A = |\nab_{g_0}u_\A|_{g_0}.
\end{align}
On $\Si_0$ we define the following vector fields
\begin{equation}\label{def vector fields init}
\begin{aligned}
N_\A & \vcentcolon = \frac{\nab_{g_0}u_\A}{ |\nab_{g_0}u_\A|_{g_0}},
\\ N^{(\pm)}_{\A\B} & \vcentcolon = \frac{\nab_{g_0}(u_\A\pm u_\B)}{ |\nab_{g_0}(u_\A\pm u_\B)|_{g_0}},
\end{aligned}
\end{equation}
where $\A\neq \B$ in the second definition and where we used \eqref{estim g0} and \eqref{spatial} to divide by $|\nab_{g_0}u_\A|_{g_0}$ or $|\nab_{g_0}(u_\A\pm u_\B)|_{g_0}$. Note that \eqref{eikonal init} implies that 
\begin{align*}
L_{\A|_{\Si_0}} & = |\nab_{g_0}u_\A|_{g_0} (\dr_t - N_\A).
\end{align*}
Moreover, if we project the geodesic equation \eqref{geodesic equation} onto $\Si_0$, i.e compute $\g_0(\D_{L_\A}L_\A,\dr_\ell)$, we obtain
\begin{align}\label{geodesic spatial}
 -   N_\A  (N_\A)_\ell    +\half N_\A^k N_\A^a \dr_\ell (g_0)_{ka} + \frac{1}{|\nab_{g_0} u_\A|_{g_0}} \dr_\ell |\nab_{g_0} u_\A|_{g_0}   =  \frac{1}{|\nab_{g_0} u_\A|_{g_0}} (N_\A)_\ell N_\A |\nab_{g_0} u_\A|_{g_0}  .
\end{align}
This identity will be used in the proof of Lemma \ref{lem ka11}.

\paragraph{Null frames.} To each optical function $u_\A$ we associate a null frame $\pth{L_\A,\Lb_\A,e^{(1)}_\A,e^{(2)}_\A}$ on $\mathcal{M}$, which thus satisfies
\begin{align*}
\g_0(L_\A,L_\A) & = \g_0(\Lb_\A,\Lb_\A) = \g_0\pth{L_\A,e^{(\i)}_\A} = \g_0\pth{\Lb_\A,e^{(\i)}_\A}=0
\end{align*}
and
\begin{align*}
\g_0(L_\A,\Lb_\A) & = -2, \qquad \g_0\pth{e^{(\i)}_\A,e^{(\j)}_\A} = \de_{\i\j}.
\end{align*}
Moreover, we ask that $\pth{e^{(1)}_\A,e^{(2)}_\A}$ is an orthonormal frame for $\g_0$ of $TP_{\A,t,u}$ where 
\begin{align*}
P_{\A,t,u} = \Si_t \cap \enstq{(\tau,x)\in\mathcal{M}}{u_\A(\tau,x)=u},
\end{align*}
which, thanks to \eqref{estim uA}, has the topology of a plane in $\R^3$. More details on the definition of such null frames can be found in \cite{Szeftel2018}. Finally, note that $\pth{N_\A,e^{(1)}_\A,e^{(2)}_\A}$ is an orthonormal frame for $g_0$ of $T\Si_0$ and that $\Lb_{\A|_{\Si_0}}=|\nab_{g_0}u_\A|_{g_0} (\dr_t + N_\A)$.

\begin{remark}
In this article, we don't prove that such a background spacetime $(\mathcal{M},\g_0,u_\A,F_\A)$ can be constructed, this follows from \cite{ChoquetBruhat2006} adapted to the null dusts case.
\end{remark}

\subsection{Statement of the result}

The following theorem is the main result of this article.

\begin{thm}\label{theo evol}
Let $(\mathcal{M},\g_0, u_\A,F_\A)$ be a solution of the Einstein-null dusts system as described in Section \ref{section BG}. There exists $\e_0=\e_0(N,\de,R)>0$ such that if $0<\e\leq \e_0$ then there exists a family $(\g_\la)_{\la\in(0,\e_0]}$ of solutions to the Einstein vacuum equations on $\mathcal{M}$ of the form
\begin{align}\label{ansatz theo}
\g_\la & = \g_0 + \la \sum_\A \cos\pth{\frac{u_\A}{\la}} F^{(1)}_\A + \la^2\pth{ \sum_{\substack{w\in\mathcal{N}_1\cup\mathcal{N}_2\cup\mathcal{I}_2\\T\in\{\cos,\sin\}}} T\pth{\frac{w}{\la}}F^{(2,w,T)}_\la +  \widetilde{\g_\la}}.
\end{align}
Moreover,
\begin{itemize}
\item[(i)] the metrics $\g_\la$ solve \eqref{EVE} in generalised wave gauge and in the coordinates $(t,x^1,x^2,x^3)$ we have
\begin{equation}\label{gauge theo}
\begin{aligned}
\g_\la^{\mu\nu}\Ga(\g_\la)^\rho_{\mu\nu} & \rightarrow 0 \quad\text{uniformly on compact sets},
\\ \dr_\a\pth{ \g_\la^{\mu\nu}\Ga(\g_\la)^\rho_{\mu\nu}} & \rightharpoonup 0 \quad\text{weakly in $L^2_{loc}$},
\end{aligned}
\end{equation}
when $\la\to 0$,
\item[(ii)] the tensors $F^{(1)}_\A$ and $F^{(2,w,T)}_\la$ are supported in $J^+_0\pth{ B_R}$ and $F^{(1)}_\A$ satisfies 
\begin{align}
-2\D_{L_\A} F^{(1)}_\A + (\Box_{\g_0}u_\A)F^{(1)}_\A & = 0, \label{transport theo}
\\ \left| F^{(1)}_\A \right|^2_{\g_0} & = 8 F_\A^2,\label{energy theo}
\end{align}
\item[(iii)] there exists $C=C(N,\de,R)>0$ such that 
\begin{align}
\l F^{(1)}_\A \r_{H^N} & \leq C\e,\label{estim theo 1}
\\ \l F^{(2,w,T)}_\la \r_{L^2} + \max_{r\in\llbracket 0,5\rrbracket}\la^r\l\dr\nab^r F^{(2,w,T)}_\la \r_{L^2} & \leq C\e,\label{estim theo 2}
\\ \l \widetilde{\g_\la} \r_{L^2_{\de}} + \max_{r\in\llbracket 0, 4 \rrbracket} \la^{r} \l\dr \nab^r \widetilde{\g_\la} \r_{L^2_{\de+1+r}} & \leq C \e. \label{estim theo 3}
\end{align}
\end{itemize}
\end{thm}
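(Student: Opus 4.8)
The plan is to follow the three-step strategy for the stability of a high-frequency approximation, the novelty lying in the treatment of an arbitrary number of interacting null directions. First, I would construct an explicit approximate solution $\g_\la^{\mathrm{app}}$, namely the truncation of \eqref{ansatz theo} without $\widetilde{\g_\la}$, solving the reduced Einstein equations in a suitable \emph{generalised} wave gauge $\g_\la^{\mu\nu}\Ga(\g_\la)^\rho_{\mu\nu}=\underline{H}^\rho_\la$, where $\underline{H}^\rho_\la$ is an explicit oscillating function of size $\GO{\la}$ to be determined along the way, up to a source of size $\GO{\la}$ in the natural $\la$-weighted norms. Second, I would set $\g_\la=\g_\la^{\mathrm{app}}+\la^2\widetilde{\g_\la}$ and solve \emph{exactly} the resulting quasi-linear wave system for $\widetilde{\g_\la}$, the bounds \eqref{estim theo 3} following from a bootstrap argument uniform in $\la$ on $[0,1]$. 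Third, I would propagate the generalised wave gauge from initial data solving the vacuum constraint equations and consistent with \eqref{ansatz theo}, so that the exact solution of the reduced system is an exact solution of \eqref{EVE}; since $\underline{H}^\rho_\la=\GO{\la}$ is oscillating, \eqref{gauge theo} then follows from the stationary phase statement recalled after \eqref{spatial}.

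\textbf{The WKB hierarchy.} Plugging \eqref{ansatz theo} into the reduced equations --- which read schematically $-\half\tBox_{\g_\la}(\g_\la)_{\mu\nu}+\half P_{\mu\nu}(\g_\la)(\dr\g_\la,\dr\g_\la)+(\g_\la)_{\rho(\mu}\dr_{\nu)}\underline{H}^\rho_\la+\dots=0$ --- the order $\la^{-1}$ vanishes by the eikonal equation in \eqref{BG system}. At order $\la^0$, requiring that the leading oscillating part of $\g_\la^{\mu\nu}\Ga(\g_\la)^\rho_{\mu\nu}$ vanish forces the polarisation condition $\Pol{F^{(1)}_\A}{u_\A}=0$, which by \eqref{Pol L}--\eqref{Pol Lb} makes $F^{(1)}_\A$ transverse-traceless modulo a longitudinal gauge part; the $u_\A$-harmonic of the equation then reduces, after the $\g_0$--$F^{(1)}_\A$ cross terms in $P_{\mu\nu}$ supply the Christoffel contributions, to the linear transport equation \eqref{transport theo}, while the phase-free part, compared with the background identity \eqref{eq BG coordinates}, forces the normalisation \eqref{energy theo}: here the polarisation of $F^{(1)}_\A$ together with the null structure of $P_{\mu\nu}$ (notably $\g_0^{-1}(\d u_\A,\d u_\A)=0$) collapses the fast-variable average of the quadratic self-interaction to a multiple of $|F^{(1)}_\A|^2_{\g_0}\,\dr_\mu u_\A\dr_\nu u_\A$, which is the transparency phenomenon of Section \ref{section transparence}; crucially, the interactions between distinct directions $\A\neq\B$ produce only the mixed harmonics of $\mathcal{I}_2$ and no phase-free term, because \eqref{coherence} excludes any resonance $u_\A\equiv\pm u_\B$. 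The residual order-$\la^0$ oscillations, with phases in $\mathcal{N}_2\cup\mathcal{I}_2$, are then removed by the profiles $F^{(2,w,T)}_\la$: for non-null $w\in\mathcal{I}_2$ one inverts $\mathcal{P}_w$ via Lemma \ref{lem Pv}, the non-polarised part of the source lying in its range by the same structural computation, while for null $w\in\mathcal{N}_1\cup\mathcal{N}_2$ the algebraic operator degenerates and $F^{(2,w,T)}_\la$ is produced by a transport equation along the rays; in both cases the leftover polarised/longitudinal part is absorbed into a component of $\underline{H}^\rho_\la$ at phase $w$, which is where the generalised wave gauge becomes genuinely oscillating of order $\la$. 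Iterating once more produces sources carrying the phases of $\mathcal{N}_3\cup\mathcal{I}_3$, and the contracted Bianchi identities used in Section \ref{section propa 2} bring in $\mathcal{I}_4\cup\mathcal{I}_5$, all non-null by \eqref{coherence}; absorbing them leaves the announced $\GO{\la}$ error, and \eqref{estim theo 1}--\eqref{estim theo 2} follow from energy estimates for the transport equations using \eqref{estim g0}--\eqref{estim FA} and from the explicit formulas for the $F^{(2,w,T)}_\la$, all supports lying in $J^+_0(B_R)$ by finite speed of propagation.

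\textbf{The remainder estimates.} The remainder satisfies a quasi-linear wave equation $\g_\la^{\mu\nu}\dr_\mu\dr_\nu\widetilde{\g_\la}=\SS_\la$, where the source $\SS_\la$ splits into the mismatch of $\g_\la^{\mathrm{app}}$ (of size $\GO{\la}$ in the relevant weighted norm), terms linear in $\widetilde{\g_\la}$ with oscillating $\GO{1}$ coefficients, and genuinely quadratic and higher self-interaction terms. I would run a bootstrap on a norm of the form $\l\widetilde{\g_\la}\r_{L^2_\de}+\max_{0\leq r\leq 4}\la^{r}\l\dr\nab^r\widetilde{\g_\la}\r_{L^2_{\de+1+r}}$, chosen so that each derivative falling on an oscillating coefficient, which costs a factor $\la^{-1}$, is compensated by a factor $\la$ from the weight --- this is why $N\geq 10$ derivatives on the background and the profiles are needed. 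The quasi-linear character is handled by the standard energy identity for $\g_\la^{\mu\nu}\dr_\mu\dr_\nu$, exploiting that $\g_\la$ is a small perturbation of $\mathbf{m}$; the weak null structure of $P_{\mu\nu}$ satisfied by \eqref{EVE} in wave gauge prevents the self-interaction from producing the secular growth that would destroy uniformity on $[0,1]$; and the interactions among the many null directions are controlled because the mixed phases that appear are non-null by \eqref{coherence} --- hence the associated correctors are bounded --- and spatially non-degenerate by \eqref{spatial}. \textbf{This energy estimate, which must close uniformly in $\la$ on the full interval $[0,1]$ while simultaneously tracking all the self- and cross-interactions of the arbitrarily many null directions, is the main obstacle}; the rest is, in spirit, additional bookkeeping layered on top of the single-phase analysis of \cite{Touati2023a,Touati2023b}.

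\textbf{Gauge propagation, initial data and conclusion.} Initial data on $\Si_0$ solving the vacuum constraint equations and consistent with \eqref{ansatz theo} and its $\dr_t$-derivative is obtained by perturbing the background data $(g_0,k_0)$: one prescribes the oscillating free data dictated by the $F^{(1)}_\A$ and the $F^{(2,w,T)}_\la$, solves the constraints for the correction using the weighted elliptic isomorphism $\De\colon H^2_\de\to L^2_{\de+2}$ and the conformal method with $\la$-uniform estimates, and then fixes $\dr_t(\g_\la)_{0\rho}|_{\Si_0}$ from the generalised wave gauge (the analogue of \eqref{dt g 0i} with oscillating $\GO{\la}$ corrections). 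Given such data, the gauge defect $V^\rho\vcentcolon=\g_\la^{\mu\nu}\Ga(\g_\la)^\rho_{\mu\nu}-\underline{H}^\rho_\la$ satisfies a homogeneous linear wave equation by the contracted Bianchi identities and the reduced equations; since the constraints together with the gauge condition force $V^\rho=\dr_t V^\rho=0$ on $\Si_0$, we get $V^\rho\equiv 0$, hence $\g_\la^{\mu\nu}\Ga(\g_\la)^\rho_{\mu\nu}=\underline{H}^\rho_\la$ and $R_{\mu\nu}(\g_\la)=0$. Finally $\underline{H}^\rho_\la=\GO{\la}$ gives the uniform convergence in \eqref{gauge theo}, its derivative is $\GO{1}$ oscillating with phases in $\mathcal{Z}$ and hence converges weakly to $0$ in $L^2_{loc}$ by the stationary phase argument associated with \eqref{spatial}, and the estimates \eqref{estim theo 1}--\eqref{estim theo 3} collect the bounds obtained in the previous steps.
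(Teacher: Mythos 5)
Your step two contains the essential gap, and it is precisely the point you flag as ``the main obstacle'' without resolving it: the bootstrap for $\widetilde{\g_\la}$, as you set it up, cannot close uniformly in $\la$ on $[0,1]$. The reason is that the dangerous linear term is not an ``oscillating $\GO{1}$ coefficient''. When the quasilinear operator $\tBox_{\g_\la}$ acts on the leading term $\la\cos\pth{\frac{u_\A}{\la}}F^{(1)}_\A$, the coefficient $\frac{1}{\la}\g_\la^{-1}(\d u_\A,\d u_\A)$ does not vanish once the full metric contains $\la^2\widetilde{\g_\la}$: it produces, in the equation for the remainder (after dividing by $\la^2$), the term $\frac{1}{\la}\cos\pth{\frac{u_\A}{\la}}(\widetilde{\g_\la})_{L_\A L_\A}F^{(1)}_\A$, i.e.\ a linear term with an oscillating coefficient of size $\la^{-1}$. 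A Gronwall or bootstrap argument on the norm you propose then only yields existence and smallness on a time interval of length $\GO{\la}$, not on $[0,1]$. The paper's resolution is not a better norm but a modification of the ansatz and of the system: one introduces the $\la$-dependent amplitude $\F_\A$ transported along the rays with source $-\Pi_\leq\pth{(\h_\la)_{L_\A L_\A}}F^{(1)}_\A$ (equation \eqref{eq F}), keeps only the high-frequency part $\frac{1}{\la}\Pi_\geq\pth{(\h_\la)_{L_\A L_\A}}F^{(1)}_\A$ in the wave equation \eqref{eq h} for the remainder where the Bernstein estimate compensates the $\la^{-1}$, and then closes the resulting coupled transport--wave system, which a priori loses one derivative, by exploiting the structure of $\tBox_{\g_\la}\F_\A=\tBox_{\g_0}\F_\A+(\g_\la^{\mu\nu}-\g_0^{\mu\nu})\dr_\mu\dr_\nu\F_\A$, the commutator estimate for $[L_\A,\Box_{\g_0}]$, and a commutator estimate between $\Box_{\g_0}$ and $\Pi_\leq$. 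Without this device (or an equivalent one), your decomposition ``approximate solution $+$ exact remainder'' does not produce a solution on the unit time scale.

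Two secondary points are also glossed over. First, in the multiphase setting the propagation of the second-order polarization conditions (the analogues of \eqref{pola F21}--\eqref{pola F22}, which are tensorial and not pure gauge) via the contracted Bianchi identities requires decoupling the finitely many directions; the paper does this by multiplying the divergence identity by $\sin\pth{\frac{u_\B}{\la}}$ or $\cos\pth{\frac{2u_\B}{\la}}$ and passing to weak limits using \eqref{spatial}, an ingredient absent from your sketch in which everything is pushed into the gauge source $\underline{H}^\rho_\la$. Second, the initial data construction is more constrained than ``prescribe oscillating free data and solve the constraints'': the mixed-harmonic profiles $F^{(2,\pm)}_{\A\B}$ are determined algebraically by \eqref{def F2pm}, so even their restriction to $\Si_0$ is not free, and the solution $(g_\la,k_\la)$ of the constraints must be matched to the spacetime ansatz through a specific choice of the free parameters in the conformal method (the tensors $\ga^{(2,\pm)}_{\A\B}$, $\ka^{(1,1)}_\A$, $\ka^{(1,2)}_\A$, $\ka^{(1,\pm)}_{\A\B}$ of Proposition \ref{prop constraint main}). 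These two issues are fixable along the paper's lines, but the missing $\F_\A$--$\h_\la$ coupling is a genuine gap in the argument as proposed.
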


\noindent Some comments are in order.
\begin{itemize}
\item \textbf{The reverse Burnett conjecture.} The estimates stated in Theorem \ref{theo evol} are consistent with Burnett's weak convergence as in \eqref{CV}. Indeed, the assumption $\de>-\frac{3}{2}$, the Sobolev embedding $H^2_{-\frac{3}{2}}\subset L^\infty$ and \eqref{estim theo 2}-\eqref{estim theo 3} imply 
\begin{align*}
\l F^{(2,w,T)}_\la \r_{L^\infty} + \l \widetilde{\g_\la} \r_{L^\infty}\lesssim \la^{-1},
\end{align*}
and thus that $\l \g_\la - \g_0 \r_{L^\infty}\lesssim \la$. Moreover, we have
\begin{align*}
\dr(\g_\la-\g_0) & = -\sum_\A \sin\pth{\frac{u_\A}{\la}}\dr u_\A F^{(1)}_\A 
\\&\quad +  \la \pth{ \sum_\A \cos\pth{\frac{u_\A}{\la}} \dr F^{(1)}_\A + \sum_{\substack{w\in\mathcal{N}_1\cup\mathcal{N}_2\cup\mathcal{I}_2\\T\in\{\cos,\sin\}}} T'\pth{\frac{w}{\la}}\dr w F^{(2,w,T)}_\la }
\\&\quad +  \la^2 \pth{ \sum_{\substack{w\in\mathcal{N}_1\cup\mathcal{N}_2\cup\mathcal{I}_2\\T\in\{\cos,\sin\}}} T\pth{\frac{w}{\la}}\dr F^{(2,w,T)}_\la +  \dr \widetilde{\g_\la}}.
\end{align*}
Thanks to \eqref{spatial} and \eqref{estim theo 1}, $\sin\pth{\frac{u_\A}{\la}}\dr u_\A F^{(1)}_\A$ converges weakly to 0 in $L^2_{loc}$ and \eqref{estim theo 2}-\eqref{estim theo 3} again implies that the remaining terms in $\dr(\g_\la-\g_0)$ converge actually strongly to 0 in $L^2_{loc}$. Therefore, Theorem \ref{theo evol} indeed implies Theorem \ref{theo rough burnett}.
\item \textbf{Parametrization of the $(\g_\la)_{\la\in(0,1]}$.} For a given background spacetime $(\mathcal{M},\g_0, u_\A,F_\A)$ as described in Section \ref{section BG}, we actually construct several families $(\g_\la)_{\la\in(0,\e_0]}$ as in Theorem \ref{theo evol}. These families are parametrized by a set of seeds living on the initial hypersurface $\Si_0$ and defined in Definition \ref{def seed} below. Each seed is parametrized by two real numbers, which correspond to the coefficients of two possible polarizations for the leading oscillating term in \eqref{ansatz theo}, as in the linearized gravity setting in TT gauge (see Remark \ref{remark seed}).
\item \textbf{The generalised wave gauge.} The exact expression of the generalised wave gauge term $\g_\la^{\mu\nu}\Ga(\g_\la)^\rho_{\mu\nu}$ is somehow irrelevant since we don't really prescribe what it should contain but rather what it should not contain, namely first order derivatives of some of the metric components solving (or coupled with) a wave equation. A novelty compared to \cite{Touati2023a} is that we also use this gauge term to control the interaction of waves propagating in different directions, see the discussion on $\gg^{(3,e)}_{v,T}$ in Section \ref{section gauge term}. Overall, the gauge term helps us recover true hyperbolicity and ellipticity of the Ricci tensor.
\item \textbf{The constraint equations.} As in every resolution of \eqref{EVE} from a spacelike hypersurface, one should solve first the constraint equations on the initial hypersurface. In our previous work on the singlephase case, we separated the resolution of the constraint equations on $\Si_0$ from the resolution of \eqref{EVE} on $\mathcal{M}$, resulting in the two articles \cite{Touati2023a} and \cite{Touati2023b}. For the multiphase case, we decided to associate the elliptic and hyperbolic aspects of the construction in one self-consistent article. In particular, we want to highlight the surprising connections between the elliptic and hyperbolic procedures due to the oscillatory aspect of the metric. Some of these connections were already present in the singlephase case, but some are specific to the multiphase case, see the discussion after Proposition \ref{prop constraint main}.
\item \textbf{Gauge-independent transparency.} We conclude these comments by insisting on the astounding structures in the Einstein vacuum equations leading in particular to \eqref{transport theo}-\eqref{energy theo}. These relations satisfied by the first profiles in the ansatz \eqref{ansatz theo} for $\g_\la$ illustrate perfectly the discussion of Section \ref{section transparence}: the self-interaction of gravitational waves produces a quadratic macroscopic effect \eqref{energy theo} while each wave is linearly transported \eqref{transport theo}. Moreover the interaction of gravitational waves propagating in different null directions don't produce a macroscopic effect, in particular since the first mixed harmonics $v\in\mathcal{I}$ only appear at order $\la^2$ in \eqref{ansatz theo}. Finally, note that both the linear propagation and non-linear backreaction are phenomena that are independent of the choice of coordinates since \eqref{transport theo} and \eqref{BG system} are tensorial equations.
\end{itemize}
The rest of this article is devoted to the proof of Theorem \ref{theo evol}.


\section{The high-frequency ansatz}\label{section expansion Ricci}

In this section we present the high-frequency ansatz for the metric $\g_\la$. Its expression is given in Section \ref{section expansion metric}, its Ricci tensor is computed in Section \ref{section ricci} and finally in Section \ref{section hierarchy} a hierarchy of equations and conditions is deduced from the requirement that $\g_\la$ solves the Einstein vacuum equations \eqref{EVE}.

\subsection{Expansion of the metric}\label{section expansion metric}

For $\la>0$, we consider the following metric 
\begin{equation}\label{expansion metric}
\begin{aligned}
\g_\la & = \g_0 + \la \sum_\A  \cos\pth{\frac{u_\A}{\la}} F^{(1)}_\A  +   \lambda^2 \sum_\A \pth{ \sin\pth{\frac{u_\A}{\la}} \pth{ \F_\A + F^{(2,1)}_\A } + \cos\pth{\frac{2u_\A}{\la}} F^{(2,2)}_\A }
\\&\quad + \la^2 \sum_{\A\neq \B,\pm}   \cos\pth{\frac{u_\A\pm u_\B}{\la}} F^{(2,\pm)}_{\A\B} + \la^2 \h_\la 
\\&\quad + \la^3 \sum_{\substack{u\in\mathcal{N}\\T\in\{\cos,\sin\}}}T\pth{\frac{u}{\la}} \gg^{(3,h)}_{u,T} + \la^3 \sum_{\substack{v\in\mathcal{I}\\T\in\{\cos,\sin\}}} T\pth{\frac{v}{\la}} \gg^{(3,e)}_{v,T}.
\end{aligned}
\end{equation}
In this expression, 
\begin{itemize}
\item $F^{(1)}_\A$, $F^{(2,1)}_\A$, $F^{(2,2)}_\A$, $F^{(2,\pm)}_{\A\B}$, are symmetric 2-tensors which don't depend on $\la$ and we already assume $F^{(2,\pm)}_{\A\B}  = F^{(2,\pm)}_{\B\A}$,
\item the remainder $\h_\la$ and the amplitude $\F_\A$ are symmetric 2-tensors and do depend on $\la$. The higher order symmetric 2-tensors $\gg^{(3,h)}_{u,T}$ and $\gg^{(3,e)}_{v,T}$ will depend on $\F_\A$ and thus on $\la$.
\end{itemize}

The ansatz \eqref{expansion metric} differs from the singlephase ansatz in \cite{Touati2023a} only because of the terms $F^{(2,\pm)}_{\A\B}$ and $\gg^{(3,e)}_{v,T}$. They will crucially be used to control the creation of mixed harmonics, i.e terms oscillating with phases $v\in\mathcal{I}$, at order $\la^0$ and $\la^1$ in the Ricci tensor of $\g_\la$ respectively.

\subsection{Expansion of the Ricci tensor}\label{section ricci}

In this section, we compute the Ricci tensor of the metric $\g_\lambda$. Since we want $\g_\la$ to solve \eqref{EVE} in generalised wave gauge we use the following decomposition of the Ricci tensor of Lorentzian metric $\g$:
\begin{align}\label{Ricci tensor GWC}
2R_{\a\b}(\g) = - \tBox_\g \g_{\a\b} + P_{\a\b}(\g)(\dr\g,\dr\g) + \g_{\rho(\a}\dr_{\b)}H^\rho + H^\rho\dr_\rho\g_{\a\b},
\end{align}
where
\begin{align}
\tBox_\g & \vcentcolon = \g^{\mu\nu}\dr_\mu \dr_\nu, \label{wave heur}
\\ H^\rho & \vcentcolon = \g^{\mu\nu}\Ga(\g)^\rho_{\mu\nu},\label{gauge heur}
\\ P_{\a\b}(\g)(\dr\g,\dr\g) & \vcentcolon = \g^{\mu\rho}\g^{\nu\si} \pth{ \dr_{(\a}\g_{\rho\si}\dr_\mu \g_{\b)\nu}  - \frac{1}{2} \dr_\a \g_{\rho\si} \dr_\b \g_{\mu\nu}  - \dr_\rho \g_{\a\nu}\dr_\si \g_{\b\mu} +  \dr_\rho \g_{\si\a} \dr_\mu\g_{\nu\b} }. \label{P dg dg}
\end{align}
We expand the wave part $\tBox_{\g_\lambda}(\g_\lambda)_{\alpha\beta}$ in Section \ref{section wave part}, the quadratic non-linearity $P_{\alpha\beta}(\g_\la)(\dr \g_\lambda ,\dr \g_\lambda) $ in Section \ref{section quadratic} and the gauge term $H^\rho$ in Section \ref{section gauge term}. We combine the results in Section \ref{section ricci final} to obtain the expansion of $R_{\a\b}(\g_\lambda)$. The computations will be performed under the following assumption on $F^{(1)}_\A$:
\begin{equation}\label{assumption}
\begin{aligned}
\Pol{F^{(1)}_\A}{u_\A} & =0,     
\\ F^{(1)}_{L_\A \Lb_\A} & = 0.
\end{aligned}
\end{equation}
Note that thanks to \eqref{Pol L}-\eqref{Pol Lb} the assumption \eqref{assumption} is equivalent to $\tr_{\g_0}F^{(1)}_\A =0$ and $(F^{(1)}_\A)_{L_\A \a } =0$. They are consistent with the singlephase case of \cite{Touati2023a} and the approximate construction of \cite{ChoquetBruhat1969}. More precisely, the first assumption in \eqref{assumption} ensures that $R_{\a\b}(\g_\lambda)=\GO{1}$. The second assumption is convenient to simplify the computations and is not necessary. They will be proved to be consistent with the hierarchy derived from \eqref{EVE}, as in \cite{Touati2023a}.

\begin{remark}
We introduce two conventions on the use of the bracket schematic notations introduced in Section \ref{section notation}:
\begin{itemize}
\item In a bracket we will never write down the quantities depending on the background spacetime introduced in Section \ref{section BG}, since we are not solving for it. This includes metric or inverse metric coefficients, phases $z\in\mathcal{Z}$, vector fields from the null frames.
\item In a bracket we will never specify the indices referring to the set $\mathcal{A}$. For instance, $\cro{F^{(1)}}$ simply denotes any of the $\cro{F^{(1)}_\A}$ for $\A\in\mathcal{A}$. Similarly $\cro{\gg^{(3,h)}}$ denotes any of the $\cro{\gg^{(3,h)}_{u,T}}$ for $u\in\mathcal{N}$ and $T\in\{\cos,\sin\}$ (same for $\gg^{(3,e)}$). 
\end{itemize}
These conventions also apply to the notation $\cro{\cdot}^\osc$ and will be used throughout the article.
\end{remark}

\begin{remark}\label{remark admissible forbidden}
To clarify the mechanisms at stake, we introduce a notion of \emph{admissible} and \emph{forbidden} terms, as in \cite{Touati2023a}. The admissibility of a term depends on its order in terms of $\la$ in the Ricci tensor of $\g_\la$. More precisely, for $k\in\{0,1\}$, a term at order $\la^k$ in the Ricci tensor is called admissible if it oscillates with frequency $\frac{\ell u_\A}{\la}$ for some $\A\in\mathcal{A}$ and $\ell\in\llbracket 1,k+1\rrbracket$. A non-admissible term is said to be forbidden. Admissible terms are the easiest to treat since they will be dealt with by transport equations and won't require the help of gauge freedom, as opposed to forbidden terms. 
\end{remark}

\subsubsection{The wave part}\label{section wave part}

In this section we expand the wave term in \eqref{Ricci tensor GWC}, i.e $\tBox_{\g_\lambda}(\g_\lambda)_{\alpha\beta}$. The result is contained in the next proposition, whose proof is postponed to Appendix \ref{appendix proof lemma W}.

\begin{lemma}\label{lemma W}
Under the assumptions \eqref{assumption}, the wave part of the Ricci tensor of $\g_\la$ admits the expansion
\begin{equation}\label{exp W}
\begin{aligned}
\tBox_{\g_\la}(\g_\la)_{\a\b} & =  \tBox_{\g_0}(\g_0)_{\a\b} + \sum_\A \sin\pth{\frac{u_\A}{\la}} (W^{(0,1)}_\A)_{\a\b}  +  \sum_{\A\neq\B,\pm} \cos\pth{\frac{u_\A\pm u_\B}{\la}}  (W^{(0,\pm)}_{\A\B})_{\a\b}
\\&\quad + \la \sum_\A\pth{\cos\pth{\frac{u_\A}{\la}} (W^{(1,1)}_\A)_{\a\b} + \sin\pth{\frac{2u_\A}{\la}} (W^{(1,2)}_\A)_{\a\b}}
\\&\quad + \la\sum_\A\cos\pth{\frac{u_\A}{\lambda}} \cos\pth{ \frac{2u_\A}{\lambda} } (F^{(2,2)}_\A)_{L_\A L_\A} (F^{(1)}_\A)_{\alpha\beta}
\\&\quad +  \frac{\la}{2}\sum_\A \sin\pth{\frac{2u_\A}{\la}} (\F_\A)_{L_\A L_\A} (F^{(1)}_\A)_{\a\b} + \la \sum_{\substack{v\in\mathcal{I}\\T\in\{\cos,\sin\}}} T\pth{\frac{v}{\la}}(W^{(1)}_{v,T})_{\a\b} 
\\&\quad + \la^2 W^{(\geq 2)}_{\a\b}.
\end{aligned}
\end{equation}
The terms of order 0 are given by
\begin{align}
(W^{(0,1)}_\A)_{\a\b} & = 2 L_\A (F^{(1)}_\A)_{\a\b} - (\tBox_{\g_0} u_\A)(F^{(1)}_\A)_{\a\b} ,\label{W 0 1}
\\ (W^{(0,\pm)}_{\A\B})_{\a\b} & = - \g_0^{-1}(\d (u_\A\pm u_\B), \d(u_\A\pm u_\B))  (F^{(2,\pm)}_{\A\B})_{\alpha\beta} \label{W 0 pm}
\\&\quad + \frac{1}{4}   \pth{ (F^{(1)}_\B)_{L_\A L_\A}   (F^{(1)}_\A)_{\alpha\beta} + (F^{(1)}_\A)_{L_\B L_\B}   (F^{(1)}_\B)_{\alpha\beta} }.\non
\end{align}
The terms of order 1 are given by
\begin{align}
(W^{(1,1)}_\A)_{\a\b} & =   (-2 L_\A + \tBox_{\g_0}u_\A ) \left( (\F_\A)_{\alpha\beta} + (F^{(2,1)}_\A)_{\alpha\beta} \right)   + (\h_\lambda)_{L_\A L_\A} (F^{(1)}_\A)_{\alpha\beta}     \label{W 1 1}
\\&\quad + \cro{\pth{1 + F^{(2,\pm)} + (F^{(1)})^2 }F^{(1)} + \dr^2 F^{(1)}}_{\a\b} , \non
\\ (W^{(1,2)}_\A)_{\a\b} & = -2 (-2 L_\A + \tBox_{\g_0}u_\A ) (F^{(2,2)}_\A)_{\alpha\beta}  +  \cro{ \pth{ F^{(2,1)} + \dr F^{(1)} + F^{(1)} }F^{(1)} }_{\a\b} , \label{W 1 2}
\\ (W^{(1)}_{v,T})_{\a\b} & = - \g_0^{-1}(\d v, \d v) (\gg^{(3,e)}_{v,T})_{\a\b}  \label{W 1 v T}
\\&\quad +  \Big\{\dr^{\leq 1}F^{(2,\pm)} + \pth{ \F + F^{(2,1)} + F^{(2,2)} + F^{(2,\pm)}  + (F^{(1)})^2 + F^{(1)} + \dr F^{(1)} }F^{(1)} \Big\}_{\a\b} .\non
\end{align}
The higher order terms are given by
\begin{align}
W^{(\geq 2)}_{\a\b} & = \tBox_{\g_\la}(\h_\la)_{\a\b}  + \sum_\A \sin\pth{\frac{u_\A}{\la}}\tBox_{\g_\la}(\F_\A)_{\a\b} \label{W 2}
\\&\quad +  \la \sum_{\substack{u\in\mathcal{N}\\T\in\{\cos,\sin\}}}T\pth{\frac{u}{\la}}\tBox_{\g_\la} (\gg^{(3,h)}_{u,T})_{\a\b}  + \la  \sum_{\substack{v\in\mathcal{I}\\T\in\{\cos,\sin\}}} T\left( \frac{v}{\lambda} \right)\tBox_{\g_\la} (\gg^{(3,e)}_{v,T})_{\a\b}\non
\\&\quad + \Big\{ \g_\la^{-1}\pth{\dr^{\leq 1}\pth{\gg^{(3,h)} + \gg^{(3,e)} + \F} + \dr^{\leq 2}\pth{ F^{(1)}  + F^{(2,1)} + F^{(2,2)} +  F^{(2,\pm)}} } \Big\}_{\a\b}^\osc \non.
\end{align}
\end{lemma}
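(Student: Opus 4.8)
Before turning to the author's argument, here is how I would prove Lemma \ref{lemma W}.

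The plan is to compute $\tBox_{\g_\la}(\g_\la)_{\a\b}=(\g_\la^{-1})^{\mu\nu}\dr_\mu\dr_\nu(\g_\la)_{\a\b}$ by substituting the ansatz \eqref{expansion metric} and bookkeeping the powers of $\la$ and the oscillating phases. First I would expand the inverse metric: since $\la^{-1}(\g_\la-\g_0)$ is bounded with leading part $\sum_\A\cos(u_\A/\la)F^{(1)}_\A$, the Neumann series converges for $\la$ small and yields
\begin{align*}
(\g_\la^{-1})^{\mu\nu}=\g_0^{\mu\nu}-\la\sum_\A\cos\pth{\tfrac{u_\A}{\la}}(F^{(1)}_\A)^{\mu\nu}+\la^2\,(\cro{\cdot}^\osc)^{\mu\nu},
\end{align*}
where the order-$\la^2$ oscillating coefficient involves only the products $(F^{(1)}_\A)^{\mu\rho}(F^{(1)}_\B)_\rho{}^\nu$, the amplitudes $\F_\A,F^{(2,1)}_\A,F^{(2,2)}_\A,F^{(2,\pm)}_{\A\B}$ and the remainder $\h_\la$. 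Next, for each oscillating building block $\la^kT\pth{\tfrac z\la}S_{\a\b}$ of $\g_\la$ (with $T\in\{\cos,\sin\}$, $z$ a phase and $S$ non-oscillating), the Leibniz rule together with $T''=-T$ gives
\begin{align*}
\dr_\mu\dr_\nu\pth{T\pth{\tfrac z\la}S}=-\tfrac1{\la^2}T\pth{\tfrac z\la}\dr_\mu z\,\dr_\nu z\,S+\tfrac1\la T'\pth{\tfrac z\la}\pth{\dr^2_{\mu\nu}z\,S+\dr_\mu z\,\dr_\nu S+\dr_\nu z\,\dr_\mu S}+T\pth{\tfrac z\la}\dr^2_{\mu\nu}S,
\end{align*}
and contracting against $(\g_\la^{-1})^{\mu\nu}$, using $\g_0^{\mu\nu}\dr_\mu z\,\dr_\nu z=\g_0^{-1}(\d z,\d z)$, $\g_0^{\mu\nu}\dr_\mu u_\A\dr_\nu=-L_\A$, $\g_0^{\mu\nu}\dr_\mu\dr_\nu=\tBox_{\g_0}$ and $(F^{(1)}_\A)^{\mu\nu}\dr_\mu u_\B\dr_\nu u_\B=(F^{(1)}_\A)_{L_\B L_\B}$, reduces everything to oscillating scalars times contractions of the unknowns against the null frames of Section \ref{section BG}.

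Three mechanisms then govern the answer. First, the eikonal equation $\g_0^{-1}(\d u_\A,\d u_\A)=0$ cancels the most singular, $\GO{\la^{-2}}$-order, contribution of every pure null harmonic $ku_\A$ ($k=1,2,3$): this is why the expansion starts at order $\la^0$ and why the $\la^3$-terms $\gg^{(3,h)}_{u,T}$ produce no leading $-\g_0^{-1}(\d u,\d u)\gg^{(3,h)}_{u,T}$; whereas for the mixed phases $v\in\mathcal{I}$ the coherence assumption \eqref{coherence} gives $\g_0^{-1}(\d v,\d v)\neq0$, so the $\la^2$-term $F^{(2,\pm)}_{\A\B}$ and the $\la^3$-term $\gg^{(3,e)}_{v,T}$ yield the leading terms $-\g_0^{-1}(\d v,\d v)(\cdots)$ of \eqref{W 0 pm} and \eqref{W 1 v T}. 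Second, the assumption \eqref{assumption}, equivalent to $\tr_{\g_0}F^{(1)}_\A=0$ and $(F^{(1)}_\A)_{L_\A\a}=0$, makes every diagonal self-interaction $(F^{(1)}_\A)_{L_\A L_\A}$ vanish, so the product of the $\la^1$-oscillation of $(\g_\la^{-1})^{\mu\nu}$ with the $\la^1$-term of $\g_\la$ only survives in the off-diagonal combinations $(F^{(1)}_\B)_{L_\A L_\A}(F^{(1)}_\A)_{\a\b}$ with $\A\neq\B$. Third, the product-to-sum identities $\cos a\cos b=\tfrac12\cos(a-b)+\tfrac12\cos(a+b)$, $\sin a\cos b=\tfrac12\sin(a+b)+\tfrac12\sin(a-b)$, etc.\ turn the resulting products of oscillating scalars into single-phase oscillations; at orders $\la^0$ and $\la^1$ at most three factors $u_\A$ ever multiply, so all phases created belong to $\mathcal{N}\cup\mathcal{I}=\mathcal{W}$, which is precisely why the terms of order $\la^0,\la^1$ in \eqref{exp W} can be written with explicit phases while $W^{(\geq2)}_{\a\b}$ is only described through $\cro{\cdot}^\osc$.

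It then remains to collect the contributions order by order and phase by phase. The $\la^0$-terms with phase $u_\A$ come from $\g_0^{\mu\nu}$ applied to $\la\cos(u_\A/\la)F^{(1)}_\A$ and assemble into $\sin(u_\A/\la)(W^{(0,1)}_\A)_{\a\b}$ with $(W^{(0,1)}_\A)_{\a\b}=2L_\A(F^{(1)}_\A)_{\a\b}-(\tBox_{\g_0}u_\A)(F^{(1)}_\A)_{\a\b}$; the $\la^0$-terms with phase $u_\A\pm u_\B$ combine the $\la^2$-contribution $-\g_0^{-1}(\d(u_\A\pm u_\B),\d(u_\A\pm u_\B))F^{(2,\pm)}_{\A\B}$ with the off-diagonal piece $\cos(u_\A/\la)\cos(u_\B/\la)(F^{(1)}_\B)_{L_\A L_\A}(F^{(1)}_\A)_{\a\b}$, whose symmetrisation in $\A\leftrightarrow\B$ (using $F^{(2,\pm)}_{\A\B}=F^{(2,\pm)}_{\B\A}$ and the evenness of $\cos$) produces the $\tfrac14$ of \eqref{W 0 pm}; the $\la^1$-terms with phase $u_\A$ gather the transport $(-2L_\A+\tBox_{\g_0}u_\A)(\F_\A+F^{(2,1)}_\A)$, the term $\tBox_{\g_0}F^{(1)}_\A$, the coupling $(\h_\la)_{L_\A L_\A}(F^{(1)}_\A)_{\a\b}$ coming from the $\h_\la$-part of the $\la^2$-oscillation of $(\g_\la^{-1})^{\mu\nu}$ crossed with $\la\cos(u_\A/\la)F^{(1)}_\A$, and schematic products, giving \eqref{W 1 1}; those with phase $2u_\A$ give \eqref{W 1 2}; the explicit terms $\cos(u_\A/\la)\cos(2u_\A/\la)(F^{(2,2)}_\A)_{L_\A L_\A}(F^{(1)}_\A)_{\a\b}$ and $\tfrac12\sin(2u_\A/\la)(\F_\A)_{L_\A L_\A}(F^{(1)}_\A)_{\a\b}$ likewise come from the $F^{(2,2)}_\A$- and $\F_\A$-parts of that same $\la^2$-oscillation crossed with $\la\cos(u_\A/\la)F^{(1)}_\A$; those with phase $v\in\mathcal{I}$ give \eqref{W 1 v T}; and all remaining pieces of order $\la^2$ and higher --- the full wave operators $\tBox_{\g_\la}$ acting on $\h_\la,\F_\A,\gg^{(3,h)}_{u,T},\gg^{(3,e)}_{v,T}$, together with a schematic $\cro{\cdot}^\osc$ remainder --- assemble into \eqref{W 2}. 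I expect the only real difficulty to be organisational rather than analytical: there is no delicate estimate, but one must control the proliferation of cross-terms between the inverse-metric expansion and the multi-term ansatz, check that the most singular terms genuinely cancel by the eikonal equation, that the diagonal interactions genuinely vanish by \eqref{assumption}, and that every phase produced at orders $\la^0,\la^1$ lies in the prescribed finite set $\mathcal{W}$; the numerical constants then fall out of the product-to-sum identities and of $F^{(2,\pm)}_{\A\B}=F^{(2,\pm)}_{\B\A}$.
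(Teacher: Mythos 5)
Your proposal follows essentially the same route as the paper's proof in Appendix \ref{appendix proof lemma W}: expand $\g_\la^{-1}$, apply the formula for $\tBox_{\g_\la}$ acting on $T\pth{\frac{z}{\la}}f$ (the paper's \eqref{Box T f}), exploit the eikonal equation, the coherence assumption, the polarization assumptions \eqref{assumption} and the product-to-sum identities, and collect contributions order by order and phase by phase — including the correct identification of the quasi-linear couplings $(\h_\la)_{L_\A L_\A}F^{(1)}_\A$, $(\F_\A)_{L_\A L_\A}F^{(1)}_\A$ and $(F^{(2,2)}_\A)_{L_\A L_\A}F^{(1)}_\A$ as arising from the $\la^2$ part of the inverse metric (equivalently the expansion \eqref{expansion eikonal} of $\g_\la^{-1}(\d u_\A,\d u_\A)$) hitting the $\la^{-2}$-order phase derivatives of $\la\cos\pth{\frac{u_\A}{\la}}F^{(1)}_\A$. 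The only nitpick is that the factor $\frac{1}{4}$ in \eqref{W 0 pm} comes from the trigonometric identity combined with the symmetrization of the double sum over $\A\neq\B$ (the paper's \eqref{double sum 1}), not from the symmetry $F^{(2,\pm)}_{\A\B}=F^{(2,\pm)}_{\B\A}$, but this does not affect the argument.
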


We draw the reader's attention on several structural facts from Lemma \ref{lemma W}:
\begin{enumerate}
\item The main term produced by the wave part of the Ricci is a transport term along the rays: at order $\la^0$ we see the transport of $F^{(1)}_\A$ by the vector field $L_\A$ and at order $\la^1$ we see the transport of $\F_\A$, $F^{(2,1)}_\A$ and $F^{(2,2)}_\A$ by $L_\A$ again. By imposing transport equations for these tensors we will be able to absorb terms from the Ricci tensor oscillating at the same frequency, which precisely correspond to the admissible terms defined in Remark \ref{remark admissible forbidden}. Referring again to this remark, we already see in \eqref{exp W} the presence at order $\la^1$ of forbidden term such as $(F^{(2,2)}_\A)_{L_\A L_\A} (F^{(1)}_\A)_{\alpha\beta}$ oscillating like $\cos\pth{\frac{3u_\A}{\la}}$, we will deal with it differently.
\item At order $\la^2$, we obviously recover the wave operator acting on the remainder $\h_\la$. Note how we made a difference in \eqref{W 2} between second order derivatives of the various tensors in \eqref{expansion metric}. Second order derivatives of $F^{(1)}_\A$, $F^{(2,1)}_\A$, $F^{(2,2)}_\A$ and $F^{(2,\pm)}_{\A\B}$ are considered error terms and put in the brackets, while second order derivatives of $\F_\A$, $\gg^{(3,h)}_{u,T}$ and $\gg^{(3,e)}_{v,T}$ are carefully conserved and in particular the wave operator structure will crucially be used to prove well-posedness.
\item Opposite to the terms oscillating in null directions, the wave operator acts 'elliptically' on terms oscillating with phase $v\in\mathcal{I}$ since $\g_0^{-1}(\d v,\d v)\neq 0$ whenever $v\in\mathcal{I}$ (see \eqref{coherence}). This explains the presence of $F^{(2,\pm)}_{\A\B}$ in \eqref{W 0 pm} and of $\gg^{(3,e)}_{v,T}$ in \eqref{W 1 v T}.
\end{enumerate}

\subsubsection{The quadratic non-linearity}\label{section quadratic}

In this section we expand the quadratic non-linearity in \eqref{Ricci tensor GWC}, i.e $P_{\alpha\beta}(\g_\la)(\dr \g_\lambda ,\dr \g_\lambda) $. The result is contained in the next proposition, whose proof is postponed to Appendix \ref{appendix proof lemma P}.

\begin{lemma}\label{lemma P}
Under the assumptions \eqref{assumption}, the quadratic part of the Ricci tensor of $\g_\la$ admits the expansion
\begin{equation}\label{exp P}
\begin{aligned}
P_{\alpha\beta}(\g_\la)(\dr \g_\lambda, \dr \g_\lambda) & = P_{\alpha\beta}(\g_0)(\dr \g_0, \dr \g_0)  - \frac{1}{4}  \sum_{\A}  \left| F^{(1)}_\A \right|^2_{\g_0} \dr_\alpha u_\A \dr_\beta u_\A   + \sum_\A \sin\pth{\frac{u_\A}{\la}} (P^{(0,1)}_\A)_{\a\b} 
\\&\quad + \sum_\A \cos\pth{\frac{2u_\A}{\la}} (P^{(0,2)}_\A)_{\a\b} + \sum_{\A\neq \B,\pm}\cos\pth{\frac{u_\A\pm u_\B}{\la}} (P^{(0,\pm)}_{\A\B})_{\a\b}
\\&\quad + \la \sum_\A\pth{ \cos\pth{\frac{u_\A}{\la}} (P^{(1,1)}_\A)_{\a\b} + \sin\pth{ \frac{2u_\A}{\la}} (P^{(1,2)}_\A)_{\a\b} }
\\&\quad + \la  \sum_{\substack{u\in\mathcal{N}\\T\in\{\cos,\sin\}}}T\pth{\frac{u}{\la}} \dr_{(\a}u (\hat{P}^{(1)}_{u,T})_{\b)} + \la \sum_{\substack{v\in\mathcal{I}\\T\in\{\cos,\sin\}}}T\pth{ \frac{v}{\la}} (P^{(1)}_{v,T})_{\a\b} 
\\&\quad + \la^2  P^{(\geq 2)}_{\alpha\beta} .
\end{aligned}
\end{equation}
The terms of order 0 are given by
\begin{align}
(P^{(0,1)}_\A)_{\a\b} & = - 2\Ga(\g_0)^\mu_{(\a \rho} \dr^\rho u_\A (F^{(1)}_\A)_{\beta)\mu}  - (F^{(1)}_\A)^{\mu\nu} \dr_{(\a}u_\A \pth{ \dr_\mu (\g_0)_{\b)\nu}  - \frac{1}{2} \dr_{\b)} (\g_0)_{\mu\nu} },\label{P 0 1}
\\ (P^{(0,2)}_\A)_{\a\b} & = \frac{1}{4}\left| F^{(1)}_\A \right|^2_{\g_0}\dr_\alpha u_\A \dr_\beta u_\A  , \label{P 0 2}
\\ (P^{(0,\pm)}_{\A\B})_{\a\b} & = \frac{1}{4}\bigg( \pm\dr_{(\alpha}u_\A (F^{(1)}_\A)_{L_\B\si} (F^{(1)}_\B)_{\beta)}^\si  \pm\dr_{(\alpha}u_\B (F^{(1)}_\B)_{L_\A\si} (F^{(1)}_\A)_{\beta)}^\si  \label{P 0 pm}
\\&\hspace{2cm} \pm\frac{1}{2} \dr_{(\alpha} u_\A\dr_{\beta)} u_\B   \left| F^{(1)}_\A\cdot F^{(1)}_\B \right|_{\g_0}  \pm (F^{(1)}_\A)_{(\alpha L_\B} (F^{(1)}_\B)_{\beta) L_\A}\non
\\&\hspace{6cm}  \mp \g_0^{-1}(\d u_\A,\d u_\B)  (F^{(1)}_\A)_{(\alpha}^\nu (F^{(1)}_\B)_{\nu\beta)}   \bigg).\non
\end{align}
The terms of order 1 are given by
\begin{align}
(P^{(1,1)}_\A)_{\a\b} & = 2\Ga(\g_0)^{\mu}_{(\a\rho}  \dr^\rho u_\A\pth{(\F_\A)_{\beta)\mu} +  (F^{(2,1)}_\A)_{\beta)\mu}}  \label{P 1 1}
\\&\quad +  \cro{ \dr F^{(1)} + F^{(1)} F^{(2,\pm)} + F^{(1)}+(F^{(1)})^3 }_{\a\b} , \non
\\ (P^{(1,2)}_\A)_{\a\b} & = -4\Ga(\g_0)^{\mu}_{(\a\rho}  \dr^\rho u_\A (F^{(2,2)}_\A)_{\beta)\mu} + \cro{F^{(1)} \dr F^{(1)}+ (F^{(1)})^2}_{\a\b}, \label{P 1 2}
\\ (\hat{P}^{(1)}_{u,T})_{\b} & = \cro{\pth{1+F^{(1)}}\pth{(F^{(1)})^2 +\F + F^{(2,1)} +  F^{(2,2)}}}_{\b}, \label{Phat 1 u T}
\\ (P^{(1)}_{v,T})_{\a\b} & = \Big\{ F^{(2,\pm)} + (F^{(1)})^2+(F^{(1)})^3  + F^{(1)} \pth{\dr F^{(1)} + \F + F^{(2,1)} + F^{(2,2)} + F^{(2,\pm)} }  \Big\}_{\a\b}. \label{P 1 v T}
\end{align}
The terms of order 2 are simply of the form
\begin{align}
P^{(\geq 2)}_{\alpha\beta} & =   \cro{ \pth{ \g_\la^{-1}\g_\la^{-1}\dr\g_\la\dr\g_\la }^{(\geq 2)} }_{\a\b}^\osc .\label{P 2}
\end{align}
\end{lemma}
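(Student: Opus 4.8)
The proof of Lemma \ref{lemma P} is a long but essentially mechanical computation, and the plan is to organize it so that the algebra stays under control.

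\textbf{Setup and strategy.} First I would recall the definition \eqref{P dg dg} of $P_{\a\b}(\g)(\dr\g,\dr\g)$, which is bilinear in $\dr\g$ and involves two inverse metric factors $\g^{-1}\g^{-1}$. The idea is to substitute the ansatz \eqref{expansion metric} for $\g_\la$ into this expression and expand everything in powers of $\la$. There are two sources of $\la$-dependence: the explicit prefactors $\la, \la^2, \la^3$ in front of the profiles, and the fact that differentiating $T(u/\la)$ produces a factor $\la^{-1}$. Thus $\dr\g_\la = \dr\g_0 - \sum_\A \sin(u_\A/\la)\dr u_\A F^{(1)}_\A + \la(\dots) + \dots$, where the leading oscillating term is at order $\la^0$. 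Plugging two such expansions into the bilinear form $P$, the lowest-order contribution is $P_{\a\b}(\g_0)(\dr\g_0,\dr\g_0)$ at order $\la^0$ non-oscillating; the genuinely new terms come from pairing the $O(\la^0)$ oscillating piece of one $\dr\g_\la$ factor with itself or with $\dr\g_0$, and then successively lower-order pairings. I would also need to expand the inverse metric $\g_\la^{-1} = \g_0^{-1} - \la\sum_\A\cos(u_\A/\la)(\g_0^{-1}F^{(1)}_\A\g_0^{-1}) + O(\la^2)$, noting that the $\la^{-1}$-producing derivatives do not hit $\g_\la^{-1}$, so the inverse metric expansion is a clean power series in $\la$ with oscillating coefficients.

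\textbf{Order-by-order extraction.} The bulk of the work is collecting terms at each order. At order $\la^0$: the non-oscillating part is $P_{\a\b}(\g_0)(\dr\g_0,\dr\g_0)$ plus the ``DC'' contribution from squaring $\sin(u_\A/\la)\dr u_\A F^{(1)}_\A$, using $\sin^2 = \frac12 - \frac12\cos(2\cdot)$; the $\frac12$ piece gives, after using the polarization constraints \eqref{assumption} (equivalently $\tr_{\g_0}F^{(1)}_\A = 0$ and $(F^{(1)}_\A)_{L_\A\cdot} = 0$) and the eikonal equation $\g_0^{-1}(\d u_\A,\d u_\A)=0$, the term $-\frac14|F^{(1)}_\A|^2_{\g_0}\dr_\a u_\A\dr_\b u_\A$. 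The $\cos(2u_\A/\la)$ piece gives $(P^{(0,2)}_\A)_{\a\b}$ in \eqref{P 0 2}, with the opposite sign. The $\sin(u_\A/\la)$ terms at order $\la^0$ come from pairing the oscillating $O(\la^0)$ factor with $\dr\g_0$; here one must carefully track which of the four terms in \eqref{P dg dg} contribute, rewrite $\dr u_\A\cdot\dr^\rho$ contractions in terms of $L_\A$, and recognize the Christoffel symbol $\Ga(\g_0)^\mu_{\a\rho}$ that appears — this yields \eqref{P 0 1}. The cross terms $\sin(u_\A/\la)\sin(u_\B/\la) = \frac12\cos((u_\A-u_\B)/\la) - \frac12\cos((u_\A+u_\B)/\la)$ with $\A\neq\B$ give $(P^{(0,\pm)}_{\A\B})_{\a\b}$; here one uses that $\g_0^{-1}(\d u_\A,\d u_\A)=0$ to simplify the double contraction, and the antisymmetry in $\pm$ is exactly the $\pm$ prefactors in \eqref{P 0 pm}. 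At order $\la^1$ I would pair: the $O(\la^0)$ oscillating factor with the $O(\la^1)$ oscillating factor (producing $\sin\cdot\cos$ and $\sin\cdot\sin$ combinations that regroup into $\cos(u_\A/\la)$, $\sin(2u_\A/\la)$, and mixed $v\in\mathcal{I}$ phases), the $O(\la^0)$ oscillating factor with $\la\dr(\text{profile})$ from differentiating the $\la^2$-order terms only partially, and $\dr\g_0$ with the $O(\la^1)$ piece. Most of these are dumped into brackets $\cro{\cdot}$; the only explicitly retained structures are the Christoffel terms in \eqref{P 1 1}--\eqref{P 1 2} and the $\dr_{(\a}u(\hat P^{(1)}_{u,T})_{\b)}$ structure in \eqref{Phat 1 u T}, which I would extract by noting that whenever a derivative $\la^{-1}\dr u$ hits the phase and the resulting term is not absorbed into a Christoffel symbol, it carries a free $\dr u$ factor. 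Finally, everything of order $\la^2$ and higher is collected as $\cro{(\g_\la^{-1}\g_\la^{-1}\dr\g_\la\dr\g_\la)^{(\geq 2)}}^\osc$, which requires no further work since the bracket notation is precisely designed to absorb such remainders.

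\textbf{Main obstacle.} The hard part is bookkeeping, not mathematics: keeping track of all the trigonometric product-to-sum identities and making sure that (a) every term oscillating with a forbidden phase at order $\la^0$ (i.e. anything other than $\sin(u_\A/\la)$, $\cos(2u_\A/\la)$, $\cos((u_\A\pm u_\B)/\la)$) genuinely cancels — this is where the constraints \eqref{assumption} and the eikonal equation must be invoked at exactly the right places — and (b) the Christoffel symbols are correctly identified, since $P$ as written in \eqref{P dg dg} is in terms of bare $\dr\g_0$ but the natural output involves $\Ga(\g_0)$; the conversion uses $\Ga(\g_0)^\rho_{\mu\nu} = \frac12\g_0^{\rho\sigma}(\dr_\mu(\g_0)_{\nu\sigma} + \dr_\nu(\g_0)_{\mu\sigma} - \dr_\sigma(\g_0)_{\mu\nu})$ and the symmetrization $(\a\b)$. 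A secondary subtlety is the factor of $2$ discrepancies (e.g. the $-2$ in front of the Christoffel term in \eqref{W 1 2} versus $-4$ in \eqref{P 1 2}), which come from the quadratic phase $2u_\A$ producing $\dr(2u_\A) = 2\dr u_\A$; I would double-check these against the singlephase computation in \cite{Touati2023a}, of which this is the natural multiphase extension. Since the full computation is routine, I would carry it out in an appendix (Appendix \ref{appendix proof lemma P}) rather than inline.
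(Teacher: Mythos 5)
Your proposal is correct and follows essentially the same route as the paper's proof in Appendix \ref{appendix proof lemma P}: compute $(\dr\g_\la)^{(0)}$ and $(\dr\g_\la)^{(1)}$, expand the (undifferentiated) inverse metric via \eqref{inverse 1}--\eqref{inverse 2}, collect order by order with the product-to-sum identities \eqref{trigo 1}--\eqref{trigo 3} and the symmetrization \eqref{double sum 1}, invoke \eqref{assumption} and the eikonal equation to reduce the self-pairing to the resonant and $\cos(2u_\A/\la)$ terms, and dump everything not explicitly needed into the schematic brackets (the paper organizes the order-$\la$ part as the two pairings $A=\g_0^{-1}\g_0^{-1}(\dr\g_\la)^{(0)}(\dr\g_\la)^{(1)}$ and $B=(\g_\la^{-1})^{(1)}\g_0^{-1}(\dr\g_\la)^{(0)}(\dr\g_\la)^{(0)}$, which is exactly the bookkeeping you describe). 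No gap to report.
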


We draw the reader's attention on several structural facts from Lemma \ref{lemma P}:
\begin{enumerate}
\item It is clear from the resonant term in \eqref{exp P} that the quadratic part of the Ricci tensor is responsible for backreaction, i.e the fact that the Ricci tensor of the background spacetime $\g_0$ cannot vanish and needs to absorb the sum of $ \left| F^{(1)}_\A \right|^2_{\g_0}  \dr_\alpha u_\A \dr_\beta u_\A $, which in particular makes clear the null dusts structure of backreaction.
\item Following the first comment after Lemma \ref{lemma W}, we see the presence at order $\la^0$ of a forbidden term due to $P^{(0,2)}_\A$. This term is the high-high counterpart of backreaction and its particular null dust tensorial structure $\dr_\a u_\A \dr_\b u_\A$ given in \eqref{P 0 2} will be crucially used. Similarly, the forbidden harmonics $3u_\A$ is present at order $\la^1$ with a particular tensorial structure $\dr_{(\a}u (\hat{P}^{(1)}_{u,T})_{\b)}$ (recall that $\mathcal{N}$ includes the third harmonics $3u_\A$).
\end{enumerate}

\subsubsection{The gauge part}\label{section gauge term}

In this section we expand the gauge term $H^\rho$ which appears differentiated in \eqref{Ricci tensor GWC}. In the usual wave gauge, it is simply set to 0. In our case, we will set to 0 only some problematic terms in $H^\rho$ which we regroup and call $\Upsilon^\rho$. The terms that we will put in $\Upsilon^\rho$ will serve two main purposes: first recover hyperbolicity and obtain a well-posed final system and second simplify the control of the mixed harmonics at order $\la^1$. As a secondary purpose it will allow $\gg^{(3,h)}_{u,T}$ not to depend on the remainder $\h_\la$. For now, we simply give the definition of $\Upsilon^\rho$:
\begin{equation}\label{Upsilon}
\begin{aligned}
\Upsilon^\rho & \vcentcolon =  \g_\la^{\rho\sigma} \g_\la^{\mu\nu} \Bigg( \dr_\mu (\h_\lambda)_{\sigma\nu} - \half \dr_\sigma (\h_\lambda)_{\mu\nu} +  \sum_\A  \sin\left(\frac{u_\A}{\lambda} \right) \pth{\dr_\mu (\F_\A)_{\sigma\nu} - \half \dr_\sigma (\F_\A)_{\mu\nu} }  \Bigg)
\\&\quad + \la \g_\la^{\rho\sigma} \g_\la^{\mu\nu} \sum_{\substack{u\in\mathcal{N}\\T\in\{\cos,\sin\}}}T\pth{\frac{u}{\la}} \pth{\dr_\mu (\gg^{(3,h)}_{u,T})_{\sigma\nu} - \half \dr_\sigma (\gg^{(3,h)}_{u,T})_{\mu\nu}}
\\&\quad + \la \g_\la^{\rho\sigma} \g_\la^{\mu\nu} \sum_{\substack{v\in\mathcal{I}\\T\in\{\cos,\sin\}}}T\pth{\frac{v}{\la}} \pth{\dr_\mu (\gg^{(3,e)}_{v,T})_{\sigma\nu} - \half \dr_\sigma (\gg^{(3,e)}_{v,T})_{\mu\nu}}
\\&\quad +  \g_\la^{\rho\sigma} \g_\la^{\mu\nu}  \sum_{\substack{v\in\mathcal{I}\\T\in\{\cos,\sin\}}}T'\pth{\frac{v}{\la}}\pth{ \dr_\mu v(\gg^{(3,e)}_{v,T})_{\sigma\nu} - \half \dr_\sigma v(\gg^{(3,e)}_{v,T})_{\mu\nu} }
\\&\quad  -  \g_0^{\rho\sigma} \h_\la^{\mu\nu} \Bigg(\dr_\mu (\g_0)_{\sigma\nu} - \half \dr_\sigma (\g_0)_{\mu\nu} -  \sum_\A  \sin\left( \frac{u_\A}{\lambda} \right) \pth{ \dr_\mu u_\A (F^{(1)}_\A)_{\sigma\nu} - \half \dr_\sigma u_\A (F^{(1)}_\A)_{\mu\nu}  }\Bigg).
\end{aligned}
\end{equation}
The gauge term is expanded in the next proposition, whose proof is postponed to Appendix \ref{appendix proof lemma H}.

\begin{lemma}\label{lemma H}
Under the assumptions \eqref{assumption}, the gauge term $H^\rho$ satisfies
\begin{align*}
H^\rho & = \mathring{H}^\rho + \la^2 \Upsilon^\rho
\end{align*}
where $\mathring{H}^\rho$ admits the expansion
\begin{align*}
\mathring{H}^\rho & = \la  \sum_\A\pth{\cos\pth{\frac{u_\A}{\la}} (H^{(1,1)}_\A)^\rho + \sin\pth{\frac{2u_\A}{\la}} (H^{(1,2)}_\A)^\rho}  +\la \sum_{\A\neq\B,\pm}\sin\pth{\frac{u_\A\pm u_\B}{\la}}(H^{(1,\pm)}_{\A\B})^\rho 
\\&\quad + \la^2 (H^{(2)})^\rho + \la^3 (H^{(\geq 3)})^\rho.
\end{align*}
The terms of order 1 are given by
\begin{align}
(H^{(1,1)}_\A)^\rho & = \g_0^{\rho\sigma} \Pol{\F_\A + F^{(2,1)}_\A}{u_\A}_\si + \g_0^{\rho\si} \g_0^{\mu\nu} \pth{ \dr_\mu (F^{(1)}_\A)_{\sigma\nu} - \half \dr_\sigma (F^{(1)}_\A)_{\mu\nu} } \label{H 1 1}
\\&\quad - (F^{(1)}_\A)^{\mu\nu} \g_0^{\rho\sigma} \pth{\dr_\mu (\g_0)_{\sigma\nu} - \half \dr_\sigma (\g_0)_{\mu\nu} },\non
\\ (H^{(1,2)}_\A)^\rho  & = -2 \g_0^{\rho\sigma} \Pol{F^{(2,2)}_\A}{u_\A}_\si - \frac{1}{4}  \dr^\rho u_\A  \left| F^{(1)}_\A\right|^2_{\g_0} , \label{H 1 2}
\\ (H^{(1,\pm)}_{\A\B})^\rho  & = -\g_0^{\rho\sigma} \Pol{F^{(2,\pm)}_{\A\B}}{u_\A\pm u_\B}_\si - \frac{1}{8} \dr^\rho (u_\A\pm u_\B) \left| F^{(1)}_\A\cdot F^{(1)}_\B\right|_{\g_0}  \label{H 1 pm}
\\&\quad  - \frac{1}{4}  (F^{(1)}_\B)^{\nu}_{L_\A} (F^{(1)}_\A)_{\nu}^\rho \mp \frac{1}{4}  (F^{(1)}_\A)^{\nu}_{L_\B} (F^{(1)}_\B)_{\nu}^\rho  .\non
\end{align}
The terms of order 2 are given by
\begin{equation}\label{H 2 gauge}
\begin{aligned}
&(H^{(2)})^\rho 
\\& =  \sum_{\substack{u\in\mathcal{N}\\T\in\{\cos,\sin\}}}T'\pth{\frac{u}{\la}}\g_0^{\rho\sigma}  \Pol{\gg^{(3,h)}_{u,T}}{u}_\si 
\\&\quad +  \sum_{\substack{u\in\mathcal{N}\\T\in\{1,\cos,\sin\}}}T\pth{\frac{u}{\la}} \Big\{\dr F^{(2,1)} + \dr F^{(2,2)}
\\&\hspace{4cm} +  \pth{ \F + F^{(2,1)} + F^{(2,2)} + F^{(2,\pm)} + (F^{(1)})^2 + \dr F^{(1)} } \pth{1+F^{(1)}} \Big\}^\rho
\\&\quad +  \sum_{\substack{v\in\mathcal{I}\\T\in\{1,\cos,\sin\}}}T\pth{ \frac{v}{\la}} \Big\{ \dr F^{(2,\pm)}
\\&\hspace{4cm} +  \pth{ \F + F^{(2,1)} + F^{(2,2)} + F^{(2,\pm)} + (F^{(1)})^2 + \dr F^{(1)} } \pth{1+F^{(1)}} \Big\}^\rho.
\end{aligned}
\end{equation}
The terms of order 3 are of the form
\begin{align}
(H^{(\geq 3)})^\rho & = \Big\{ \pth{ (\g_\la^{-1} (\g_\la)^{(\geq 2)} + (\g_\la^{-1})^{(\geq 1)}(\g_\la^{-1})^{(\geq 1)}}
\\&\hspace{1cm}\times \pth{ \dr F^{(1)}  + \F + \dr^{\leq 1} F^{(2,1)} +  \dr^{\leq 1}F^{(2,2)} +  \dr^{\leq 1}F^{(2,\pm)} + \gg^{(3,h)} } \Big\}^{\rho,\osc} \non.
\end{align}
\end{lemma}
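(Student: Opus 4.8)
The plan is to start from the identity
\[
H^\rho = \g_\la^{\rho\sigma}\g_\la^{\mu\nu}\pth{\dr_\mu (\g_\la)_{\sigma\nu} - \half \dr_\sigma (\g_\la)_{\mu\nu}},
\]
which follows directly from the definition of the Christoffel symbols, and to substitute the ansatz \eqref{expansion metric}. Writing $\g_\la = \g_0 + \sum_{k\geq 1}\la^k B^{(k)}$, so that $B^{(1)} = \sum_\A \cos(u_\A/\la) F^{(1)}_\A$, $B^{(2)}$ gathers the order-$\la^2$ oscillating terms together with the remainder $\h_\la$, and $B^{(3)}$ gathers the $\gg^{(3,h)}_{u,T}$ and $\gg^{(3,e)}_{v,T}$ terms, I would expand the inverse metric by the Neumann series $\g_\la^{-1} = \g_0^{-1} - \la\,\g_0^{-1}B^{(1)}\g_0^{-1} + \la^2\pth{\g_0^{-1}B^{(1)}\g_0^{-1}B^{(1)}\g_0^{-1} - \g_0^{-1}B^{(2)}\g_0^{-1}} + \dots$ and collect contributions power by power in $\la$. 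The single bookkeeping rule is that each derivative hitting an oscillating factor $T(z/\la)$ produces $\la^{-1}T'(z/\la)\,\dr z$, so $\la^k\dr B^{(k)}$ splits into a phase-derivative part of order $\la^{k-1}$ and an amplitude-derivative part of order $\la^k$; within each power of $\la$ the terms get sorted by harmonic, using the product-to-sum identities to decompose products of the elementary oscillations $\cos(u_\A/\la),\sin(u_\A/\la),\cos(2u_\A/\la),\cos((u_\A\pm u_\B)/\la)$ into the phases of $\mathcal{N}$ and $\mathcal{I}$.

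At order $\la^0$ the non-oscillating part is $\g_0^{\mu\nu}\Ga(\g_0)^\rho_{\mu\nu}$, which vanishes by \eqref{wave condition BG}, while the only oscillating contribution — the phase-derivative of $\la\cos(u_\A/\la)F^{(1)}_\A$ contracted with $\g_0^{-1}\g_0^{-1}$ — equals $-\sum_\A\sin(u_\A/\la)\,\g_0^{\rho\sigma}\Pol{F^{(1)}_\A}{u_\A}_\sigma$ and vanishes by \eqref{assumption}; hence $H^\rho = \GO{\la}$. At order $\la^1$ I would gather: the phase-derivatives of the order-$\la^2$ oscillating pieces carrying $\F_\A + F^{(2,1)}_\A$, $F^{(2,2)}_\A$ and $F^{(2,\pm)}_{\A\B}$, which produce the polarization-tensor terms $\g_0^{\rho\sigma}\Pol{\cdot}{\cdot}_\sigma$ in $(H^{(1,1)}_\A)^\rho$, $(H^{(1,2)}_\A)^\rho$ and $(H^{(1,\pm)}_{\A\B})^\rho$, with the combinatorial factors coming from differentiating $\cos(2u_\A/\la)$ and $\cos((u_\A\pm u_\B)/\la)$; the amplitude-derivative of $\cos(u_\A/\la)F^{(1)}_\A$, giving the $\g_0^{\rho\sigma}\g_0^{\mu\nu}(\dr_\mu(F^{(1)}_\A)_{\sigma\nu} - \half\dr_\sigma(F^{(1)}_\A)_{\mu\nu})$ term; and the first correction $(\g_\la^{-1})^{(1)} = -\sum_\A\cos(u_\A/\la)\g_0^{-1}F^{(1)}_\A\g_0^{-1}$, contracted against $\dr\g_0$ — the correction of the $\g_\la^{\rho\sigma}$ factor being discarded because it is proportional to $\g_0^{\mu\nu}(\dr_\mu(\g_0)_{\sigma\nu} - \half\dr_\sigma(\g_0)_{\mu\nu}) = 0$, again by \eqref{wave condition BG} — which yields the $-(F^{(1)}_\A)^{\mu\nu}\g_0^{\rho\sigma}(\dr_\mu(\g_0)_{\sigma\nu} - \half\dr_\sigma(\g_0)_{\mu\nu})$ term, and against the phase-derivative of $B^{(1)}$, which yields the quadratic-in-$F^{(1)}$ terms. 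These last terms, of schematic shape $\cos(u_\A/\la)\sin(u_\B/\la)\times[\,\dots F^{(1)}_\A F^{(1)}_\B\dots]$, I would split by $\cos a\sin b = \half(\sin(a+b) - \sin(a-b))$: the $\A=\B$ part gives the $\sin(2u_\A/\la)$-coefficient $-\frac14\dr^\rho u_\A|F^{(1)}_\A|^2_{\g_0}$ in $(H^{(1,2)}_\A)^\rho$, and the $\A\neq\B$ part the $\sin((u_\A\pm u_\B)/\la)$-coefficients of $(H^{(1,\pm)}_{\A\B})^\rho$; the relations \eqref{Pol L}--\eqref{Pol Lb} and the assumption \eqref{assumption} ($\tr_{\g_0}F^{(1)}_\A = 0$ and $(F^{(1)}_\A)_{L_\A\a}=0$) are used to simplify all $\g_0^{-1}$-contractions and to check that no forbidden harmonic survives.

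At order $\la^2$ I would then \emph{define} $\Upsilon^\rho$ by \eqref{Upsilon} in closed form, designed to collect exactly the order-$\la^2$ and order-$\la^3$ contributions that contain first derivatives of $\h_\la$, of the amplitudes $\F_\A$, and of $\gg^{(3,h)}_{u,T}$ and $\gg^{(3,e)}_{v,T}$ (the last two appearing at order $\la^3$ in $H^\rho$, hence with an extra $\la$ inside $\Upsilon^\rho$), together with the phase-derivative of the $\gg^{(3,e)}_{v,T}$ piece and the $\h_\la$-contractions coming from $(\g_\la^{-1})^{(2)}\supset-\g_0^{-1}\h_\la\g_0^{-1}$ — these being precisely the metric components for which a generalised gauge will be imposed, whose derivatives must be isolated in $\Upsilon^\rho$ so that hyperbolicity is recovered when one sets $\Upsilon^\rho = 0$. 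Everything left at order $\la^2$ — involving only $\dr F^{(2,1)}$, $\dr F^{(2,2)}$, $\dr F^{(2,\pm)}$, the undifferentiated combination $\g_0^{\rho\sigma}\Pol{\gg^{(3,h)}_{u,T}}{u}_\sigma$ produced by the phase-derivative of the $\gg^{(3,h)}$ piece (which stays in $\mathring{H}^\rho$ because it carries $\gg^{(3,h)}$ without a derivative), and algebraic combinations of $\F, F^{(1)}, F^{(2,1)}, F^{(2,2)}, F^{(2,\pm)}$ — would be sorted by the trigonometric products into the null-phase and mixed-phase sums of \eqref{H 2 gauge}, yielding $(H^{(2)})^\rho$. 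Everything of order $\la^{\geq 3}$, systematically of the form $(\g_\la^{-1})^{(\geq 1)}$-corrections times first derivatives of amplitudes, goes into $(H^{(\geq 3)})^\rho$ with only a schematic estimate.

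I expect no conceptual obstacle; the delicate point is purely organizational — attributing the correct power of $\la$ to each term after tracking phase-versus-amplitude derivatives and the inverse-metric expansion, and decomposing every product of the elementary oscillations into the harmonics of $\mathcal{N}$ and $\mathcal{I}$ so as to route each quadratic-in-$F^{(1)}$ term to either the self-interaction coefficient $(H^{(1,2)}_\A)^\rho$ or a cross-interaction coefficient $(H^{(1,\pm)}_{\A\B})^\rho$, all while using \eqref{assumption} and the null-frame identities to confirm that the order-$\la^0$ term vanishes and that no untracked harmonic appears at orders $\la^0$ and $\la^1$.
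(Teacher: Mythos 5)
Your proposal is correct and follows essentially the same route as the paper's proof in Appendix B.3: expand $H^\rho=\g_\la^{\rho\sigma}\g_\la^{\mu\nu}\pth{\dr_\mu(\g_\la)_{\sigma\nu}-\half\dr_\sigma(\g_\la)_{\mu\nu}}$ by splitting phase versus amplitude derivatives of the ansatz, contract with the Neumann expansion \eqref{inverse 1}--\eqref{inverse 2} of $\g_\la^{-1}$, kill the order-$\la^0$ terms via \eqref{wave condition BG} and \eqref{assumption}, sort the quadratic $F^{(1)}$ terms into the harmonics $2u_\A$ and $u_\A\pm u_\B$ with the product-to-sum identities, and recognize the remaining problematic derivative terms as the prescribed $\Upsilon^\rho$ of \eqref{Upsilon}. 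The only detail worth noting is that the $(\g_\la^{-1})^{(1)}$-correction in the $\rho\sigma$ slot against the oscillating part of the order-zero derivative factor is killed by $\Pol{F^{(1)}_\B}{u_\B}=0$ (i.e.\ by \eqref{assumption}), not by the wave gauge condition alone, but your closing remark about using \eqref{assumption} at orders $\la^0$ and $\la^1$ covers this.
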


We draw the reader's attention on several structural facts from Lemma \ref{lemma H}:
\begin{enumerate}
\item The gauge term $H^\rho$ contains first order derivatives of the metric. Some appear in $\mathring{H}^\rho$ and some appear in $\Upsilon^\rho$, i.e are considered as problematic terms from the well-posedness point of view. In particular, first order derivatives of $\h_\la$, $\F_\A$, $\gg^{(3,h)}_{u,T}$ and $\gg^{(3,e)}_{v,T}$ have been included in $\Upsilon^\rho$, see the first three lines of \eqref{Upsilon}. By doing so, the second order derivatives of these tensors due to derivatives of $H^\rho$ in the Ricci tensor won't appear in the final system, where only the wave terms from \eqref{W 2} will be allowed (recall the second comment after Lemma \ref{lemma W}).
\item At order $\la^1$ and $\la^2$, we see the polarization tensors of $\F_\A$, $F^{(2,1)}_\A$, $F^{(2,2)}_\A$ and $\gg^{(3,h)}_{u,T}$ appearing as leading terms, where we recall that polarization tensors have been defined in Definition \ref{def pola}. Note that the polarization tensor of $\gg^{(3,e)}_{v,T}$ should also appear in \eqref{H 2 gauge}. However, we included the polarization tensor of $\gg^{(3,e)}_{v,T}$ in $\Upsilon^\rho$ in order to remove it from the final hierarchy, see the fourth line in \eqref{Upsilon}. The reason to do so will be explained after Proposition \ref{prop expression of the ricci}.
\end{enumerate}

\subsubsection{The Ricci tensor}\label{section ricci final}

In this section, we put together the results of Lemmas \ref{lemma W}, \ref{lemma P} and \ref{lemma H} and obtain the final expression of $R_{\a\b}(\g_\la)$ in the next proposition, whose proof is postponed to Appendix \ref{appendix proof prop ricci}.

\begin{prop}\label{prop expression of the ricci}
Under the assumptions \eqref{assumption}, the Ricci tensor of $\g_\la$ admits the expansion
\begin{align*}
R_{\a\b} (\g_\la) & = R^{(0)}_{\a\b} + \la R^{(1)}_{\a\b} + \la^2 R^{(\geq 2)}_{\a\b}.
\end{align*} 
The term $R^{(0)}_{\a\b}$ is given by
\begin{align}
R^{(0)}_{\a\b} & = \sum_\A \pth{ F_\A^2 -  \frac{1}{8} \left| F^{(1)}_\A \right|^2_{\g_0} } \dr_\mu u_\A \dr_\nu u_\A+ (R^{(0)}_{\mathrm{null}})_{\a\b} +  (R^{(0)}_{\mathrm{mixed}})_{\a\b}  , \label{R 0}
\end{align}
where 
\begin{align} 
2 (R^{(0)}_{\mathrm{null}})_{\a\b} & =  \sum_\A \sin\pth{\frac{u_\A}{\la}}\bigg( (\Ll_\A F^{(1)}_\A)_{\alpha\beta} -  \dr_{(\a}u_\A  \Pol{\F_\A }{u_\A}_{\b)} \non
\\&\hspace{4cm} -  \dr_{(\a}u_\A \pth{  \Pol{ F^{(2,1)}_\A}{u_\A}_{\b)} +(Q^{(0)}_\A)_{\b)} }\bigg)\label{R 0 null}
\\&\quad - 4 \sum_\A \cos\pth{\frac{2u_\A}{\la}} \dr_{(\alpha} u_\A\pth{\Pol{F^{(2,2)}_\A}{u_\A}_{\b)}  +   \frac{3}{32} \dr_{\beta)} u_\A  \left| F^{(1)}_\A \right|^2_{\g_0} }\non,
\\ 2(R^{(0)}_{\mathrm{mixed}})_{\a\b} & = \sum_{\A\neq\B,\pm} \cos\pth{\frac{u_\A\pm u_\B}{\la}}\pth{  -\mathcal{P}_{u_\A\pm u_\B}\pth{ F^{(2,\pm)}_{\A\B}}_{\a\b} + (\mathcal{I}^{(0,\pm)}_{\A\B})_{\a\b} }, \label{R 0 mixed}
\end{align}
with
\begin{align}
(Q^{(0)}_\A)_{\b} & \vcentcolon=  \g_0^{\mu\nu} \pth{ \dr_\mu (F^{(1)}_\A)_{\b\nu} - \half \dr_\b (F^{(1)}_\A)_{\mu\nu} }  ,\label{Q 0}
\\ (\mathcal{I}^{(0,\pm)}_{\A\B})_{\a\b} & \vcentcolon= - \frac{1}{4}   \pth{ (F^{(1)}_\B)_{L_\A L_\A}   (F^{(1)}_\A)_{\alpha\beta} + (F^{(1)}_\A)_{L_\B L_\B}   (F^{(1)}_\B)_{\alpha\beta} }   + (P^{(0,\pm)}_{\A\B})_{\a\b} \label{I 0 pm}
\\&\quad +\dr_{(\a}(u_\A\pm u_\B)\bigg(  - \frac{1}{8} \dr_{\b)} (u_\A\pm u_\B) \left| F^{(1)}_\A\cdot F^{(1)}_\B\right|_{\g_0}  \non
\\&\hspace{4cm}  - \frac{1}{4} (F^{(1)}_\B)^{\nu}_{L_\A} (F^{(1)}_\A)_{\nu\b)} \mp \frac{1}{4} (F^{(1)}_\A)^{\nu}_{L_\B} (F^{(1)}_\B)_{\nu\b)} \bigg). \non
\end{align}
The term $R^{(1)}_{\a\b}$ is given by
\begin{align}
R^{(1)}_{\a\b} & = (R^{(1)}_{\mathrm{null}})_{\a\b} +  (R^{(1)}_{\mathrm{mixed}})_{\a\b} + \half  (\g_0)_{\rho(\a}(\dr_{\b)}\Upsilon^\rho)^{(-1)}  ,\label{R 1}
\end{align}
where
\begin{align}
2(R^{(1)}_{\mathrm{null}})_{\a\b} & = -\sum_\A \cos\pth{\frac{u_\A}{\lambda}} \bigg(   (\Ll_\A\F_\A)_{\alpha\beta} + (\Ll_\A F^{(2,1)}_\A)_{\alpha\beta} + (\h_\lambda)_{L_\A L_\A} (F^{(1)}_\A)_{\alpha\beta} \label{R 1 null}
\\&\hspace{5cm}    - \D_{(\a} \Pol{\F_\A+F^{(2,1)}_\A}{u_\A}_{\b)} +  (\tilde{R}^{(1,1)}_\A)_{\a\b}    \bigg) \non
\\&\quad + 2\sum_\A\sin\pth{\frac{2u_\A}{\lambda}} \bigg(    (\Ll_\A F^{(2,2)}_\A)_{\alpha\beta}  -  \D_{(\a} \Pol{F^{(2,2)}_\A}{u_\A}_{\b)} +  (\tilde{R}^{(1,2)}_\A)_{\a\b}   \bigg) \non
\\&\quad + \sum_\A\bigg( 2  \sin\pth{\frac{u_\A}{\la}}  \sin\pth{\frac{2u_\A}{\la}}  - \cos\pth{\frac{u_\A}{\lambda}} \cos\pth{ \frac{2u_\A}{\lambda} } \bigg) (F^{(2,2)}_\A)_{L_\A L_\A} (F^{(1)}_\A)_{\alpha\beta} \non
\\&\quad  - \sum_\A \sin\pth{\frac{2u_\A}{\la}} (\F_\A)_{L_\A L_\A} (F^{(1)}_\A)_{\a\b}  \non
\\&\quad +  \sum_{\substack{u\in\mathcal{N}\\T\in\{\cos,\sin\}}}T\pth{\frac{u}{\la}} \dr_{(\a}u \pth{  -  \Pol{\gg^{(3,h)}_{u,T} }{u}_{\b)} + (\tilde{R}^{(1,h,T,u)}_\A)_{\b)} }  , \non
\\ 2(R^{(1)}_{\mathrm{mixed}})_{\a\b} & = \sum_{\substack{v\in\mathcal{I}\\T\in\{\cos,\sin\} }} T\left( \frac{v}{\lambda} \right) \pth{ \g_0^{-1}(\d v, \d v) (\gg^{(3,e)}_{v,T})_{\a\b} + (\tilde{R}^{(1,e,T,v)}_\A)_{\a\b} }. \label{R 1 mixed}
\end{align}
and
\begin{align}
\tilde{R}^{(1,1)}_\A & =  \cro{\pth{F^{(2,\pm)} + (F^{(1)})^2 }F^{(1)} + \dr^{\leq 2} F^{(1)} + (F^{(1)})^2},\label{Rtilde 1 1}
\\ \tilde{R}^{(1,2)}_\A & = \cro{ \pth{ F^{(2,1)} + \dr^{\leq 1} F^{(1)} }F^{(1)} }_{\a\b},\label{Rtilde 1 2}
\\ \tilde{R}^{(1,h,T,u)}_\A & = \Big\{\dr F^{(2,1)} + \dr F^{(2,2)} \label{R 1 h}
\\&\hspace{1cm} +  \pth{ \F + F^{(2,1)} + F^{(2,2)} + F^{(2,\pm)} + (F^{(1)})^2 + \dr F^{(1)} } \pth{1+F^{(1)}} \Big\}_{\b)}, \non
\\ \tilde{R}^{(1,e,T,v)}_\A & =  \Big\{\dr^{\leq 1}F^{(2,\pm)} + (F^{(1)})^2 \label{R 1 e}
\\&\hspace{1cm}+ \pth{ \F + F^{(2,1)} + F^{(2,2)} + F^{(2,\pm)}  + (F^{(1)})^2 + \dr^{\leq 1} F^{(1)} }\pth{1+F^{(1)}} \Big\}.\non
\end{align}
The term $R^{(\geq 2)}_{\a\b}$ is given by
\begin{equation}\label{R 2}
\begin{aligned}
2R^{(\geq 2)}_{\a\b} & =  -\tBox_{\g_\la}(\h_\la)_{\a\b}  - \sum_\A \sin\pth{\frac{u_\A}{\la}}\tBox_{\g_\la}(\F_\A)_{\a\b} 
\\&\quad -  \la \sum_{\substack{u\in\mathcal{N}\\T\in\{\cos,\sin\}}}T\pth{\frac{u}{\la}}\tBox_{\g_\la} (\gg^{(3,h)}_{u,T})_{\a\b}  - \la  \sum_{\substack{v\in\mathcal{I}\\T\in\{\cos,\sin\}}} T\left( \frac{v}{\lambda} \right)\tBox_{\g_\la} (\gg^{(3,e)}_{v,T})_{\a\b}
\\&\quad  + \tilde{R}^{(2)}_{\a\b} + \pth{ H^\rho \dr_\rho (\g_\la)_{\a\b} + (\g_\la)_{\rho(\a} \dr_{\b)}H^\rho }^{(\geq 2)}.
\end{aligned}
\end{equation}
where
\begin{equation}\label{R 2 w T}
\begin{aligned}
\tilde{R}^{(2)}_{\a\b} & = \Big\{\pth{ \g_\la^{-1}\g_\la^{-1}\dr\g_\la\dr\g_\la }^{(\geq 2)}  + \g_\la^{-1}\Big(\dr^{\leq 1}\gg^{(3,h)} + \dr^{\leq 1}\gg^{(3,e)} + \dr^{\leq 2} F^{(1)} + \dr^{\leq 1} \F 
\\&\hspace{6cm}+ \dr^{\leq 2}F^{(2,1)} + \dr^{\leq 2}F^{(2,2)} + \dr^{\leq 2} F^{(2,\pm)} \Big)  \Big\}_{\a\b}^\osc .
\end{aligned}
\end{equation}
\end{prop}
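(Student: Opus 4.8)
The proposition merely collects the three expansions already obtained, so the plan is to substitute Lemmas~\ref{lemma W}, \ref{lemma P} and \ref{lemma H} into the decomposition \eqref{Ricci tensor GWC} of $2R_{\a\b}(\g_\la)$ and to sort the result by powers of $\la$ and, at each order, by the oscillating harmonic multiplying a given tensor; throughout one works under the assumptions \eqref{assumption}, as in the three lemmas. The first point to settle is the bookkeeping of the gauge terms $H^\rho\dr_\rho(\g_\la)_{\a\b}$ and $(\g_\la)_{\rho(\a}\dr_{\b)}H^\rho$. Writing $H^\rho=\mathring H^\rho+\la^2\Upsilon^\rho$ with $\mathring H^\rho$ of order $\la$ and $\dr(\g_\la)_{\a\b}$ of order $1$, the contribution of these two terms to $2R_{\a\b}(\g_\la)$ begins at order $\la^0$ and comes only from $(\g_0)_{\rho(\a}\dr_{\b)}\mathring H^\rho$, the $\la^0$-coefficient being produced by $\dr_\b$ hitting the oscillating factors of the order-$\la$ part of $\mathring H^\rho$; the term $H^\rho\dr_\rho(\g_\la)_{\a\b}$ and the $\la^2\Upsilon^\rho$ part enter only from order $\la^1$ on, the latter contributing $\half(\g_0)_{\rho(\a}(\dr_{\b)}\Upsilon^\rho)^{(-1)}$ at order $\la^1$ (the $\la^{-1}$-coefficient arising from $\dr_\b$ hitting the factors $T'(v/\la)$, $T(v/\la)$ and $\sin(u_\A/\la)$ in \eqref{Upsilon}) and the remaining $(\geq 2)$-part of the gauge products together with $\la^2\Upsilon^\rho$ at order $\la^2$, which is the last line of \eqref{R 2}.

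At order $\la^0$, the non-oscillating part of $-\tBox_{\g_\la}(\g_\la)_{\a\b}+P_{\a\b}(\g_\la)(\dr\g_\la,\dr\g_\la)$ equals, by the first terms of \eqref{exp W} and \eqref{exp P}, the quantity $-\tBox_{\g_0}(\g_0)_{\a\b}+P_{\a\b}(\g_0)(\dr\g_0,\dr\g_0)-\frac14\sum_\A|F^{(1)}_\A|^2_{\g_0}\dr_\a u_\A\dr_\b u_\A$; inserting \eqref{eq BG coordinates}, which holds because $\g_0$ is in wave gauge \eqref{wave condition BG}, this becomes $2\sum_\A(F_\A^2-\frac18|F^{(1)}_\A|^2_{\g_0})\dr_\a u_\A\dr_\b u_\A$, the first term of \eqref{R 0}. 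The oscillating part at order $\la^0$ carries only the harmonics $\sin(u_\A/\la)$, $\cos(2u_\A/\la)$ and $\cos((u_\A\pm u_\B)/\la)$. For $\sin(u_\A/\la)$ one adds $-(W^{(0,1)}_\A)_{\a\b}$, $(P^{(0,1)}_\A)_{\a\b}$ and the contribution of $(\g_0)_{\rho(\a}\dr_{\b)}\mathring H^\rho$ built from $(H^{(1,1)}_\A)^\rho$: the bare directional derivative $-2L_\A$ produced by $-(W^{(0,1)}_\A)_{\a\b}$ recombines with the $\Ga(\g_0)$-terms of $(P^{(0,1)}_\A)_{\a\b}$ and of $(H^{(1,1)}_\A)^\rho$ into the covariant transport $\Ll_\A$ (using in particular $\tBox_{\g_0}u_\A=\Box_{\g_0}u_\A$ under \eqref{wave condition BG}), while \eqref{Pol dv S} identifies the polarisation terms $\dr_{(\a}u_\A\Pol{\F_\A}{u_\A}_{\b)}$, $\dr_{(\a}u_\A\Pol{F^{(2,1)}_\A}{u_\A}_{\b)}$ and the divergence term $\dr_{(\a}u_\A(Q^{(0)}_\A)_{\b)}$; this gives \eqref{R 0 null}. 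For $\cos(2u_\A/\la)$ one combines $(P^{(0,2)}_\A)_{\a\b}$ with the contribution of $(H^{(1,2)}_\A)^\rho$, obtaining the second line of \eqref{R 0 null} with the crucial null-dust structure $\dr_{(\a}u_\A(\cdot)_{\b)}$. For the mixed harmonics one adds $-(W^{(0,\pm)}_{\A\B})_{\a\b}$ (carrying the term $\g_0^{-1}(\d v,\d v)F^{(2,\pm)}_{\A\B}$ with $v=u_\A\pm u_\B$), $(P^{(0,\pm)}_{\A\B})_{\a\b}$ and the contribution of $(H^{(1,\pm)}_{\A\B})^\rho$, recognises $-\mathcal{P}_{v}(F^{(2,\pm)}_{\A\B})$ from Definition~\ref{def pola}, and collects the rest into $\mathcal{I}^{(0,\pm)}_{\A\B}$ as in \eqref{I 0 pm}--\eqref{R 0 mixed}.

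At order $\la^1$ one repeats the same sorting. The harmonics $\cos(u_\A/\la)$ and $\sin(2u_\A/\la)$ receive the covariant transports $\Ll_\A\F_\A$, $\Ll_\A F^{(2,1)}_\A$, $\Ll_\A F^{(2,2)}_\A$ (reassembled from $W^{(1,1)}_\A$, $W^{(1,2)}_\A$, the $\Ga(\g_0)$-parts of $P^{(1,1)}_\A$, $P^{(1,2)}_\A$ and the gauge), the source $(\h_\la)_{L_\A L_\A}F^{(1)}_\A$, the covariant divergences $\D_{(\a}\Pol{\cdot}{u_\A}_{\b)}$ produced by $\dr_\b$ hitting the polarisation terms of $\mathring H^\rho$, and the schematic remainders $\tilde R^{(1,1)}_\A$, $\tilde R^{(1,2)}_\A$; the new harmonics $u\in\mathcal{N}$ produce $\dr_{(\a}u(-\Pol{\gg^{(3,h)}_{u,T}}{u}_{\b)}+\tilde R^{(1,h,T,u)}_\A)$, the forbidden third harmonic $3u_\A$ again appearing only through the structure $\dr_{(\a}u(\cdot)_{\b)}$; the mixed harmonics $v\in\mathcal{I}$ produce $\g_0^{-1}(\d v,\d v)\gg^{(3,e)}_{v,T}+\tilde R^{(1,e,T,v)}_\A$; and the genuinely forbidden products $\cos(u_\A/\la)\cos(2u_\A/\la)$ and $\sin(u_\A/\la)\sin(2u_\A/\la)$ produced in \eqref{exp W} by the terms $(F^{(2,2)}_\A)_{L_\A L_\A}F^{(1)}_\A$ and $(\F_\A)_{L_\A L_\A}F^{(1)}_\A$ are simply kept as written, which yields \eqref{R 1 null}--\eqref{R 1 mixed}. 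Finally, at order $\la^2$ one keeps the wave operators $\tBox_{\g_\la}(\h_\la)_{\a\b}$, $\sin(u_\A/\la)\tBox_{\g_\la}(\F_\A)_{\a\b}$, $\la T(u/\la)\tBox_{\g_\la}(\gg^{(3,h)}_{u,T})_{\a\b}$, $\la T(v/\la)\tBox_{\g_\la}(\gg^{(3,e)}_{v,T})_{\a\b}$ from \eqref{W 2}, puts all remaining contributions of $W^{(\geq 2)}$, $P^{(\geq 2)}$ and $H^{(\geq 3)}$ into the schematic error $\tilde R^{(2)}_{\a\b}$ of \eqref{R 2 w T}, and records separately the $(\geq 2)$-part of $H^\rho\dr_\rho(\g_\la)_{\a\b}+(\g_\la)_{\rho(\a}\dr_{\b)}H^\rho$, which is \eqref{R 2}.

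The only genuinely delicate point is the order $\la^0$ and $\la^1$ recombination: one must check that the non-covariant quantities produced by $-\tBox_{\g_\la}(\g_\la)_{\a\b}$ (namely $2L_\A$ acting on tensors and bare coordinate derivatives) pair exactly with the Christoffel terms of $P_{\a\b}(\g_\la)(\dr\g_\la,\dr\g_\la)$ and with the divergence and polarisation terms carried by $\mathring H^\rho$, so as to form the covariant objects $\Ll_\A$, $\D_{(\a}\Pol{\cdot}{\cdot}_{\b)}$ and $\mathcal{P}_v$; and one must verify that every harmonic which is \emph{not} allowed at its order --- the second and third harmonics of $u_\A$ appearing in $(P^{(0,2)}_\A)_{\a\b}$, $(\hat P^{(1)}_{u,T})_\b$ and the trigonometric products in \eqref{R 1 null} --- comes attached to the null-dust tensorial structure $\dr_\a u_\A\dr_\b u_\A$ or $\dr_{(\a}u_\A(\cdot)_{\b)}$, so that it can later be removed via Lemma~\ref{lem Pv} together with the transport equations. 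Since the split between quantities kept with precision and quantities relegated to the brackets $\cro{\cdot}$ and $\cro{\cdot}^\osc$ is already fixed in Lemmas~\ref{lemma W}--\ref{lemma H}, this final collection requires no new estimate and the whole argument is purely algebraic.
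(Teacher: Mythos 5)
Your proposal is correct and follows essentially the same route as the paper's Appendix A.4: substitute Lemmas \ref{lemma W}, \ref{lemma P} and \ref{lemma H} into \eqref{Ricci tensor GWC}, sort by powers of $\la$ and by harmonic, use \eqref{eq BG coordinates} for the non-oscillating part, and recombine the bare transport, Christoffel and gauge pieces into $\Ll_\A$, $\dr_{(\a}u_\A\Pol{\cdot}{u_\A}_{\b)}$, $\D_{(\a}\Pol{\cdot}{u_\A}_{\b)}$ and $\mathcal{P}_{u_\A\pm u_\B}$, with the $\Upsilon$-contribution at order $\la$ isolated exactly as in \eqref{R 1}. One minor slip of attribution: the product $2\sin\pth{\frac{u_\A}{\la}}\sin\pth{\frac{2u_\A}{\la}}(F^{(2,2)}_\A)_{L_\A L_\A}(F^{(1)}_\A)_{\a\b}$ and half of the coefficient of $-\sin\pth{\frac{2u_\A}{\la}}(\F_\A)_{L_\A L_\A}(F^{(1)}_\A)_{\a\b}$ arise from $\pth{H^\rho\dr_\rho(\g_\la)_{\a\b}}^{(1)}$ via \eqref{H 1 1}--\eqref{H 1 2}, not from \eqref{exp W}, but since your sorting does include this gauge product at order $\la^1$ the collection still closes as stated.
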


In Section \ref{section hierarchy} and \ref{section reformulation} below we will precisely state the hierarchy of equations and relations that will ensure $R_{\a\b}(\g_\la)=0$. Before that, we briefly discuss the mechanisms at stake, following the comments made after Lemmas \ref{lemma W}, \ref{lemma P} and \ref{lemma H}:
\begin{enumerate}
\item \textbf{Admissible terms.} The admissible terms at order $\la^0$ and $\la^1$, i.e the $\sin\pth{\frac{u_\A}{\la}}$ terms in \eqref{R 0 null} and the $\cos\pth{\frac{u_\A}{\la}}$ and $\sin\pth{\frac{2u_\A}{\la}}$ terms in \eqref{R 1 null} can be set to zero by imposing transport equations for $F^{(1)}_\A$, $\F_\A$, $F^{(2,1)}_\A$ and $F^{(2,2)}_\A$. The seemingly redundant presence of $\F_\A$ is justified by the term $(\h_\lambda)_{L_\A L_\A} (F^{(1)}_\A)_{\alpha\beta}$ in \eqref{R 1 null}, which will be absorbed by the RHS of the transport equation for $\F_\A$.
\item \textbf{Forbidden terms at order $\la^0$.} In \eqref{R 0 null}, the forbidden terms in $\cos\pth{\frac{2u_\A}{\la}}$ can be set to zero by imposing a polarization condition for $F^{(2,2)}_\A$. This is only possible if the forbidden term that need to be absorbed has the tensorial structure $\dr_{(\a}u_\A Q_{\b)}$ for some 1-form $Q$. Here, this is even better since $Q=\d u_\A$ (recall \eqref{P 0 2})! If the polarization condition for $F^{(2,2)}_\A$ holds, this extra bit of structure would imply $(F^{(2,2)}_\A)_{L_\A L_\A}=0$ (where we used \eqref{Pol L}).
\item \textbf{Forbidden terms at order $\la^1$.} In \eqref{R 1 null}, two types of forbidden terms occur. First, the term $(F^{(2,2)}_\A)_{L_\A L_\A} (F^{(1)}_\A)_{\alpha\beta}$ coming from the quasi-linear wave operator oscillates like $\cos\pth{\frac{3u_\A}{\la}}$. However, the previous comment precisely says that it vanishes thanks to the structure of the semi-linear forbidden terms that the polarization condition for $F^{(2,2)}_\A$ is absorbing. Second, the term $\dr_{(\a}u(\tilde{R}^{(1,h,T,u)}_\A)_{\b)}$ precisely has the required structure to be absorbed by a polarization condition for $\gg^{(3,h)}_{u,T}$.
\item \textbf{Mixed harmonics at order $\la^0$.} The term $R^{(0)}_{\mathrm{mixed}}$ given by \eqref{R 0 mixed} will be simply canceled by asking $F^{(2,\pm)}_{\A\B}$ to solve 
\begin{align*}
\mathcal{P}_{u_\A\pm u_\B}\pth{ F^{(2,\pm)}_{\A\B}} = \mathcal{I}^{(0,\pm)}_{\A\B}.
\end{align*}
Since $\g_0^{-1}(\d(u_\A\pm u_\B),\d(u_\A\pm u_\B))\neq 0$ (recall \eqref{coherence}), Lemma \ref{lem Pv} shows that this equation is solvable if and only if $\Pol{\mathcal{I}^{(0,\pm)}_{\A\B}}{u_\A\pm u_\B}=0$. We will check this in Section \ref{section ensuring R 0} but we draw the reader's attention on the importance of this simple algebraic fact which is a reminder of Einstein vacuum equations' transparency.
\item \textbf{Mixed harmonics at order $\la^1$.} The structure of $R^{(1)}_{\mathrm{mixed}}$ differs from the one of $R^{(0)}_{\mathrm{mixed}}$ since instead of having the operator $\PP_v$ acting on $\gg^{(3,e)}_{v,T}$, we only have the part of $\PP_v$ coming from the wave part of the Ricci tensor, i.e $-\g_0^{-1}(\d v,\d v)\mathrm{Id}$. Opposite to $\PP_v$, this operator has full range and no condition on the RHS needs to hold, which simplifies greatly our construction since we don't need the exact expression of $\tilde{R}^{(1,e,T,v)}_\A$. The absence of the other half of $\PP_v$ is due to our choice of generalised wave gauge, i.e our choice of $\Upsilon^\rho$ (recall the second comment after Lemma \ref{lemma H}). We see here how the gauge helps us recover true ellipticity when considering mixed harmonics, as it usually does help us recover true hyperbolicity when solving \eqref{EVE}.
\end{enumerate}

\subsection{The hierarchy}\label{section hierarchy}

In this section, following the general mechanisms presented after Proposition \ref{prop expression of the ricci}, we derive a hierarchy of equations and relations for the various terms in $\g_\la$ which will ensure $R_{\a\b} (\g_\la) =0$ by ensuring $R^{(0)}_{\a\b}=0$, $R^{(1)}_{\a\b}=0$ and $R^{(\geq 2)}_{\a\b}=0$. 

\subsubsection{Ensuring $R^{(0)}_{\a\b}=0$}\label{section ensuring R 0}

Looking at \eqref{R 0}, we first want the non-oscillating terms to vanish. For that, we impose the following backreaction condition for $F^{(1)}_\A$:
\begin{align*}
\left| F^{(1)}_\A \right|^2_{\g_0} & = 8 F_\A^2.
\end{align*}
We wish to have $R^{(0)}_{\mathrm{null}}=0$. Looking at \eqref{R 0 null}, we cancel the admissible harmonics by imposing the following transport equation for $F^{(1)}_\A$
\begin{align*}
\Ll_\A F^{(1)}_\A & = 0,
\end{align*}
as well as a polarization condition for $\F_\A$ and $F^{(2,1)}_\A$ 
\begin{align*}
\Pol{\F_\A }{u_\A} & = 0,
\\ \Pol{F^{(2,1)}_\A}{u_\A} & = - Q^{(0)}_\A,
\end{align*}
where $Q^{(0)}_\A$ is defined in \eqref{Q 0} (this is not clear that \eqref{Q 0} actually defines a tensor, see Remark \ref{remark Q0} for a justification of this fact). We cancel the forbidden harmonics in $R^{(0)}_{\mathrm{null}}$ by imposing a polarization condition for $F^{(2,2)}_\A$
\begin{align*}
\Pol{F^{(2,2)}_\A}{u_\A} & = -  \frac{3}{32} \left| F^{(1)}_\A \right|^2_{\g_0} \d u_\A.
\end{align*}
For convenience, we define the following tensors
\begin{align}
V^{(2,1)}_\A & \vcentcolon=  \Pol{F^{(2,1)}_\A}{u_\A} + Q^{(0)}_\A,\label{def 1 V 2 1}
\\ V^{(2,2)}_\A & \vcentcolon= \Pol{F^{(2,2)}_\A}{u_\A} +  \frac{3}{32} \left| F^{(1)}_\A \right|^2_{\g_0} \d u_\A,\label{def 1 V 2 2}
\end{align}
so that the polarization conditions for $F^{(2,1)}_\A$ and $F^{(2,2)}_\A$ rewrite
\begin{align}
V^{(2,1)}_\A & = 0,\label{premiere V 2 1 = 0}
\\ V^{(2,2)}_\A & = 0.\label{premiere V 2 2 = 0}
\end{align}

\begin{remark}\label{remark Q0}
One might wonder about the status of the polarization conditions \eqref{premiere V 2 1 = 0}-\eqref{premiere V 2 2 = 0}, in particular if they are gauge conditions or not. This is in fact not the case since $V^{(2,1)}_\A$ and $V^{(2,2)}_\A$ as defined in \eqref{def 1 V 2 1} and \eqref{def 1 V 2 2} are tensors, so that their vanishing is not a gauge condition. Note that the tensoriality of $V^{(2,2)}_\A$ is obvious from \eqref{def 1 V 2 2}, while the tensoriality of $Q^{(0)}_\A$ in \eqref{def 1 V 2 1} follows from its alternative expression 
\begin{align*}
(Q^{(0)}_\A)_\si & = \g_0^{\mu\nu} \pth{ \D_\mu (F^{(1)}_\A)_{\si\nu} - \half \D_\si (F^{(1)}_\A)_{\mu\nu}}, 
\end{align*}
which follows itself from \eqref{Q 0} and \eqref{wave condition BG}.
\end{remark}

We cancel the mixed term $R^{(0)}_{\mathrm{mixed}}$ given by \eqref{R 0 mixed} by imposing the following equation for $F^{(2,\pm)}_{\A\B}$
\begin{align}
\PP_{u_\A\pm u_\B}\pth{F^{(2,\pm)}_{\A\B}} & = \mathcal{I}^{(0,\pm)}_{\A\B}, \label{eq F2pm first}
\end{align}
where $\mathcal{I}^{(0,\pm)}_{\A\B}$ is defined by \eqref{I 0 pm}. Thanks to Lemma \ref{lem Pv}, this equation is solvable if and only its RHS satisfies some polarization condition.

\begin{lemma}\label{lem F2pm}
We assume that \eqref{assumption} holds. If we define
\begin{align*}
F^{(2,\pm)}_{\A\B} & \vcentcolon = -\frac{\mathcal{I}^{(0,\pm)}_{\A\B} }{\g_0^{-1}(\d(u_\A\pm u_\B),\d(u_\A\pm u_\B))},
\end{align*}
then \eqref{eq F2pm first} holds.
\end{lemma}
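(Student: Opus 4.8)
The plan is to reduce the whole statement to the single algebraic identity
\[
\Pol{\mathcal{I}^{(0,\pm)}_{\A\B}}{u_\A\pm u_\B}=0,
\]
which is a manifestation of the transparency of the Einstein vacuum equations discussed in Section~\ref{section transparence}. Indeed, by the coherence assumption~\eqref{coherence} we have $\g_0^{-1}(\d(u_\A\pm u_\B),\d(u_\A\pm u_\B))\neq0$, so Lemma~\ref{lem Pv} applies with $v=u_\A\pm u_\B$: if the polarization identity above holds, then $\mathcal{I}^{(0,\pm)}_{\A\B}\in\mathrm{ran}\,\PP_{u_\A\pm u_\B}$ and the explicit inversion formula in Lemma~\ref{lem Pv} gives precisely that the tensor $F^{(2,\pm)}_{\A\B}=-\mathcal{I}^{(0,\pm)}_{\A\B}/\g_0^{-1}(\d(u_\A\pm u_\B),\d(u_\A\pm u_\B))$ satisfies $\PP_{u_\A\pm u_\B}\big(F^{(2,\pm)}_{\A\B}\big)=\mathcal{I}^{(0,\pm)}_{\A\B}$, which is exactly~\eqref{eq F2pm first}. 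So the entire content of the lemma is the polarization identity.

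To establish it I would split $\mathcal{I}^{(0,\pm)}_{\A\B}$ according to its definition~\eqref{I 0 pm} into three pieces: the ``wave interaction'' piece $S_1:=-\frac{1}{4}\big((F^{(1)}_\B)_{L_\A L_\A}F^{(1)}_\A+(F^{(1)}_\A)_{L_\B L_\B}F^{(1)}_\B\big)$, the quadratic piece $P^{(0,\pm)}_{\A\B}$ given by~\eqref{P 0 pm}, and the manifestly longitudinal piece $\dr_{(\a}(u_\A\pm u_\B)\,Q_{\b)}$, where $Q$ is the bracketed $1$-form in~\eqref{I 0 pm}. Applying $\Pol{\cdot}{u_\A\pm u_\B}$ term by term, the longitudinal piece contributes $\g_0^{-1}(\d(u_\A\pm u_\B),\d(u_\A\pm u_\B))\,Q$ by the identity~\eqref{Pol dv S}, so it remains to check that $\Pol{S_1}{u_\A\pm u_\B}+\Pol{P^{(0,\pm)}_{\A\B}}{u_\A\pm u_\B}=-\g_0^{-1}(\d(u_\A\pm u_\B),\d(u_\A\pm u_\B))\,Q$. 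For this I would use: the assumption~\eqref{assumption} in its equivalent form $\tr_{\g_0}F^{(1)}_\A=0$, $(F^{(1)}_\A)_{L_\A\a}=0$ (and likewise for $\B$), which together with the eikonal equations yields the working identities
\[
\Pol{F^{(1)}_\A}{u_\A\pm u_\B}_\a=\mp(F^{(1)}_\A)_{\a L_\B},\qquad \Pol{F^{(1)}_\B}{u_\A\pm u_\B}_\a=\mp(F^{(1)}_\B)_{\a L_\A};
\]
the relations $\g_0^{-1}(\d(u_\A\pm u_\B),\d(u_\A\pm u_\B))=\pm2\,\g_0^{-1}(\d u_\A,\d u_\B)$ and $\g_0^{-1}(\d u_\A,\d(u_\A\pm u_\B))=\pm\,\g_0^{-1}(\d u_\A,\d u_\B)$; and the scalar-linearity $\Pol{fS}{v}=f\Pol{S}{v}$ of the polarization operator. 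With these inputs, each term of $\Pol{S_1}{u_\A\pm u_\B}$ and of $\Pol{P^{(0,\pm)}_{\A\B}}{u_\A\pm u_\B}$ reduces to one of the finitely many tensorial monomials $\dr_{(\a}u_\A\,\dr_{\b)}u_\B$, $(F^{(1)}_\A)_{(\a L_\B}(F^{(1)}_\B)_{\b)L_\A}$, $(F^{(1)}_\A)_{(\a}^\nu(F^{(1)}_\B)_{\nu\b)}$, each carrying an explicit scalar coefficient, and one verifies that the total equals $-\g_0^{-1}(\d(u_\A\pm u_\B),\d(u_\A\pm u_\B))\,Q$.

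I expect no conceptual difficulty here; the whole point is the remarkable cancellation just described, which is the two-phase analogue of the singlephase transparency computation of~\cite{Touati2023a} (and can alternatively be read off from the relevant order of the contracted Bianchi identity). The only real obstacle is bookkeeping: keeping track of the $\pm$ signs, of the symmetrizations $(\a\b)$, and of the null-frame contractions without sign errors. I would cross-check the final identity against the $\A=\B$ computation of~\cite{Touati2023a}, and verify en passant that $\mathcal{I}^{(0,\pm)}_{\A\B}$ is symmetric under $\A\leftrightarrow\B$ — consistent with the convention $F^{(2,\pm)}_{\A\B}=F^{(2,\pm)}_{\B\A}$ — which is immediate from~\eqref{I 0 pm} and~\eqref{P 0 pm}. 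Once the polarization identity is established, the lemma follows at once by the appeal to Lemma~\ref{lem Pv} described in the first paragraph.
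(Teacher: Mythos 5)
Your proposal is correct and takes essentially the same route as the paper: reduction via Lemma \ref{lem Pv} (legitimate thanks to \eqref{coherence}) to the identity $\Pol{\mathcal{I}^{(0,\pm)}_{\A\B}}{u_\A\pm u_\B}=0$, the same three-piece splitting of \eqref{I 0 pm}, and \eqref{Pol dv S} for the longitudinal piece. The paper's proof consists precisely of the bookkeeping you defer: it computes the polarizations of the three pieces explicitly using \eqref{assumption} and \eqref{P 0 pm} and checks that the polarization of $P^{(0,\pm)}_{\A\B}$ equals minus the sum of the other two, i.e.\ exactly the cancellation you anticipate.
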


\begin{proof}
According to Lemma \ref{lem Pv} it suffices to show that 
\begin{align}
\Pol{ \mathcal{I}_{\A\B}^{(0,\pm)} }{ u_\A\pm u_\B } = 0.\label{Pol I 0 pm}
\end{align}
For clarity we define
\begin{align*}
A_{\a\b} & \vcentcolon =  - \frac{1}{4}   \pth{ (F^{(1)}_\B)_{L_\A L_\A}   (F^{(1)}_\A)_{\alpha\beta} + (F^{(1)}_\A)_{L_\B L_\B}   (F^{(1)}_\B)_{\alpha\beta} },
\\ B_{\a\b} & \vcentcolon = \dr_{(\a}(u_\A\pm u_\B)\bigg(  - \frac{1}{8} \dr_{\b)} (u_\A\pm u_\B) \left| F^{(1)}_\A\cdot F^{(1)}_\B\right|_{\g_0}  
  - \frac{1}{4} (F^{(1)}_\B)^{\nu}_{L_\A} (F^{(1)}_\A)_{\nu\b)} \mp \frac{1}{4} (F^{(1)}_\A)^{\nu}_{L_\B} (F^{(1)}_\B)_{\nu\b)} \bigg).
\end{align*}
so that $ (\mathcal{I}^{(0,\pm)}_{\A\B})_{\a\b} = A_{\a\b}  + B_{\a\b} + (P^{(0,\pm)}_{\A\B})_{\a\b}$. Thanks to \eqref{assumption} we first have
\begin{align*}
\Pol{A}{u_\A\pm u_\B}_\a & =   \frac{1}{4}   \pth{ \pm(F^{(1)}_\B)_{L_\A L_\A}   (F^{(1)}_\A)_{\alpha L_\B} + (F^{(1)}_\A)_{L_\B L_\B}   (F^{(1)}_\B)_{\alpha L_\A} }.
\end{align*}
For $B_{\a\b}$, \eqref{Pol dv S} gives
\begin{align*}
\Pol{B}{u_\A\pm u_\B}_\a & = \mp \half \g_0^{-1}(\d u_\A, \d u_\B)   \bigg( (F^{(1)}_\B)^{\nu}_{L_\A}  (F^{(1)}_\A)_{\a\nu} \pm (F^{(1)}_\A)^{\nu}_{L_\B} (F^{(1)}_\B)_{\a\nu} 
\\&\hspace{5cm} + \half  \left| F^{(1)}_\A \cdot F^{(1)}_\B \right|_{g_0}  \dr_\a (u_\A\pm u_\B)  \bigg).
\end{align*}
For $(P^{(0,\pm)}_{\A\B})_{\a\b}$, \eqref{P 0 pm} and \eqref{assumption} give
\begin{align*}
\Pol{P^{(0,\pm)}_{\A\B}}{u_\A \pm u_\B}_\a & =  - (P^{(0,\pm)}_{\A\B})_{\a (L_\A \pm L_\B)} + \half \tr_{\g_0}P^{(0,\pm)}_{\A\B} ((L_\A)_\a \pm (L_\B)_\a)
\\& = \pm \frac{1}{4} \dr_{\a} (u_\A\pm u_\B) \g_0^{-1}(\d u_\A,\d u_\B) \left| F^{(1)}_\A\cdot F^{(1)}_\B \right|_{\g_0}
\\&\quad + \frac{1}{2}\g_0^{-1}(\d u_\A,\d u_\B) \pth{ (F^{(1)}_\A)_{L_\B\si} (F^{(1)}_\B)_{\a}^\si  \pm (F^{(1)}_\B)_{L_\A\si} (F^{(1)}_\A)_{\a}^\si }
\\&\quad - \frac{1}{4} (F^{(1)}_\A)_{ L_\B L_\B} (F^{(1)}_\B)_{\a L_\A}  \mp\frac{1}{4} (F^{(1)}_\A)_{\alpha L_\B} (F^{(1)}_\B)_{L_\A L_\A}
\\& = - \Pol{A}{u_\A\pm u_\B}_\a - \Pol{B}{u_\A\pm u_\B}_\a,
\end{align*}
which concludes the proof of \eqref{Pol I 0 pm} and thus of the lemma.
\end{proof}

\begin{remark}
If one is interested in the strict equivalent to Choquet-Bruhat's singlephase approximate construction from \cite{ChoquetBruhat1969} in the multiphase case, the above conditions are sufficient since they indeed lead to $R_{\a\b}(\g_\la)=\GO{\la}$. 
\end{remark}

\subsubsection{Ensuring $R^{(1)}_{\a\b}=0$}\label{section ensuring R 1}

Looking at \eqref{R 1}, we wish to have $R^{(1)}_{\mathrm{null}}=0$ and $R^{(1)}_{\mathrm{mixed}}=0$. We first consider $R^{(1)}_{\mathrm{null}}$ and rewrite it with the help of the tensors $V^{(2,1)}_\A$ and $V^{(2,2)}_\A$ defined by \eqref{def 1 V 2 1} and \eqref{def 1 V 2 2}. Thanks to \eqref{R 1 null} we obtain
\begin{equation}\label{R 1 null bis}
\begin{aligned}
2(R^{(1)}_{\mathrm{null}})_{\a\b} & = -\sum_\A \cos\pth{\frac{u_\A}{\lambda}} \bigg(   (\Ll_\A\F_\A)_{\alpha\beta} + (\Ll_\A F^{(2,1)}_\A)_{\alpha\beta} + (\h_\lambda)_{L_\A L_\A} (F^{(1)}_\A)_{\alpha\beta} +  (\tilde{R}^{(1,1)}_\A)_{\a\b}
\\&\hspace{5cm}   - \D_{(\a} \Pol{\F_\A}{u_\A}_{\b)}  - \D_{(\a} (V^{(2,1)}_\A)_{\b)}  + \D_{(\a} (Q^{(0)}_\A)_{\b)}     \bigg) 
\\&\quad + 2\sum_\A\sin\pth{\frac{2u_\A}{\lambda}} \bigg(    (\Ll_\A F^{(2,2)}_\A)_{\alpha\beta}  -  \D_{(\a} (V^{(2,2)}_\A)_{\b)} 
\\&\hspace{4.5cm} + \frac{3}{32}\D_{(\a} \pth{ \left| F^{(1)}_\A \right|^2_{\g_0}\d u_\A}_{\b)} +  (\tilde{R}^{(1,2)}_\A)_{\a\b}   \bigg)  
\\&\quad - \sum_\A\bigg( 2  \sin\pth{\frac{u_\A}{\la}}  \sin\pth{\frac{2u_\A}{\la}}  - \cos\pth{\frac{u_\A}{\lambda}} \cos\pth{ \frac{2u_\A}{\lambda} } \bigg) (V^{(2,2)}_\A)_{L_\A} (F^{(1)}_\A)_{\alpha\beta}  
\\&\quad  - \sum_\A \sin\pth{\frac{2u_\A}{\la}} (\F_\A)_{L_\A L_\A} (F^{(1)}_\A)_{\a\b} 
\\&\quad +  \sum_{u\in\mathcal{N},T\in\{\cos,\sin\}}T\pth{\frac{u}{\la}} \dr_{(\a}u \pth{  -  \Pol{\gg^{(3,h)}_{u,T} }{u}_{\b)} + (\tilde{R}^{(1,h,T,u)}_\A)_{\b)} }  . 
\end{aligned}
\end{equation}
Since the above polarization conditions would imply $V^{(2,1)}_\A=V^{(2,2)}_\A=\Pol{\F_\A}{u_\A}=0$, we cancel the admissible harmonics in $(R^{(1)}_{\mathrm{null}})_{\a\b}$ by imposing transport equations for $F^{(2,1)}_\A$ and $F^{(2,2)}_\A$:
\begin{align}
(\Ll_\A F^{(2,1)}_\A)_{\a\b} & = -\D_{(\a} (Q^{(0)}_\A)_{\b)} -  (\tilde{R}^{(1,1)}_\A)_{\a\b} , \label{premiere eq F 2 1}
\\ (\Ll_\A F^{(2,2)}_\A)_{\alpha\beta}  & = - \frac{3}{32}\D_{(\a} \pth{ \left| F^{(1)}_\A \right|^2_{\g_0}\d u_\A}_{\b)} -  (\tilde{R}^{(1,2)}_\A)_{\a\b}, \label{premiere eq F 2 2}
\end{align}
The equation for $\F_\A$ needs to be adjusted so that the coupling with the equation for $\h_\la$ leads to a well-posed system in the high-frequency limit $\la\to 0$. As in \cite{Touati2023a} we impose the following transport equation
\begin{align}
\Ll_\A\F_\A & = - \Pi_\leq \pth{ (\h_\lambda)_{L_\A L_\A} }F^{(1)}_\A ,\label{premiere eq F}
\end{align}
where the operator $\Pi_\leq$ is defined by
\begin{align*}
\Pi_\leq (f) = \mathcal{F}^{-1}\pth{\chi_\la \mathcal{F}(f)},
\end{align*}
where $\chi_\la(\xi)=\chi(\la\xi)$ for $\chi:\R^3\longrightarrow [0,1]$ a smooth function supported in $\{|\xi|\leq 2\}$ and such that $\chi_{|_{\{|\xi|\leq 1\}}}=1$ and where $\mathcal{F}$ is the standard Fourier transform on $\R^3$. In order to cancel the non-tangential terms containing forbidden harmonics we impose a polarization condition for $\gg^{(3,h)}_{u,T}$
\begin{align}
 \Pol{\gg^{(3,h)}_{u,T} }{u}& = \tilde{R}^{(1,h,T,u)}_\A. \label{premiere pola g 3 h}
\end{align}
In order to cancel $R^{(1)}_{\mathrm{mixed}}$ given by \eqref{R 1 mixed}, we simply define $\gg^{(3,e)}_{v,T}$ by
\begin{align}
\gg^{(3,e)}_{v,T} & = - \frac{1}{\g_0^{-1}(\d v, \d v)} \tilde{R}^{(1,e,T,v)}_\A,\label{premiere def g 3 e}
\end{align}
where we recall that if $v\in \mathcal{I}$, then $\g_0^{-1}(\d v, \d v)\neq 0$ (see Section \ref{section BG}).

\subsubsection{Ensuring $R^{(\geq 2)}_{\a\b}=0$}\label{section ensuring R 2}

Thanks to its expression given by \eqref{R 2}, in order to cancel $R^{(\geq 2)}_{\a\b}$ it suffices to impose a wave equation for the remainder $\h_\la$. This equation needs to be adjusted to handle the coupling with the equation for $\F_\A$, and we also need to set aside the problematic derivatives all contained in $\Upsilon^\rho$. Therefore we impose the following wave equation for $\h_\la$
\begin{align*}
\tBox_{\g_\la}(\h_\la)_{\a\b}  & = - \sum_\A \sin\pth{\frac{u_\A}{\la}}\tBox_{\g_\la}(\F_\A)_{\a\b} -  \la \sum_{\substack{u\in\mathcal{N}\\T\in\{\cos,\sin\}}}T\pth{\frac{u}{\la}}\tBox_{\g_\la} (\gg^{(3,h)}_{u,T})_{\a\b} 
\\&\quad  - \la  \sum_{\substack{v\in\mathcal{I}\\T\in\{\cos,\sin\}}} T\left( \frac{v}{\lambda} \right)\tBox_{\g_\la} (\gg^{(3,e)}_{v,T})_{\a\b} + \tilde{R}^{(2)}_{\a\b}  +\pth{ \mathring{H}^\rho \dr_\rho (\g_\la)_{\a\b} + (\g_\la)_{\rho(\a} \dr_{\b)}\mathring{H}^\rho }^{(\geq 2)}
\\&\quad - \frac{1}{\la}\sum_\A \cos\pth{\frac{u_\A}{\la}}\Pi_\geq\pth{(\h_\lambda)_{L_\A L_\A}}(F^{(1)}_\A)_{\a\b},
\end{align*}
where the operator $\Pi_\geq$ is defined by $\Pi_\geq=\mathrm{Id}-\Pi_\leq$. We also impose the following generalised wave gauge condition
\begin{align*}
\Upsilon^\rho & = 0 .
\end{align*}

\subsection{Reformulation of Theorem \ref{theo evol}}\label{section reformulation}

The conclusion of Sections \ref{section ensuring R 0}, \ref{section ensuring R 1} and \ref{section ensuring R 2} is the following: if the tensors $F^{(1)}_\A$, $\F_\A$, $F^{(2,1)}_\A$, $F^{(2,2)}_\A$ and $\h_\la$ solve the system
\begin{align}
\Ll_\A F^{(1)}_\A & = 0,\label{eq F1}
\\ (\Ll_\A F^{(2,1)}_\A)_{\a\b} & = -\D_{(\a} (Q^{(0)}_\A)_{\b)} -  (\tilde{R}^{(1,1)}_\A)_{\a\b} , \label{eq F21}
\\ (\Ll_\A F^{(2,2)}_\A)_{\alpha\beta}  & = - \frac{3}{32}\D_{(\a} \pth{ \left| F^{(1)}_\A \right|^2_{\g_0}\d u_\A}_{\b)} -  (\tilde{R}^{(1,2)}_\A)_{\a\b},\label{eq F22}
\\ \Ll_\A\F_\A & = - \Pi_\leq \pth{ (\h_\lambda)_{L_\A L_\A} }F^{(1)}_\A,\label{eq F}
\\ \tBox_{\g_\la}(\h_\la)_{\a\b} & = - \sum_\A \sin\pth{\frac{u_\A}{\la}}\tBox_{\g_\la}(\F_\A)_{\a\b} -  \la \sum_{\substack{u\in\mathcal{N}\\T\in\{\cos,\sin\}}}T\pth{\frac{u}{\la}}\tBox_{\g_\la} (\gg^{(3,h)}_{u,T})_{\a\b} \label{eq h}
\\&\quad  - \la  \sum_{\substack{v\in\mathcal{I}\\T\in\{\cos,\sin\}}} T\left( \frac{v}{\la} \right)\tBox_{\g_\la} (\gg^{(3,e)}_{v,T})_{\a\b} + \tilde{R}^{(2)}_{\a\b}  +\pth{ \mathring{H}^\rho \dr_\rho (\g_\la)_{\a\b} + (\g_\la)_{\rho(\a} \dr_{\b)}\mathring{H}^\rho }^{(\geq 2)}\non
\\&\quad - \frac{1}{\la}\sum_\A \cos\pth{\frac{u_\A}{\la}}\Pi_\geq\pth{(\h_\la)_{L_\A L_\A}}(F^{(1)}_\A)_{\a\b},\non
\end{align}
together with the polarization conditions 
\begin{align}
\Pol{F^{(1)}_\A}{u_\A} & = 0,\label{pola F1}
\\ F^{(1)}_{L_\A \Lb_\A} & = 0,\label{pola F1 bis}
\\ \Pol{\F_\A}{u_\A} & = 0,\label{pola F}
\\ V^{(2,1)}_\A & = 0,\label{pola F21}
\\ V^{(2,2)}_\A & = 0,\label{pola F22}
\\ \Pol{\gg^{(3,h)}_{u,T} }{u} & =  \tilde{R}^{(1,h,T,u)}_\A,\label{pola g3h}
\end{align}
the backreaction condition
\begin{align}
\left| F^{(1)}_\A \right|^2_{\g_0} = 8 F_\A^2 \label{energy F1},
\end{align}
the generalised wave gauge condition 
\begin{align}
\Upsilon^\rho & =0,\label{GWC}
\end{align}
with $F^{(2,\pm)}_{\A\B}$ and $\gg^{(3,e)}_{v,T}$ defined by
\begin{align}
(F^{(2,\pm)}_{\A\B})_{\a\b} & \vcentcolon = - \frac{(\mathcal{I}^{(0,\pm)}_{\A\B})_{\a\b} }{\g_0^{-1}(\d(u_\A\pm u_\B),\d(u_\A\pm u_\B))}, \label{def F2pm}
\\ \gg^{(3,e)}_{v,T} & \vcentcolon = - \frac{1}{\g_0^{-1}(\d v, \d v)} \tilde{R}^{(1,e,T,v)}_\A, \label{def g3e}
\end{align}
then the metric $\g_\la$ defined by \eqref{expansion metric} solves the Einstein vacuum equations \eqref{EVE}. The initial data for the transport-wave system \eqref{eq F1}-\eqref{eq h} consist in initial values on $\Si_0$ for
\begin{align*}
(F^{(1)}_\A)_{\a\b},\quad (F^{(2,1)}_\A)_{\a\b},\quad (F^{(2,2)}_\A)_{\a\b},\quad (\F_\A)_{\a\b},\quad (\h_\la)_{\a\b} \quad \text{and} \quad \T_\la (\h_\la)_{\a\b},
\end{align*}
where $\T_\la$ is the future-directed unit normal to $\Si_0$ for $\g_\la$ (which can be computed from $\g_{\la|_{\Si_0}}$). Therefore, the task of proving Theorem \ref{theo evol} is divided into three steps:
\begin{enumerate}
\item In Section \ref{section initial data}, we construct initial data such that the algebraic conditions \eqref{pola F1}-\eqref{GWC} and the constraint equations are satisfied on $\Si_0$.
\item In Section \ref{section solving the system}, we solve the system \eqref{eq F1}-\eqref{eq h} on $[0,1]\times \R^3$.
\item In Section \ref{section propagation}, we show that the algebraic conditions \eqref{pola F1}-\eqref{GWC} actually hold on the full spacetime $[0,1]\times \R^3$.
\end{enumerate}

\begin{remark}\label{remark F2pm}
Thanks to \eqref{def F2pm}, \eqref{I 0 pm} and \eqref{P 0 pm} we have $F^{(2,\pm)}_{\A\B} = \cro{ (F^{(1)})^2 }$. This would be used without mention in the rest of this article.
\end{remark}

\begin{remark}\label{remark g3h}
The tensors $\gg^{(3,h)}_{u,T}$ only need to satisfy the polarization condition \ref{pola g3h} and in particular are not obtained by solving partial differential equations. Therefore, we can simply define $\gg^{(3,h)}_{u,T}$ for $u=ku_\A$ by setting
\begin{align*}
(\gg^{(3,h)}_{u,T})_{L_\A L_\A} & \vcentcolon = - \frac{1}{k} (\tilde{R}^{(1,h,T,u)}_\A)_{L_\A},
\\ (\gg^{(3,h)}_{u,T})_{L_\A e^{(\i)}_\A} & \vcentcolon = - \frac{1}{k} (\tilde{R}^{(1,h,T,u)}_\A)_{e^{(\i)}_\A},
\\ (\gg^{(3,h)}_{u,T})_{e^{(1)}_\A e^{(1)}_\A} & \vcentcolon = - \frac{1}{k} (\tilde{R}^{(1,h,T,u)}_\A)_{\Lb_\A},
\end{align*}
and by setting all the other components of $\gg^{(3,h)}_{u,T}$ in the frame $\pth{L_\A,\Lb_\A,e^{(1)}_\A,e^{(2)}_\A}$ to 0. In particular, thanks to \eqref{R 1 h} this implies
\begin{align}
\gg^{(3,h)}_{u,T} & = \Big\{\dr F^{(2,1)} + \dr F^{(2,2)} +  \pth{ \F + F^{(2,1)} + F^{(2,2)} + (F^{(1)})^2 + \dr F^{(1)} } \pth{1+F^{(1)}} \Big\} , \label{g3h}
\end{align}
where we already applied Remark \ref{remark F2pm} and replaced $F^{(2,\pm)}$ by $(F^{(1)})^2$ in the schematic notation.
\end{remark}


\section{Construction of initial data}\label{section initial data}

In this section, we construct initial data for the system \eqref{eq F1}-\eqref{eq h} on $\Si_0$.  They need to be such that the algebraic conditions \eqref{pola F1}-\eqref{GWC} hold on $\Si_0$ and also such that the constraint equations are satisfied by the induced metric on $\Si_0$ and the second fundamental form of $\Si_0$. These equations are
\begin{equation}\label{constraint equations}
\begin{aligned}
R(g) - |k|^2_g + (\tr_g k)^2 & = 0,
\\ \div_g k - \d \tr_g k & = 0,
\end{aligned}
\end{equation}
where $R(g)$ is the scalar curvature of $g$.

\subsection{From a seed to solutions to the constraint equations}

The construction of initial data is based on a seed, with which we first construct a family of solutions to the constraint equations and then pick a particular element of this family to define appropriate initial data for the system \eqref{eq F1}-\eqref{eq h}. 

\begin{mydef}\label{def seed}
A seed is a family $\pth{ \bar{F}^{(1)}_\A}_{\A\in\mathcal{A}}$ of symmetric 2-tensors on $\Si_0$ such that for all $\A\in\mathcal{A}$ we have
\begin{align}\label{seed form}
\bar{F}^{(1)}_\A = F_\A \pth{ \th^+_\A \pth{ e^{(1),\flat}_\A\otimes e^{(1),\flat}_\A - e^{(2),\flat}_\A\otimes e^{(2),\flat}_\A } + \th^\times_\A \pth{ e^{(1),\flat}_\A\otimes e^{(2),\flat}_\A + e^{(2),\flat}_\A\otimes e^{(1),\flat}_\A }  },
\end{align}
where $e^{(\i),\flat}_\A$ denotes the 1-form canonically associated to the vector field $e^{(\i)}_\A$ by $g_0$ and where $\pth{ \th^+_\A, \th^\times_\A}$ are numbers satisfying $\pth{\th^+_\A}^2  + \pth{\th^\times_\A}^2  = 4$.
\end{mydef}

The expression \eqref{seed form} directly implies that $\bar{F}^{(1)}_\A$ satisfies
\begin{equation}\label{assum F1}
\begin{aligned}
\tr_{g_0}\bar{F}^{(1)}_\A & = 0,
\\ (\bar{F}^{(1)}_\A)_{N_\A i} & = 0,
\\ \left|\bar{F}^{(1)}_\A\right|^2_{g_0} & = 8F_\A^2.
\end{aligned}
\end{equation}
Moreover, the assumptions made on $F_\A$ in Section \ref{section BG} imply that $ \bar{F}^{(1)}_\A$ is supported in $B_R$ and satisfies
\begin{align}\label{estim seed}
\l  \bar{F}^{(1)}_\A \r_{H^N(\Si_0)} \lesssim \e.
\end{align}

\begin{remark}\label{remark seed}
Note the analogy between \eqref{seed form} and the TT gauge of linearized gravity, where $\th^+_\A$ and $\th^\times_\A$ would play the role as the coefficients of the two possible polarizations for the gravitational wave.
\end{remark}

\noindent Before stating our main result on the constraint equations, we introduce two operators acting on the space of symmetric 2-tensors on $\Si_0$:
\begin{align}
\bar{\PP}^{[1]}_v(S) & = S - \frac{1}{2} \pth{ \tr_{g_0}S - S_{N_vN_v} }g_0,\label{def P1(T)}
\\ \bar{\PP}^{[2]}_v(S)_{ij} & = S_{ij}  +  (N_v)_{(i}  \pth{(N_v)_{j)} \tr_{g_0}S - S_{N_v j)}} - \half \pth{ \tr_{g_0}S - S_{N_v N_v}} (g_0)_{ij}, \label{def P2(T)}
\end{align}
where $v$ is a scalar function on $\Si_0$ such that $|\nab_{g_0} v|_{g_0}\neq 0$ and 
\begin{align*}
N_v \vcentcolon =\frac{\nab_{g_0} v}{|\nab_{g_0} v|_{g_0}}.
\end{align*}
Note that $N_\A=N_{u_\A}$ and $N^{(\pm)}_{\A\B}=N_{u_\A\pm u_\B}$ where $N_\A$ and $N^{(\pm)}_{\A\B}$ are defined in \eqref{def vector fields init}. Next lemma gives important properties of these operators which will be used in Section \ref{section initial data final}, its proof is left to the reader.

\begin{lemma}\label{lem P 1et2}
Let $v$ a scalar function on $\Si_0$ such that $|\nab_{g_0} v|_{g_0}\neq 0$. We have $\bar{\PP}^{[1]}_v\circ\bar{\PP}^{[1]}_v=\bar{\PP}^{[1]}_v$ and $\bar{\PP}^{[2]}_v\circ\bar{\PP}^{[2]}_v=\bar{\PP}^{[2]}_v$. Moreover we have
\begin{align}
\mathrm{ran}\bar{\PP}^{[1]}_v & =  \enstq{S \text{ symmetric 2-tensors on $\Si_0$}}{\tr_{g_0}S = S_{N_vN_v}} ,\label{image P1}
\\ \mathrm{ran}\bar{\PP}^{[2]}_v & =  \enstq{S \text{ symmetric 2-tensors on $\Si_0$}}{(\tr_{g_0}S) (N_v)_i  = S_{iN_v}} .\label{image P2}
\end{align}
\end{lemma}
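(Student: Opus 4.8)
The plan is to verify the two identities and the two range descriptions by direct computation in an orthonormal frame adapted to $N_v$. First I would fix the Riemannian metric $g_0$ on $\Si_0$ and write $N=N_v$ for brevity, noting that $|N|_{g_0}=1$. I would introduce the convenient shorthand $\tau(S)=\tr_{g_0}S$ and $\sigma(S)=S_{NN}$, so that $\bar\PP^{[1]}_v(S)=S-\tfrac12(\tau(S)-\sigma(S))g_0$ and $\bar\PP^{[2]}_v(S)_{ij}=S_{ij}+(N)_{(i}((N)_{j)}\tau(S)-S_{Nj)})-\tfrac12(\tau(S)-\sigma(S))(g_0)_{ij}$. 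The whole computation reduces to tracking how $\tau$ and $\sigma$ act on the correction terms, using $\tr_{g_0}g_0=3$, $(g_0)_{NN}=1$, $(N)_iN^i=1$ and the symmetrization convention $T_{(\a\b)}=T_{\a\b}+T_{\b\a}$ from Section \ref{section notation}.

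For $\bar\PP^{[1]}_v$, writing $S'=\bar\PP^{[1]}_v(S)$ I would compute $\tau(S')=\tau(S)-\tfrac12(\tau(S)-\sigma(S))\cdot 3$ and $\sigma(S')=\sigma(S)-\tfrac12(\tau(S)-\sigma(S))\cdot 1$, hence $\tau(S')-\sigma(S')=(\tau(S)-\sigma(S))-(\tau(S)-\sigma(S))=0$, which already proves $\tau(S')=\sigma(S')$, i.e. that the range is contained in the right-hand side of \eqref{image P1}; conversely, if $\tau(S)=\sigma(S)$ then $\bar\PP^{[1]}_v(S)=S$, giving the reverse inclusion, and idempotence $\bar\PP^{[1]}_v\circ\bar\PP^{[1]}_v=\bar\PP^{[1]}_v$ is then immediate since $\bar\PP^{[1]}_v(S)$ lies in a set on which $\bar\PP^{[1]}_v$ is the identity. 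For $\bar\PP^{[2]}_v$, the same strategy applies but one must also track the quantity $S_{iN}-\tau(S)(N)_i$, which measures failure of the condition in \eqref{image P2}: setting $S''=\bar\PP^{[2]}_v(S)$, I would contract the defining formula with $N^j$ to get $S''_{iN}$ and take the $g_0$-trace to get $\tau(S'')$, then check that $S''_{iN}-\tau(S'')(N)_i=0$ by cancellation. The computation of $\tau(S'')$ uses $\tr_{g_0}\big((N)_{(i}X_{j)}\big)=2X_N$ for any 1-form $X$ (here $X_j=(N)_j\tau(S)-S_{Nj}$, so $X_N=\tau(S)-\sigma(S)$), giving $\tau(S'')=\tau(S)+2(\tau(S)-\sigma(S))-\tfrac32(\tau(S)-\sigma(S))$; contracting with $N^j$ one finds $S''_{iN}=S_{iN}+\tfrac12(N)_i\big((N)_i\text{-type terms}\big)$ which, after collecting, equals $\tau(S'')(N)_i$. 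The reverse inclusion and idempotence follow as before: if $(\tau(S))(N)_i=S_{iN}$ then in particular $\sigma(S)=\tau(S)$ and the correction terms in $\bar\PP^{[2]}_v$ vanish, so $\bar\PP^{[2]}_v(S)=S$.

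The only mildly delicate point — and the step I would be most careful with — is the bookkeeping of the symmetrization brackets and the factor conventions in \eqref{def P2(T)}, since the paper's convention $T_{(\a\b)}=T_{\a\b}+T_{\b\a}$ (no $\tfrac12$) means the term $(N_v)_{(i}\,(\cdots)_{j)}$ already carries an implicit factor of $2$ relative to the naive symmetric average; getting the coefficients of $\tau(S)-\sigma(S)$ to cancel correctly depends on this. There is no analytic obstacle here — everything is pointwise linear algebra on symmetric $3\times 3$ matrices with a distinguished unit vector $N$ — so once the conventions are pinned down the verification is routine, which is presumably why the authors leave it to the reader. I would present it as: choose at each point an orthonormal basis $(N,f_1,f_2)$ of $T\Si_0$, write $S$ in block form with respect to the splitting $\mathrm{span}(N)\oplus \mathrm{span}(f_1,f_2)$, and read off $\bar\PP^{[1]}_v$ and $\bar\PP^{[2]}_v$ as explicit block operations whose idempotence and range are then visible by inspection.
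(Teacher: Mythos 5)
Your proposal is correct: the paper leaves this lemma's proof to the reader, and your direct pointwise verification (tracking $\tr_{g_0}S$ and $S_{N_vN_v}$ through the correction terms, with the reverse inclusions giving both the range identities and idempotence) is exactly the intended routine argument; I checked the key cancellations, including $S''_{iN}=\tau(S'')(N_v)_i$, and they work out with the paper's convention $T_{(\a\b)}=T_{\a\b}+T_{\b\a}$, which you rightly flag as the only point where a factor could go wrong.
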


\noindent Finally, if $X$ and $N$ are two vector fields on $\Si_0$ we define
\begin{align}
(N\tilde{\otimes}X)_{ij} \vcentcolon= N_{(i} X_{j)} - \half X_N (g_0)_{ij},\label{def produit}
\end{align}
where $X_N= g_0(X,N)$. We are now ready to state our main result on the constraint equations.

\begin{prop}\label{prop constraint main}
Let $\pth{ \bar{F}^{(1)}_\A}_{\A\in\mathcal{A}}$ be a seed. Let $\pth{\ka^{(1,1)}_\A, \ka^{(1,2)}_\A}_{\A\in\mathcal{A}}$ and $\pth{\ga^{(2,\pm)}_{\A\B},\ka^{(1,\pm)}_{\A\B}}_{\A,\B\in\mathcal{A}, \A\neq \B}$ be families of symmetric 2-tensors on $\Si_0$ supported in $B_R$ with the symmetry property $\ga^{(2,\pm)}_{\A\B}=\ga^{(2,\pm)}_{\B\A}$ and $\ka^{(1,\pm)}_{\A\B}=\ka^{(1,\pm)}_{\B\A}$ and such that
\begin{align}
\l \ga^{(2,\pm)}_{\A\B} \r_{H^N(\Si_0)} + \l  \ka^{(1,1)}_\A\r_{H^{N-1}(\Si_0)} + \l  \ka^{(1,2)}_\A \r_{H^{N-1}(\Si_0)} + \l  \ka^{(1,\pm)}_{\A\B} \r_{H^{N-1}(\Si_0)} \leq \e,\label{estim ga kappa param}
\end{align}
If $\e$ is small enough, there exists $(g_\la,k_\la)$ a solution of the constraint equations on $\Si_0$ of the form
\begin{equation}\label{g la}x
\begin{aligned}
g_\la & = g_0 + \la \sum_\A \cos\pth{ \frac{u_\A}{\la}} \bar{F}^{(1)}_\A +  \lambda^2 \sum_\A \sin\pth{\frac{u_\A}{\la}}  4\wc{\ffi}^{(2,1)}_{\A} g_0  +  \lambda^2 \sum_\A \cos\pth{\frac{2u_\A}{\la}} \frac{3}{4} F_\A^2  g_0 
\\&\quad + \la^2\sum_{\A\neq\B,\pm}\cos\pth{\frac{u_\A\pm u_\B}{\la}}\pth{ \bar{\PP}^{[1]}_{u_\A\pm u_\B}\pth{\ga^{(2,\pm)}_{\A\B}}  + 4\wc{\ffi}^{(2,\pm)}_{\A\B}g_0  } + \la^2 h_\la
\end{aligned}
\end{equation}
and
\begin{equation}\label{k la}
\begin{aligned}
k_\la & = k_0 + \half \sum_\A \sin\pth{ \frac{u_\A}{\la}} |\nab_{g_0} u_\A|_{g_0}\bar{F}^{(1)}_\A 
\\&\quad + \la\sum_\A \cos\pth{\frac{u_\A}{\la}} \pth{ \bar{\PP}^{[2]}_{u_\A}\pth{\ka^{(1,1)}_\A} +  \frac{N_\A\tilde{\otimes} \wc{X}^{(2,1)}_\A}{|\nab_{g_0} u_\A|_{g_0}} }
\\&\quad + \la\sum_\A \sin\pth{\frac{2u_\A}{\la}} \pth{ \bar{\PP}^{[2]}_{u_\A}\pth{\ka^{(1,2)}_\A}  -\frac{3}{2} |\nab_{g_0} u_\A|_{g_0} F_\A^2 N_\A\tilde{\otimes} N_\A  }
\\&\quad + \la\sum_{\A\neq\B,\pm}\sin\pth{\frac{u_\A\pm u_\B}{\la}} \pth{  \bar{\PP}^{[2]}_{u_\A\pm u_\B}\pth{\ka^{(1,\pm)}_{\A\B}} - \frac{N^{(\pm)}_{\A\B}\tilde{\otimes} \wc{X}^{(2,\pm)}_{\A\B}}{|\nab_{g_0}\pth{ u_\A\pm u_\B}|_{g_0}} } + \la^2 \tilde{k}^{cons}_\la,
\end{aligned}
\end{equation}
where $\wc{\ffi}^{(2,1)}_{\A}$, $\wc{\ffi}^{(2,\pm)}_{\A\B}$, $\wc{X}^{(2,1)}_\A$ and $\wc{X}^{(2,\pm)}_{\A\B}$ are defined by
\begin{align}
\wc{\ffi}^{(2,1)}_{\A} & \vcentcolon = \frac{1}{8|\nab_{g_0} u_\A|_{g_0}^2}(\bar{F}^{(1)}_\A)_{ij} \pth{- \dr^i \dr^j u_\A + |\nab_{g_0} u_\A|_{g_0}k_0^{ij} + \half  \dr_{\ell} u_\A  \dr^\ell g_0^{ij} },  \label{wcffi 21}
\\ \wc{\ffi}^{(2,\pm)}_{\A\B} & \vcentcolon =  \frac{ \left| \bar{F}^{(1)}_\A \cdot \bar{F}^{(1)}_\B \right|_{g_0}}{64 |\nab_{g_0}(u_\A\pm u_\B)|_{g_0}^2} \Big( 2 |\nab_{g_0} u_\A|_{g_0}^2 + 2 |\nab_{g_0} u_\B|_{g_0}^2 \label{wcffi 2pm}
\\&\hspace{5cm} \pm 3 |\nab_{g_0} u_\A\cdot \nab_{g_0} u_\B|_{g_0} \mp |\nab_{g_0} u_\A|_{g_0} |\nab_{g_0} u_\B|_{g_0} \Big)  \non
\\&\quad \mp \frac{ (\bar{F}^{(1)}_\A)_{i \nab_{g_0} u_\B} (\bar{F}^{(1)}_\B)_{\nab_{g_0} u_\A}^i}{32 |\nab_{g_0}(u_\A\pm u_\B)|_{g_0}^2} ,\nonumber
\end{align}
and
\begin{align}
(\wc{X}^{(2,1)}_\A)_i & \vcentcolon = \half  \dr^b \pth{|\nab_{g_0} u_\A|_{g_0} \bar{F}^{(1)}_\A}_{bi} +\frac{1}{4} |\nab_{g_0} u_\A|_{g_0} (\bar{F}^{(1)}_\A)_{bc} \dr_i g_0^{bc} \label{wcX 21}
\\&\quad +  \half |\nab_{g_0} u_\A|_{g_0} \pth{ \dr_a g_0^{ca} + \half g_0^{ab}g_0^{cd}\dr_d (g_0)_{ab} } (\bar{F}^{(1)}_\A)_{ic}   - \half \dr_i u_\A (\bar{F}^{(1)}_\A)_{bc} (k_0)^{bc}\nonumber,
\\ (\wc{X}^{(2,\pm)}_{\A\B})_i & \vcentcolon = - \frac{1}{8} |\nab_{g_0} u_\B|_{g_0} (\bar{F}^{(1)}_\A)^{b}_{\nab_{g_0} u_\B} (\bar{F}^{(1)}_\B)_{bi} - \frac{1}{8}  |\nab_{g_0} u_\A|_{g_0} (\bar{F}^{(1)}_\B)^{b}_{\nab_{g_0} u_\A} (\bar{F}^{(1)}_\A)_{bi} \non
\\&\quad + \bigg( \frac{1}{8} \pth{|\nab_{g_0} u_\A|_{g_0} \dr_i u_\A+ |\nab_{g_0} u_\B|_{g_0} \dr_i u_\B} \label{wcX 2pm}
\\&\hspace{2cm} +\frac{1}{16}  \pth{ \pm  |\nab_{g_0} u_\B|_{g_0}   \dr_i u_\A    \pm  |\nab_{g_0} u_\A|_{g_0}   \dr_i u_\B }\bigg) \left| \bar{F}^{(1)}_\A \cdot \bar{F}^{(1)}_\B \right|_{g_0}.\nonumber 
\end{align}
Moreover, the remainders $h_\la$ and $ \tilde{k}^{cons}_\la$ belong to the spaces $H^5_\de(\Si_0)$ and $H^4_{\de+1}(\Si_0)$ respectively and satisfy
\begin{align}
\max_{r\in\llbracket 0,4 \rrbracket} \la^r \l \nab^{r+1}h_\la \r_{L^2_{\de+r+1}(\Si_0)} & \leq C_{\mathrm{cons}} \e, \label{estim reste cons 1}
\\ \max_{r\in\llbracket 0,4 \rrbracket} \la^r \l \nab^{r}\tilde{k}^{cons}_\la \r_{L^2_{\de+r+1}(\Si_0)} & \leq C_{\mathrm{cons}} \e, \label{estim reste cons 2}
\end{align}
for some $C_{\mathrm{cons}}=C_{\mathrm{cons}}(N,\de,R)>0$. 
\end{prop}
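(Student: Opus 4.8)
The strategy is the standard conformal/York method adapted to the oscillatory setting, exactly as in the singlephase case \cite{Touati2023a,Touati2023b}. I would write the metric and second fundamental form as a prescribed oscillating ansatz plus a remainder $(h_\la,\tilde k^{cons}_\la)$, plug into \eqref{constraint equations}, and show that the oscillating part produces the resonant obstructions computed from the hierarchy (matching $Q^{(0)}_\A$, $\mathcal{I}^{(0,\pm)}_{\A\B}$, the $F_\A^2$ backreaction, etc.) while everything left over is put into an elliptic system for $(h_\la,\tilde k^{cons}_\la)$ that is solvable on $\Si_0$ with the weighted Sobolev isomorphism $\De:H^2_\de\to L^2_{\de+2}$ recalled in Section \ref{section notation}. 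Concretely: first I would fix the momentum constraint. Take $k_\la$ of the form in \eqref{k la}, where the explicit oscillating coefficients $\bar\PP^{[2]}_v$, $N_v\tilde\otimes \wc X$ and the $F_\A^2 N_\A\tilde\otimes N_\A$ term are chosen so that $\div_{g_\la}k_\la-\d\tr_{g_\la}k_\la$ has no oscillation at orders $\la^{-1},\la^0$; the key algebraic point is that the leading $\la^{-1}$ term is a gradient (it is $\propto \sin(u_\A/\la)|\nab u_\A| \Pol{\bar F^{(1)}_\A}{u_\A}$-type, which vanishes by \eqref{assum F1}), and the $\la^0$ obstruction has exactly the structure $N_v\tilde\otimes(\text{1-form})$ so it can be killed by choosing $\wc X^{(2,1)}_\A,\wc X^{(2,\pm)}_{\A\B}$ as in \eqref{wcX 21}-\eqref{wcX 2pm}. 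The non-oscillating momentum constraint then becomes a standard div-curl system (a vector Laplacian on $\Si_0$ in the York decomposition) for $\tilde k^{cons}_\la$ with source of size $\e$; its solvability follows from the weighted elliptic isomorphism, giving \eqref{estim reste cons 2}. The operators $\bar\PP^{[2]}_v$ project onto the constraint-compatible subspace, by Lemma \ref{lem P 1et2}.

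**Hamiltonian constraint.** Next I would handle \eqref{constraint equations}$_1$. Insert \eqref{g la}, where the conformal-factor-like pieces $4\wc\ffi^{(2,1)}_\A g_0$, $\frac34 F_\A^2 g_0$, $4\wc\ffi^{(2,\pm)}_{\A\B}g_0$ and the transverse-traceless-like pieces $\bar\PP^{[1]}_v(\ga^{(2,\pm)})$ are chosen so that the scalar curvature expansion $R(g_\la)$ cancels all oscillations at orders $\la^{-2}$ and $\la^{-1}$ against the $|k_\la|^2$ and $(\tr k_\la)^2$ terms. The $\la^{-2}$ resonant term is $\propto \sum_\A \cos(2u_\A/\la)\, |\nab u_\A|^2|\bar F^{(1)}_\A|^2$-type plus a non-oscillating backreaction $\sum_\A|\nab u_\A|^2 F_\A^2$; the first forces the choice of $\frac34 F_\A^2 g_0$ (the analogue of $V^{(2,2)}_\A=0$, i.e. $\Pol{F^{(2,2)}_\A}{u_\A}=-\tfrac{3}{32}|F^{(1)}_\A|^2\d u_\A$ restricted to $\Si_0$), the second is absorbed because $(g_0,k_0,u_\A,F_\A)$ already solves \eqref{BG hamiltonian}. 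At order $\la^{-1}$ the obstruction is a scalar that must be a divergence/Laplacian in order to be absorbed — this is why $\wc\ffi^{(2,1)}_\A$ and $\wc\ffi^{(2,\pm)}_{\A\B}$ have the precise expressions \eqref{wcffi 21}-\eqref{wcffi 2pm}, obtained by solving $\De(\text{oscillating factor}) = (\text{obstruction})$ algebraically using $\De(f e^{i z/\la}) = -\la^{-2}|\nab z|^2 f e^{iz/\la}+\dots$ and $\inf|\nab z|>0$ from \eqref{spatial}. The mixed obstructions at $\la^{-1}$ for phases $u_\A\pm u_\B$ are precisely the spatial restrictions of $\mathcal{I}^{(0,\pm)}_{\A\B}$/$\tilde R$-type quantities, which is the source of the connection to the spacetime hierarchy advertised after the statement. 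What remains in the Hamiltonian constraint is a scalar equation of the form $\De_{g_0}\tr h_\la + (\text{lower order}) = \text{source}$, which after the York trick becomes a Lichnerowicz-type semilinear elliptic equation for the trace part of $h_\la$; its solvability for small $\e$ is again the weighted elliptic theory, yielding the $r=0$ case of \eqref{estim reste cons 1}.

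**Higher derivatives, the full remainder system, and closing.** Having cancelled the $\la^{-2},\la^{-1},\la^0$ oscillations in both constraints, the residual system for $(h_\la,\tilde k^{cons}_\la)$ is an elliptic system (vector Laplacian for $\tilde k^{cons}_\la$ coupled to scalar Lichnerowicz equation for $h_\la$, plus the traceless transverse part of $h_\la$ which can be taken to vanish or solved trivially) whose coefficients depend on $\la$ through oscillating functions but whose principal part is the fixed flat Laplacian up to $\e$-small perturbations. Solvability in $H^2_\de\times H^1_{\de+1}$ for $\e$ small is by a fixed-point argument on the isomorphism $\De:H^2_\de\to L^2_{\de+2}$. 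The point requiring care — and the main obstacle — is the $\la$-weighted higher-order estimates \eqref{estim reste cons 1}-\eqref{estim reste cons 2}: the sources involve $r$ spatial derivatives hitting oscillating phases, each costing $\la^{-1}$, so one must show that differentiating the remainder equation $r$ times and multiplying by $\la^r$ produces a source uniformly bounded by $\e$ in $L^2_{\de+r+1}$. This works because every $\la^{-1}$-losing term carries an oscillation with a lower-order (in $\la$) explicitly-given profile, so the $\la^r$ weight exactly compensates; the argument is the elliptic counterpart of the weighted high-frequency estimates used later for the wave system, and is run by induction on $r\in\llbracket 0,4\rrbracket$ using the commutator structure of $\De$ with $\nab$ and the product estimates in weighted Sobolev spaces from \cite{ChoquetBruhat2009}. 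One also needs to check the supports: all prescribed oscillating coefficients are built from $\bar F^{(1)}_\A,F_\A,\ga^{(2,\pm)}_{\A\B},\ka^{(\cdot)}$, all supported in $B_R$, so the sources are supported in $B_{C_{\mathrm{supp}}R}$ and the weighted elliptic solutions then satisfy the stated decay; the remainders need not be compactly supported but lie in the weighted spaces, which is all that is claimed.
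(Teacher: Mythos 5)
Your overall strategy---prescribe the oscillating profiles, cancel the resonant oscillating obstructions algebraically using $\inf|\nabla z|_{g_0}>0$, and solve a weighted elliptic system for a remainder---is the same family of argument as the paper's, which implements it through the conformal method of \cite{Corvino2006}: the unknowns are a conformal factor $\ffi_\la$ and a vector field $X_\la$ (see \eqref{conformal ansatz}--\eqref{def X la}), and the forms \eqref{g la}--\eqref{k la}, including $\bar{\PP}^{[1]}$, $\bar{\PP}^{[2]}$ and the hatted quantities, are reconstructed at the end rather than postulated. Two points in your sketch do not survive contact with the actual expansion. First, the bookkeeping of orders is off: since the oscillating part of $g_\la$ starts at amplitude $\la$ and that of $k_\la$ at amplitude $1$, neither constraint has a $\la^{-2}$ term, and the $\la^{-1}$ coefficients vanish identically as a consequence of \eqref{assum F1} (tracelessness and $N_\A$-orthogonality of the seed), not through any choice of profile. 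All the obstructions you distribute over the orders $\la^{-2}$ and $\la^{-1}$ in the Hamiltonian constraint (the backreaction $\sum_\A|\nabla u_\A|_{g_0}^2F_\A^2$ absorbed by \eqref{BG hamiltonian}, the $\cos(2u_\A/\la)$ term behind the $\frac34F_\A^2g_0$ piece, and the $\sin(u_\A/\la)$ and mixed-phase terms behind $\wc\ffi^{(2,1)}_\A$ and $\wc\ffi^{(2,\pm)}_{\A\B}$) in fact sit at the single order $\la^0$, cf.\ \eqref{H 0} and \eqref{M 0}, and are cancelled simultaneously by the order-$\la^2$ oscillating profiles of $\ffi_\la$ and $X_\la$ through \eqref{eq ffi2} and \eqref{eq X2}.

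Second, and this is the genuine gap: you stop the explicit corrector hierarchy at that order and send everything else into the elliptic system for the remainder. But after the $\la^0$ cancellation the constraints still contain purely oscillating terms at order $\la^1$ (see \eqref{H 1}, \eqref{M 1}, together with the order-$\la^1$ leftovers of $\De_{\ga_\la}$ and $\vDe_{\ga_\la}$ acting on the order-$\la^2$ profiles). If these are fed to a non-oscillating remainder, the source of the remainder equation has $L^2_{\de+2}$ norm of size $\e/\la$, and the fixed point then yields bounds worse by a factor $\la^{-1}$ than \eqref{estim reste cons 1}--\eqref{estim reste cons 2} at $r=0$; your closing argument about the $\la^r$ weights only addresses the loss incurred when derivatives hit oscillating coefficients, not this zeroth-order loss. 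The paper removes these terms with explicit third-order correctors $\ffi^{(3)}$, $X^{(3)}$ (see \eqref{def ffi la}--\eqref{def X la} and \eqref{eq ffi3}, \eqref{eq X3}), again defined algebraically by dividing by $|\nabla w|^2_{g_0}$. There is moreover a structural subtlety specific to the momentum equation that your ``standard div-curl system with source of size $\e$'' misses: the vector operator $\vDe_{\ga_\la}$ contains second derivatives of $\ga_\la$, hence produces order-$\la^{-1}$ oscillating terms even when applied to the non-oscillating remainder $\tX_\la$, which forces the corrector $X^{(3)}$ to depend linearly on $\tX_\la$ (see \eqref{def X 3} and Remark \ref{remark couplage}); this coupling must be built in before the uniform-in-$\la$ elliptic solve, which the paper then imports from \cite{Touati2023b} to obtain \eqref{estim tffi tX}.
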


The result of Proposition \ref{prop constraint main} will be used as a black box in the final construction of initial data in Proposition \ref{prop ID main}, and its proof, based on the conformal method, is postponed to Appendix \ref{appendix proof prop constraint main}. The freedom in the choice of the tensors $\ga^{(2,\pm)}_{\A\B}$, $\ka^{(1,1)}_\A$, $\ka^{(1,2)}_\A$ and $\ka^{(1,\pm)}_{\A\B}$ will be crucially used to match the solution of the constraint equations and the data induced by the spacetime metric. This is related to an inherent redundancy in the geometric optics approach for \ref{EVE}. Indeed, solving the constraint roughly provides the zeroth and first derivatives of the metric on $\Si_0$. However, our geometric optics construction implies that the first terms in the ansatz for $\g_\la$ actually solve first order transport equation, so that their first order derivatives are not free. This difficulty was already present in the singlephase case of \cite{Touati2023a,Touati2023b}, but the multiphase case brings a new one: the tensor $F^{(2,\pm)}_{\A\B}$ solves an algebraic equation and is defined by \eqref{def F2pm} so that even its zeroth derivatives is not free. Therefore, we need to make sure that the solution $(g_\la,k_\la)$ of the constraint equations matches the expression of the spacetime ansatz by choosing appropriate tensors $\ga^{(2,\pm)}_{\A\B}$, $\ka^{(1,1)}_\A$, $\ka^{(1,2)}_\A$ and $\ka^{(1,\pm)}_{\A\B}$.

\subsection{Initial data for the spacetime metric $\g_\la$}\label{section initial data final}

In this section, we define initial data for the system \eqref{eq F1}-\eqref{eq h}. The properties of these initial data are summarized in Proposition \ref{prop ID main} at the end of the section.

\subsubsection{Initial data for the metric and the induced metric}\label{section induced metric}

We start by defining initial data for the induced metric $\g_{\la|_{\Si_0}}$. In order to ensure that the induced metric on $\Si_0$ is given by $g_\la$ from Proposition \ref{prop constraint main}, we will make a particular choice of tensor $\ga^{(2,\pm)}_{\A\B}$.

\paragraph{Initial data for $F^{(1)}_\A$.} For $F^{(1)}_\A$ we define the initial data
\begin{align}
(F^{(1)}_\A)_{ij|_{\Si_0}} & \vcentcolon = (\bar{F}^{(1)}_\A)_{ij},\label{ID F1 ij}
\\ (F^{(1)}_\A)_{0\a|_{\Si_0}} & \vcentcolon = 0. \label{ID F1 0}
\end{align}
Thanks to \eqref{assum F1}, these initial data are such that the conditions \eqref{pola F1}-\eqref{pola F1 bis} and \eqref{energy F1} hold on $\Si_0$. Moreover, they allow us to obtain the initial value of derivatives of $F^{(1)}_\A$. While spacelike derivatives can be obtained by directly differentiating \eqref{ID F1 ij} or \eqref{ID F1 0}, time derivatives can be deduced from the transport equation we want $F^{(1)}_\A$ to solve in the spacetime, i.e \eqref{eq F1}. Indeed, an equation of the form $\Ll_\A T = S$ rewrites on $\Si_0$:
\begin{align}\label{dt via transport}
\dr_t T_{\mu\nu}  & = N_\A T_{\mu\nu} + (\dr_t - N_\A)^\a \Ga(\g_0)^\rho_{\a(\mu}T_{\nu)\rho} + \frac{1}{2|\nab_{g_0} u_\A|_{g_0}} (\Box_{\g_0}u_\A)T_{\mu\nu}  - \frac{1}{2|\nab_{g_0} u_\A|_{g_0}} S_{\mu\nu} ,
\end{align}
where we used $L_{\A|_{\Si_0}} = |\nab_{g_0} u_\A|_{g_0} (\dr_t - N_\A)$. In the case of $F^{(1)}_\A$, this implies in particular
\begin{align}
\dr_t (F^{(1)}_\A)_{00|_{\Si_0}} & = 0, \label{dt F1 00}
\\ \dr_t (F^{(1)}_\A)_{0i|_{\Si_0}} & =  (\bar{F}^{(1)}_\A)_{i k} g_0^{k\ell} \pth{  \dr_t (\g_0)_{0\ell} +  (k_0)_{N_\A\ell} } \label{dt F1 0i},
\\ \dr_t (F^{(1)}_\A)_{ij|_{\Si_0}} & =  N_\A (\bar{F}^{(1)}_\A)_{ij}  +   (\dr_t - N_\A)^\rho \Gamma(\g_0)^k_{\rho (i} (\bar{F}^{(1)}_\A)_{k j)}+ \frac{1}{2|\nabla_{g_0} u_\A|_{g_0}}(\Box_{\g_0}u_\A)  (\bar{F}^{(1)}_\A)_{ij},  \label{dt F1 ij} 
\end{align}
where we used \eqref{ID F1 ij}-\eqref{ID F1 0}. Using again \eqref{assum F1}, \eqref{dt F1 ij} implies
\begin{align}
g_0^{ij}\dr_t (F^{(1)}_\A)_{ij|_{\Si_0}} & = -2 (\bar{F}^{(1)}_\A)_{ij}  k_0^{ij}, \label{trace dt F1}
\\ N_\A^i N_\A^j  \dr_t (F^{(1)}_\A)_{ij|_{\Si_0}} & = 0,\label{NN dt F1}
\\ N_\A^i (e^{(\i)}_\A)^j \dr_t(F^{(1)}_\A)_{ij|_{\Si_0}} & =\pth{  -  N_\A (N_\A)_\ell  -   (k_0)_{N_\A\ell}    + \half  N_\A^i  N_\A^j  \dr_\ell (g_0)_{j i} } g_0^{k\ell}(\bar{F}^{(1)}_\A)_{k e^{(\i)}_\A}.  \label{Ne dt F1}
\end{align}
Together with the definition \eqref{Q 0}, the identities \eqref{dt F1 00}, \eqref{dt F1 0i} and \eqref{trace dt F1} allow us to compute $Q^{(0)}_{\A|_{\Si_0}}$:
\begin{align}
(Q^{(0)}_{\A})_{\ell |_{\Si_0}} & = -  (\bar{F}^{(1)}_\A)_{\ell k} g_0^{ki} \pth{  \dr_t (\g_0)_{0i} +  (k_0)_{N_\A i} }+ g_0^{ij}\dr_i (\bar{F}^{(1)}_\A)_{\ell j} + \half (\bar{F}^{(1)}_\A)_{ij}   \dr_\ell g_0^{ij} ,\label{Q 0 ell}
\\ (Q^{(0)}_{\A})_{0|_{\Si_0}} & = (\bar{F}^{(1)}_\A)_{ij}  k_0^{ij} .\label{Q 0 0}
\end{align}

\paragraph{Initial data for $\F_\A$, $F^{(2,1)}_\A$ and $F^{(2,2)}_\A$.} For $\F_\A$, we simply define
\begin{align}
\F_{\A |_{\Si_0}} & \vcentcolon= 0,\label{ID F}
\end{align}
which clearly implies that \eqref{pola F} holds on $\Si_0$. For $F^{(2,1)}_\A$ we define
\begin{align}
(F^{(2,1)}_\A)_{ij|_{\Si_0}} & \vcentcolon =    4\wc{\ffi}^{(2,1)}_{\A} (g_0)_{ij},\label{ID F21 ij}
\\ (F^{(2,1)}_\A)_{00|_{\Si_0}} & \vcentcolon= 0,\label{ID F21 00}
\\ (F^{(2,1)}_\A)_{0N_\A|_{\Si_0}} & \vcentcolon=  2\wc{\ffi}^{(2,1)}_{\A} - \frac{1}{2|\nab_{g_0} u_\A|_{g_0}^2}(Q^{(0)}_\A)_{{L_\A}|_{\Si_0}},\label{ID F21 0N}
\\ (F^{(2,1)}_\A)_{0 e^{(\i)}_\A|_{\Si_0}} & \vcentcolon=  \frac{1}{|\nab_{g_0} u_\A|_{g_0}} (Q^{(0)}_\A)_{{e^{(\i)}_\A}|_{\Si_0}},\label{ID F21 0e}
\end{align}
where we used \eqref{Q 0 ell}-\eqref{Q 0 0}, and for $F^{(2,2)}_\A$ we define
\begin{align}
 (F^{(2,2)}_\A)_{ij|_{\Si_0}} & \vcentcolon = \frac{3}{4} F_\A^2  (g_0)_{ij},\label{ID F22 ij}
\\ (F^{(2,2)}_\A)_{00|_{\Si_0}} & \vcentcolon= 0,\label{ID F22 00}
\\ (F^{(2,2)}_\A)_{0N_\A|_{\Si_0}} & \vcentcolon=  \frac{3}{8} F_\A^2, \label{ID F22 0N}
\\ (F^{(2,2)}_\A)_{0 e^{(\i)}_\A|_{\Si_0}} & \vcentcolon= 0. \label{ID F22 0e}
\end{align}

\begin{lemma}\label{lem pola F2 init}
The initial data \eqref{ID F21 ij}-\eqref{ID F22 0e} are such that \eqref{pola F21}-\eqref{pola F22} hold on $\Si_0$.
\end{lemma}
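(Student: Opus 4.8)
The plan is to verify the two polarization conditions \eqref{pola F21}--\eqref{pola F22} pointwise on $\Si_0$ by decomposing the $1$-forms $V^{(2,1)}_\A$ and $V^{(2,2)}_\A$ (which are genuine tensors, cf.\ Remark \ref{remark Q0}) in the null frame $\pth{L_\A,\Lb_\A,e^{(1)}_\A,e^{(2)}_\A}$: since at each $p\in\Si_0$ this frame is a basis of $T_p\mathcal{M}$, it suffices to show that the four contractions vanish. The bookkeeping rests on $L_\A^\mu=-\g_0^{\mu\nu}\dr_\nu u_\A$, which via the eikonal equation and the frame normalizations yields $(\d u_\A)(L_\A)=-\g_0^{-1}(\d u_\A,\d u_\A)=0$, $(\d u_\A)(e^{(\i)}_\A)=0$, $(\d u_\A)(\Lb_\A)=-\g_0(L_\A,\Lb_\A)=2$, together with the representation $\Pol{S}{u_\A}_\a=-S_{\a L_\A}-\half(\tr_{\g_0}S)\dr_\a u_\A$ valid for any symmetric $2$-tensor $S$ (so \eqref{Pol L}--\eqref{Pol Lb} hold); on $\Si_0$ one moreover uses $L_\A=|\nab_{g_0}u_\A|_{g_0}(\dr_t-N_\A)$, $\Lb_\A$ proportional to $\dr_t+N_\A$, the $g_0$-orthonormality of $(N_\A,e^{(1)}_\A,e^{(2)}_\A)$, and that $(\g_0)_{00}=-1$, $(\g_0)_{0i}=0$ there.

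First I would treat $V^{(2,2)}_\A$. From \eqref{ID F1 ij}--\eqref{ID F1 0}, \eqref{assum F1}, \eqref{ID F22 ij} and \eqref{ID F22 00} one gets $\left|F^{(1)}_\A\right|^2_{\g_0}=8F_\A^2$ and $\tr_{\g_0}F^{(2,2)}_\A=\frac94F_\A^2$ on $\Si_0$, so by the representation above and \eqref{def 1 V 2 2}, $(V^{(2,2)}_\A)_\a=-(F^{(2,2)}_\A)_{\a L_\A}-\frac38F_\A^2\,\dr_\a u_\A$ (here $\frac{3}{32}\cdot 8=\frac34$ is the coefficient that matters). Inserting \eqref{ID F22 ij}--\eqref{ID F22 0e} and the formulas for $L_\A,\Lb_\A$ yields $(F^{(2,2)}_\A)_{L_\A L_\A}=0$, $(F^{(2,2)}_\A)_{L_\A e^{(\i)}_\A}=0$ and $(F^{(2,2)}_\A)_{L_\A \Lb_\A}=-\frac34F_\A^2$; combined with $(\d u_\A)(L_\A)=(\d u_\A)(e^{(\i)}_\A)=0$ and $(\d u_\A)(\Lb_\A)=2$ this gives $(V^{(2,2)}_\A)_{L_\A}=(V^{(2,2)}_\A)_{e^{(\i)}_\A}=0$ and $(V^{(2,2)}_\A)_{\Lb_\A}=\frac34F_\A^2-\frac34F_\A^2=0$, i.e.\ \eqref{pola F22}.

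For $V^{(2,1)}_\A$ the scheme is the same. By \eqref{ID F21 ij}--\eqref{ID F21 00}, $\tr_{\g_0}F^{(2,1)}_\A=12\wc{\ffi}^{(2,1)}_{\A}$ on $\Si_0$, so \eqref{def 1 V 2 1} gives $(V^{(2,1)}_\A)_\a=-(F^{(2,1)}_\A)_{\a L_\A}-6\wc{\ffi}^{(2,1)}_{\A}\,\dr_\a u_\A+(Q^{(0)}_\A)_\a$. The off-diagonal data \eqref{ID F21 0N}--\eqref{ID F21 0e}, whose $|\nab_{g_0}u_\A|_{g_0}^{-1}$ and $|\nab_{g_0}u_\A|_{g_0}^{-2}$ weights are chosen for exactly this purpose, make $(F^{(2,1)}_\A)_{L_\A L_\A}=(Q^{(0)}_\A)_{L_\A}$ and $(F^{(2,1)}_\A)_{L_\A e^{(\i)}_\A}=(Q^{(0)}_\A)_{e^{(\i)}_\A}$, so that (again using $(\d u_\A)(L_\A)=(\d u_\A)(e^{(\i)}_\A)=0$) the $L_\A$ and $e^{(\i)}_\A$ components of $V^{(2,1)}_\A$ vanish. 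For the $\Lb_\A$ component, \eqref{Pol Lb} and \eqref{ID F21 ij} give $\Pol{F^{(2,1)}_\A}{u_\A}_{\Lb_\A}=-\sum_\i(F^{(2,1)}_\A)_{e^{(\i)}_\A e^{(\i)}_\A}=-8\wc{\ffi}^{(2,1)}_{\A}$, whence $(V^{(2,1)}_\A)_{\Lb_\A}=-8\wc{\ffi}^{(2,1)}_{\A}+(Q^{(0)}_\A)_{\Lb_\A}$, and everything reduces to the identity $(Q^{(0)}_\A)_{\Lb_\A}=8\wc{\ffi}^{(2,1)}_{\A}$ on $\Si_0$.

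I expect this last identity to be the main obstacle. I would prove it by contracting the explicit $\Si_0$-values \eqref{Q 0 ell}--\eqref{Q 0 0} of $Q^{(0)}_\A$ against $\Lb_\A$ and matching the result with the definition \eqref{wcffi 21} of $\wc{\ffi}^{(2,1)}_{\A}$. Two cancellations make this work. First, the terms of $(Q^{(0)}_\A)_\ell$ involving $\dr_t(\g_0)_{0i}$ and $(k_0)_{N_\A i}$ drop out, since in $(Q^{(0)}_\A)_{\Lb_\A}$ they appear contracted against $N_\A^\ell(\bar{F}^{(1)}_\A)_{\ell k}=(\bar{F}^{(1)}_\A)_{N_\A k}=0$ (the second line of \eqref{assum F1}), while the $(\bar{F}^{(1)}_\A)_{ij}k_0^{ij}$ contributions cancel between the two sides. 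Second, the surviving derivative term $N_\A^\ell g_0^{ij}\dr_i(\bar{F}^{(1)}_\A)_{\ell j}$ is rewritten via $0=\dr_i\big((\bar{F}^{(1)}_\A)_{N_\A j}\big)=(\dr_i N_\A^\ell)(\bar{F}^{(1)}_\A)_{\ell j}+N_\A^\ell\dr_i(\bar{F}^{(1)}_\A)_{\ell j}$, which moves the derivative onto $N_\A=\nab_{g_0}u_\A/|\nab_{g_0}u_\A|_{g_0}$; using once more $(\bar{F}^{(1)}_\A)_{N_\A j}=0$ to discard the terms where $|\nab_{g_0}u_\A|_{g_0}$ is differentiated, together with the symmetry $(\bar{F}^{(1)}_\A)_{ij}\dr^i\dr^j u_\A=(\bar{F}^{(1)}_\A)_{ij}\dr^j\dr^i u_\A$, one recovers exactly the Hessian term $-(\bar{F}^{(1)}_\A)_{ij}\dr^i\dr^j u_\A/|\nab_{g_0}u_\A|_{g_0}$ in \eqref{wcffi 21}, the term $\frac12 N_\A^\ell(\bar{F}^{(1)}_\A)_{ij}\dr_\ell g_0^{ij}$ producing the last term there. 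This proves \eqref{pola F21}, and hence the lemma.
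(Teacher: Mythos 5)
Your proposal is correct and follows essentially the same route as the paper's proof: decompose $V^{(2,1)}_\A$ and $V^{(2,2)}_\A$ on the null frame restricted to $\Si_0$, substitute the initial data \eqref{ID F21 ij}--\eqref{ID F22 0e}, and reduce the only non-trivial component, $(V^{(2,1)}_\A)_{\Lb_\A}$, to the identity between $(Q^{(0)}_\A)_{\Lb_\A}$ and $\wc{\ffi}^{(2,1)}_{\A}$ via $(\bar{F}^{(1)}_\A)_{N_\A j}=0$ and the product-rule step moving the derivative onto $N_\A$ (which the paper leaves implicit when invoking \eqref{wcffi 21}). The only cosmetic deviation is your normalization $\Lb_\A u_\A=2$ on $\Si_0$ instead of the paper's $\Lb_{\A|_{\Si_0}}=|\nab_{g_0}u_\A|_{g_0}(\dr_t+N_\A)$, which is immaterial since only the vanishing of the contractions is at stake.
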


\begin{proof}
We recall that $L_{\A|_{\Si_0}} = |\nab_{g_0} u_\A|_{g_0} (\dr_t - N_\A)$ and $\Lb_{\A|_{\Si_0}} = |\nab_{g_0} u_\A|_{g_0} (\dr_t + N_\A)$. We start with \eqref{pola F21}. From \eqref{Pol L} and \eqref{def 1 V 2 2} we have
\begin{align*}
(V^{(2,1)}_\A)_{L_\A|_{\Si_0}} & = \Pol{F^{(2,1)}_\A}{u_\A}_{L_\A|_{\Si_0}} + (Q^{(0)}_{\A})_{L_\A |_{\Si_0}}
\\& = - (F^{(2,1)}_\A)_{L_\A L_\A|_{\Si_0}} + (Q^{(0)}_{\A})_{L_\A |_{\Si_0}}
\\& =  |\nab_{g_0} u_\A|_{g_0}^2\pth{ -  (F^{(2,1)}_\A)_{0 0|_{\Si_0}} +  2 (F^{(2,1)}_\A)_{0 N_\A|_{\Si_0}}  -  (F^{(2,1)}_\A)_{N_\A N_\A|_{\Si_0}} }  + (Q^{(0)}_{\A})_{L_\A |_{\Si_0}}
\\& = |\nab_{g_0} u_\A|_{g_0}^2\pth{  4\wc{\ffi}^{(2,1)}_{\A} - \frac{1}{|\nab u_\A|^2}(Q^{(0)}_\A)_{{L_\A}|_{\Si_0}} -  (F^{(2,1)}_\A)_{N_\A N_\A|_{\Si_0}} } + (Q^{(0)}_{\A})_{L_\A |_{\Si_0}}
\\& = 0,
\end{align*}
and from \eqref{Pol e} we also have
\begin{align*}
(V^{(2,1)}_\A)_{e^{(\i)}_\A|_{\Si_0}} & = \Pol{F^{(2,1)}_\A}{u_\A}_{e^{(\i)}_\A|_{\Si_0}} + (Q^{(0)}_{\A})_{e^{(\i)}_\A |_{\Si_0}}
\\& = - (F^{(2,1)}_\A)_{e^{(\i)}_\A L_\A|_{\Si_0}} + (Q^{(0)}_{\A})_{e^{(\i)}_\A |_{\Si_0}}
\\& = -|\nab_{g_0} u_\A|_{g_0} \pth{  (F^{(2,1)}_\A)_{e^{(\i)}_\A 0|_{\Si_0}} - (F^{(2,1)}_\A)_{e^{(\i)}_\A N_\A|_{\Si_0}} } + (Q^{(0)}_{\A})_{e^{(\i)}_\A |_{\Si_0}}
\\& = -|\nab_{g_0} u_\A|_{g_0} \pth{ \frac{1}{|\nab_{g_0} u_\A|_{g_0}} (Q^{(0)}_\A)_{{e^{(\i)}_\A}|_{\Si_0}} - (F^{(2,1)}_\A)_{e^{(\i)}_\A N_\A|_{\Si_0}} } + (Q^{(0)}_{\A})_{e^{(\i)}_\A |_{\Si_0}}
\\& = 0,
\end{align*}
where we used \eqref{ID F21 ij} and \eqref{ID F21 00}-\eqref{ID F21 0e}. From \eqref{Pol Lb} we also have
\begin{align*}
(V^{(2,1)}_\A)_{\Lb_\A|_{\Si_0}} & = \Pol{F^{(2,1)}_\A}{u_\A}_{\Lb_\A|_{\Si_0}} + (Q^{(0)}_{\A})_{\Lb_\A |_{\Si_0}}
\\& =   |\nab_{g_0} u_\A|_{g_0}^2\pth{ (F^{(2,1)}_\A)_{ N_\A N_\A|_{\Si_0}} - \tr_{g_0} F^{(2,1)}_{\A\;\;|_{\Si_0}}}  
 + |\nab_{g_0} u_\A|_{g_0} \pth{  (Q^{(0)}_{\A})_{0 |_{\Si_0}} + N_\A^\ell  (Q^{(0)}_{\A})_{\ell |_{\Si_0}} }.
\end{align*}
Using \eqref{ID F21 ij} together with \eqref{image P1} on one hand and \eqref{Q 0 ell}-\eqref{Q 0 0} on the other hand we obtain
\begin{align*}
(V^{(2,1)}_\A)_{\Lb_\A|_{\Si_0}} & =  -8 |\nab_{g_0} u_\A|_{g_0}^2 \wc{\ffi}^{(2,1)}_{\A} 
 + |\nab_{g_0} u_\A|_{g_0} \pth{ (\bar{F}^{(1)}_\A)_{ij}  k_0^{ij} - \frac{(\bar{F}^{(1)}_\A)_{\ell j}}{|\nab_{g_0} u_\A|_{g_0}} g_0^{ij}\dr_i \dr^\ell u_\A + \half (\bar{F}^{(1)}_\A)_{ij} N_\A g_0^{ij} } 
\\& = 0,
\end{align*}
where we used \eqref{wcffi 21}. We now turn to \eqref{pola F22}. From \eqref{def 1 V 2 2} we have 
\begin{align*}
(V^{(2,2)}_\A)_{L_\A|_{\Si_0}} & =  \Pol{F^{(2,2)}_\A}{u_\A}_{L_\A|_{\Si_0}}
\\& = - (F^{(2,2)}_\A)_{L_\A L_\A|_{\Si_0}}
\\& = |\nab_{g_0} u_\A|_{g_0}^2\pth{ -  (F^{(2,2)}_\A)_{0 0|_{\Si_0}} +  2 (F^{(2,2)}_\A)_{0 N_\A|_{\Si_0}}  -  (F^{(2,2)}_\A)_{N_\A N_\A|_{\Si_0}} }
\\& =  |\nab_{g_0} u_\A|_{g_0}^2\pth{ \frac{3}{4} F_\A^2  -  \frac{3}{4} F_\A^2  }
\\& = 0,
\end{align*}
and
\begin{align*}
(V^{(2,2)}_\A)_{e^{(\i)}_\A|_{\Si_0}} & =  \Pol{F^{(2,2)}_\A}{u_\A}_{e^{(\i)}_\A|_{\Si_0}}
\\& = - (F^{(2,2)}_\A)_{e^{(\i)}_\A L_\A|_{\Si_0}}
\\& = - |\nab_{g_0} u_\A|_{g_0} \pth{ (F^{(2,2)}_\A)_{e^{(\i)}_\A 0|_{\Si_0}} - (F^{(2,2)}_\A)_{e^{(\i)}_\A N_\A|_{\Si_0}} }
\\& = 0,
\end{align*}
where we used \eqref{ID F22 ij} and \eqref{ID F22 00}-\eqref{ID F22 0e}. Using now only \eqref{ID F22 ij} we finally obtain
\begin{align*}
(V^{(2,2)}_\A)_{\Lb_\A|_{\Si_0}} & = \Pol{F^{(2,2)}_\A}{u_\A}_{\Lb_\A|_{\Si_0}} + \frac{3}{2}F_\A^2 |\nab u_\A|^2
\\& =   |\nab_{g_0} u_\A|_{g_0}^2\pth{ (F^{(2,2)}_\A)_{ N_\A N_\A|_{\Si_0}} - \tr_{g_0} F^{(2,2)}_{\A\;\;|_{\Si_0}}} + \frac{3}{2}F_\A^2 |\nab_{g_0} u_\A|_{g_0}^2
\\& = - \frac{3}{2}F_\A^2 |\nab_{g_0} u_\A|_{g_0}^2 + \frac{3}{2}F_\A^2 |\nab_{g_0} u_\A|_{g_0}^2
\\&=0,
\end{align*}
which concludes the proof of the lemma.
\end{proof}

\paragraph{Initial data for $\h_\la$.} In order to define initial data for $\h_\la$, we first need to obtain the values of $(\gg^{(3,h)}_{u,T})_{\a\b|_{\Si_0}}$ and $(\gg^{(3,e)}_{v,T})_{\a\b|_{\Si_0}}$. For $(\gg^{(3,h)}_{u,T})_{\a\b|_{\Si_0}}$, we use \eqref{g3h}, and note that the initial values of $F^{(1)}_\A$, $\F_\A$, $F^{(2,1)}_\A$ and $F^{(2,2)}_\A$ have been defined in \eqref{ID F1 ij}-\eqref{ID F1 0} and \eqref{ID F21 ij}-\eqref{ID F22 0e}, the initial values of $\nab F^{(1)}_\A$ and $\dr_t F^{(1)}_\A$ can be deduced from \eqref{ID F1 ij}-\eqref{ID F1 0} and \eqref{dt F1 00}-\eqref{dt F1 ij} respectively, the initial values of $\nab F^{(2,1)}_\A $ and $\nab F^{(2,2)}_\A$ can be deduced from \eqref{ID F21 ij}-\eqref{ID F22 0e} and for the time derivatives $\dr_t F^{(2,1)}_\A $ and $\dr_t F^{(2,2)}_\A$ we use the transport equations \eqref{eq F21}-\eqref{eq F22} we want $F^{(2,1)}_\A$ and $F^{(2,2)}_\A$ to solve in the spacetime. Thanks to \eqref{dt via transport} we obtain
\begin{align*}
\dr_t (F^{(2,1)}_\A)_{\a\b} & = \cro{ \nab^{\leq 1} F^{(2,1)} + \dr^{\leq 2} F^{(1)} + (F^{(1)})^2 + (F^{(1)})^3}_{\a\b},
\\ \dr_t (F^{(2,2)}_\A)_{\a\b} & = \cro{ \nab^{\leq 1} F^{(2,2)} +  \pth{ F^{(2,1)} + \dr^{\leq 1} F^{(1)} }F^{(1)} }_{\a\b},
\end{align*}
where we also used \eqref{Rtilde 1 1}-\eqref{Rtilde 1 2}. This allows us to completely define the initial value of $(\gg^{(3,h)}_{u,T})_{\a\b|_{\Si_0}}$. For $(\gg^{(3,e)}_{v,T})_{\a\b|_{\Si_0}}$, we use \eqref{def g3e} and \eqref{R 1 e} to obtain
\begin{align*}
\gg^{(3,e)}_{v,T} & =  \Big\{ \pth{ \F + F^{(2,1)} + F^{(2,2)}  + (F^{(1)})^2 + \dr^{\leq 1} F^{(1)} }\pth{1+F^{(1)}} \Big\}.
\end{align*}
Therefore, using the various definitions of initial data above, this allows us to completely define the initial value of $(\gg^{(3,e)}_{v,T})_{\a\b|_{\Si_0}}$. We can now define the initial data for $\h_\la$:
\begin{align}
(\h_\la)_{ij|_{\Si_0}} & \vcentcolon= (h_\la)_{ij} - \la \sum_{u\in\mathcal{N},T\in\{\cos,\sin\}}T\pth{\frac{u}{\la}} (\gg^{(3,h)}_{u,T})_{ij|_{\Si_0}}   -\la \sum_{v\in\mathcal{I},T\in\{\cos,\sin\}} T\pth{\frac{v}{\la}} (\gg^{(3,e)}_{v,T})_{ij|_{\Si_0}},  \label{ID h ij}
\\ (\h_\la)_{0\a|_{\Si_0}} & \vcentcolon= - \la \sum_{u\in\mathcal{N},T\in\{\cos,\sin\}}T\pth{\frac{u}{\la}} (\gg^{(3,h)}_{u,T})_{0\a|_{\Si_0}} -\la \sum_{v\in\mathcal{I},T\in\{\cos,\sin\}} T\pth{\frac{v}{\la}} (\gg^{(3,e)}_{v,T})_{0\a|_{\Si_0}}.   \label{ID h 0}
\end{align}

\paragraph{The induced metric.} Thanks to \eqref{ID F1 ij}, \eqref{ID F}, \eqref{ID F21 ij}, \eqref{ID F22 ij} and \eqref{ID h ij} we can compute the induced metric on $\Si_0$ by $\g_\la$:
\begin{align*}
(\g_\lambda)_{ij|_{\Si_0}} & = (g_0)_{ij} + \la \sum_\A  \cos\left( \frac{u_\A}{\lambda} \right) (\bar{F}^{(1)}_\A)_{ij} + \lambda^2\sum_\A  \sin\left(\frac{u_\A}{\lambda} \right) 4\wc{\ffi}^{(2,1)}_{\A} (g_0)_{ij} 
\\&\quad + \lambda^2\sum_\A  \cos\left( \frac{2u_\A}{\lambda}\right)  \frac{3}{4} F_\A^2  (g_0)_{ij} + \la^2 \sum_{\A\neq \B,\pm} \cos\left(\frac{u_\A\pm u_\B}{\lambda}\right) (F^{(2,\pm)}_{\A\B})_{ij|_{\Si_0}}  + \lambda^2 (h_\lambda)_{ij} .
\end{align*}
Therefore, $g_\la$ given by \eqref{g la} is the induced metric if and only if the following holds
\begin{align}\label{eq ga2pm 1}
(F^{(2,\pm)}_{\A\B})_{ij|_{\Si_0}} & = \bar{\PP}^{[1]}_{u_\A\pm u_\B}\pth{\ga^{(2,\pm)}_{\A\B}}_{ij}  + 4\wc{\ffi}^{(2,\pm)}_{\A\B}(g_0)_{ij}.
\end{align}
Since $(F^{(2,\pm)}_{\A\B})_{ij|_{\Si_0}}$ can be computed from \eqref{def F2pm} and \eqref{ID F1 ij}-\eqref{ID F1 0} and $\wc{\ffi}^{(2,\pm)}_{\A\B}$ from \eqref{wcffi 2pm}, \eqref{eq ga2pm 1} is an equation for $\ga^{(2,\pm)}_{\A\B}$. The following lemma shows how one can solve it.

\begin{lemma}\label{lem ga2pm}
If we define
\begin{align}
\ga^{(2,\pm)}_{\A\B} & \vcentcolon= (F^{(2,\pm)}_{\A\B})_{ij|_{\Si_0}}  - 4\wc{\ffi}^{(2,\pm)}_{\A\B}(g_0)_{ij},\label{def ga2pm}
\end{align}
then \eqref{eq ga2pm 1} holds.
\end{lemma}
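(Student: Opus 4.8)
The plan is to reduce \eqref{eq ga2pm 1} to a single scalar identity on $\Si_0$ and then to obtain that identity by unwinding the definitions. The key preliminary observation is that $\bar{\PP}^{[1]}_v(g_0)=0$ for every admissible $v$: since $\dim\Si_0=3$ and $N_v$ is $g_0$-unit, \eqref{def P1(T)} gives $\bar{\PP}^{[1]}_v(g_0)=g_0-\half\pth{\tr_{g_0}g_0-(g_0)_{N_vN_v}}g_0=g_0-\half(3-1)g_0=0$. Writing $S\vcentcolon=(F^{(2,\pm)}_{\A\B})_{ij|_{\Si_0}}$ for the spatial components of $F^{(2,\pm)}_{\A\B}$ on $\Si_0$, the linearity of $\bar{\PP}^{[1]}_{u_\A\pm u_\B}$ combined with \eqref{def ga2pm} gives $\bar{\PP}^{[1]}_{u_\A\pm u_\B}(\ga^{(2,\pm)}_{\A\B})=\bar{\PP}^{[1]}_{u_\A\pm u_\B}(S)$, so that, using \eqref{def P1(T)} once more, \eqref{eq ga2pm 1} is equivalent to the scalar relation
\begin{align*}
\tr_{g_0}S-S_{N^{(\pm)}_{\A\B}N^{(\pm)}_{\A\B}}=8\wc{\ffi}^{(2,\pm)}_{\A\B},
\end{align*}
where $\tr_{g_0}$ and the $N^{(\pm)}_{\A\B}$-contraction only involve the spatial metric $g_0$ and the spatial components of $F^{(2,\pm)}_{\A\B}$.

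It then remains to compute the left-hand side from the definition \eqref{def F2pm}. Since $u_\A$ and $u_\B$ solve the eikonal equation and $\dr_t$ is the $\g_0$-unit normal to $\Si_0$, \eqref{eikonal init} yields on $\Si_0$ that $\g_0^{-1}(\d(u_\A\pm u_\B),\d(u_\A\pm u_\B))=\pm 2\g_0^{-1}(\d u_\A,\d u_\B)=\pm 2\pth{\nab_{g_0}u_\A\cdot\nab_{g_0}u_\B-|\nab_{g_0}u_\A|_{g_0}|\nab_{g_0}u_\B|_{g_0}}$, so that $S$ is a fixed multiple of $(\mathcal{I}^{(0,\pm)}_{\A\B})_{ij|_{\Si_0}}$. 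One expands $\mathcal{I}^{(0,\pm)}_{\A\B}$ via \eqref{I 0 pm} and \eqref{P 0 pm} and evaluates every contraction on $\Si_0$ using the initial data \eqref{ID F1 ij}--\eqref{ID F1 0} and the algebraic relations \eqref{assum F1} — which state that $F^{(1)}_\A$ is, on $\Si_0$, purely spatial, $g_0$-traceless and satisfies $(F^{(1)}_\A)_{N_\A i}=0$, so in particular $(F^{(1)}_\A)_{L_\A L_\A}=0$ and $(F^{(1)}_\A)_{L_\B i|_{\Si_0}}=-(F^{(1)}_\A)_{i\nab_{g_0}u_\B}$ — together with $L_{\A|_{\Si_0}}=|\nab_{g_0}u_\A|_{g_0}(\dr_t-N_\A)$, $\Lb_{\A|_{\Si_0}}=|\nab_{g_0}u_\A|_{g_0}(\dr_t+N_\A)$ and $\dr_t u_\A=|\nab_{g_0}u_\A|_{g_0}$. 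Taking the $g_0$-trace of the resulting spatial $2$-tensor, subtracting its $N^{(\pm)}_{\A\B}$-$N^{(\pm)}_{\A\B}$ contraction and simplifying, one should recover exactly $8$ times the expression \eqref{wcffi 2pm}; this is precisely the computation that motivated the definition \eqref{wcffi 2pm} of $\wc{\ffi}^{(2,\pm)}_{\A\B}$.

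The main (and essentially only) obstacle is the bookkeeping in this last step. The tensor $\mathcal{I}^{(0,\pm)}_{\A\B}$ is a sum of several blocks — the ``high--high self-interaction'' piece $-\tfrac14\pth{(F^{(1)}_\B)_{L_\A L_\A}(F^{(1)}_\A)_{\a\b}+(F^{(1)}_\A)_{L_\B L_\B}(F^{(1)}_\B)_{\a\b}}$, the full $(P^{(0,\pm)}_{\A\B})_{\a\b}$ of \eqref{P 0 pm}, and the block proportional to $\dr_{(\a}(u_\A\pm u_\B)$ — and one must track how each $L_\A$- and $L_\B$-contraction decomposes on $\Si_0$ and, using the explicit seed form \eqref{seed form} and the orthonormality of $\pth{N_\A,e^{(1)}_\A,e^{(2)}_\A}$ (and similarly for $\B$), how the resulting products of contractions recombine into the invariants $|\bar{F}^{(1)}_\A\cdot\bar{F}^{(1)}_\B|_{g_0}$, $(\bar{F}^{(1)}_\A)_{i\nab_{g_0}u_\B}(\bar{F}^{(1)}_\B)^i_{\nab_{g_0}u_\A}$, $|\nab_{g_0}u_\A|_{g_0}^2$, $|\nab_{g_0}u_\B|_{g_0}^2$, $\nab_{g_0}u_\A\cdot\nab_{g_0}u_\B$ and $|\nab_{g_0}(u_\A\pm u_\B)|_{g_0}^2$ appearing in \eqref{wcffi 2pm}. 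In particular, since $N^{(\pm)}_{\A\B}$ is parallel to $\nab_{g_0}u_\A\pm\nab_{g_0}u_\B$, the $\dr_{(\a}(u_\A\pm u_\B)$-block contributes both to the $g_0$-trace and to the $N^{(\pm)}_{\A\B}$-$N^{(\pm)}_{\A\B}$ contraction and cannot be dropped, and the various factors of $\g_0^{-1}(\d u_\A,\d u_\B)$ produced along the way must be seen to combine with the overall factor $1/\g_0^{-1}(\d(u_\A\pm u_\B),\d(u_\A\pm u_\B))$ from \eqref{def F2pm} so as to leave only the denominator $|\nab_{g_0}(u_\A\pm u_\B)|_{g_0}^2$ present in \eqref{wcffi 2pm}.
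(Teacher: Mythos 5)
Your opening reduction is correct and is in substance the same first step as the paper's: since $\dim\Si_0=3$ and $N_v$ is $g_0$-unit, $\bar{\PP}^{[1]}_v(g_0)=0$, so by linearity \eqref{eq ga2pm 1} with the choice \eqref{def ga2pm} is equivalent to the scalar identity $\tr_{g_0}S-S_{N^{(\pm)}_{\A\B}N^{(\pm)}_{\A\B}}=8\,\wc{\ffi}^{(2,\pm)}_{\A\B}$ for $S=(F^{(2,\pm)}_{\A\B})_{ij|_{\Si_0}}$, which is exactly \eqref{condition ga2pm} rewritten; the paper reaches the same point by rewriting \eqref{eq ga2pm 1} as $\bar{\PP}^{[1]}_{u_\A\pm u_\B}\pth{\ga^{(2,\pm)}_{\A\B}}=\ga^{(2,\pm)}_{\A\B}$ and invoking Lemma \ref{lem P 1et2}. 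Your preliminary facts (the value of $\g_0^{-1}(\d(u_\A\pm u_\B),\d(u_\A\pm u_\B))$ on $\Si_0$ via \eqref{eikonal init}, the vanishing contractions coming from \eqref{assum F1} and \eqref{ID F1 ij}--\eqref{ID F1 0}, the decompositions of $L_\A$, $\Lb_\A$ on $\Si_0$) are all correct.

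The gap is that you stop precisely where the lemma begins. The identity $\tr_{g_0}S-S_{N^{(\pm)}_{\A\B}N^{(\pm)}_{\A\B}}=8\,\wc{\ffi}^{(2,\pm)}_{\A\B}$ is the entire content of the statement: \eqref{wcffi 2pm} is a specific formula with specific coefficients ($2$, $2$, $\pm 3$, $\mp 1$, and the cross term with factor $\mp\tfrac{1}{32}$), and asserting that ``one should recover exactly $8$ times \eqref{wcffi 2pm}'' because ``this is precisely the computation that motivated the definition'' is circular — the motivation behind a definition is not a verification, and nothing in your write-up checks that these coefficients actually emerge from the contraction. The paper's proof does carry this out: it computes $(\mathcal{I}^{(0,\pm)}_{\A\B})_{N^{(\pm)}_{\A\B}N^{(\pm)}_{\A\B}}-\tr_{g_0}\mathcal{I}^{(0,\pm)}_{\A\B}$ explicitly on $\Si_0$ from \eqref{I 0 pm}, \eqref{P 0 pm} and the initial data, divides by the factor $\pm 2\pth{|\nab_{g_0}u_\A\cdot\nab_{g_0}u_\B|_{g_0}-|\nab_{g_0}u_\A|_{g_0}|\nab_{g_0}u_\B|_{g_0}}$ supplied by \eqref{def F2pm}, and only then matches the result against \eqref{wcffi 2pm}. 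Your third paragraph correctly identifies this bookkeeping as the main obstacle, but naming the obstacle does not overcome it: as written you have shown that the lemma is equivalent to the scalar identity, not that the identity holds. To complete the proof you must actually perform the trace and $N^{(\pm)}_{\A\B}$-contractions of $\mathcal{I}^{(0,\pm)}_{\A\B}$ on $\Si_0$ (your listed inputs are the right ones) and exhibit the term-by-term agreement with $8\,\wc{\ffi}^{(2,\pm)}_{\A\B}$.
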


\begin{proof}
If $\ga^{(2,\pm)}_{\A\B}$ is defined by \eqref{def ga2pm}, then \eqref{eq ga2pm 1} rewrites $\bar{\PP}^{[1]}_{u_\A\pm u_\B}\pth{\ga^{(2,\pm)}_{\A\B}}=\ga^{(2,\pm)}_{\A\B}$. Therefore, thanks to Lemma \ref{lem P 1et2}, proving the lemma is equivalent to
\begin{align}\label{condition ga2pm}
(\ga^{(2,\pm)}_{\A\B})_{N^{(\pm)}_{\A\B}N^{(\pm)}_{\A\B}} - \tr_{g_0}\ga^{(2,\pm)}_{\A\B} & = 0,
\end{align}
with $\ga^{(2,\pm)}_{\A\B}$ defined by \eqref{def ga2pm}. From \eqref{def ga2pm} we have
\begin{align*}
(\ga^{(2,\pm)}_{\A\B})_{N^{(\pm)}_{\A\B}N^{(\pm)}_{\A\B}} - \tr_{g_0}\ga^{(2,\pm)}_{\A\B} & = \mp\frac{(\mathcal{I}^{(0,\pm)}_{\A\B})_{N^{(\pm)}_{\A\B}N^{(\pm)}_{\A\B}} - \tr_{g_0}\mathcal{I}^{(0,\pm)}_{\A\B} }{2\pth{|\nab_{g_0} u_\A\cdot \nab_{g_0} u_\B|_{g_0} - |\nab_{g_0} u_\A|_{g_0}|\nab_{g_0} u_\B|_{g_0}}} + 8\wc{\ffi}^{(2,\pm)}_{\A\B}.
\end{align*}
From \eqref{I 0 pm}, \eqref{P 0 pm} and \eqref{ID F1 ij}-\eqref{ID F1 0} we find on $\Si_0$
\begin{align*}
&(\mathcal{I}^{(0,\pm)}_{\A\B})_{N^{(\pm)}_{\A\B}N^{(\pm)}_{\A\B}} - \tr_{g_0}\mathcal{I}^{(0,\pm)}_{\A\B} 
\\& =   - \frac{|\nab_{g_0} u_\A\cdot \nab_{g_0} u_\B|_{g_0} - |\nab_{g_0} u_\A|_{g_0}|\nab_{g_0} u_\B|_{g_0}}{2|\nab_{g_0} (u_\A\pm u_\B)|_{g_0}^2}  (\bar{F}^{(1)}_\B)_{\nab_{g_0} u_\A}^k (\bar{F}^{(1)}_\A)_{\nab_{g_0} u_\B k}
\\&\quad \pm  \frac{\left|\bar{F}^{(1)}_\A\cdot \bar{F}^{(1)}_\B\right|_{g_0}}{4|\nab_{g_0}(u_\A\pm u_\B)|_{g_0}^2} \Big( 2|\nab_{g_0} u_\A|_{g_0}^2 |\nab_{g_0} u_\A\cdot \nab_{g_0} u_\B|_{g_0} \pm |\nab_{g_0} u_\A|_{g_0}^2  | \nab_{g_0} u_\B|_{g_0}^2 
\\&\hspace{5cm} \pm3 |\nab_{g_0} u_\A\cdot \nab_{g_0} u_\B|_{g_0}^2  + 2 |\nab_{g_0} u_\A\cdot \nab_{g_0} u_\B|_{g_0} | \nab_{g_0} u_\B|_{g_0}^2
\\&\hspace{5cm}     - 2  |\nab_{g_0} u_\A|_{g_0}^3|\nab_{g_0} u_\B|_{g_0}    - 2   |\nab_{g_0} u_\A|_{g_0}|\nab_{g_0} u_\B|_{g_0}^3 
\\&\hspace{7cm}  \mp4 |\nab_{g_0} u_\A\cdot \nab_{g_0} u_\B|_{g_0}  |\nab_{g_0} u_\A|_{g_0}|\nab_{g_0} u_\B|_{g_0}     \Big).
\end{align*}
Using now \eqref{wcffi 2pm} we finally obtain \eqref{condition ga2pm} and thus the lemma.
\end{proof}

\subsubsection{Initial data for derivatives of the metric and the second fundamental form}\label{section second fundamental form}

In terms of initial data for the system \eqref{eq F1}-\eqref{eq h} it only remains to define the initial data for $\T_\la (\h_\la)_{\a\b}$, where $\T_\la$ is the future-directed unit normal to $\Si_0$ for $\g_\la$. As usual when solving the Einstein vacuum equations in wave coordinates, $\T_\la (\h_\la)_{ij}$ will set the second fundamental form and $\T_\la (\h_\la)_{0\a}$ will ensure that \eqref{GWC} holds on $\Si_0$. In order to ensure that the second fundamental form is given by $k_\la$ from Proposition \ref{prop constraint main}, we will also make a particular choice of $\ka^{(1,1)}_\A$, $\ka^{(1,2)}_\A$ and $\ka^{(1,\pm)}_{\A\B}$. We first need to compute $\T_\la$. 

\paragraph{The future-directed unit normal and the second fundamental form.}
Using the fact that $\dr_t$ is the future-directed unit normal to $\Si_0$ for $\g_0$ and using also \eqref{ID F1 0}, \eqref{ID F} and \eqref{ID h 0} we obtain
\begin{align}
\T_\la & = \dr_t + \la^2 \sum_\A\pth{ \sin\pth{\frac{u_\A}{\la}}\T^{(2,1)}_\A + \cos\pth{\frac{2u_\A}{\la}}\T^{(2,2)}_\A } \label{unit normal}
\\&\quad + \la^2\sum_{\A\neq\B,\pm}\cos\pth{\frac{u_\A\pm u_\B}{\la}}\T^{(2,\pm)}_{\A\B} + \la^3 \mathfrak{T}_\la, \non
\end{align}
where the vector fields $\T^{(2,1)}_\A$, $\T^{(2,2)}_\A$ and $\T^{(2,\pm)}_{\A\B}$ are tangent to $\Si_0$ and satisfy
\begin{align}
g_0\pth{\T^{(2,1)}_\A,\dr_i} & = - (F^{(2,1)}_\A)_{0i}, \label{T21}
\\ g_0\pth{\T^{(2,2)}_\A,\dr_i} & = - (F^{(2,2)}_\A)_{0i}, \label{T22}
\\ g_0\pth{\T^{(2,\pm)}_{\A\B},\dr_i} & = - (F^{(2,\pm)}_{\A\B})_{0i},\label{T2pm}
\end{align}
and where the vector field $\mathfrak{T}_\la=\mathfrak{T}_\la^\a \dr_\a$ satisfies
\begin{align}
\mathfrak{T}_\la^\a = \cro{\pth{ F^{(2,1)} + F^{(2,2)} + (F^{(1)})^2 }\pth{ (\g_\la^{-1})^{(\geq 1)}  + \g_\la^{-1}\pth{ F^{(2,1)} + F^{(2,2)} + (F^{(1)})^2 }} }^\osc.\label{mathfrak T}
\end{align}
We can now compute the second fundamental form of $\Si_0$ with respect to the spacetime metric $\g_\la$, using in particular \eqref{eikonal init}, \eqref{ID F1 ij}, \eqref{ID F21 ij}, \eqref{ID F22 ij} and \eqref{T21}-\eqref{mathfrak T}:
\begin{equation}\label{second fundamental form}
\begin{aligned}
-\half (\LL_{\T_\la}\g_\la)_{ij} & =  (k_0)_{ij} + \half \sum_\A \sin\pth{\frac{u_\A}{\la}}|\nab_{g_0} u_\A|_{g_0} (\bar{F}^{(1)}_\A)_{ij}
\\&\quad - \frac{\la}{2} \sum_\A \cos\pth{\frac{u_\A}{\la}}\Big( \dr_t (F^{(1)}_\A)_{ij} + 4 |\nab_{g_0} u_\A|_{g_0} \wc{\ffi}^{(2,1)}_{\A} (g_0)_{ij} - \dr_{(i}u_\A(F^{(2,1)}_\A)_{0j)}\Big)
\\&\quad + \la \sum_\A \sin\pth{\frac{2u_\A}{\la}} \pth{ \frac{3}{4}|\nab_{g_0} u_\A|_{g_0} F_\A^2  (g_0)_{ij} - \dr_{(i}u_\A(F^{(2,2)}_\A)_{0j)}}
\\&\quad + \frac{\la}{2} \sum_{\A\neq \B,\pm} \sin\pth{\frac{u_\A\pm u_\B}{\la}}\bigg( \pth{ |\nab_{g_0} u_\A|_{g_0} \pm |\nab_{g_0} u_\B|_{g_0}} (F^{(2,\pm)}_{\A\B})_{ij}  
\\&\hspace{7cm} - \dr_{(i}(u_\A\pm u_\B)(F^{(2,\pm)}_{\A\B})_{0j)} \bigg)
\\&\quad - \frac{\la^2}{2} \T_\la (\h_\la)_{ij} + \la^2 \tilde{k}^{evol}_\la,
\end{aligned}
\end{equation}
where the remainder is of the form 
\begin{align}
\tilde{k}^{evol}_\la & = \cro{\pth{ \g_\la \dr \T_\la + \T_\la \dr \pth{\g_\la - \la^2 \h_\la} }^{(\geq 2)}}^\osc,\label{ktilde evol}
\end{align}
where by $\T_\la$ we denote a component of the vector field $\T_\la$ in coordinates. Note that the only terms involved in \eqref{ktilde evol} that have not yet been defined are time derivatives of $\F$. They can be deduced from the transport equation \eqref{eq F} we want $\F_\A$ to satisfy in the spacetime. Thanks to \eqref{dt via transport} and \eqref{ID F}, \eqref{eq F} becomes on $\Si_0$
\begin{align*}
\dr_t (\F_\A)_{\mu\nu|_{\Si_0}}  & =  \frac{1}{2|\nab_{g_0} u_\A|_{g_0}} \Pi_\leq \pth{ (\h_\lambda)_{L_\A L_\A|_{\Si_0}} } (F^{(1)}_\A)_{\mu\nu|_{\Si_0}} .
\end{align*}
Therefore, $k_\la$ given by \eqref{k la} is the second fundamental form if and only if the expressions \eqref{k la} and \eqref{second fundamental form} coincide, that is if and only if the following four identities hold:
\begin{align}
 \bar{\PP}^{[2]}_{u_\A}\pth{\ka^{(1,1)}_\A}_{ij} &+  \frac{(N_\A\tilde{\otimes} \wc{X}^{(2,1)}_\A)_{ij}}{|\nab_{g_0} u_\A|_{g_0}} \label{eq ka11} 
\\& = -\half \pth{ 4|\nab_{g_0} u_\A|_{g_0} \wc{\ffi}^{(2,1)}_{\A} (g_0)_{ij}  - \dr_{(i} u_\A (F^{(2,1)}_\A)_{0j)} + \dr_t(F^{(1)}_\A)_{ij|_{\Si_0}} }, \non
\end{align}
\begin{align}
\bar{\PP}^{[2]}_{u_\A}\pth{\ka^{(1,2)}_\A}_{ij}   -\frac{3}{2} |\nab_{g_0} u_\A|_{g_0} F_\A^2 (N_\A\tilde{\otimes} N_\A)_{ij} = \frac{3}{4} |\nab_{g_0} u_\A|_{g_0} F_\A^2  (g_0)_{ij} - \dr_{(i} u_\A (F^{(2,2)}_\A)_{0j)} ,  \label{eq ka12} 
\end{align}
\begin{align}
 \bar{\PP}^{[2]}_{u_\A\pm u_\B}&\pth{\ka^{(1,\pm)}_{\A\B}}_{ij} - \frac{(N^{(\pm)}_{\A\B}\tilde{\otimes} \wc{X}^{(2,\pm)}_{\A\B})_{ij}}{|\nab_{g_0}\pth{ u_\A\pm u_\B}|_{g_0}} \label{eq ka1pm}
 \\&  = \half \pth{ (|\nab_{g_0} u_\A|_{g_0}\pm|\nab_{g_0} u_\B|_{g_0}) (F^{(2,\pm)}_{\A\B})_{ij|_{\Si_0}}  - \dr_{(i} (u_\A\pm u_\B)  (F^{(2,\pm)}_{\A\B})_{0j)} }  \non,
\end{align}
and
\begin{align} \label{ID dth ij first}
 -  \frac{1}{2} (\T_\la(\h_\lambda)_{ij})_{|_{\Si_0}} + (\tilde{k}^{evol}_\la)_{ij} & = (\tilde{k}^{cons}_\la)_{ij}.
\end{align}

\paragraph{Adjusting the second fundamental form.}
First, note that \eqref{ID dth ij first} defines the initial data for $\T_\la(\h_\lambda)_{ij}$:
\begin{align}\label{ID dth ij}
(\T_\la(\h_\lambda)_{ij})_{|_{\Si_0}} & \vcentcolon = - 2(\tilde{k}^{cons}_\la)_{ij} + 2(\tilde{k}^{evol}_\la)_{ij} .
\end{align}
Second, all the terms in \eqref{eq ka11} can be computed from \eqref{wcffi 21}, \eqref{wcX 21}, \eqref{dt F1 ij}, \eqref{ID F21 0N} and \eqref{ID F21 0e}. All the terms in \eqref{eq ka12} can be computed from \eqref{ID F21 0N} and \eqref{ID F21 0e}. All the terms in \eqref{eq ka1pm} can be computed from \eqref{def F2pm}, \eqref{ID F1 ij}, \eqref{ID F1 0} and \eqref{wcX 2pm}. Therefore, \eqref{eq ka11}-\eqref{eq ka1pm} are equations for $\ka^{(1,1)}_\A$, $\ka^{(1,2)}_\A$ and $\ka^{(1,\pm)}_{\A\B}$.

\begin{lemma}\label{lem ka11}
If we define 
\begin{align}
(\ka^{(1,1)}_\A)_{ij} & \vcentcolon= - \frac{(N_\A\tilde{\otimes} \wc{X}^{(2,1)}_\A)_{ij}}{|\nab_{g_0} u_\A|_{g_0}} - \half \dr_t(F^{(1)}_\A)_{ij|_{\Si_0}} \label{def ka11}
- \frac{|\nab_{g_0} u_\A|_{g_0}}{2}\pth{ 4\wc{\ffi}^{(2,1)}_{\A} (g_0)_{ij}  - (N_\A)_{(i} (F^{(2,1)}_\A)_{0j)}  },
\end{align}
then \eqref{eq ka11} holds.
\end{lemma}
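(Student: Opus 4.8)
The plan is to reduce \eqref{eq ka11} to a membership statement and then verify it by frame decomposition. Since $\dr_i u_\A = |\nab_{g_0} u_\A|_{g_0}(N_\A)_i$ on $\Si_0$, the definition \eqref{def ka11} reads exactly $(\ka^{(1,1)}_\A)_{ij} + |\nab_{g_0} u_\A|_{g_0}^{-1}(N_\A\tilde{\otimes}\wc{X}^{(2,1)}_\A)_{ij} = -\half\big(4|\nab_{g_0} u_\A|_{g_0}\wc{\ffi}^{(2,1)}_{\A}(g_0)_{ij} - \dr_{(i}u_\A(F^{(2,1)}_\A)_{0j)} + \dr_t(F^{(1)}_\A)_{ij|_{\Si_0}}\big)$, which is the right-hand side of \eqref{eq ka11}. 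Therefore \eqref{eq ka11} holds if and only if $\bar{\PP}^{[2]}_{u_\A}(\ka^{(1,1)}_\A) = \ka^{(1,1)}_\A$, and by Lemma \ref{lem P 1et2} together with \eqref{image P2} this is equivalent to the tensorial identity $(\tr_{g_0}\ka^{(1,1)}_\A)(N_\A)_i = (\ka^{(1,1)}_\A)_{iN_\A}$. Contracting against the $g_0$-orthonormal frame $(N_\A, e^{(1)}_\A, e^{(2)}_\A)$ of $T\Si_0$ splits this into the two scalar statements $(\ka^{(1,1)}_\A)_{N_\A N_\A} = \tr_{g_0}\ka^{(1,1)}_\A$ and $(\ka^{(1,1)}_\A)_{e^{(\i)}_\A N_\A} = 0$ for $\i = 1,2$.

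For the first statement I would use the elementary contractions $\tr_{g_0}(N_\A\tilde{\otimes}X) = \half X_{N_\A}$ and $(N_\A\tilde{\otimes}X)_{N_\A N_\A} = \tfrac{3}{2}X_{N_\A}$ (from \eqref{def produit} and $|N_\A|^2_{g_0} = 1$), the contractions \eqref{trace dt F1}--\eqref{NN dt F1} of $\dr_t F^{(1)}_\A$, and $\tr_{g_0}g_0 = 3$, $(g_0)_{N_\A N_\A} = 1$; moreover the term $\dr_{(i}u_\A(F^{(2,1)}_\A)_{0j)}$ drops out, since its trace and its $N_\A N_\A$-component both equal $2|\nab_{g_0} u_\A|_{g_0}(F^{(2,1)}_\A)_{0N_\A}$. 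This reduces the first statement to the purely algebraic identity $4|\nab_{g_0} u_\A|_{g_0}\wc{\ffi}^{(2,1)}_{\A} = |\nab_{g_0} u_\A|_{g_0}^{-1}(\wc{X}^{(2,1)}_\A)_{N_\A} + (\bar{F}^{(1)}_\A)_{ij}k_0^{ij}$, which one checks directly from \eqref{wcffi 21} and \eqref{wcX 21}: the condition $(\bar{F}^{(1)}_\A)_{N_\A i} = 0$ from \eqref{assum F1} annihilates the third term of \eqref{wcX 21} as well as two further pieces of its first term once $N_\A = \nab_{g_0}u_\A/|\nab_{g_0}u_\A|_{g_0}$ is differentiated, and the surviving pieces reproduce $4|\nab_{g_0}u_\A|_{g_0}\wc{\ffi}^{(2,1)}_\A$ once one expands $\dr^i\dr^j u_\A$ in \eqref{wcffi 21} as $g_0^{ia}(\dr_a g_0^{jb})\dr_b u_\A + g_0^{ia}g_0^{jb}\dr_a\dr_b u_\A$.

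For the second statement, $(N_\A\tilde{\otimes}\wc{X}^{(2,1)}_\A)_{e^{(\i)}_\A N_\A} = (\wc{X}^{(2,1)}_\A)_{e^{(\i)}_\A}$ and $\dr_{e^{(\i)}_\A}u_\A = 0$ reduce it, after inserting \eqref{ID F21 0e} and \eqref{Q 0 ell}, to $\half(Q^{(0)}_\A)_{e^{(\i)}_\A} - \half\dr_t(F^{(1)}_\A)_{e^{(\i)}_\A N_\A} = |\nab_{g_0} u_\A|_{g_0}^{-1}(\wc{X}^{(2,1)}_\A)_{e^{(\i)}_\A}$. Here I would substitute \eqref{Ne dt F1} and invoke the spatial geodesic equation \eqref{geodesic spatial}: the term it produces proportional to $(N_\A)_\ell N_\A|\nab_{g_0}u_\A|_{g_0}$ contracts with $(\bar{F}^{(1)}_\A)^\ell{}_{e^{(\i)}_\A}$ and vanishes by $(\bar{F}^{(1)}_\A)_{N_\A i} = 0$, while the $(k_0)_{N_\A}$-contributions of $(Q^{(0)}_\A)_{e^{(\i)}_\A}$ and of $\dr_t(F^{(1)}_\A)_{e^{(\i)}_\A N_\A}$ cancel against each other by symmetry of $\bar{F}^{(1)}_\A$. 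Finally, replacing $\dr_t(\g_0)_{0i}$ in $Q^{(0)}_\A$ by the wave gauge relation \eqref{dt g 0i} and comparing with the $e^{(\i)}_\A$-component of \eqref{wcX 21}, the remaining terms match after relabelling (using only the symmetries of $g_0$ and $\bar{F}^{(1)}_\A$). The only real difficulty is bookkeeping: keeping track of the index contractions in this last identity and invoking $(\bar{F}^{(1)}_\A)_{N_\A i} = 0$, \eqref{geodesic spatial} and \eqref{dt g 0i} at exactly the right places; everything else is routine.
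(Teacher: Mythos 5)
Your proposal is correct and follows essentially the same route as the paper: reduce \eqref{eq ka11} to the fixed-point/range condition $\bar{\PP}^{[2]}_{u_\A}(\ka^{(1,1)}_\A)=\ka^{(1,1)}_\A$ via Lemma \ref{lem P 1et2}, split it into the trace--$N_\A N_\A$ and $N_\A e^{(\i)}_\A$ components, and verify these using \eqref{wcffi 21}, \eqref{wcX 21}, \eqref{trace dt F1}--\eqref{Ne dt F1}, \eqref{ID F21 0e}, \eqref{Q 0 ell}, together with $(\bar{F}^{(1)}_\A)_{N_\A i}=0$, the gauge relation \eqref{dt g 0i} and the projected geodesic equation \eqref{geodesic spatial}. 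The explicit cancellations you describe (the $(F^{(2,1)}_\A)_{0j}$ term dropping from the trace--$N_\A N_\A$ difference, the $(k_0)_{N_\A}$ cancellation, and the $(N_\A)_\ell N_\A|\nab_{g_0}u_\A|_{g_0}$ term killed by the polarization of $\bar{F}^{(1)}_\A$) match the paper's computation.
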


\begin{proof}
If $\ka^{(1,1)}_\A$ is defined by \eqref{def ka11}, then \eqref{eq ka11} rewrites $\bar{\PP}^{[2]}_{u_\A}\pth{\ka^{(1,1)}_\A}=\ka^{(1,1)}_\A$. Therefore, thanks to Lemma \ref{lem P 1et2}, proving the lemma is equivalent to 
\begin{align*}
(\ka^{(1,1)}_\A)_{N_\A N_\A}-\tr_{g_0}\ka^{(1,1)}_\A & = 0,
\\ (\ka^{(1,1)}_\A)_{N_\A e^{(\i)}_\A} & = 0,
\end{align*}
with $\ka^{(1,1)}_\A$ defined by \eqref{def ka11}. Thanks to \eqref{wcffi 21}, \eqref{wcX 21}, \eqref{trace dt F1} and \eqref{NN dt F1} we first have 
\begin{align*}
(\ka^{(1,1)}_\A)_{N_\A N_\A}-\tr_{g_0}\ka^{(1,1)}_\A & =  - \frac{1}{|\nab_{g_0} u_\A|_{g_0}}(\wc{X}^{(2,1)}_\A)_{N_\A} + \half g_0^{ij} \dr_t(F^{(1)}_\A)_{ij|_{\Si_0}}  + 4|\nab_{g_0} u_\A|_{g_0}\wc{\ffi}^{(2,1)}_{\A}  
\\& =  \frac{1}{2|\nab_{g_0} u_\A|_{g_0}}(\bar{F}^{(1)}_\A)_{ij}\dr^i\dr^j u_\A   - \frac{1}{4}  (\bar{F}^{(1)}_\A)_{ij} N_\A g_0^{ij}   - \half  (\bar{F}^{(1)}_\A)_{ij} (k_0)^{ij}
\\&\quad + \frac{1}{2|\nab_{g_0} u_\A|_{g_0}}(\bar{F}^{(1)}_\A)_{ij} \pth{- \dr^i \dr^j u_\A + |\nab_{g_0} u_\A|_{g_0}k_0^{ij} + \half  \dr_{\ell} u_\A  \dr^\ell g_0^{ij} }
\\& = 0.
\end{align*}
Using now \eqref{wcffi 21}, \eqref{Ne dt F1}, \eqref{ID F21 0e} and \eqref{Q 0 ell} we have
\begin{align*}
(\ka^{(1,1)}_\A)_{N_\A e^{(\i)}_\A} & =  - \frac{1}{|\nab_{g_0} u_\A|_{g_0}}(\wc{X}^{(2,1)}_\A)_{e}   - \half N_\A^i (e^{(\i)}_\A)^j \dr_t(F^{(1)}_\A)_{ij}  + \frac{|\nab_{g_0} u_\A|_{g_0}}{2}  (F^{(2,1)}_\A)_{0e^{(\i)}_\A} 
\\& =   - \half g_0^{k\ell}(\bar{F}^{(1)}_\A)_{k e^{(\i)}_\A}  \bigg( - \dr^b (g_0)_{\ell b} + \half g_0^{ab}\dr_\ell (g_0)_{ab}+\dr_t (\g_0)_{0\ell} 
\\&\hspace{4cm} -  N_\A (N_\A)_\ell    + \half  N_\A^i  N_\A^j  \dr_\ell (g_0)_{j i} + \frac{1}{|\nab_{g_0} u_\A|_{g_0}} \dr_\ell |\nab_{g_0} u_\A|_{g_0}   \bigg) .
\end{align*}
We then use \eqref{dt g 0i} and \eqref{geodesic spatial} to obtain 
\begin{align*}
(\ka^{(1,1)}_\A)_{N_\A e^{(\i)}_\A} & =  -  \frac{1}{2|\nab_{g_0} u_\A|_{g_0}} (\bar{F}^{(1)}_\A)_{N_\A e^{(\i)}_\A}  N_\A |\nab_{g_0} u_\A|_{g_0}  
\\& = 0. 
\end{align*}
This concludes the proof of the lemma.
\end{proof}

\begin{lemma}\label{lem ka12}
If we define 
\begin{align}
(\ka^{(1,2)}_\A)_{ij} & \vcentcolon= \frac{3}{2}|\nab_{g_0} u_\A|_{g_0}F_\A^2(N_\A)_{i}(N_\A)_{j} \label{def ka12}
\end{align}
then \eqref{eq ka12} holds.
\end{lemma}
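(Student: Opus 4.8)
The identity \eqref{eq ka12} is purely algebraic on $\Si_0$: with the choice \eqref{def ka12} both sides are built only from $N_\A$, $g_0$, $\d u_\A$ and $F_\A^2$, and no spacetime derivative enters, since the restriction $F^{(2,2)}_{\A|_{\Si_0}}$ is fully prescribed by \eqref{ID F22 ij}--\eqref{ID F22 0e}. So I would argue as in the proof of Lemma~\ref{lem ka11}, but now without having to invoke the geodesic equation \eqref{geodesic spatial} or \eqref{dt g 0i}.

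First I would record that the tensor $\ka^{(1,2)}_\A$ defined by \eqref{def ka12} lies in $\mathrm{ran}\,\bar{\PP}^{[2]}_{u_\A}$. Indeed, $N_\A$ has $g_0$-unit length, so $\tr_{g_0}\ka^{(1,2)}_\A=\frac{3}{2}|\nab_{g_0}u_\A|_{g_0}F_\A^2$ and $(\ka^{(1,2)}_\A)_{iN_\A}=\frac{3}{2}|\nab_{g_0}u_\A|_{g_0}F_\A^2\,(N_\A)_i=(\tr_{g_0}\ka^{(1,2)}_\A)(N_\A)_i$, which is exactly the membership condition \eqref{image P2}. Hence Lemma~\ref{lem P 1et2} gives $\bar{\PP}^{[2]}_{u_\A}(\ka^{(1,2)}_\A)=\ka^{(1,2)}_\A$, so \eqref{eq ka12} is equivalent to the identity
\[
\ka^{(1,2)}_\A-\frac{3}{2}|\nab_{g_0}u_\A|_{g_0}F_\A^2\,(N_\A\tilde{\otimes}N_\A)=\frac{3}{4}|\nab_{g_0}u_\A|_{g_0}F_\A^2\,g_0-\d_{(\cdot}u_\A\,(F^{(2,2)}_\A)_{0\cdot)} \quad\text{on }\Si_0 .
\]

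Next I would evaluate the remaining ingredients: using \eqref{def produit} and $(N_\A)_{N_\A}=g_0(N_\A,N_\A)=1$ one gets $(N_\A\tilde{\otimes}N_\A)_{ij}=(N_\A)_{(i}(N_\A)_{j)}-\frac12(g_0)_{ij}$; the definition \eqref{def vector fields init} of $N_\A$ gives $\d_i u_\A=|\nab_{g_0}u_\A|_{g_0}(N_\A)_i$ on $\Si_0$; and expanding the $1$-form $j\mapsto(F^{(2,2)}_\A)_{0j}$ in the $g_0$-orthonormal frame $(N_\A,e^{(1)}_\A,e^{(2)}_\A)$ and using \eqref{ID F22 0N}--\eqref{ID F22 0e} identifies $(F^{(2,2)}_\A)_{0j|_{\Si_0}}$ as a multiple of $(N_\A)_j$, hence $\d_{(i}u_\A(F^{(2,2)}_\A)_{0j)}$ as a multiple of $(N_\A)_i(N_\A)_j$. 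Substituting these expressions, every term in the displayed identity becomes a linear combination of $(N_\A)_i(N_\A)_j$ and $(g_0)_{ij}$, and one concludes by comparing the two coefficients. I do not expect any genuine obstacle here — this is one of the more elementary of the matching computations, being entirely algebraic; the only point requiring care is the consistent bookkeeping of the symmetrisation convention $T_{(\a\b)}=T_{\a\b}+T_{\b\a}$, which is where the relevant factors of $2$ enter.
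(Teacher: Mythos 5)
Your proposal is correct and follows essentially the same route as the paper: the paper first uses \eqref{ID F22 0N}--\eqref{ID F22 0e} to reduce \eqref{eq ka12} to an identity of the form $\bar{\PP}^{[2]}_{u_\A}\pth{\ka^{(1,2)}_\A}=\tfrac{3}{2}|\nab_{g_0}u_\A|_{g_0}F_\A^2\,N_\A\otimes N_\A$ and then invokes Lemma~\ref{lem P 1et2} through the two frame conditions $(\ka^{(1,2)}_\A)_{N_\A N_\A}=\tr_{g_0}\ka^{(1,2)}_\A$ and $(\ka^{(1,2)}_\A)_{N_\A e^{(\i)}_\A}=0$, which is exactly your range/fixed-point observation performed in the opposite order. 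The only difference is presentational: your deferred ``comparison of coefficients of $(N_\A)_i(N_\A)_j$ and $(g_0)_{ij}$'' is precisely the paper's initial rewriting step, so nothing of substance is missing, though you are right that the symmetrisation convention $T_{(\a\b)}=T_{\a\b}+T_{\b\a}$ is where all the factors must be tracked in that final check.
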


\begin{proof}
Thanks to \eqref{ID F22 0N} and \eqref{ID F22 0e}, the equation \eqref{eq ka12} rewrites
\begin{align*}
\bar{\PP}^{[2]}_{u_\A}\pth{\ka^{(1,2)}_\A}_{ij} & =\frac{3}{2}|\nab_{g_0} u_\A|_{g_0}F_\A^2(N_\A)_{i}(N_\A)_{j}.
\end{align*}
Therefore, Lemma \ref{lem P 1et2} implies that \eqref{def ka12} is a solution if and only if it satisfies
\begin{align*}
(\ka^{(1,2)}_\A)_{N_\A N_\A}-\tr_{g_0}\ka^{(1,2)}_\A & = 0,
\\ (\ka^{(1,2)}_\A)_{N_\A e^{(\i)}_\A} & = 0,
\end{align*}
which is obviously the case. 
\end{proof}

\begin{lemma}\label{lem ka1pm}
If we define 
\begin{align}
(\ka^{(1,\pm)}_{\A\B})_{ij} & \vcentcolon=  \frac{-(|\nab_{g_0} u_\A|_{g_0}\pm|\nab_{g_0} u_\B|_{g_0})(\mathcal{I}^{(0,\pm)}_{\A\B})_{ij|_{\Si_0}} + \dr_{(i} (u_\A\pm u_\B)(\mathcal{I}^{(0,\pm)}_{\A\B})_{0j)}    }{2\g_0^{-1}(\d(u_\A\pm u_\B),\d(u_\A\pm u_\B))}   \label{def ka1pm}
\\&\quad +  \frac{(N^{(\pm)}_{\A\B}\tilde{\otimes} \wc{X}^{(2,\pm)}_{\A\B})_{ij}}{|\nab_{g_0}\pth{ u_\A\pm u_\B}|_{g_0}}\non,
\end{align}
then \eqref{eq ka1pm} holds.
\end{lemma}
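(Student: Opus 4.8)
The plan is to follow the template of the proofs of Lemmas~\ref{lem ga2pm}, \ref{lem ka11} and \ref{lem ka12}: recast \eqref{eq ka1pm} as a fixed-point identity for the projector $\bar{\PP}^{[2]}_{u_\A\pm u_\B}$, invoke the range characterisation of Lemma~\ref{lem P 1et2}, and then verify the two resulting scalar conditions on $\Si_0$ by direct computation. For the first step, by the definition \eqref{def F2pm} one has $(F^{(2,\pm)}_{\A\B})_{\a\b|_{\Si_0}} = -(\mathcal{I}^{(0,\pm)}_{\A\B})_{\a\b|_{\Si_0}}/\g_0^{-1}(\d(u_\A\pm u_\B),\d(u_\A\pm u_\B))$, so the whole right-hand side of \eqref{eq ka1pm} coincides with the first, fractional, term in \eqref{def ka1pm}. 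Writing $M^{(\pm)}_{\A\B}$ for that term, equation \eqref{eq ka1pm} with $\ka^{(1,\pm)}_{\A\B}$ as in \eqref{def ka1pm} becomes
\begin{align*}
\bar{\PP}^{[2]}_{u_\A\pm u_\B}\pth{\ka^{(1,\pm)}_{\A\B}} = \ka^{(1,\pm)}_{\A\B} = M^{(\pm)}_{\A\B} + \frac{N^{(\pm)}_{\A\B}\tilde{\otimes}\wc{X}^{(2,\pm)}_{\A\B}}{|\nab_{g_0}(u_\A\pm u_\B)|_{g_0}}.
\end{align*}
By idempotence of $\bar{\PP}^{[2]}$ and the range characterisation \eqref{image P2}, this amounts to $\ka^{(1,\pm)}_{\A\B}\in\mathrm{ran}\,\bar{\PP}^{[2]}_{u_\A\pm u_\B}$, which — contracting the $i$-form in \eqref{image P2} once with $N^{(\pm)}_{\A\B}$ and once with any vector field $w$ on $\Si_0$ satisfying $g_0(w,N^{(\pm)}_{\A\B})=0$ — is equivalent to the two scalar identities
\begin{align*}
\tr_{g_0}\ka^{(1,\pm)}_{\A\B} = (\ka^{(1,\pm)}_{\A\B})_{N^{(\pm)}_{\A\B}N^{(\pm)}_{\A\B}}, \qquad (\ka^{(1,\pm)}_{\A\B})_{N^{(\pm)}_{\A\B}w}=0.
\end{align*}

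Next I would isolate the $N^{(\pm)}_{\A\B}\tilde{\otimes}\wc{X}^{(2,\pm)}_{\A\B}$ contribution using only the algebra of \eqref{def produit}: for a $g_0$-unit vector field $N$ on $\Si_0$ one checks, in dimension three, that $\tr_{g_0}(N\tilde{\otimes}X)=\tfrac12 g_0(X,N)$, $(N\tilde{\otimes}X)_{NN}=\tfrac32 g_0(X,N)$ and $(N\tilde{\otimes}X)_{Nw}=g_0(X,w)$ whenever $g_0(N,w)=0$. Plugging \eqref{def ka1pm} into the two conditions above, all the $\wc{X}^{(2,\pm)}_{\A\B}$-dependence collapses and one is reduced to showing that $\tr_{g_0}M^{(\pm)}_{\A\B} - (M^{(\pm)}_{\A\B})_{N^{(\pm)}_{\A\B}N^{(\pm)}_{\A\B}}$ and the $N^{(\pm)}_{\A\B}$-contraction of $M^{(\pm)}_{\A\B}$ equal, up to the explicit factor $|\nab_{g_0}(u_\A\pm u_\B)|_{g_0}^{-1}$, the corresponding contractions of $\wc{X}^{(2,\pm)}_{\A\B}$.

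Finally, the computational heart — and the main obstacle — is to evaluate these contractions on $\Si_0$. Here I would expand $\mathcal{I}^{(0,\pm)}_{\A\B}$ via \eqref{I 0 pm} and \eqref{P 0 pm}, substitute the initial data \eqref{ID F1 ij}--\eqref{ID F1 0} — which give $(F^{(1)}_\A)_{0\a|_{\Si_0}}=0$ and $(\bar F^{(1)}_\A)_{N_\A i}=0$, hence $(F^{(1)}_\A)_{L_\A\a|_{\Si_0}}=0$ — and use the eikonal relation \eqref{eikonal init}, which gives on $\Si_0$
\begin{align*}
\g_0^{-1}(\d(u_\A\pm u_\B),\d(u_\A\pm u_\B)) = \pm 2\pth{|\nab_{g_0}u_\A\cdot\nab_{g_0}u_\B|_{g_0} - |\nab_{g_0}u_\A|_{g_0}|\nab_{g_0}u_\B|_{g_0}},
\end{align*}
together with $g_0\pth{N^{(\pm)}_{\A\B},\nab_{g_0}(u_\A\pm u_\B)}=|\nab_{g_0}(u_\A\pm u_\B)|_{g_0}$. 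After these simplifications, the surviving pieces — built out of $|\bar F^{(1)}_\A\cdot\bar F^{(1)}_\B|_{g_0}$, $(\bar F^{(1)}_\A)^b_{\nab_{g_0}u_\B}(\bar F^{(1)}_\B)_{bi}$ and its $\A\leftrightarrow\B$ analogue, and the gradients $\nab_{g_0}u_\A,\nab_{g_0}u_\B$ — must reorganise into precisely the expression \eqref{wcX 2pm} for the $N^{(\pm)}_{\A\B}$- and $w$-contractions, with the trace-minus-$NN$ combination reproducing the coefficients in \eqref{wcffi 2pm}. The polarisation identity $\Pol{\mathcal{I}^{(0,\pm)}_{\A\B}}{u_\A\pm u_\B}=0$ established in Lemma~\ref{lem F2pm} (equivalently $\Pol{F^{(2,\pm)}_{\A\B}}{u_\A\pm u_\B}=0$) supplies the algebraic relations among the null-frame components of $F^{(2,\pm)}_{\A\B}$ that make this matching close. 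The obstacle is entirely this term-by-term bookkeeping — keeping track of the $\pm$ signs, of the normalisation $|\nab_{g_0}(u_\A\pm u_\B)|_{g_0}$, and of all the quadratic-in-$\bar F^{(1)}$ contractions so that the numerical constants in \eqref{wcX 2pm}--\eqref{wcffi 2pm} come out exactly; the structural ingredients (range characterisation of $\bar{\PP}^{[2]}$, vanishing of the $L_\A$-components on $\Si_0$, the eikonal identity, and the transparency relation of Lemma~\ref{lem F2pm}) are all immediate.
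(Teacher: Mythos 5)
Your proposal follows exactly the paper's route: use \eqref{def F2pm} to identify the right-hand side of \eqref{eq ka1pm} with the fractional term of \eqref{def ka1pm}, so that the equation becomes the fixed-point relation $\bar{\PP}^{[2]}_{u_\A\pm u_\B}\pth{\ka^{(1,\pm)}_{\A\B}}=\ka^{(1,\pm)}_{\A\B}$, and then reduce, via the idempotence and range characterisation \eqref{image P2} of Lemma \ref{lem P 1et2}, to the two scalar identities $(\ka^{(1,\pm)}_{\A\B})_{N^{(\pm)}_{\A\B}N^{(\pm)}_{\A\B}}=\tr_{g_0}\ka^{(1,\pm)}_{\A\B}$ and $(\ka^{(1,\pm)}_{\A\B})_{N^{(\pm)}_{\A\B}Z}=0$ for $Z$ orthogonal to $N^{(\pm)}_{\A\B}$; your contraction algebra for $N\tilde{\otimes}X$ and your formula for $\g_0^{-1}(\d(u_\A\pm u_\B),\d(u_\A\pm u_\B))$ on $\Si_0$ are both correct.

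The caveat is that the verification of these two identities, which is where the entire content of the lemma sits, is asserted ("must reorganise into \eqref{wcX 2pm}") rather than carried out. In the paper it is closed as follows. For the trace identity, the $\mathcal{I}^{(0,\pm)}_{0j}$-terms cancel in the $NN$-minus-trace combination, so $(\ka^{(1,\pm)}_{\A\B})_{N^{(\pm)}_{\A\B}N^{(\pm)}_{\A\B}}-\tr_{g_0}\ka^{(1,\pm)}_{\A\B}$ reduces to a multiple of $(\mathcal{I}^{(0,\pm)}_{\A\B})_{N^{(\pm)}_{\A\B}N^{(\pm)}_{\A\B}}-\tr_{g_0}\mathcal{I}^{(0,\pm)}_{\A\B}$ plus the $\wc{X}^{(2,\pm)}_{\A\B}$-term; that combination was already computed in the proof of Lemma \ref{lem ga2pm} (via \eqref{condition ga2pm} and \eqref{def ga2pm}), so the identity follows by simply comparing \eqref{wcffi 2pm} with \eqref{wcX 2pm}, and no fresh expansion of $\mathcal{I}^{(0,\pm)}_{\A\B}$ is needed there — a shortcut worth taking rather than redoing the bookkeeping from \eqref{I 0 pm}--\eqref{P 0 pm}. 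For the $N^{(\pm)}_{\A\B}Z$-identity one does expand \eqref{I 0 pm} and \eqref{P 0 pm} with the data \eqref{ID F1 ij}--\eqref{ID F1 0}, using \eqref{eikonal init} and the relation $|\nab_{g_0} u_\B\cdot Z|_{g_0}=\mp |\nab_{g_0} u_\A\cdot Z|_{g_0}$, and matches the outcome against \eqref{wcX 2pm}. Finally, your appeal to the polarization identity of Lemma \ref{lem F2pm} is not actually used in this verification: the matching closes by direct computation, because $\wc{\ffi}^{(2,\pm)}_{\A\B}$ and $\wc{X}^{(2,\pm)}_{\A\B}$ were defined in Proposition \ref{prop constraint main} precisely so that it does.
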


\begin{proof}
Since $F^{(2,\pm)}_{\A\B}$ is defined by \eqref{def F2pm}, the equation \eqref{eq ka1pm} with $\ka^{(1,\pm)}_{\A\B}$ defined by \eqref{def ka1pm} rewrites $\bar{\PP}^{[2]}_{u_\A\pm u_\B}\pth{\ka^{(1,\pm)}_{\A\B}}=\ka^{(1,\pm)}_{\A\B}$ and, thanks to Lemma \ref{lem P 1et2}, holds if and only if
\begin{align}
(\ka^{(1,\pm)}_{\A\B})_{N^{(\pm)}_{\A\B}N^{(\pm)}_{\A\B}} - \tr_{g_0}\ka^{(1,\pm)}_{\A\B} & = 0,\label{ka1pm 1}
\\ (\ka^{(1,\pm)}_{\A\B})_{N^{(\pm)}_{\A\B}Z} & = 0,\label{ka1pm 2}
\end{align}
for any vector field $Z$ such that $g_0\pth{N^{(\pm)}_{\A\B},Z}=0$. We start with the first identity. We have
\begin{align*}
(\ka^{(1,\pm)}_{\A\B})_{N^{(\pm)}_{\A\B}N^{(\pm)}_{\A\B}} &- \tr_{g_0}\ka^{(1,\pm)}_{\A\B} 
\\& = \mp \frac{|\nab_{g_0} u_\A|_{g_0}\pm|\nab_{g_0} u_\B|_{g_0}  }{4\g_0^{-1}(\d u_\A,\d u_\B)}  \pth{ (\mathcal{I}^{(0,\pm)}_{\A\B})_{N^{(\pm)}_{\A\B}N^{(\pm)}_{\A\B}} - \tr_{g_0}\mathcal{I}^{(0,\pm)}_{\A\B} } + \frac{(\wc{X}^{(2,\pm)}_{\A\B})_{N^{(\pm)}_{\A\B}}}{|\nab_{g_0}\pth{ u_\A\pm u_\B}|_{g_0}}
\\& = -4 (|\nab_{g_0} u_\A|_{g_0}\pm|\nab_{g_0} u_\B|_{g_0} ) \wc{\ffi}^{(2,\pm)}_{\A\B}  + \frac{(\wc{X}^{(2,\pm)}_{\A\B})_{N^{(\pm)}_{\A\B}}}{|\nab_{g_0}\pth{ u_\A\pm u_\B}|_{g_0}},
\end{align*}
where we used \eqref{condition ga2pm} and \eqref{def ga2pm}. Using now \eqref{wcffi 2pm} and \eqref{wcX 2pm} we obtain \eqref{ka1pm 1}. We now turn to \eqref{ka1pm 2}. Let $Z$ such that $g_0\pth{N^{(\pm)}_{\A\B},Z}=0$, we have
\begin{align*}
(\ka^{(1,\pm)}_{\A\B})_{N^{(\pm)}_{\A\B}Z} & = \frac{-(|\nab_{g_0} u_\A|_{g_0}\pm|\nab_{g_0} u_\B|_{g_0})(\mathcal{I}^{(0,\pm)}_{\A\B})_{N^{(\pm)}_{\A\B}Z} + |\nab_{g_0}(u_\A\pm u_\B)|_{g_0} (\mathcal{I}^{(0,\pm)}_{\A\B})_{0Z}  }{\pm4\g_0^{-1}(\d u_\A,\d u_\B)}  
\\&\quad  +  \frac{1}{|\nab_{g_0}\pth{ u_\A\pm u_\B}|_{g_0}} (\wc{X}^{(2,\pm)}_{\A\B})_{Z}.
\end{align*}
From \eqref{I 0 pm}, \eqref{P 0 pm} and \eqref{ID F1 ij}-\eqref{ID F1 0} we find on $\Si_0$
\begin{align*}
-(|\nab_{g_0} u_\A|_{g_0}&\pm|\nab_{g_0} u_\B|_{g_0})(\mathcal{I}^{(0,\pm)}_{\A\B})_{N^{(\pm)}_{\A\B}Z} + |\nab_{g_0}(u_\A\pm u_\B)|_{g_0} (\mathcal{I}^{(0,\pm)}_{\A\B})_{0Z} 
\\& =  \pm \frac{(F^{(1)}_\A)^{\ell}_{\nab_{g_0} u_\B}  (F^{(1)}_\B)_{\ell Z}}{2|\nab_{g_0} (u_\A\pm u_\B)|_{g_0} }  |\nab_{g_0} u_\B|_{g_0} \Big( |\nab_{g_0} u_\A\cdot \nab_{g_0} u_\B|_{g_0} - |\nab_{g_0} u_\A|_{g_0} |\nab_{g_0} u_\B|_{g_0} \Big)
\\&\quad \pm \frac{(F^{(1)}_\B)^{\ell}_{\nab_{g_0} u_\A}   (F^{(1)}_\A)_{\ell Z}}{2|\nab_{g_0}(u_\A\pm u_\B)|_{g_0}}  |\nab_{g_0} u_\A|_{g_0} \Big( |\nab_{g_0} u_\A\cdot \nab_{g_0} u_\B|_{g_0} - |\nab_{g_0} u_\A|_{g_0} |\nab_{g_0} u_\B|_{g_0} \Big)
\\&\quad  +   \frac{1}{4}\left|\bar{F}^{(1)}_\A\cdot \bar{F}^{(1)}_\B\right|_{g_0}\frac{|\nab_{g_0} u_\A\cdot Z|_{g_0}}{|\nab_{g_0}(u_\A\pm u_\B)|_{g_0}} \Big( \mp |\nab_{g_0} u_\A|_{g_0} + |\nab_{g_0} u_\B|_{g_0} \Big)
\\&\hspace{6cm}\times \Big(|\nab_{g_0} u_\A\cdot \nab_{g_0} u_\B|_{g_0} - |\nab_{g_0} u_\A|_{g_0} |\nab_{g_0} u_\B|_{g_0} \Big) ,
\end{align*}
where we also used $|\nab_{g_0} u_\B\cdot Z|_{g_0}=\mp |\nab_{g_0} u_\A\cdot Z|_{g_0} $.  Using \eqref{wcX 2pm} we finally obtain \eqref{ka1pm 2} and thus the lemma.
\end{proof}

\paragraph{Setting the gauge.} We conclude this section by defining initial data for $\T_\la(\h_\la)_{0 \a}$. They will be chosen so that \eqref{GWC} holds on $\Si_0$. Thanks to the definition of $\Upsilon^\rho$ in \eqref{Upsilon} this condition rewrites
\begin{align}
 \g_\la^{\mu\nu} \pth{ \dr_\mu (\h_\la)_{\si\nu} - \half \dr_\si (\h_\la)_{\mu\nu}} & = \tilde{\Upsilon}_\si ,\label{condition dth0}
\end{align}
where we defined
\begin{equation*}
\begin{aligned}
\tilde{\Upsilon}_\si & \vcentcolon = - \g_\la^{\mu\nu}   \sum_\A  \sin\left(\frac{u_\A}{\lambda} \right) \pth{\dr_\mu (\F_\A)_{\si\nu} - \half \dr_\si (\F_\A)_{\mu\nu} } 
\\&\quad - \la  \g_\la^{\mu\nu} \sum_{\substack{u\in\mathcal{N}\\T\in\{\cos,\sin\}}}T\pth{\frac{u}{\la}} \pth{\dr_\mu (\gg^{(3,h)}_{u,T})_{\si\nu} - \half \dr_\si (\gg^{(3,h)}_{u,T})_{\mu\nu}}
\\&\quad - \la  \g_\la^{\mu\nu} \sum_{\substack{v\in\mathcal{I}\\T\in\{\cos,\sin\}}}T\pth{\frac{v}{\la}} \pth{\dr_\mu (\gg^{(3,e)}_{v,T})_{\si\nu} - \half \dr_\si (\gg^{(3,e)}_{v,T})_{\mu\nu}}
\\&\quad  -  \g_\la^{\mu\nu}  \sum_{\substack{v\in\mathcal{I}\\T\in\{\cos,\sin\}}}T'\pth{\frac{v}{\la}}\pth{ \dr_\mu v(\gg^{(3,e)}_{v,T})_{\si\nu} - \half \dr_\si v(\gg^{(3,e)}_{v,T})_{\mu\nu} }
\\&\quad +  (\g_\la)_{\rho\si} \g_0^{\rho\ga} \h_\la^{\mu\nu} \Bigg(\dr_\mu (\g_0)_{\ga\nu} - \half \dr_\ga (\g_0)_{\mu\nu} -  \sum_\A  \sin\left( \frac{u_\A}{\lambda} \right) \pth{ \dr_\mu u_\A (F^{(1)}_\A)_{\ga\nu} - \half \dr_\ga u_\A (F^{(1)}_\A)_{\mu\nu}  }\Bigg).
\end{aligned}
\end{equation*}
Note that $\tilde{\Upsilon}^\rho_{|_{\Si_0}}$ can be completely computed. Moreover, \eqref{condition dth0} can be ensured by the following choice of initial data for $\dr_t(\h_\la)_{\a0}$:
\begin{align}
\dr_t (\h_\la)_{00|_{\Si_0}}  & \vcentcolon = \frac{2}{\g_\la^{00}}\pth{\tilde{\Upsilon}_0 -  \g_\la^{0i}  \dr_i (\h_\la)_{00} - \g_\la^{ij} \pth{ \dr_i (\h_\la)_{0 j} - \half \dr_t (\h_\la)_{ij}}}, \label{ID dth 00}
\\ \dr_t (\h_\la)_{k0|_{\Si_0}}  & \vcentcolon = \frac{1}{\g_\la^{00}}\bigg( \tilde{\Upsilon}_k + \half \g_\la^{00}\dr_k (\h_\la)_{00} - \g_\la^{0i} \pth{ \dr_t (\h_\la)_{k i} - \dr_k (\h_\la)_{0i} +  \dr_i (\h_\la)_{k0}}  \label{ID dth 0k}
\\&\hspace{6cm}- \g_\la^{ij} \pth{ \dr_i (\h_\la)_{k j} - \half \dr_k (\h_\la)_{ij}}\bigg).\non
\end{align}
Note that the RHS of \eqref{ID dth 00} and \eqref{ID dth 0k} are fully known on $\Si_0$ and that we used the fact that $\g_\la^{00}\neq 0$. Note also that the initial data for $\dr_t(\h_\la)_{\a0|_{\Si_0}}$ allows us easily to obtain $\T_\la(\h_\la)_{\a0|_{\Si_0}}$.

\subsubsection{Conclusion}

Combining the results of the previous sections, we can prove the following.

\begin{prop}\label{prop ID main}
Let $\pth{ \bar{F}^{(1)}_\A}_{\A\in\mathcal{A}}$ be a seed. There exists a choice of initial data for the system \eqref{eq F1}-\eqref{eq h} such that 
\begin{itemize}
\item[(i)] the algebraic conditions \eqref{pola F1}-\eqref{GWC} hold on $\Si_0$,
\item[(ii)] the induced metric and the second fundamental form computed from $\g_{\la|_{\Si_0}}$ solve the constraint equations on $\Si_0$.
\end{itemize}
Moreover, the initial data for $F^{(1)}_\A$, $F^{(2,1)}_\A$, $F^{(2,2)}_\A$ are supported in $B_R$ and there exists $C_{\mathrm{init}}=C_{\mathrm{init}}(N,\de,R)>0$ such that
\begin{align}
\l F^{(1)}_\A\r_{H^N(\Si_0)} + \l F^{(2,1)}_\A\r_{H^{N-1}(\Si_0)} + \l F^{(2,2)}_\A\r_{H^{N-1}(\Si_0)} & \leq C_{\mathrm{init}} \e,\label{estim ID 1}
\\ \max_{r\in\llbracket 0 ,4 \rrbracket}\la^r\l \dr \nab^r \h_\la \r_{L^2_{\de+r+1}(\Si_0)} & \leq C_{\mathrm{init}} \e.\label{estim ID 2}
\end{align}
\end{prop}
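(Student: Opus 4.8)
All the ingredients have in fact been assembled in the previous subsections, so the plan is simply to package them and then carry out the quantitative bookkeeping. Given the seed $\pth{\bar{F}^{(1)}_\A}_{\A\in\mathcal{A}}$, I would fix the initial data by the formulas already written down: \eqref{ID F1 ij}--\eqref{ID F1 0} for $F^{(1)}_\A$, \eqref{ID F} for $\F_\A$, \eqref{ID F21 ij}--\eqref{ID F21 0e} for $F^{(2,1)}_\A$, \eqref{ID F22 ij}--\eqref{ID F22 0e} for $F^{(2,2)}_\A$, \eqref{ID h ij}--\eqref{ID h 0} for $(\h_\la)_{|_{\Si_0}}$, and finally $\T_\la(\h_\la)_{ij|_{\Si_0}}$ by \eqref{ID dth ij} and $\T_\la(\h_\la)_{0\a|_{\Si_0}}$ through \eqref{ID dth 00}--\eqref{ID dth 0k} (recall that $\T_\la$, hence the translation between $\T_\la(\h_\la)_{0\a}$ and $\dr_t(\h_\la)_{0\a}$, is computable from $\g_{\la|_{\Si_0}}$ alone, cf. \eqref{unit normal}). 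In the course of this one uses the recursive definitions of $(\gg^{(3,h)}_{u,T})_{|_{\Si_0}}$ and $(\gg^{(3,e)}_{v,T})_{|_{\Si_0}}$ given in Section \ref{section induced metric}, in which the time derivatives of the lower profiles are eliminated by the transport equations \eqref{eq F21}--\eqref{eq F}.

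\textbf{Verification of (i).} With this choice the algebraic conditions are immediate from what has been proved: \eqref{pola F1}, \eqref{pola F1 bis} and \eqref{energy F1} follow from \eqref{assum F1} together with \eqref{ID F1 ij}--\eqref{ID F1 0}; \eqref{pola F} is immediate from \eqref{ID F}; \eqref{pola F21}--\eqref{pola F22} are exactly Lemma \ref{lem pola F2 init}; \eqref{pola g3h} holds by the very construction of $\gg^{(3,h)}_{u,T}$ in Remark \ref{remark g3h}; and \eqref{GWC} on $\Si_0$, which reduces to \eqref{condition dth0}, holds by the choice \eqref{ID dth 00}--\eqref{ID dth 0k} of $\dr_t(\h_\la)_{\a0|_{\Si_0}}$, using $\g_\la^{00}\neq0$ on $\Si_0$.

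\textbf{Verification of (ii).} Here I would invoke Proposition \ref{prop constraint main} with the parameters $\ga^{(2,\pm)}_{\A\B}$ defined in \eqref{def ga2pm} and $\ka^{(1,1)}_\A,\ka^{(1,2)}_\A,\ka^{(1,\pm)}_{\A\B}$ defined in \eqref{def ka11}, \eqref{def ka12}, \eqref{def ka1pm}. One first checks that these are admissible parameters for that proposition: they are supported in $B_R$ (clear from the formulas, which only involve $\bar{F}^{(1)}_\A$, $F_\A$, $u_\A$ and $\g_0$), they enjoy the symmetry relations $\ga^{(2,\pm)}_{\A\B}=\ga^{(2,\pm)}_{\B\A}$ and $\ka^{(1,\pm)}_{\A\B}=\ka^{(1,\pm)}_{\B\A}$ (inherited from the corresponding symmetries of $F^{(2,\pm)}_{\A\B}$, $\mathcal{I}^{(0,\pm)}_{\A\B}$, $\wc{\ffi}^{(2,\pm)}_{\A\B}$ and $\wc{X}^{(2,\pm)}_{\A\B}$ under $\A\leftrightarrow\B$), and they satisfy the smallness bound \eqref{estim ga kappa param}. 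Granting this, Proposition \ref{prop constraint main} produces $(g_\la,k_\la)$ solving the constraint equations \eqref{constraint equations}, and Lemmas \ref{lem ga2pm}, \ref{lem ka11}, \ref{lem ka12}, \ref{lem ka1pm} state precisely that, for exactly these parameters, the metric and second fundamental form induced on $\Si_0$ by $\g_\la$ coincide with $g_\la$ and $k_\la$; this yields (ii).

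\textbf{Estimates and main obstacle.} The bound \eqref{estim ID 1}, together with the support statement, is read off from \eqref{ID F1 ij}--\eqref{ID F22 0e}, using $\l\bar{F}^{(1)}_\A\r_{H^N(\Si_0)}\lesssim\e$ from \eqref{estim seed}, the background bounds \eqref{estim g0}--\eqref{estim FA}, and the product and embedding estimates recalled in Section \ref{section notation} (note that $\wc{\ffi}^{(2,1)}_\A$ and $Q^{(0)}_\A$ cost one derivative of $\bar{F}^{(1)}_\A$, hence live in $H^{N-1}$; the room is provided by $N\geq10$). For \eqref{estim ID 2} one writes $(\h_\la)_{|_{\Si_0}}$ via \eqref{ID h ij}--\eqref{ID h 0} as $h_\la$ minus the displayed oscillating amplitudes: the contribution of $h_\la$ is governed by \eqref{estim reste cons 1}, while in the oscillating part every extra derivative falling on a factor $T(z/\la)$ produces a $\la^{-1}$ that is exactly absorbed by the weight $\la^r$ attached to $\dr\nab^r$ in \eqref{estim ID 2}, the amplitudes $\gg^{(3,h)}_{u,T},\gg^{(3,e)}_{v,T}$ being bounded in $H^{N-1}$-type norms through the schematic identities \eqref{g3h}, \eqref{R 1 e} in terms of $F^{(1)}_\A,\F_\A,F^{(2,1)}_\A,F^{(2,2)}_\A$ and their first derivatives; finally $\T_\la(\h_\la)_{ij|_{\Si_0}}$ is controlled by \eqref{ID dth ij}, \eqref{estim reste cons 2} and the schematic form \eqref{ktilde evol} of $\tilde{k}^{evol}_\la$, and $\T_\la(\h_\la)_{0\a|_{\Si_0}}$ by \eqref{ID dth 00}--\eqref{ID dth 0k}. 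The bulk of the work — and the only place where the assumptions \eqref{coherence}--\eqref{spatial} really enter — sits exactly here: one must verify that every division performed in Section \ref{section initial data} (by $\g_0^{-1}(\d v,\d v)$ for $v\in\mathcal{I}$, or by $|\nab_{g_0}v|_{g_0}$ for the relevant phases) is by a quantity bounded below uniformly in $\la$, so that the explicit expressions \eqref{wcffi 21}--\eqref{wcX 2pm} and \eqref{def ka1pm} obey \eqref{estim ga kappa param}, and that the $\la$-weights built into \eqref{estim ID 2} exactly compensate the loss from differentiating the oscillating factors. None of this is conceptually difficult once Proposition \ref{prop constraint main} and Lemmas \ref{lem pola F2 init}--\ref{lem ka1pm} are in hand, but it is where the bookkeeping is heaviest.
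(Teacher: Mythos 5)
Your proposal follows the same route as the paper: define the data by the explicit formulas of Section \ref{section initial data final}, feed the parameters \eqref{def ga2pm}, \eqref{def ka11}, \eqref{def ka12}, \eqref{def ka1pm} (which are of the form $\cro{(\bar{F}^{(1)})^2}$, hence $O(\e^2)$ and admissible for \eqref{estim ga kappa param}) into Proposition \ref{prop constraint main}, match the induced data via Lemmas \ref{lem ga2pm}--\ref{lem ka1pm}, check the algebraic conditions through Lemma \ref{lem pola F2 init}, Remark \ref{remark g3h} and the choice \eqref{ID dth 00}--\eqref{ID dth 0k}, and read off the estimates from \eqref{estim seed} and \eqref{estim reste cons 1}--\eqref{estim reste cons 2}. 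This is essentially identical to the paper's proof, so nothing further is needed.
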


\begin{proof}
We start by defining initial data for $F^{(1)}_\A$ and $\F_\A$ following \eqref{ID F1 ij}-\eqref{ID F1 0} and \eqref{ID F}. Thanks to \eqref{Q 0 ell}-\eqref{Q 0 0}, we can then define initial data for $F^{(2,1)}_\A$ and $F^{(2,2)}_\A$ following \eqref{ID F21 ij}-\eqref{ID F22 0e}. This then allows us to define every terms on the RHS of \eqref{def ga2pm}, \eqref{def ka11}, \eqref{def ka12} and \eqref{def ka1pm} and thus allows to define $\ga^{(2,\pm)}_{\A\B}$, $\ka^{(1,1)}_\A$, $\ka^{(1,2)}_\A$ and $\ka^{(1,\pm)}_{\A\B}$ by \eqref{def ga2pm}, \eqref{def ka11}, \eqref{def ka12} and \eqref{def ka1pm} respectively. Note that these tensors only depend on the seed $\pth{ \bar{F}^{(1)}_\A}_{\A\in\mathcal{A}}$ and are all of the form $\cro{ (\bar{F}^{(1)})^2 }$, which thanks to \eqref{estim seed} implies
\begin{align}
\l \ga^{(2,\pm)}_{\A\B} \r_{H^N} + \l  \ka^{(1,1)}_\A\r_{H^N} + \l  \ka^{(1,2)}_\A \r_{H^N} + \l  \ka^{(1,\pm)}_{\A\B} \r_{H^N} \lesssim \e^2.
\end{align}
In particular, if $\e$ is small enough then \eqref{estim ga kappa param} holds and we can apply Proposition \ref{prop constraint main} and obtain a solution of the constraint equations $(g_\la,k_\la)$ given by \eqref{g la}-\eqref{k la}. This allows us to define every terms on the RHS of \eqref{ID h ij}-\eqref{ID h 0}, \eqref{ID dth ij} and \eqref{ID dth 00}-\eqref{ID dth 0k}, which in turn define initial data for $\h_\la$ and $\T_\la\h_\la$. This concludes the definition of initial data for the system \eqref{eq F1}-\eqref{eq h}. Moreover, thanks to Lemmas \ref{lem ga2pm}, \ref{lem ka11}, \ref{lem ka12} and \ref{lem ka1pm}, the induced metric and the second fundamental form computed from $\g_\la$ on $\Si_0$ are given by $(g_\la,k_\la)$ and thus are solving the constraint equations on $\Si_0$. We now verify that the algebraic conditions \eqref{pola F1}-\eqref{GWC} hold on $\Si_0$: \eqref{pola F1}-\eqref{pola F1 bis} and \eqref{energy F1} hold thanks to \eqref{ID F1 ij}-\eqref{ID F1 0} and \eqref{assum F1}, \eqref{pola F} holds thanks to \eqref{ID F}, \eqref{pola F21}-\eqref{pola F22} hold thanks to Lemma \ref{lem pola F2 init}, \eqref{pola g3h} already holds thanks to Remark \ref{remark g3h}, \eqref{GWC} holds thanks to the choice of initial data for $\dr_t(\h_\la)_{0\a|_{\Si_0}}$. Finally, the estimate \eqref{estim ID 1} follows from \eqref{estim seed}, \eqref{ID F1 ij}-\eqref{ID F1 0}, \eqref{ID F21 ij}-\eqref{ID F22 0e} and the estimate \eqref{estim ID 2} follows in addition from \eqref{estim reste cons 1}-\eqref{estim reste cons 2}, \eqref{ID h ij}-\eqref{ID h 0}, \eqref{ID dth ij} and \eqref{ID dth 00}-\eqref{ID dth 0k}.
\end{proof}


\section{Solving the system}\label{section solving the system}

In this section, we present our result on well-posedness for the system \eqref{eq F1}-\eqref{eq h} with the initial data constructed in Proposition \ref{prop ID main}. It is contained in the following proposition.

\begin{prop}\label{prop system}
If $\e$ is small enough and $\la\leq\e$, then there exists a unique solution \[\pth{F^{(1)}_\A,F^{(2,1)}_\A,F^{(2,2)}_\A,\F_\A,\h_\la}\] to the system \eqref{eq F1}-\eqref{eq h} on $[0,1]\times\R^3$ with the initial data of Proposition \ref{prop ID main}. Moreover, the tensors $F^{(1)}_\A$, $F^{(2,1)}_\A$, $F^{(2,2)}_\A$ and $\F_\A$ are supported in $J^+_0(B_R)$ and there exists $C=C(N,\de,R)>0$ such that 
\begin{align}
\l F^{(1)}_\A \r_{H^N} + \l F^{(2,1)}_\A \r_{H^{N-2}} + \l F^{(2,2)}_\A \r_{H^{N-2}} & \leq C \e, \label{estim F1 F21 F22}
\\ \l \F_\A \r_{L^2} + \max_{r\in\llbracket 1,6\rrbracket}\lambda^{r-1} \l \nabla^r \F_\A \r_{L^2}+ \max_{r\in\llbracket 0,5\rrbracket}\lambda^{r} \l \dr_t \nabla^r \F_\A \r_{L^2} + \max_{r\in\llbracket 0,4\rrbracket}\lambda^{r+1} \l \dr_t^2 \nabla^r \F_\A \r_{L^2} & \leq C\e, \label{estim F}
\\ \max_{r\in\llbracket 0,4\rrbracket} \lambda^r \l \nabla^r \tBox_{\g_\la} \F_\A \r_{L^2} & \leq C \e,\label{estim box F}
\\ \max_{r\in\llbracket 0,4\rrbracket}\lambda^r\pth{ \l \dr_t \nabla^r \h_\la \r_{L^2_{\delta+1+r}} + \l \nabla \nabla^r \h_\la \r_{L^2_{\delta+1+r}} } + \max_{r\in\llbracket 0,3\rrbracket} \lambda^{r+1} \l \dr_t^2 \nabla^r \h_\la \r_{L^2_{\delta+2+r}} & \leq C \e.  \label{estim h}
\end{align}
\end{prop}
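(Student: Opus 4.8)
The plan is to solve the coupled transport–wave system \eqref{eq F1}–\eqref{eq h} by a bootstrap/iteration argument that respects the natural hierarchy: the equation for $F^{(1)}_\A$ is a closed linear transport equation along $L_\A$, while the equations for $F^{(2,1)}_\A$, $F^{(2,2)}_\A$ are transport equations whose right-hand sides depend only on $F^{(1)}_\A$ and (through $\tilde R^{(1,1)}_\A,\tilde R^{(1,2)}_\A$ and $F^{(2,\pm)}_{\A\B}$, which by Remark \ref{remark F2pm} is $\cro{(F^{(1)})^2}$) on lower-order data. The genuinely coupled block is $(\F_\A,\h_\la)$: $\F_\A$ solves a transport equation sourced by $\Pi_\leq\pth{(\h_\la)_{L_\A L_\A}}F^{(1)}_\A$, and $\h_\la$ solves a quasilinear wave equation whose source contains $\tBox_{\g_\la}\F_\A$ and the high-frequency term $\la^{-1}\cos(u_\A/\la)\Pi_\geq\pth{(\h_\la)_{L_\A L_\A}}F^{(1)}_\A$. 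First I would note that the metric $\g_\la$ appearing in the wave operator $\tBox_{\g_\la}$ is itself determined by the unknowns through \eqref{expansion metric} (with $F^{(2,\pm)}_{\A\B}$, $\gg^{(3,h)}_{u,T}$, $\gg^{(3,e)}_{v,T}$ given by the explicit formulas \eqref{def F2pm}, \eqref{g3h}, \eqref{def g3e}), so the whole system is self-referential and must be closed by a fixed-point argument on a suitable weighted Sobolev ball; the conditions $\e$ small and $\la\le\e$ are used to make the map a contraction and to control the dangerous $\la^{-1}$ factor via the frequency splitting $\Pi_\leq/\Pi_\geq$.

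The key steps, in order, would be: (1) \emph{Solve $F^{(1)}_\A$} by integrating \eqref{eq F1} along the null geodesics of $L_\A$; finite propagation speed gives support in $J^+_0(B_R)$, and the linear transport estimate together with \eqref{estim ID 1} and the background bounds \eqref{estim g0}–\eqref{estim uA} yields $\l F^{(1)}_\A\r_{H^N}\lesssim\e$, i.e. the first part of \eqref{estim F1 F21 F22}; note we lose no derivatives since $\Ll_\A$ is first order with coefficients controlled in $H^N$. (2) \emph{Solve $F^{(2,1)}_\A,F^{(2,2)}_\A$} by the same transport estimate applied to \eqref{eq F21}–\eqref{eq F22}; the right-hand sides involve $\D F^{(1)}_\A$ and quadratic expressions in $F^{(1)}_\A$, costing one derivative, so we land in $H^{N-2}$ (the extra loss coming from $\tilde R^{(1,1)}_\A$ which contains $\dr^{\le 2}F^{(1)}$), giving the remaining bounds in \eqref{estim F1 F21 F22}. (3) \emph{Set up the $(\F_\A,\h_\la)$ iteration.} Given a guess $\h_\la$ in a ball defined by the norms on the left of \eqref{estim h}, define $\F_\A$ by solving \eqref{eq F} — here the crucial structural point, already present in \cite{Touati2023a}, is that $\Pi_\leq$ guarantees $\l\nab^r\F_\A\r_{L^2}\lesssim\la^{1-r}\l(\h_\la)_{L_\A L_\A}\r_{L^2}$ and similarly for time derivatives via \eqref{dt via transport}, so $\F_\A$ gains the weights in \eqref{estim F}; then $\tBox_{\g_\la}\F_\A$ can be estimated using \eqref{eq F} differentiated, the background wave operator structure, and the frequency localization, yielding \eqref{estim box F}. (4) \emph{Solve the wave equation for $\h_\la$} with the source now a fixed function of $\F_\A$ and the lower-order tensors; use the standard energy estimate for $\tBox_{\g_\la}$ in weighted spaces $H^s_\de$ (the metric perturbation being $O(\la)$ in $L^\infty$ but only $O(1)$ after one derivative, so one works with the commuted energies carrying $\la$-weights as in \eqref{estim h}), carefully tracking that the term $\la^{-1}\cos(u_\A/\la)\Pi_\geq\pth{(\h_\la)_{L_\A L_\A}}F^{(1)}_\A$ is controlled because $\Pi_\geq$ produces a factor $\la$ when hitting the low-frequency part of $\h_\la$, while on its high-frequency part one uses an additional gain; this is exactly where $\la\le\e$ enters. (5) \emph{Close the fixed point} by showing the map $\h_\la\mapsto\h_\la^{\mathrm{new}}$ is a contraction on the ball of radius $C\e$ in the norm \eqref{estim h} for $\e$ small; uniqueness follows from the same contraction estimate, and the support property of $\F_\A$ follows from finite speed of propagation applied to \eqref{eq F} once $\h_\la$ is known (note $\h_\la$ itself need not be compactly supported — it only satisfies weighted bounds).

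The main obstacle will be step (4)–(5): closing the energy estimate for $\h_\la$ uniformly in $\la$ despite (a) the explicit $\la^{-1}$ in the source, (b) the fact that $\tBox_{\g_\la}$ is a quasilinear wave operator whose coefficients oscillate at frequency $\la^{-1}$ and whose first derivatives are only $O(1)$, and (c) the appearance of $\tBox_{\g_\la}\gg^{(3,h)}_{u,T}$ and $\tBox_{\g_\la}\gg^{(3,e)}_{v,T}$ on the right-hand side of \eqref{eq h}, which through \eqref{g3h}, \eqref{def g3e} bring in two derivatives of $F^{(2,1)}_\A,F^{(2,2)}_\A,\F_\A$ and hence require the $\la$-weighted norms on those tensors to absorb the loss. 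The resolution, following the singlephase analysis of \cite{Touati2023b}, is to work with the hierarchy of $\la$-weighted energies $\sum_r\la^{2r}\l\dr\nab^r\h_\la\r^2_{L^2_{\de+1+r}}$, to exploit that every oscillating coefficient carries either an explicit $\la$ or, after the $\Pi_\leq/\Pi_\geq$ splitting, an effective smallness, and to observe that the generalised wave gauge term $\mathring H^\rho$ contributes only lower-order (non-problematic) derivatives by construction of $\Upsilon^\rho$ — all the dangerous second derivatives of $\h_\la,\F_\A,\gg^{(3,h)},\gg^{(3,e)}$ having been isolated in $\Upsilon^\rho$, which is set to zero. The multiphase aspect adds bookkeeping — one must sum over the finite sets $\mathcal N$, $\mathcal I$, $\mathcal Z$ of phases and use the strong coherence assumption \eqref{coherence}–\eqref{spatial} to ensure $\g_0^{-1}(\d v,\d v)$ stays bounded away from zero so that \eqref{def F2pm}, \eqref{def g3e} are well-defined with $H^{N-2}$ control — but introduces no new analytic difficulty beyond the singlephase case.
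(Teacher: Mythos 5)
Your overall architecture is the one the paper follows (and the paper itself does not reprove this proposition: it solves the triangular transport block for $F^{(1)}_\A$, $F^{(2,1)}_\A$, $F^{(2,2)}_\A$, then treats the coupled $(\F_\A,\h_\la)$ block with the $\la$-weighted norms \eqref{estim F}--\eqref{estim h} and the $\Pi_\leq/\Pi_\geq$ splitting, deferring all details to Sections 6 and 7 of \cite{Touati2023a}; whether one packages the coupled block as a bootstrap, as there, or as your contraction is immaterial). However, the single step that carries the whole difficulty is treated too loosely in your plan, and as written it would fail. If $\tBox_{\g_\la}\F_\A$ is estimated by differentiating \eqref{eq F} twice and invoking the Bernstein bound \eqref{bernstein} (your ``eq F differentiated \ldots and the frequency localization''), one only gets $\l \tBox_{\g_\la}\F_\A \r_{L^2}\lesssim \la^{-1}\e$, not \eqref{estim box F}; feeding $\la^{-1}\e$ into the energy estimate for \eqref{eq h} on $[0,1]$ does not close your contraction ball of radius $C\e$ --- it closes only on a time interval of length $O(\la)$. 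The missing idea is the decomposition \eqref{box F}: for $(\g_\la^{\mu\nu}-\g_0^{\mu\nu})\dr_\mu\dr_\nu\F_\A$ the factor $\g_\la-\g_0=\GO{\la}$ compensates the Bernstein loss, while for $\tBox_{\g_0}\F_\A$ Bernstein is never used; instead one commutes the transport equation with $\Box_{\g_0}$, exploiting that $[L_\A,\Box_{\g_0}]$ produces only second derivatives of the form $\dr L_\A$ or $\Box_{\g_0}$ itself, and then commutes $\Box_{\g_0}$ with $\Pi_\leq$ using the commutator estimate $\l [u,\Pi_\leq]\nab v\r_{L^2}\lesssim (\l \nab u\r_{L^\infty}+\la\l u\r_{H^{7/2}})\l v\r_{L^2}$, which is what gains the derivative at top order. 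Without naming this mechanism, \eqref{estim box F} is unjustified and steps (4)--(5) collapse.

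Two further corrections. Your treatment of the explicit $\la^{-1}\cos(u_\A/\la)\,\Pi_\geq\pth{(\h_\la)_{L_\A L_\A}}F^{(1)}_\A$ term is garbled: the gain is simply $\l \Pi_\geq(\h_\la)\r_{L^2}\lesssim \la \l \nab \h_\la\r_{L^2}$, because $\Pi_\geq=\mathrm{Id}-\Pi_\leq$ localizes to frequencies $\gtrsim\la^{-1}$; it is not that ``$\Pi_\geq$ produces a factor $\la$ when hitting the low-frequency part of $\h_\la$'', and this gain has nothing to do with the hypothesis $\la\leq\e$ (which is used elsewhere, to make $\la$-dependent error terms small). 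Finally, the singlephase reference for this evolution analysis is \cite{Touati2023a} (Theorems 6.1 and 7.1), not \cite{Touati2023b}, which concerns the constraint equations; your observation that the new terms $\tBox_{\g_\la}\gg^{(3,e)}_{v,T}$ are harmless because they carry an extra power of $\la$ and are at worst linear in $\F_\A$ is correct and is exactly the point the paper makes.
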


The proof of Proposition \ref{prop system} follows exactly the same lines as the corresponding statements in the singlephase case of \cite{Touati2023a}, namely Theorems 6.1 and 7.1 there. Indeed, the reasons why well-posedness of the system \eqref{eq F1}-\eqref{eq h} does not follow from simple arguments are not linked with the directions of oscillations and thus don't differ from the singlephase to the the multiphase case. Therefore, we won't reproduce the argument and refer the reader to Sections 6 and 7 of \cite{Touati2023a}. We will however provide an outline of what is done there. As in the singlephase case, proving well-posedness is done in two steps: we first consider the equations \eqref{eq F1}-\eqref{eq F22} and then the equations \eqref{eq F}-\eqref{eq h}.

\paragraph{Solving for $F^{(1)}_\A$, $F^{(2,1)}_\A$ and $F^{(2,2)}_\A$.} The equations \eqref{eq F1}-\eqref{eq F22} form a triangular system for the unknowns $F^{(1)}_\A$, $F^{(2,1)}_\A$ and $F^{(2,2)}_\A$. Thanks to \eqref{Q 0} and \eqref{Rtilde 1 1}-\eqref{Rtilde 1 2} the equations \eqref{eq F1}-\eqref{eq F22} rewrite schematically
\begin{align*}
\Ll_\A F^{(1)}_\A & = 0,
\\ \Ll_\A F^{(2,1)}_\A & = \cro{  \dr^{\leq 2} F^{(1)} + (F^{(1)})^2 + (F^{(1)})^3 },
\\ \Ll_\A F^{(2,2)}_\A & = \cro{ F^{(1)}\dr F^{(1)} + F^{(2,1)}F^{(1)} +  (F^{(1)})^2  }.
\end{align*}
Since the transport operator $\Ll_\A$ only depends on the background spacetime, a combination of the characteristics method and the energy estimate for the vector field $L_\A$ easily implies the existence and uniqueness of solutions $F^{(1)}_\A$, $F^{(2,1)}_\A$ and $F^{(2,2)}_\A$ to these equations on $[0,1]\times \R^3$. The estimate \eqref{estim F1 F21 F22} follows from the following consideration: it holds on $\Si_0$ (see Proposition \ref{prop ID main}) and is easily propagated for $F^{(1)}_\A$, for $F^{(2,1)}_\A$ we lose two derivatives because of the $\dr^{\leq 2}F^{(1)}$ on the RHS, and because of the $F^{(2,1)}$ on the RHS of the equation for $F^{(2,2)}_\A$ it lives at the same level of regularity. The only harmless difference with the singlephase case is that there is as many equations as number of directions of oscillation, and since we don't keep track on how they mix in the equations (meaning for instance that one could see a $F^{(1)}_\B$ in the equation for $F^{(2,1)}_\A$ with $\A\neq\B$) the estimate \eqref{estim F1 F21 F22} is far from being uniform in $|\mathcal{A}|$. 

\paragraph{Solving for $\F_\A$ and $\h_\la$.} Once $F^{(1)}_\A$, $F^{(2,1)}_\A$ and $F^{(2,2)}_\A$ are defined, the equations \eqref{eq F}-\eqref{eq h} form an independent coupled system for $\F_\A$ and $\h_\la$. Besides the fact that there is a transport equation for each $\A\in\mathcal{A}$, the only difference between the multiphase and singlephase case is the presence of $\tBox_{\g_\la}\gg^{(3,e)}_{v,T}$ on the RHS of \eqref{eq h}. Indeed this term is not present in the singlephase case since its purpose is precisely to deal with the mixed harmonics in $R^{(1)}_{\a\b}$. However, this new term dos not change the structure of the system. Indeed, thanks to \eqref{def g3e} and \eqref{R 1 e}, $\gg^{(3,e)}_{v,T}$ contains terms depending on $F^{(1)}_\A$, $F^{(2,1)}_\A$ and $F^{(2,2)}_\A$ and a linear term in $\F_\A$, the latter being the only one of interest at this stage. Since the term $\tBox_{\g_\la}\gg^{(3,e)}_{v,T}$ comes with an extra $\la$ power in \eqref{eq h} this new term is at least better than the $\tBox_{\g_\la}\F_\A$ in \eqref{eq h} and already present in the singlephase case. In conclusion, one can say that the argument (which we will briefly outline below) of Section 7 of \cite{Touati2023a} applies also here without any change. 

\saut
The major issue with the coupled system \eqref{eq F}-\eqref{eq h} is the \textit{a priori} loss of one derivative, which prevents a straightforward proof of well-posedness. Indeed, from an energy estimate on \eqref{eq F} one concludes that $\F_\A$ lives at the same level of regularity as $\h_\la$, and from an energy estimate on \eqref{eq h} one concludes that one derivative of $\h_\la$ lives at the same level of regularity as two derivatives of $\F_\A$ (because of $\tBox_{\g_\la}\F_\A$ on the RHS). The system \eqref{eq F}-\eqref{eq h} is thus \textit{a priori} ill-posed. The operators $\Pi_\leq$ and $\Pi_\geq$ have been introduced precisely to solve this problem. Indeed, applying the Bernstein estimate 
\begin{align}\label{bernstein}
\l \nab \Pi_\leq (u) \r_{L^2} &\lesssim \frac{1}{\la} \l u \r_{L^2}
\end{align} 
to \eqref{eq F} we can estimate two derivatives of $\F_\A$ by one derivative of $\h_\la$ at the cost of the loss of one $\la$ power. This would however prove well-posedness of \eqref{eq F}-\eqref{eq h} only on a time scale of order $\la$, which is far from satisfactory if one is interested in the limit $\la\to0$. Therefore, one absolutely needs to use the structure of the term $\tBox_{\g_\la}\F_\A$ on the RHS of \eqref{eq h}. This is done by decomposing it as follows
\begin{align}\label{box F}
\tBox_{\g_\la}\F_\A & = \tBox_{\g_0}\F_\A + \pth{\g_\la^{\mu\nu}-\g_0^{\mu\nu}}\dr_\mu \dr_\nu \F_\A.
\end{align}
We explain how the two terms in \eqref{box F} can be estimated:
\begin{itemize}
\item  The first term in \eqref{box F} can be estimated by commuting \eqref{eq F} and using the estimate (proved in Appendix B of \cite{Touati2023a})
\begin{align*}
\l [L_\A,\Box_{\g_0}]f \r_{L^2} \leq C(C_0)\pth{ \l \dr L_\A f \r_{L^2} + \l \Box_{\g_0}f \r_{L^2} + \l \dr f \r_{L^2} },
\end{align*}
where it is crucial that the only second order derivatives appearing on the RHS include a $L_\A$ or are directly a wave operator. This implies that $\Box_{\g_0}\F_\A$ can be estimated roughly by $\dr \h_\la$ and $\Box_{\g_0}\Pi_\leq(\h)$. The former is itself controlled by $\Box_{\g_\la}\F_\A$ thanks to \eqref{eq h} and the latter requires to commute $\Box_{\g_0}$ and $\Pi_\leq$. In Appendix C of \cite{Touati2023a} we prove the following estimate
\begin{align*}
\l \left[ u , \Pi_\leq  \right]\nabla v \r_{L^2} & \lesssim  \left( \l \nabla  u \r_{L^\infty}  +   \lambda \l u \r_{H^{\frac{7}{2}}}  \right)\l v \r_{L^2}.
\end{align*}
This estimates shows in particular how one gain a derivative at top order.
\item For the second term in \eqref{box F}, instead of relying on the structure of the derivatives we benefit from the fact that $\g_\la^{\mu\nu}-\g_0^{\mu\nu}=\GO{\la}$. This extra factor $\la$ makes now possible the use of the Bernstein estimate \eqref{bernstein}.
\end{itemize}
Integrating this procedure in a bootstrap argument with \eqref{estim F}-\eqref{estim h} as bootstrap assumptions allows us to prove well-posedness for the system \eqref{eq F}-\eqref{eq h} and obtain a unique solution $\F_\A$ and $\h_\la$ satisfying \eqref{estim F}-\eqref{estim h} on the desired time scale. This concludes our outline of the proof of Proposition \ref{prop system}, and we again refer to \cite{Touati2023a} for all the technical details.


\section{Propagation of polarization and gauge conditions}\label{section propagation}

In this section, we conclude the proof of Theorem \ref{theo evol} by showing that the metric $\g_\la$ on $[0,1]\times\R^3$ defined by \eqref{expansion metric} where $\pth{F^{(1)}_\A,F^{(2,1)}_\A,F^{(2,2)}_\A,\F_\A,\h_\la}$ are constructed in Section \ref{section solving the system} and where $F^{(2,\pm)}_{\A\B}$, $\gg^{(3,e)}_{v,T}$ and $\gg^{(3,h)}_{u,T}$ are respectively defined in \eqref{def F2pm}, \eqref{def g3e} and Remark \ref{remark g3h}, is a solution of the Einstein vacuum equations \eqref{EVE}. As first stated in Section \ref{section reformulation}, this will be done by showing that the algebraic conditions \eqref{pola F1}-\eqref{GWC} holds on $[0,1]\times\R^3$ (\eqref{pola g3h} already holds thanks to the definition of $\gg^{(3,h)}_{u,T}$, see Remark \ref{remark g3h}), relying obviously on the fact that they all hold on $\Si_0$ thanks to Proposition \ref{prop ID main}. These conditions are separated in two groups:
\begin{itemize}
\item First, the conditions on $F^{(1)}_\A$ and $\F_\A$, i.e \eqref{pola F1}-\eqref{pola F}, \eqref{energy F1}, which are all propagated by directly using the transport equations that $F^{(1)}_\A$ and $\F_\A$ satisfy on $[0,1]\times\R^3$, i.e \eqref{eq F1} and \eqref{eq F}. This is done in Section \ref{section propa 1}.
\item Second, the conditions on $F^{(2,1)}_\A$ and $F^{(2,2)}_\A$, i.e \eqref{pola F21}-\eqref{pola F22}, together with the generalised wave gauge condition \eqref{GWC}, which are propagated using the contracted Bianchi identities. This is done in Section \ref{section propa 2}.
\end{itemize}

\begin{remark}
As explained in Remark \ref{remark Q0}, the conditions \eqref{pola F21}-\eqref{pola F22} are not gauge conditions since they don't depend on coordinates. However we treat them as gauge conditions and propagate them with the contracted Bianchi identities.
\end{remark}

\subsection{Propagation via the system} \label{section propa 1}

The proof of the following lemma is identical to the corresponding ones in \cite{Touati2023a}, we only sketch its proof and refer the reader to Section 8.1 in \cite{Touati2023a} for more details.

\begin{lemma}\label{lem pola F1 F}
The polarization conditions \eqref{pola F1}-\eqref{pola F} and the backreaction condition \eqref{energy F1} hold on $[0,1]\times \R^3$.
\end{lemma}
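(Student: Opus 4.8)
The plan is to propagate each of the conditions \eqref{pola F1}, \eqref{pola F1 bis}, \eqref{pola F} and \eqref{energy F1} by showing that the quantity measuring its failure satisfies a homogeneous linear transport equation along $L_\A$, and then invoking uniqueness for such equations together with the vanishing initial data from Proposition \ref{prop ID main}. The key algebraic input is that $\Ll_\A=-2\D_{L_\A}+\Box_{\g_0}u_\A$ is a transport operator acting on tensors of all types, and that it commutes appropriately with the operations $S\mapsto \Pol{S}{u_\A}$, $S\mapsto |S|^2_{\g_0}$, and $S\mapsto S_{L_\A\Lb_\A}$ up to terms that are themselves controlled by the quantity being propagated. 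More precisely, since $L_\A$ is geodesic for $\g_0$ (see \eqref{geodesic equation}) and $\D_{L_\A}(\d u_\A)$ is proportional to $\d u_\A$ (differentiating the eikonal equation), one checks that $\D_{L_\A}$ preserves the null frame $\pth{L_\A,\Lb_\A,e^{(1)}_\A,e^{(2)}_\A}$ modulo lower-order rotations, so that applying $\Ll_\A$ to, say, $(F^{(1)}_\A)_{L_\A\a}$ produces $(\Ll_\A F^{(1)}_\A)_{L_\A\a}$ plus terms linear in the frame components of $F^{(1)}_\A$ contracted with connection coefficients.

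First I would treat \eqref{pola F1} and \eqref{pola F1 bis} together. Using \eqref{eq F1}, i.e. $\Ll_\A F^{(1)}_\A=0$, one computes $\Ll_\A\pth{\Pol{F^{(1)}_\A}{u_\A}}$ and $\Ll_\A\pth{F^{(1)}_{L_\A\Lb_\A}}$; by the identity \eqref{Pol dv S} and the fact that $\g_0^{-1}(\d u_\A,\d u_\A)=0$, these reduce to linear expressions in $\Pol{F^{(1)}_\A}{u_\A}$ and in $(F^{(1)}_\A)_{L_\A\a}$ themselves. Equivalently, as noted after \eqref{assumption}, the pair of conditions \eqref{pola F1}-\eqref{pola F1 bis} is equivalent to $\tr_{\g_0}F^{(1)}_\A=0$ and $(F^{(1)}_\A)_{L_\A\a}=0$; one then shows $\Ll_\A\tr_{\g_0}F^{(1)}_\A=\tr_{\g_0}\Ll_\A F^{(1)}_\A=0$ directly (the trace commutes with $\D_{L_\A}$), so $\tr_{\g_0}F^{(1)}_\A$ solves a homogeneous transport equation and vanishes identically by Proposition \ref{prop ID main}; and $\Ll_\A\pth{(F^{(1)}_\A)_{L_\A\a}}$ equals $(\Ll_\A F^{(1)}_\A)_{L_\A\a}$ plus a term linear in $(F^{(1)}_\A)_{L_\A\b}$ coming from $\D_{L_\A}L_\A=0$ and the transport of $e^{(\i)}_\A$, hence again a homogeneous linear transport system for the four quantities $(F^{(1)}_\A)_{L_\A\a}$, which vanish identically. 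With \eqref{pola F1}-\eqref{pola F1 bis} in hand, the energy identity \eqref{energy F1} follows since $\Ll_\A\pth{|F^{(1)}_\A|^2_{\g_0}}=2\g_0^{\a\b}\g_0^{\mu\nu}(F^{(1)}_\A)_{\a\mu}(\Ll_\A F^{(1)}_\A)_{\b\nu}+(\text{terms from }\D_{L_\A}\g_0^{-1})$, and $\D$ is metric so the second group vanishes; thus $|F^{(1)}_\A|^2_{\g_0}-8F_\A^2$ satisfies $\Ll_\A\pth{|F^{(1)}_\A|^2_{\g_0}-8F_\A^2}=0$ using the background transport equation $-2L_\A F_\A+(\Box_{\g_0}u_\A)F_\A=0$ from \eqref{BG system}, which gives $\Ll_\A(F_\A^2)=0$, and the equality propagates from $\Si_0$.

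Finally, \eqref{pola F} is propagated the same way but using \eqref{eq F} rather than \eqref{eq F1}: since $\Ll_\A\F_\A=-\Pi_\leq\pth{(\h_\la)_{L_\A L_\A}}F^{(1)}_\A$, one gets $\Ll_\A\pth{\Pol{\F_\A}{u_\A}}=-\Pi_\leq\pth{(\h_\la)_{L_\A L_\A}}\Pol{F^{(1)}_\A}{u_\A}+(\text{linear in }\Pol{\F_\A}{u_\A})$, and the first term vanishes by the already-established \eqref{pola F1}, so $\Pol{\F_\A}{u_\A}$ solves a homogeneous linear transport equation and vanishes by its zero initial data \eqref{ID F}. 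Throughout, the only nontrivial point—and the place where one must be a little careful—is verifying that the commutator terms between $\Ll_\A$ and the algebraic frame operations are genuinely \emph{linear} in the propagated quantity with coefficients built only from the (smooth, bounded by \eqref{estim g0}-\eqref{estim uA}) background geometry, so that the resulting transport ODEs along the integral curves of $L_\A$ have the uniqueness property on $[0,1]\times\R^3$; this is exactly the computation carried out in Section 8.1 of \cite{Touati2023a} in the singlephase case, and since $\Ll_\A$ and the null frame of $u_\A$ depend only on the background, the argument transfers verbatim, index $\A$ by index $\A$. I therefore only sketch these computations and refer to \cite{Touati2023a} for the details.
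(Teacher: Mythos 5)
Your proposal follows essentially the same route as the paper: derive homogeneous linear transport equations along $L_\A$ for the quantities measuring the failure of \eqref{pola F1}--\eqref{pola F} and \eqref{energy F1}, using \eqref{eq F1}, \eqref{eq F} and the background equation for $F_\A$, and conclude by uniqueness for linear transport together with the vanishing initial data of Proposition \ref{prop ID main}; propagating $\tr_{\g_0}F^{(1)}_\A$ and $(F^{(1)}_\A)_{L_\A\a}$ instead of the null-frame components of $\Pol{F^{(1)}_\A}{u_\A}$ and $(F^{(1)}_\A)_{L_\A\Lb_\A}$ as in \eqref{system pola F1} is an equivalent bookkeeping, and the exact commutation you invoke does hold thanks to $\D_{L_\A}L_\A=0$ and $\D_{L_\A}\d u_\A=0$. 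One identity in your energy step is off: $\Ll_\A=-2\D_{L_\A}+\Box_{\g_0}u_\A$ is not a derivation, so from $\Ll_\A F^{(1)}_\A=0$ and $-2L_\A F_\A+(\Box_{\g_0}u_\A)F_\A=0$ one gets $\Ll_\A\pth{\left|F^{(1)}_\A\right|^2_{\g_0}}=-(\Box_{\g_0}u_\A)\left|F^{(1)}_\A\right|^2_{\g_0}$ and $\Ll_\A\pth{F_\A^2}=-(\Box_{\g_0}u_\A)F_\A^2$, not $0$; the correct operator annihilating the quadratic quantities is $-L_\A+\Box_{\g_0}u_\A$, as in \eqref{transport energy}. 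This slip does not affect the conclusion, since $\left|F^{(1)}_\A\right|^2_{\g_0}-8F_\A^2$ still satisfies a homogeneous linear transport equation and vanishes on $\Si_0$, hence everywhere.
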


\begin{proof}
By contracting the transport equation \eqref{eq F1} with various vector fields we obtain the transport system
\begin{equation}\label{system pola F1}
\begin{aligned}
\Ll_\A \pth{ \Pol{F^{(1)}_\A}{u_\A}_{L_\A}} & = 0,
\\ \Ll_\A \pth{ \Pol{F^{(1)}_\A}{u_\A}_{e^{(\mathbf{k})}_\A}} & = - \g_0\pth{ \D_{L_\A}\Lb_\A,e^{(\mathbf{k})}_\A } \Pol{F^{(1)}_\A}{u_\A}_{L_\A} 
\\&\quad - 2 \sum_{\i,\j=1,2} \de_{\i\j}\g_0\pth{ \D_{L_\A}e^{(\mathbf{k})}_\A,e^{(\i)}_\A} \Pol{F^{(1)}_\A}{u_\A}_{e^{(\j)}_\A},
\\ \Ll_\A \pth{ \Pol{F^{(1)}_\A}{u_\A}_{\Lb_\A}} & =  -2 \sum_{\i,\j=1,2} \de_{\i\j} \g_0\pth{\D_{L_\A}\Lb_\A,e^{(\i)}_\A} \Pol{F^{(1)}_\A}{u_\A}_{e^{(\j)}_\A},
\\ \Ll_\A \pth{ (F^{(1)}_\A)_{L_\A\Lb_\A} } & = 2 \sum_{\i,\j=1,2} \de_{\i\j} \g_0\pth{\D_{L_\A}\Lb_\A,e^{(\i)}_\A} \Pol{F^{(1)}_\A}{u_\A}_{e^{(\j)}_\A}.
\end{aligned}
\end{equation}
Since $\Pol{F^{(1)}_\A}{u_\A}_{|_{\Si_0}}=0$ and $(F^{(1)}_\A)_{L_\A\Lb_\A|_{\Si_0}}=0$, the system \eqref{system pola F1} implies that \eqref{pola F1}-\eqref{pola F1 bis} hold on $[0,1]\times \R^3$. Similarly, using now \eqref{pola F1}-\eqref{pola F1 bis} we deduce from \eqref{eq F1} the following transport equation 
\begin{align*}
\pth{ - L_\A + \Box_{\g_0}u_\A} \left| F^{(1)}_\A \right|_{\g_0}^2 & = 0.
\end{align*}
Using now the background transport equation satisfied by $F_\A$ (recall \eqref{BG system}), we obtain
\begin{align}\label{transport energy}
\pth{ - L_\A + \Box_{\g_0}u_\A} \pth{ \left| F^{(1)}_\A \right|_{\g_0}^2 - 8 F_\A^2} & = 0.
\end{align}
Since we have 
\begin{align*}
\pth{ \left| F^{(1)}_\A \right|_{\g_0}^2 - 8 F_\A^2}_{|_{\Si_0}} =0,
\end{align*}
the equation \eqref{transport energy} implies that \eqref{energy F1} holds on $[0,1]\times \R^3$. Finally, using again \eqref{pola F1} we deduce from \eqref{eq F} that $\Pol{\F_\A}{u_\A}$ satisfies the same system as $\Pol{F^{(1)}_\A}{u_\A}$, i.e \eqref{system pola F1} with $F^{(1)}_\A$ replaced by $\F_\A$. Since $\Pol{\F_\A}{u_\A}_{|_{\Si_0}}=0$, we again obtain that \eqref{pola F} holds on $[0,1]\times \R^3$, which concludes the proof of the lemma.
\end{proof}

\subsection{Propagation via the contracted Bianchi identities} \label{section propa 2}

We now propagate from $\Si_0$ the conditions \eqref{pola F21}-\eqref{pola F22} and \eqref{GWC}. This is done by deriving extra equations for the quantities $V^{(2,1)}_\A$, $V^{(2,2)}_\A$ and $\Upsilon^\rho$ from the contracted Bianchi identities, which state that the Einstein tensor of any Lorentzian metric is divergence free. We first compute the Einstein tensor of $\g_\la$.

\begin{lemma}\label{lem G}
The Einstein tensor of $\g_\la$ admits the decomposition
\begin{align*}
G_{\a\b}(\g_\la) & = G(V)_{\a\b} + G(\Upsilon)_{\a\b}.
\end{align*}
The term $G(V)_{\a\b}$ is given by
\begin{align}\label{expansion G(V)}
G(V)_{\a\b} & = G^{(0)}_{\a\b} + \la G^{(1)}_{\a\b} + \GO{\la^2},
\end{align}
where
\begin{align}
G^{(0)}_{\a\b} & = -\half \sum_\A \sin\pth{\frac{u_\A}{\la}} \pth{ \dr_{(\a} u_\A (V_\A^{(2,1)})_{\b)} + (V^{(2,1)}_\A)_{L_\A} (\g_0)_{\a\b} } \label{G 0}
\\&\quad -2 \sum_\A \cos\pth{\frac{2u_\A}{\la}} \pth{ \dr_{(\a} u_\A (V_\A^{(2,2)})_{\b)} + (V^{(2,2)}_\A)_{L_\A} (\g_0)_{\a\b} } ,\non
\\ G^{(1)}_{\a\b} & =  \frac{1}{2}\sum_\A \cos\pth{\frac{u_\A}{\lambda}} \pth{ \D_{(\a} (V^{(2,1)}_\A)_{\b)} - \div_{g_0} V^{(2,1)}_\A (\g_0)_{\a\b} } \label{G 1}
\\&\quad  - \sum_\A\sin\pth{\frac{2u_\A}{\lambda}} \pth{ \D_{(\a} (V^{(2,2)}_\A)_{\b)}  - \div_{g_0}V^{(2,2)}_\A (\g_0)_{\a\b}}\non
\\&\quad + \sum_{\substack{\A , k=1,2,3\\ T\in\{\cos,\sin\}}} T\pth{\frac{ku_\A}{\la}} \cro{V^{(2,1)} + V^{(2,2)}} (F^{(1)}_\A)_{\a\b} \non
\\&\quad + \sum_{\substack{v\in\mathcal{I}\\T\in\{\cos,\sin\}}} T\pth{\frac{v}{\la}}\cro{  (V^{(2,1)} + V^{(2,2)}) F^{(1)} }_{\a\b}.\non
\end{align}
The term $G(\Upsilon)_{\a\b}$ is given by
\begin{align}\label{def G upsilon}
G(\Upsilon)_{\a\b} & = \frac{\la^2}{2} \bigg( (\g_\la)_{\rho(\a}\dr_{\b)}\Upsilon^\rho - (\g_\la)_{\a\b}\dr_\rho\Upsilon^\rho + \Upsilon^\rho \dr_\rho (\g_\la)_{\a\b} - \half \g_\la^{\mu\nu}\Upsilon^\rho \dr_\rho (\g_\la)_{\mu\nu}(\g_\la)_{\a\b} \bigg).
\end{align}
\end{lemma}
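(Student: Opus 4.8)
The plan is to start from the decomposition of the Ricci tensor already established in Proposition \ref{prop expression of the ricci} and simply trace through what it implies for the Einstein tensor $G_{\a\b}(\g_\la) = R_{\a\b}(\g_\la) - \half R(\g_\la)(\g_\la)_{\a\b}$. The key observation is that, on the solution of the system \eqref{eq F1}-\eqref{eq h} constructed in Section \ref{section solving the system}, all the admissible harmonics in $R^{(0)}_{\a\b}$ and $R^{(1)}_{\a\b}$ that were set to zero by transport equations do vanish, and the only remaining pieces at orders $\la^0$ and $\la^1$ are precisely the terms carrying the polarization defects $V^{(2,1)}_\A$ and $V^{(2,2)}_\A$ — these are genuinely present because we have \emph{not} yet shown \eqref{pola F21}-\eqref{pola F22}. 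First I would rewrite $R^{(0)}_{\a\b}$ and $R^{(1)}_{\a\b}$ on the solution: using \eqref{R 0 null} with the polarization conditions for $F^{(1)}_\A$ and $\F_\A$ already in force (Lemma \ref{lem pola F1 F}), the $\sin(u_\A/\la)$ term reduces to $-\dr_{(\a}u_\A (V^{(2,1)}_\A)_{\b)}$ and the $\cos(2u_\A/\la)$ term to $-4\dr_{(\a}u_\A (V^{(2,2)}_\A)_{\b)}$ (recall \eqref{def 1 V 2 1}-\eqref{def 1 V 2 2}), while $R^{(0)}_{\mathrm{mixed}}=0$ by Lemma \ref{lem F2pm} and the non-oscillating part vanishes by \eqref{energy F1}. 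Similarly $R^{(1)}_{\a\b}$ reduces, via \eqref{R 1 null bis}, the transport equations \eqref{eq F21}-\eqref{eq F}, the definitions \eqref{def F2pm}-\eqref{def g3e} and the polarization condition \eqref{pola g3h} from Remark \ref{remark g3h}, to the $\D_{(\a}(V^{(2,i)}_\A)_{\b)}$ terms plus the term $\half(\g_0)_{\rho(\a}(\dr_{\b)}\Upsilon^\rho)^{(-1)}$ coming from \eqref{R 1}, plus lower-order $V^{(2,i)}$-linear contributions hidden in $\tilde R^{(1,1)}_\A$, $\tilde R^{(1,2)}_\A$ through the forbidden high-frequency terms $(F^{(2,2)}_\A)_{L_\A L_\A}(F^{(1)}_\A)_{\a\b} = (V^{(2,2)}_\A)_{L_\A}(F^{(1)}_\A)_{\a\b}$ (using \eqref{Pol L}) and $(\F_\A)_{L_\A L_\A}(F^{(1)}_\A)_{\a\b}$, which by \eqref{pola F} vanishes.

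Next I would isolate the $\Upsilon$-dependent part. By construction $R^{(\geq 2)}_{\a\b}$ contains the term $(H^\rho\dr_\rho(\g_\la)_{\a\b}+(\g_\la)_{\rho(\a}\dr_{\b)}H^\rho)^{(\geq 2)}$ and, since $H^\rho = \mathring H^\rho + \la^2\Upsilon^\rho$ by Lemma \ref{lemma H}, and the $\mathring H^\rho$ contribution is absorbed into the wave equation \eqref{eq h} for $\h_\la$, what survives of $R_{\a\b}(\g_\la)$ on the solution is exactly the $\Upsilon$-linear expression $\la^2\big(\half(\g_\la)_{\rho(\a}\dr_{\b)}\Upsilon^\rho + \half\Upsilon^\rho\dr_\rho(\g_\la)_{\a\b}\big)$ at order $\la^2$, together with the single order-$\la^1$ occurrence $\half(\g_0)_{\rho(\a}(\dr_{\b)}\Upsilon^\rho)^{(-1)}$ that was explicitly flagged in \eqref{R 1}. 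Forming the Einstein tensor then requires contracting with $\g_\la^{\a\b}$ to get the scalar curvature; the terms $-\half R(\g_\la)(\g_\la)_{\a\b}$ produce the $(\g_0)_{\a\b}$-proportional pieces $(V^{(2,i)}_\A)_{L_\A}(\g_0)_{\a\b}$ and $\div_{g_0}V^{(2,i)}_\A(\g_0)_{\a\b}$ in \eqref{G 0}-\eqref{G 1} — here one uses that $\g_0^{\a\b}\dr_{(\a}u_\A (V^{(2,i)}_\A)_{\b)} = \Pol{V^{(2,i)}_\A}{u_\A}_{?}$-type contractions collapse, via the eikonal equation and \eqref{Pol L}, to $(V^{(2,i)}_\A)_{L_\A}$ up to sign, and $\g_0^{\a\b}\D_{(\a}(V^{(2,i)}_\A)_{\b)} = \div_{g_0}V^{(2,i)}_\A$ — and the $\Upsilon$ contraction produces the $-(\g_\la)_{\a\b}\dr_\rho\Upsilon^\rho - \half\g_\la^{\mu\nu}\Upsilon^\rho\dr_\rho(\g_\la)_{\mu\nu}(\g_\la)_{\a\b}$ terms in \eqref{def G upsilon}. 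The remaining error terms at order $\la^2$ and beyond, which are quadratic or higher in the various amplitudes and their derivatives and thus $\GO{\la^2}$ by the estimates of Proposition \ref{prop system}, get collected into the $\GO{\la^2}$ in \eqref{expansion G(V)}; one must be a little careful to check that the terms multiplying $(F^{(1)}_\A)_{\a\b}$ in \eqref{G 1} — the third and fourth sums — indeed only involve $V^{(2,1)}, V^{(2,2)}$ linearly, which follows by inspecting $\tilde R^{(1,1)}_\A$ via \eqref{Rtilde 1 1} together with the fact that $F^{(2,2)}_{L_\A L_\A} = (V^{(2,2)}_\A)_{L_\A}$ and that the genuinely $V$-independent part of $\tilde R^{(1,1)}_\A$ has been killed by \eqref{eq F21}.

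The main obstacle I anticipate is the bookkeeping at order $\la^1$: one must verify that every contribution to $R^{(1)}_{\a\b}$ that is \emph{not} proportional to $V^{(2,1)}$, $V^{(2,2)}$ or $\Upsilon$ has been set to zero by the equations of the hierarchy, so that no spurious term survives in $G^{(1)}_{\a\b}$. This requires going carefully back through \eqref{R 1 null bis}, the transport equations \eqref{eq F21}-\eqref{eq F} (noting in particular that the RHS of \eqref{eq F21} cancels $-\D_{(\a}(Q^{(0)}_\A)_{\b)} - \tilde R^{(1,1)}_\A$ exactly, and that $\Ll_\A F^{(2,1)}_\A$ together with $\Ll_\A\F_\A$ absorbs the $(\h_\la)_{L_\A L_\A}(F^{(1)}_\A)_{\a\b}$ term up to the $\Pi_\geq$ piece which is relocated into \eqref{eq h}), the algebraic definitions of $F^{(2,\pm)}_{\A\B}$ and $\gg^{(3,e)}_{v,T}$, and the polarization identity \eqref{pola g3h} which kills the $\dr_{(\a}u\,\tilde R^{(1,h,T,u)}_\A{}_{\b)}$ terms. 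The mixed-harmonic bookkeeping — checking that the $v\in\mathcal{I}$ terms in $G^{(1)}_{\a\b}$ reduce to the schematic $\cro{(V^{(2,1)}+V^{(2,2)})F^{(1)}}$ form claimed in the last line of \eqref{G 1} — is analogous but needs tracking which new mixed phases (the sets $\mathcal{I}_4$, $\mathcal{I}_5$) are generated when the divergence hits oscillating factors; this is precisely why $\mathcal{Z}$ was enlarged beyond $\mathcal{W}$ in Section \ref{section BG}. Everything else is a direct, if lengthy, substitution into the standard formula $\dr^\a G_{\a\b}=0$ is \emph{not} needed here — only the computation of $G_{\a\b}$ itself — so no genuinely new analytic input is required beyond Proposition \ref{prop expression of the ricci} and the hierarchy.
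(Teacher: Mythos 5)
Your proposal is correct and follows essentially the same route as the paper: use Proposition \ref{prop expression of the ricci} together with the hierarchy (transport equations, the definitions of $F^{(2,\pm)}_{\A\B}$ and $\gg^{(3,e)}_{v,T}$, and the already-propagated conditions of Lemma \ref{lem pola F1 F}) to reduce $R_{\a\b}(\g_\la)$ to a $V$-linear part plus a $\Upsilon$-linear part, then form $G_{\a\b}=R_{\a\b}-\half\g_\la^{\mu\nu}R_{\mu\nu}(\g_\la)_{\a\b}$ and compute the trace to obtain \eqref{G 0}--\eqref{def G upsilon}, collecting everything else into $\GO{\la^2}$. Only two minor bookkeeping slips: \eqref{Pol L} gives $(F^{(2,2)}_\A)_{L_\A L_\A}=-(V^{(2,2)}_\A)_{L_\A}$ (sign), and the $F^{(1)}$-multiplied $V$-linear sums in \eqref{G 1} are not residues of $\tilde{R}^{(1,1)}_\A$ (which \eqref{eq F21} cancels entirely) but come from the quasilinear term $\pth{2\sin\sin-\cos\cos}(V^{(2,2)}_\A)_{L_\A}(F^{(1)}_\A)_{\a\b}$ in \eqref{R 1 null bis} and from the order-$\la$ corrections $(\g_\la^{-1})^{(1)}$, $(\g_\la)^{(1)}$ entering the trace when passing from $R$ to $G$ (the sets $\mathcal{I}_4,\mathcal{I}_5$ only matter for the later divergence computation, not here).
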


\begin{proof}
Thanks to Lemma \ref{lem pola F1 F}, the assumptions \eqref{assumption} are satisfied and we can use the results of Proposition \ref{prop expression of the ricci}. Thanks to Lemma \ref{lem pola F1 F} again, the non-oscillating term in \eqref{R 0} vanish and thanks to \eqref{R 0 mixed}, \eqref{def F2pm} and Lemma \ref{lem F2pm} we have $(R^{(0)}_{\mathrm{mixed}})_{\a\b}=0$. Therefore we have $R^{(0)}_{\a\b} = (R^{(0)}_{\mathrm{null}})_{\a\b}$. Moreover, the transport equation \eqref{eq F1}, the polarization condition \eqref{pola F} (which holds thanks to Lemma \ref{lem pola F1 F}) and the definitions \eqref{def 1 V 2 1}-\eqref{def 1 V 2 2} imply that
\begin{align}\label{R0 final}
R^{(0)}_{\a\b} & = - \half \sum_\A \sin\pth{\frac{u_\A}{\la}}  \dr_{(\a}u_\A (V^{(2,1)}_\A )_{\b)} - 2  \sum_\A \cos\pth{\frac{2u_\A}{\la}}  \dr_{(\a}u_\A (V^{(2,2)}_\A )_{\b)}.
\end{align}
We now compute $R^{(1)}_{\a\b}$. The definition \eqref{def g3e} of $\gg^{(3,e)}_{v,T}$ precisely ensures that $(R^{(1)}_{\mathrm{mixed}})_{\a\b}=0$ so that 
\begin{align}\label{R1 final}
R^{(1)}_{\a\b} = (R^{(1)}_{\mathrm{null}})_{\a\b} + \half (\g_0)_{\rho(\a}(\dr_{\b)}\Upsilon^\rho)^{(-1)}.
\end{align}
Moreover, the transport equations \eqref{eq F21}-\eqref{eq F} and the conditions \eqref{pola F} and \eqref{pola g3h} imply
\begin{align}\label{R1 final bis}
(R^{(1)}_{\mathrm{null}})_{\a\b} & = \half\sum_\A \cos\pth{\frac{u_\A}{\lambda}} \bigg(  -  \Pi_\geq \pth{ (\h_\lambda)_{L_\A L_\A} }(F^{(1)}_\A)_{\alpha\beta}   + \D_{(\a} (V^{(2,1)}_\A)_{\b)}  \bigg) \non
\\&\quad - \sum_\A\sin\pth{\frac{2u_\A}{\lambda}}  \D_{(\a} (V^{(2,2)}_\A)_{\b)}  
\\&\quad - \half\sum_\A\bigg( 2  \sin\pth{\frac{u_\A}{\la}}  \sin\pth{\frac{2u_\A}{\la}}  - \cos\pth{\frac{u_\A}{\lambda}} \cos\pth{ \frac{2u_\A}{\lambda} } \bigg) (V^{(2,2)}_\A)_{L_\A} (F^{(1)}_\A)_{\alpha\beta}  ,\non
\end{align}
where we used \eqref{R 1 null bis}. Finally, thanks to the wave equation \eqref{eq h} we have from \eqref{R 2}:
\begin{align}\label{R2 final}
R^{(\geq 2)}_{\a\b} & =  \frac{1}{2\la}\sum_\A \cos\pth{\frac{u_\A}{\la}}\Pi_\geq\pth{(\h_\lambda)_{L_\A L_\A}}(F^{(1)}_\A)_{\a\b} +\half\pth{ \Upsilon^\rho \dr_\rho (\g_\la)_{\a\b} + (\g_\la)_{\rho(\a} \dr_{\b)}\Upsilon^\rho }^{(\geq 0)} .
\end{align}
Combining \eqref{R0 final}-\eqref{R2 final} we obtain the decomposition
\begin{align*}
R_{\a\b}(\g_\la) & =  R(V)_{\a\b} + R(\Upsilon)_{\a\b}
\end{align*}
with 
\begin{align}
R(V)_{\a\b} & \vcentcolon = - \half \sum_\A \sin\pth{\frac{u_\A}{\la}}  \dr_{(\a}u_\A (V^{(2,1)}_\A )_{\b)} - 2  \sum_\A \cos\pth{\frac{2u_\A}{\la}}  \dr_{(\a}u_\A (V^{(2,2)}_\A )_{\b)} \non
\\&\quad +  \frac{\la}{2}\sum_\A \cos\pth{\frac{u_\A}{\lambda}} \D_{(\a} (V^{(2,1)}_\A)_{\b)}  - \la\sum_\A\sin\pth{\frac{2u_\A}{\lambda}}  \D_{(\a} (V^{(2,2)}_\A)_{\b)}   \label{R V}
\\&\quad - \frac{\la}{2}\sum_\A\bigg( 2  \sin\pth{\frac{u_\A}{\la}}  \sin\pth{\frac{2u_\A}{\la}}  - \cos\pth{\frac{u_\A}{\lambda}} \cos\pth{ \frac{2u_\A}{\lambda} } \bigg) (V^{(2,2)}_\A)_{L_\A} (F^{(1)}_\A)_{\alpha\beta},  \non
\\ R(\Upsilon)_{\a\b} & \vcentcolon = \frac{\la^2}{2}\pth{ \Upsilon^\rho \dr_\rho (\g_\la)_{\a\b} + (\g_\la)_{\rho(\a} \dr_{\b)}\Upsilon^\rho }. \label{R Upsilon}
\end{align}
We define 
\begin{align*}
G(V)_{\a\b} & \vcentcolon = R(V)_{\a\b} - \half \g_\la^{\mu\nu}R(V)_{\mu\nu}(\g_\la)_{\a\b},
\\ G(\Upsilon)_{\a\b} & \vcentcolon = R(\Upsilon)_{\a\b} - \half \g_\la^{\mu\nu}R(\Upsilon)_{\mu\nu}(\g_\la)_{\a\b},
\end{align*}
so that the Einstein tensor of $\g_\la$ satisfies $G_{\a\b}(\g_\la) = G(V)_{\a\b} + G(\Upsilon)_{\a\b}$. Moreover, $G(\Upsilon)_{\a\b}$ is indeed given by \eqref{def G upsilon}. It remains to compute $G(V)_{\a\b}$, for that we compute the trace $\g_\la^{\mu\nu}R(V)_{\mu\nu}$:
\begin{align*}
\g_\la^{\mu\nu}R(V)_{\mu\nu} & =  \sum_\A \sin\pth{\frac{u_\A}{\la}}  (V^{(2,1)}_\A )_{L_\A} + 4  \sum_\A \cos\pth{\frac{2u_\A}{\la}}  (V^{(2,2)}_\A )_{L_\A}
\\&\quad + \la\sum_\A \cos\pth{\frac{u_\A}{\lambda}} \div_{g_0} V^{(2,1)}_\A  - 2\la\sum_\A\sin\pth{\frac{2u_\A}{\lambda}} \div_{g_0}V^{(2,2)}_\A
\\&\quad + \la \sum_{\substack{v\in\mathcal{I}\\T\in\{\cos,\sin\}}} T\pth{\frac{v}{\la}}\cro{ F^{(1)} (V^{(2,1)} + V^{(2,2)}) } + \GO{\la^2}.
\end{align*}
Plugging this into the definition of $G(V)_{\a\b}$, we obtain the decomposition \eqref{expansion G(V)}.
\end{proof}

The contracted Bianchi identities applied to the metric $\g_\la$ now read
\begin{align}\label{cBI}
\div_{\g_\la} G(V) + \div_{\g_\la} G(\Upsilon) & = 0.
\end{align}
To deduce extra equations from \eqref{cBI} we compute the divergence of $G(\Upsilon)$ and $G(V)$ in the two following lemmas.

\begin{lemma}\label{lem divG(Ups)}
Let $K$ be any compact set of $\R^3$.  We have 
\begin{align*}
\dive_{\g_\la} G(\Upsilon) & = \sum_{\substack{v\in\mathcal{I}\\T\in\{\cos,\sin\}}} T\pth{\frac{v}{\la}} \cro{\gg^{(3,e)}} + \mathcal{R}_1
\end{align*}
with 
\begin{align}\label{reste 1}
\l \mathcal{R}_1 \r_{L^2(K)} \lesssim  \la.
\end{align}
\end{lemma}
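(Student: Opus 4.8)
The plan is to compute $\dive_{\g_\la} G(\Upsilon)$ directly from the explicit expression \eqref{def G upsilon}, exploiting the crucial fact that $\Upsilon^\rho$ itself will eventually be shown to vanish — so at this stage we only need that $\Upsilon^\rho$ is a controlled oscillating quantity of the schematic form $\{\g_\la^{-1}\dr(\h_\la + \sum_\A \F_\A + \sum \gg^{(3,h)} + \sum \gg^{(3,e)}) + \dots\}^\osc$ (read off from \eqref{Upsilon}), and that by the estimates \eqref{estim F}, \eqref{estim h}, \eqref{g3h} and \eqref{def g3e} together with the Sobolev embeddings recalled in Section \ref{section notation}, one has $\l \Upsilon^\rho \r_{L^2(K)} \lesssim 1$ and $\l \dr \Upsilon^\rho \r_{L^2(K)} \lesssim \la^{-1}$ uniformly in $\la$, after multiplying by the $\la^2$ prefactor these become $O(\la^2)$ and $O(\la)$ respectively in terms of the contribution to $G(\Upsilon)$ and its derivative. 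The point is that $G(\Upsilon)_{\a\b} = \frac{\la^2}{2}(\dots)$ carries an explicit $\la^2$, so a single spacetime derivative in $\dive_{\g_\la}$ lands us at order $\la$, except for the terms where the derivative hits an oscillating function $T(v/\la)$ or $T(u/\la)$ hidden inside $\Upsilon^\rho$, which produces a factor $\la^{-1}$ and hence an $O(1)$ term.

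First I would expand $\dive_{\g_\la} G(\Upsilon)_\b = \g_\la^{\a\mu}\D^{\g_\la}_\mu G(\Upsilon)_{\a\b}$, writing the covariant derivative as $\dr$ plus Christoffel corrections $\cro{\g_\la^{-1}\dr\g_\la}$; the Christoffel corrections multiply $G(\Upsilon) = O(\la^2)$ and, since $\dr\g_\la = O(1)$ in $L^2(K)$ after accounting for the oscillations (the leading oscillating derivative term in $\dr\g_\la$ is the $\sin(u_\A/\la)\dr u_\A F^{(1)}_\A$ piece, which is $O(1)$ not $O(\la^{-1})$ — recall the discussion after Theorem \ref{theo evol}), these contribute only $O(\la^2)$ to the remainder $\mathcal{R}_1$. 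Then I would differentiate the explicit bracket in \eqref{def G upsilon} term by term. Each term is $\la^2$ times a product of $\g_\la$-factors (or $\g_\la^{-1}$), $\dr\g_\la$-factors, and one factor $\Upsilon^\rho$ or $\dr\Upsilon^\rho$. Applying $\dr$: if it hits a $\g_\la$ or $\g_\la^{-1}$ factor we gain at most $O(1)$ from its derivative, so the term stays $O(\la^2) \times O(1) = O(\la^2)$... unless that derivative produces a $\la^{-1}$, which again only happens through oscillating phases. If $\dr$ hits the $\Upsilon^\rho$ factor, we get $\la^2 \dr\Upsilon^\rho = O(\la)$ as long as no further $\la^{-1}$ appears, and $O(1)$ when it does. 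The systematic bookkeeping is: every surviving $O(1)$ contribution must contain exactly one extra $\la^{-1}$ relative to the naive count, and tracking where that $\la^{-1}$ comes from shows it is always a derivative of an oscillating exponential $T(\cdot/\la)$.

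The key structural observation — and the main content of the lemma — is the identification of those $O(1)$ terms with $\sum_{v\in\mathcal{I},T} T(v/\la)\cro{\gg^{(3,e)}}$. The reason is that $\Upsilon^\rho$ as given by \eqref{Upsilon} contains, at order $\la^0$, only $\dr\h_\la$ and $\sin(u_\A/\la)\dr\F_\A$ type terms whose phases are null harmonics $u_\A \in \mathcal{N}$, and at order $\la^1$ the terms with phases in $\mathcal{N}$ (for $\gg^{(3,h)}$) and in $\mathcal{I}$ (for $\gg^{(3,e)}$), plus the distinguished line $\g_\la^{\rho\sigma}\g_\la^{\mu\nu}\sum_{v\in\mathcal{I},T}T'(v/\la)(\dr_\mu v(\gg^{(3,e)}_{v,T})_{\sigma\nu} - \dots)$, which is $O(1)$ already because of the $T'(v/\la)$. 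When one multiplies $G(\Upsilon)$ (itself $O(\la^2)$) by $\g_\la^{-1}$ and takes a derivative, the only way to produce an $O(1)$ term is: either (a) $\dr$ hits the oscillation $T'(v/\la)$ inside that distinguished line — but that line has been absorbed into $\Upsilon^\rho$ precisely to kill the polarization term in \eqref{H 2 gauge}, and differentiating it gives $\la^{-1}\cdot\la^2 \cdot T''(v/\la)(\dots) = \la \cdot T''(\dots)(\dots) = O(\la)$, harmless; or (b) more carefully — the $O(1)$ terms are exactly those where the $\la$ counting works out to zero, and one checks that these necessarily carry a phase $v\in\mathcal{I}$ (null-harmonic phases $u\in\mathcal{N}$ end up at higher order because of how the $\la^2$ prefactor interacts with the order-$\la^0$ part of $\Upsilon^\rho$, which only has $\mathcal{N}$-phases and gives $O(\la^2)\cdot O(\la^{-1}) = O(\la)$ at worst). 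So the residual $O(1)$ comes solely from the $\mathcal{I}$-phase, $\gg^{(3,e)}$-carrying terms, giving the claimed form $\sum_{v\in\mathcal{I},T}T(v/\la)\cro{\gg^{(3,e)}}$. I expect the main obstacle to be precisely this last bookkeeping step: organising the Leibniz expansion of $\dive_{\g_\la}$ of the five terms in \eqref{def G upsilon} so as to be sure that (i) no $O(1)$ term with a null-harmonic phase $u\in\mathcal{N}$ survives — these must all be pushed to $O(\la)$ and enter $\mathcal{R}_1$ — and (ii) the $L^2(K)$ estimate $\l\mathcal{R}_1\r_{L^2(K)} \lesssim \la$ genuinely holds, which requires invoking the compact support of the $F$'s in $J^+_0(B_R)$ (so the weighted norms in \eqref{estim h} become ordinary $L^2(K)$ norms) together with the $H^2 \subset L^\infty$ embedding to handle the products, exactly as in the estimate of $\tilde{R}^{(2)}$ in \eqref{R 2 w T}.
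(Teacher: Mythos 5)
Your overall strategy (expand $\dive_{\g_\la}G(\Upsilon)$ explicitly and track powers of $\la$, with each loss of $\la$ traced to a derivative falling on an oscillating phase) is the same in spirit as the paper's, but your bookkeeping goes wrong at exactly the point that produces the leading term, and you are missing the two structural facts that make the correct count work. First, recall that $G(\Upsilon)_{\a\b}$ already contains $\dr\Upsilon^\rho$, so its divergence carries \emph{second} derivatives of $\Upsilon^\rho$; the paper's proof organises these as $\dive_{\g_\la}G(\Upsilon)_\a = \tfrac{\la^2}{2}\pth{(\g_\la)_{\rho\a}\tBox_{\g_\la}\Upsilon^\rho + \mathcal{B}_\a}$, i.e.\ the second-order part appears only through the wave operator (the cross terms $\dr_\b\dr_\rho\Upsilon^\rho$ cancel between the $(\g_\la)_{\rho(\a}\dr_{\b)}\Upsilon^\rho$ and $(\g_\la)_{\a\b}\dr_\rho\Upsilon^\rho$ pieces). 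In your case (a) you apply only \emph{one} derivative to the distinguished line $\sum_{v,T}T'\pth{\tfrac{v}{\la}}\cro{\gg^{(3,e)}}$ of $\Upsilon^\rho$ and conclude it contributes $\la^2\cdot\la^{-1}=O(\la)$, "harmless". With two derivatives it contributes $\tfrac{\la^2}{2}\cdot\la^{-2}\,T\pth{\tfrac{v}{\la}}\g_0^{-1}(\d v,\d v)\cro{\gg^{(3,e)}}+\dots$, and since $v\in\mathcal{I}$ the coherence assumption \eqref{coherence} gives $\g_0^{-1}(\d v,\d v)\neq 0$: this is precisely the $O(1)$ term $\sum_{v\in\mathcal{I},T}T\pth{\tfrac{v}{\la}}\cro{\gg^{(3,e)}}$ in the statement. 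So the term you dismiss is the entire content of the lemma, and your subsequent case (b) attributes the leading term to the $\mathcal{I}$-phases by assertion rather than by a computation.

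Second, your claim (i) that no $O(1)$ term with a null-harmonic phase survives is not justified by your counting. The order-$\la^0$ part of $\Upsilon^\rho$ contains $\sin\pth{\tfrac{u_\A}{\la}}\cro{\dr\F_\A+F^{(1)}}$, and a generic second derivative of this is $O(\la^{-2})$, hence $O(1)$ after the $\la^2$ prefactor; your statement that it "gives $O(\la^2)\cdot O(\la^{-1})=O(\la)$ at worst" presupposes exactly what has to be proved. The degeneration to $O(\la^{-1})$ happens only because the second derivatives enter through $\tBox_{\g_\la}$, where the double phase derivative produces the factor $\g_\la^{-1}(\d u_\A,\d u_\A)=\GO{\la}$ (approximate eikonal equation, \eqref{expansion eikonal}), leaving the $\tfrac{1}{\la}$ transport-type terms $\cro{\dr L_\A\F_\A+\dr^{\leq 1}\F_\A+\dr^{\leq 1}F^{(1)}}^\osc$ of \eqref{box upsilon}, which after multiplication by $\la^2$ go into $\mathcal{R}_1$. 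Without first exhibiting the wave-operator structure of $\dive_{\g_\la}G(\Upsilon)$ and then invoking the eikonal cancellation for $u\in\mathcal{N}$ versus the non-degeneracy \eqref{coherence} for $v\in\mathcal{I}$, your Leibniz-by-hand expansion cannot distinguish the harmless null-phase $O(1)$ candidates from the genuine mixed-phase leading term, and as written it would in fact conclude the wrong statement ($\dive_{\g_\la}G(\Upsilon)=\GO{\la}$ with no $O(1)$ part). Your remarks on the Christoffel corrections, on $\dr\g_\la=\GO{1}$, and on using the compact support and Sobolev embedding for the $L^2(K)$ bound of $\mathcal{R}_1$ are fine and match the paper's treatment of the lower-order bracket $\mathcal{B}$.
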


\begin{proof}
From \eqref{def G upsilon} we obtain
\begin{align*}
\dive_{\g_\la} G(\Upsilon)_\a & = \frac{\la^2}{2} \pth{ (\g_\la)_{\rho\a}\tBox_{\g_\la}\Upsilon^\rho + \mathcal{B}_\a }
\end{align*}
where $\mathcal{B}_\a$ is schematically of the form
\begin{align*}
\mathcal{B} & =\pth{ 1 + \g_\la^{-1}\g_\la } \pth{ \g_\la^{-1} \dr \Upsilon \dr \g_\la + \g_\la^{-1}  \Upsilon \dr^2\g_\la + \g_\la^{-1}  \g_\la^{-1}\Upsilon \dr\g_\la\dr \g_\la  }.
\end{align*}
Since $\Upsilon=\GO{1}$ and $\g_\la=\GO{1}$ both with oscillating coefficients, the worst terms in $\mathcal{B}$ are $\dr\Upsilon$ and $\dr^2\g_\la$, and note that they are not multiplied together. Using the regularity and the estimates stated in Proposition \ref{prop system}, we conclude easily that
\begin{align*}
\l \mathcal{B} \r_{L^2(K)} \lesssim  \frac{1}{\la}.
\end{align*} 
We now look at the term $\tBox_{\g_\la}\Upsilon^\rho$. From \eqref{Upsilon} we obtain the following schematic expression of $\Upsilon^\rho$:
\begin{align*}
\Upsilon & \vcentcolon =    \cro{\dr\h_\la} +  \sum_\A  \sin\left(\frac{u_\A}{\lambda} \right) \cro{\dr\F_\A + F^{(1)}} +  \sum_{\substack{v\in\mathcal{I}\\T\in\{\cos,\sin\}}}T'\pth{\frac{v}{\la}}\cro{\gg^{(3,e)}} + \text{better terms},
\end{align*}
where for clarity we choose not to write the $\g_\la^{-1}$ factors (which are lower order terms from the point of view of the $\la$ behaviour) and where the better terms includes terms with the same amounts of derivatives but with better $\la$ behaviour (for instance, terms of the form $\la\dr\gg^{(3,e)}$ or $\la\dr\gg^{(3,h)}$). Therefore, using \eqref{Box T f} we obtain
\begin{equation}\label{box upsilon}
\begin{aligned}
\tBox_{\g_\la}\Upsilon & \vcentcolon =    \cro{\dr\tBox_{\g_\la}\h_\la}  +  \cro{ \dr\tBox_{\g_\la}\F + \dr^2F^{(1)} }^\osc + \frac{1}{\la} \cro{ \dr L_\A  \F_\A + \dr^{\leq 1}\F_\A + \dr^{\leq 1}F^{(1)}}^\osc
\\&\quad + \frac{1}{\la^2}  \sum_{\substack{v\in\mathcal{I}\\T\in\{\cos,\sin\}}} T\pth{\frac{v}{\la}}\cro{\gg^{(3,e)}} + \frac{1}{\la} \cro{\dr^{\leq 1}\gg^{(3,e)}}^\osc + \cro{ \tBox_{\g_\la}\gg^{(3,e)}}^\osc + \text{better terms},
\end{aligned}
\end{equation}
where we used the fact that the commutators $\left[ \tBox_{\g_\la}, \dr \right]$ and $[L_\A,\dr]$ are better terms. Compared to the singlephase construction of \cite{Touati2023a}, the only new term in \eqref{box upsilon} is \[\frac{1}{\la^2}  \sum_{\substack{v\in\mathcal{I}\\T\in\{\cos,\sin\}}} T\pth{\frac{v}{\la}}\cro{\gg^{(3,e)}},\] which we keep as it is. The other terms can be estimated as in Lemma 8.2 of \cite{Touati2023a} (using in particular the fact that the worst term in $\gg^{(3,e)}$ is a linear term in $\F$), we don't write the details and obtain that there are all bounded by $\frac{1}{\la}$ in $L^2(K)$. This proves the lemma.
\end{proof}

\begin{lemma}\label{lem divG(V)}
We have 
\begin{align*}
\dive_{\g_\la}G(V)_\a & = \half \sum_\A \sin\pth{\frac{u_\A}{\la}}\Big( 2\D_{L_\A} (V^{(2,1)}_\A)_{\a} +  \div_{\g_0}L_\A (V^{(2,1)}_\A)_{\a}    + (V^{(2,1)}_\A)^\b\D_{[\b}(L_\A)_{\a]}   \Big)
\\&\quad + 2 \sum_\A \cos\pth{\frac{2u_\A}{\la}} \Big(  2 \D_{L_\A} (V^{(2,2)}_\A)_{\a} + \div_{\g_0}L_\A (V^{(2,2)}_\A)_{\a} + (V^{(2,2)}_\A)^\b\D_{[\b}(L_\A)_{\a]} \Big)
\\&\quad +  \sum_{\substack{v\in\mathcal{I}\\T\in\{\cos,\sin\}}} T\pth{\frac{v}{\la}}\cro{  (V^{(2,1)} + V^{(2,2)}) F^{(1)} }_{\a\b} + (\mathcal{R}_2)_\a
\end{align*}
with 
\begin{align}\label{reste 2}
\l \mathcal{R}_2 \r_{L^2(K)} \lesssim  \la.
\end{align}
\end{lemma}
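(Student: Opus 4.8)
The plan is to prove Lemma \ref{lem divG(V)} by a direct computation, starting from the expansion $G(V)_{\a\b} = G^{(0)}_{\a\b} + \la G^{(1)}_{\a\b} + \GO{\la^2}$ of Lemma \ref{lem G}, with $G^{(0)}$ and $G^{(1)}$ given explicitly by \eqref{G 0}-\eqref{G 1}. Writing the $\g_\la$-covariant divergence as the $\g_0$-covariant divergence plus corrections involving $\Ga(\g_\la)-\Ga(\g_0)$ (which is $\GO{\la^0}$ with oscillating coefficients of the schematic form $\cro{F^{(1)}}$ at leading order), the contribution of the $\GO{\la^2}$ part of $G(V)$ is immediately put into $\mathcal{R}_2$: differentiating its oscillating coefficients produces at worst a factor $\la^{-1}$, so this contribution is $\GO{\la}$ in $L^2(K)$ by the estimates of Proposition \ref{prop system} (the relevant amplitudes being built from $F^{(1)}_\A$, its first derivatives, the $1$-forms $V^{(2,1)}_\A$, $V^{(2,2)}_\A$ and their first derivatives, all of size $\GO{\e}$, together with $\h_\la$-terms carrying compensating powers of $\la$). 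It thus remains to compute $\dive_{\g_\la}\pth{G^{(0)} + \la G^{(1)}}$ modulo $\GO{\la}$.

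First I would isolate the $\la^{-1}$ term of $\dive_{\g_\la}G^{(0)}$, which arises only when a derivative hits the oscillating factors $\sin\pth{\frac{u_\A}{\la}}$ or $\cos\pth{\frac{2u_\A}{\la}}$ in $G^{(0)}$, producing $\frac1\la$ times $\g_\la^{\mu\nu}\dr_\mu u_\A = -(L_\A)^\nu + \GO{\la}$ contracted against the amplitude of $G^{(0)}$. The crucial point is that this amplitude has the null-dust structure $\dr_{(\nu}u_\A (V^{(2,i)}_\A)_{\a)} + (V^{(2,i)}_\A)_{L_\A}(\g_0)_{\nu\a}$ (for $i\in\{1,2\}$), and, using the eikonal equation $L_\A(u_\A)=0$ together with $(L_\A)_\a = -\dr_\a u_\A$,
\begin{align*}
(L_\A)^\nu\pth{\dr_{(\nu}u_\A (V^{(2,i)}_\A)_{\a)} + (V^{(2,i)}_\A)_{L_\A}(\g_0)_{\nu\a}} = \dr_\a u_\A\,(V^{(2,i)}_\A)_{L_\A} + (V^{(2,i)}_\A)_{L_\A}(L_\A)_\a = 0.
\end{align*}
Hence the $\la^{-1}$ contribution vanishes; this is the same null-structure cancellation already at play in the singlephase case of \cite{Touati2023a}.

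Next I would collect the $\GO{\la^0}$ contributions, which come from three sources. First, a $\g_0$-covariant derivative hitting the amplitude of $G^{(0)}$: replacing $\g_\la^{\mu\nu}\dr_\mu u_\A$ by $-(L_\A)^\nu$ turns this into the transport operator $\D_{L_\A}$ acting on $V^{(2,i)}_\A$, plus the terms proportional to $\div_{\g_0}L_\A$ and to the Hessian of $u_\A$, using $\g_0^{\mu\nu}\D_\mu\D_\nu u_\A = \Box_{\g_0}u_\A = -\div_{\g_0}L_\A$ and the symmetry of $\D^2 u_\A$. Second, a derivative hitting the oscillation in $\la G^{(1)}$: by the same computation this produces a second copy of $\D_{L_\A}V^{(2,i)}_\A$ — so the two combine into the coefficient $2\D_{L_\A}(V^{(2,i)}_\A)$ — and precisely the divergence-type terms $\D_\a\pth{(V^{(2,i)}_\A)_{L_\A}}$ and $\dr_\a u_\A\,\div_{\g_0}V^{(2,i)}_\A$, which cancel between the first and the second source, leaving only the antisymmetric remainder $(V^{(2,i)}_\A)^\b\D_{[\b}(L_\A)_{\a]}$ together with the term $\div_{\g_0}L_\A\,(V^{(2,i)}_\A)_\a$. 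Third, the subleading piece $\g_\la^{\mu\nu}-\g_0^{\mu\nu}=\GO{\la}$ meeting the $\la^{-1}$ oscillation-derivative, and the Christoffel correction $\Ga(\g_\la)-\Ga(\g_0) = \cro{F^{(1)}}+\dots$ contracted against $G^{(0)} = \cro{V^{(2,1)}+V^{(2,2)}}$: expanding the resulting products of trigonometric factors by product-to-sum formulas yields exactly the terms $\sum_{v\in\mathcal{I},\,T\in\{\cos,\sin\}}T\pth{\frac{v}{\la}}\cro{(V^{(2,1)}+V^{(2,2)})F^{(1)}}_{\a\b}$, the null-phase and non-oscillating pieces either cancelling via the same null structure as above or being lower order and absorbed into $\mathcal{R}_2$. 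Collecting everything, and putting into $\mathcal{R}_2$ the $\GO{\la}$ pieces coming from the subleading terms in $\g_\la^{\mu\nu}\dr_\mu u_\A = -(L_\A)^\nu + \GO{\la}$, in $\Ga(\g_\la)-\Ga(\g_0)$, and in the commutator between the $\g_\la$- and $\g_0$-covariant derivatives, one obtains the stated identity; the bound $\l\mathcal{R}_2\r_{L^2(K)}\lesssim\la$ follows from Proposition \ref{prop system} exactly as in \cite{Touati2023a}.

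The main obstacle is the bookkeeping of the oscillating products: one must organise the many trigonometric terms so that the genuine transport operators $\D_{L_\A}V^{(2,i)}_\A$ emerge with the right coefficients, that all spurious divergence-type terms cancel pairwise, and that the surviving interaction terms are correctly identified either with the schematic $\cro{V F^{(1)}}$ contributions indexed by $v\in\mathcal{I}$ or with the $\GO{\la}$ remainder. Apart from the single genuinely new ingredient — the appearance of the mixed phases $v\in\mathcal{I}$ through the products of $F^{(1)}_\A$ with the amplitude of $G^{(0)}$ — the computation is identical to its singlephase counterpart in \cite{Touati2023a}, to which I would refer for the remaining routine details.
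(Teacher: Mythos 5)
Your proposal is correct and follows essentially the same route as the paper's proof: expand $\dive_{\g_\la}G(V)$ order by order in $\la$ from the decomposition of Lemma \ref{lem G}, observe that the $\la^{-1}$ term vanishes by the null-dust structure of $G^{(0)}$ (eikonal equation together with $(L_\A)_\a=-\dr_\a u_\A$), and collect the $\la^{0}$ contributions coming from the derivative hitting the amplitude of $G^{(0)}$, the derivative hitting the oscillation of $\la G^{(1)}$, and the $(\g_\la^{-1})^{(1)}$ and oscillating Christoffel corrections, which produce respectively the doubled transport term, the cancellations, and the mixed-phase $\cro{(V^{(2,1)}+V^{(2,2)})F^{(1)}}$ terms, with everything else absorbed into $\mathcal{R}_2$ via the bounds of Proposition \ref{prop system}. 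Your pairing of the spurious terms is slightly imprecise ($-\D_\a\pth{(V^{(2,i)}_\A)_{L_\A}}$ combines with $L_\A^\b\D_\a (V^{(2,i)}_\A)_\b$ to form part of the antisymmetrised term, while $(L_\A)_\a\div_{\g_0}V^{(2,i)}_\A$ cancels against $\dr_\a u_\A\,\div_{\g_0}V^{(2,i)}_\A$), but the surviving terms you identify agree with the paper's computation.
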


\begin{proof}
Since $G(V)$ contains oscillating terms at the order $\la^0$ we have 
\begin{align*}
\dive_{\g_\la}G(V)_\a & = \frac{1}{\la} \pth{ \dive_{\g_\la}G(V)_\a }^{(-1)} + \pth{ \dive_{\g_\la}G(V)_\a }^{(0)} + \GO{\la}.
\end{align*}
Recall that if $T$ is a symmetric 2-tensor we have in coordinates 
\begin{align*}
\dive_{\g_\la}T_\a & = \g_\la^{\rho\b}  \dr_\rho T_{\a\b} - \g_\la^{\rho\b}\Ga(\g_\la)_{\rho(\a}^{\mu}T_{\mu\b)} .
\end{align*}
Therefore, we have thanks to Lemma \ref{lem G}:
\begin{align*}
\pth{ \dive_{\g_\la}G(V)_\a }^{(-1)} & = -\half \sum_\A \cos\pth{\frac{u_\A}{\la}} \dr^\b u_\A\pth{ \dr_{(\a} u_\A (V_\A^{(2,1)})_{\b)} + (V^{(2,1)}_\A)_{L_\A} (\g_0)_{\a\b} } 
\\&\quad + 4 \sum_\A \sin\pth{\frac{2u_\A}{\la}} \dr^\b u_\A\pth{ \dr_{(\a} u_\A (V_\A^{(2,2)})_{\b)} + (V^{(2,2)}_\A)_{L_\A} (\g_0)_{\a\b} } 
\\& = 0.
\end{align*}
Moreover, we have
\begin{align*}
\pth{ \dive_{\g_\la}G(V)_\a }^{(0)} & = \pth{\div_{g_0} G^{(0)}_{\a}}^{(0)}   +  \g_0^{\rho\b} \pth{ \dr_\rho G^{(1)}_{\a\b} }^{(-1)}   + \pth{ \g_\la^{\rho\b}}^{(1)} \pth{ \dr_\rho G(V)_{\a\b} }^{(-1)}  - \g_0^{\rho\b} (\tilde{\Ga}^{(0)})_{\rho(\a}^{\mu} G^{(0)}_{\mu\b)}
\\& \vcentcolon= I + II + III + IV,
\end{align*}
where
\begin{align*}
(\tilde{\Ga}^{(0)})_{\rho\a}^{\mu} & = -\half\sum_\B  \sin\pth{\frac{u_\B}{\la}} \pth{\dr_{(\rho} u_\B (F^{(1)}_\B)_{\a)}^\mu - \dr^\mu  u_\B (F^{(1)}_\B)_{\rho\a} }.
\end{align*}
For $I$, \eqref{G 0} implies
\begin{align*}
I & = \half \sum_\A \sin\pth{\frac{u_\A}{\la}} \Big( (L_\A)_{\a} \div_{\g_0} (V^{(2,1)}_\A) + (V^{(2,1)}_\A)_{\a}\div_{\g_0}L_\A 
\\&\hspace{4cm} +  \D_{L_\A} (V^{(2,1)}_\A)_{\a} + (V^{(2,1)}_\A)^\rho\D_\rho(L_\A)_{\a} - \dr_\a(V^{(2,1)}_\A)_{L_\A}  \Big)
\\&\quad + 2 \sum_\A \cos\pth{\frac{2u_\A}{\la}} \Big( (L_\A)_{\a} \div_{\g_0} (V^{(2,2)}_\A) + (V^{(2,2)}_\A)_{\a}\div_{\g_0}L_\A 
\\&\hspace{5cm} +  \D_{L_\A} (V^{(2,2)}_\A)_{\a} + (V^{(2,2)}_\A)^\rho\D_\rho(L_\A)_{\a} - \dr_\a(V^{(2,2)}_\A)_{L_\A}  \Big).
\end{align*}
For $II$, \eqref{G 1} implies
\begin{align*}
II & = - \frac{1}{2}\sum_\A \sin\pth{\frac{u_\A}{\lambda}}  \pth{ - L_\A^\b \D_{\a} (V^{(2,1)}_\A)_{\b} - \D_{L_\A} (V^{(2,1)}_\A)_{\a}  - \div_{\g_0} V^{(2,1)}_\A \dr_\a u_\A }
\\&\quad  - 2\sum_\A\cos\pth{\frac{2u_\A}{\lambda}}  \pth{- L_\A^\b \D_{\a} (V^{(2,2)}_\A)_{\b} - \D_{L_\A} (V^{(2,2)}_\A)_{\a}  - \div_{\g_0}V^{(2,2)}_\A \dr_\a u_\A }
\\&\quad + \sum_{\substack{v\in\mathcal{I}\\T\in\{\cos,\sin\}}} T\pth{\frac{v}{\la}}\cro{  (V^{(2,1)} + V^{(2,2)}) F^{(1)} }_{\a\b}
\end{align*}
For $III$ and $IV$, \eqref{G 0} simply implies
\begin{align*}
III + IV & = \sum_{\substack{v\in\mathcal{I}\\T\in\{\cos,\sin\}}} T\pth{\frac{v}{\la}}\cro{  (V^{(2,1)} + V^{(2,2)}) F^{(1)} }_{\a\b}.
\end{align*}
Putting everything together, we obtain the lemma, using in particular the fact that all the $\la$ terms in $G(V)$ are oscillating with amplitudes depending only on $V^{(2,1)}_\A$, $V^{(2,2)}_\A$ and $F^{(1)}$ which can be bounded independently of $\la$, thus producing the remainder term $\mathcal{R}_2$.
\end{proof}

\begin{lemma}
The polarization conditions \eqref{pola F21}-\eqref{pola F22} holds on $[0,1]\times \R^3$.
\end{lemma}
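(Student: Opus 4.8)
The plan is to combine the contracted Bianchi identities \eqref{cBI} with the divergence computations of Lemmas \ref{lem divG(Ups)} and \ref{lem divG(V)} to derive a closed transport-type system for the 1-forms $V^{(2,1)}_\A$ and $V^{(2,2)}_\A$, and then use that this system is satisfied with vanishing data on $\Si_0$ (by Lemma \ref{lem pola F2 init} and Proposition \ref{prop ID main}). The first step is to substitute the expansions of $\dive_{\g_\la}G(V)$ and $\dive_{\g_\la}G(\Upsilon)$ into \eqref{cBI} and collect terms according to their oscillating phase. The crucial structural point is that the phases appearing on the right-hand sides are organised: the $\sin(u_\A/\la)$ and $\cos(2u_\A/\la)$ harmonics carry the genuine transport operators $2\D_{L_\A}(V^{(2,i)}_\A) + \div_{\g_0}L_\A\, V^{(2,i)}_\A + (V^{(2,i)}_\A)^\b\D_{[\b}(L_\A)_{\a]}$, while all the remaining contributions — the $\cro{\gg^{(3,e)}}$ terms from $\dive_{\g_\la}G(\Upsilon)$, the mixed $v\in\mathcal{I}$ harmonics in $\dive_{\g_\la}G(V)$, and the $\mathcal{R}_1,\mathcal{R}_2$ remainders — must be shown to be expressible schematically in terms of $V^{(2,1)}_\A$, $V^{(2,2)}_\A$, $\Upsilon^\rho$ and $O(\la)$ errors. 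For the $\cro{\gg^{(3,e)}}$ terms this is exactly where one uses that $\gg^{(3,e)}_{v,T}$, by its defining relation \eqref{def g3e} and the expression \eqref{R 1 e}, contains a linear dependence on $\F_\A$ which, once \eqref{eq F} and the polarization conditions already propagated in Lemma \ref{lem pola F1 F} are invoked, can be traded against $\Upsilon$ and $V$-type quantities; this mirrors the singlephase argument in Section 8.2 of \cite{Touati2023a}.

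The second step is to note that by the results of the previous section (specifically that \eqref{GWC}, $\Upsilon^\rho=0$, is \emph{not yet} known on $[0,1]\times\R^3$), one must propagate $\Upsilon^\rho$, $V^{(2,1)}_\A$ and $V^{(2,2)}_\A$ \emph{simultaneously}. I would set up a single coupled system: a wave equation for $\Upsilon^\rho$ coming from isolating $(\g_\la)_{\rho\a}\tBox_{\g_\la}\Upsilon^\rho$ in $\dive_{\g_\la}G(\Upsilon)_\a$ via \eqref{box upsilon}, together with transport equations for $V^{(2,1)}_\A$ and $V^{(2,2)}_\A$ obtained by matching the $\sin(u_\A/\la)$ and $\cos(2u_\A/\la)$ harmonics in \eqref{cBI}. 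Because distinct phases $u_\A$ are, by the strong coherence and spatial-separation assumptions \eqref{coherence}--\eqref{spatial}, genuinely distinct as functions, one can separate the harmonics and read off one transport equation per index $\A$ and per $i\in\{1,2\}$, with right-hand sides that are linear in the full collection $\{V^{(2,1)}_\B, V^{(2,2)}_\B, \Upsilon^\rho\}$ up to $O(\la)$ source terms living in $L^2(K)$ on every compact $K$. One then runs the same energy/bootstrap scheme as in \cite{Touati2023a}: a weighted energy estimate for the wave equation on $\Upsilon^\rho$ coupled with $L_\A$-energy estimates for the transport equations, closing on a Grönwall inequality in which the only inhomogeneity is $O(\la)$.

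The third step is simply to invoke uniqueness for this coupled linear system with the initial data $V^{(2,1)}_{\A|\Si_0}=V^{(2,2)}_{\A|\Si_0}=0$ (Lemma \ref{lem pola F2 init}) and $\Upsilon^\rho_{|\Si_0}=0$ together with $\dr_t\Upsilon^\rho_{|\Si_0}=0$, the latter following from the choice of $\dr_t(\h_\la)_{0\a|\Si_0}$ in \eqref{ID dth 00}--\eqref{ID dth 0k} as in Proposition \ref{prop ID main}. The Grönwall argument forces all three quantities to vanish identically on $[0,1]\times\R^3$; in particular \eqref{pola F21}--\eqref{pola F22} hold everywhere, which is the assertion of the lemma (and the accompanying vanishing of $\Upsilon^\rho$ gives \eqref{GWC}, completing the proof that $\g_\la$ solves \eqref{EVE}).

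The main obstacle, in my view, is not the energy estimate itself — that is a routine adaptation of \cite{Touati2023a} — but establishing rigorously that the new mixed-harmonic terms $\sum_{v\in\mathcal{I},T}T(v/\la)\cro{(V^{(2,1)}+V^{(2,2)})F^{(1)}}$ appearing in both $\dive_{\g_\la}G(V)$ and (via $\gg^{(3,e)}$) in $\dive_{\g_\la}G(\Upsilon)$ do not destabilise the closure of the system. One has to check that these terms, although oscillating with phases $v\in\mathcal{I}$ different from the $ku_\A$, are genuinely \emph{linear and homogeneous} in the unknowns $V^{(2,i)}_\A$ with coefficients bounded uniformly in $\la$ — so that they are absorbed by Grönwall rather than generating a forbidden $\la^{-1}$ source. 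This is the point where the schematic bookkeeping of Section \ref{section notation} and the structural facts recorded after Lemma \ref{lemma P} and Lemma \ref{lemma H} (in particular that $F^{(2,\pm)}_{\A\B}=\cro{(F^{(1)})^2}$ and that $\gg^{(3,e)}_{v,T}$ depends on $\F$ only linearly) have to be used carefully; everything else parallels the singlephase treatment.
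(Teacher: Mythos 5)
There is a genuine gap. The heart of the paper's argument is a weak-limit harmonic-separation mechanism that your proposal replaces by steps that do not work at fixed $\la$. In the paper one multiplies the Bianchi identity by the conjugate oscillation ($\sin\pth{\frac{u_\B}{\la}}$ first, then $\cos\pth{\frac{2u_\B}{\la}}$ once $V^{(2,1)}=0$ is known) and passes to the \emph{weak} $L^2_{loc}$ limit $\la\to 0$. This is legitimate and conclusive because (i) $V^{(2,1)}_\A$ and $V^{(2,2)}_\A$ are independent of $\la$ (they are built from $F^{(1)}_\A$, $F^{(2,1)}_\A$, $F^{(2,2)}_\A$, which solve the $\la$-independent triangular transport system), (ii) all phases produced belong to $\mathcal{Z}$ and are non-stationary by \eqref{spatial}, and (iii) the $\la$-dependent amplitudes ($\F_\A$, $\h_\la$, hence $\gg^{(3,e)}$, and the remainders $\mathcal{R}_1,\mathcal{R}_2$) are uniformly bounded by Proposition \ref{prop system}, so every term except $\sin^2\pth{\frac{u_\B}{\la}}$ times the transport operator converges weakly to zero. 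One is left with a \emph{homogeneous} linear transport equation for the $\la$-independent tensor $V^{(2,1)}_\B$, and zero initial data gives $V^{(2,1)}_\B\equiv 0$; then the same game with $\cos\pth{\frac{2u_\B}{\la}}$ gives $V^{(2,2)}_\B\equiv 0$. Your proposal instead "matches harmonics" at fixed $\la$, which is not a valid operation (an identity between finite sums of terms oscillating with distinct phases does not imply termwise equality for a fixed $\la$), and you never invoke the $\la$-independence of $V^{(2,1)}_\A,V^{(2,2)}_\A$, which is exactly what makes the limiting argument extract an equation for them.

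The second problem is your coupled propagation of $\Upsilon^\rho$ together with the $V$'s. The $\cro{\gg^{(3,e)}}$ terms coming from $\dive_{\g_\la}G(\Upsilon)$ cannot be "traded against $\Upsilon$ and $V$-type quantities": by \eqref{def g3e} and \eqref{R 1 e}, $\gg^{(3,e)}_{v,T}$ is a fixed expression in $\F_\A$, $F^{(1)}_\A$, $F^{(2,1)}_\A$, $F^{(2,2)}_\A$, $F^{(2,\pm)}_{\A\B}$, so these are genuine inhomogeneous $O(1)$ oscillating sources, disposed of in the paper only through the weak limit (using the uniform $H^1_{loc}$ bound on $\F_\A$), not through linearity in the unknowns. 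Moreover, before $V^{(2,1)}=V^{(2,2)}=0$ is known, the relation $\div_{\g_\la}G(\Upsilon)=-\div_{\g_\la}G(V)$ gives a wave equation for $\Upsilon^\rho$ whose source is of size $\la^{-2}$ relative to the $\la^2$ prefactor in \eqref{def G upsilon} (see also the $\frac{1}{\la^2}\cro{\gg^{(3,e)}}$ term in \eqref{box upsilon}), so a Gr\"onwall closure uniform in $\la$ is not available; and even where your sources are only $O(\la)$, Gr\"onwall with zero data yields bounds of size $\la$, not exact vanishing, which again only becomes a proof of $V\equiv 0$ after invoking the $\la$-independence you never use. The paper deliberately decouples the two propagations: first $V^{(2,1)}=V^{(2,2)}=0$ by the weak-limit argument (Lemma \ref{lem divG(Ups)} shows the entire $\Upsilon$-contribution is weakly negligible without knowing $\Upsilon=0$), and only then, with $R(V)=0$, does $\Upsilon^\rho$ satisfy the homogeneous linear wave system \eqref{system Ups} with vanishing Cauchy data (the data $\T_\la\Upsilon^\rho_{|\Si_0}=0$ coming from the constraints, not directly from \eqref{ID dth 00}--\eqref{ID dth 0k}), giving \eqref{GWC}. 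As written, your scheme does not close.
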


\begin{proof}
Thanks to the contracted Bianchi identities \eqref{cBI} and Lemmas \ref{lem divG(Ups)} and \ref{lem divG(V)} we have
\begin{align*}
0 & = \half \sum_\A \sin\pth{\frac{u_\A}{\la}}\Big( 2\D_{L_\A} (V^{(2,1)}_\A)_{\a} +  \div_{\g_0}L_\A (V^{(2,1)}_\A)_{\a}    + (V^{(2,1)}_\A)^\b\D_{[\b}(L_\A)_{\a]}   \Big)
\\&\quad + 2 \sum_\A \cos\pth{\frac{2u_\A}{\la}} \Big(  2 \D_{L_\A} (V^{(2,2)}_\A)_{\a}  + \div_{\g_0}L_\A (V^{(2,2)}_\A)_{\a} + (V^{(2,2)}_\A)^\b\D_{[\b}(L_\A)_{\a]} \Big)
\\&\quad +  \sum_{\substack{v\in\mathcal{I}\\T\in\{\cos,\sin\}}} T\pth{\frac{v}{\la}}\cro{  (V^{(2,1)} + V^{(2,2)}) F^{(1)} + \gg^{(3,e)}}_{\a} + \mathcal{R}_1 + \mathcal{R}_2.
\end{align*}
We multiply this equality by $\sin\pth{\frac{u_\B}{\la}}$ and obtain
\begin{equation}\label{div G a}
\begin{aligned}
0 & = \half \sin^2\pth{\frac{u_\B}{\la}} \pth{   (V^{(2,1)}_\B)_{\a}\div_{\g_0}L_\B +  2\D_{L_\B} (V^{(2,1)}_\B)_{\a} + (V^{(2,1)}_\B)^\b\D_{[\b}(L_\B)_{\a]}   } 
\\&\quad +  \frac{1}{4} \sum_{\A\in\mathcal{A}\setminus\{\B\},\pm} \cos\pth{\frac{u_\A\pm u_\B}{\la}} \cro{\dr^{\leq 1}V^{(2,1)}}_\a +  \sum_{\A,\pm} \sin\pth{\frac{u_\B\pm 2u_\A}{\la}}  \cro{\dr^{\leq 1}V^{(2,2)}}_\a
\\&\quad +  \sum_{\substack{v\in\mathcal{I}\\T\in\{\cos,\sin\}}} T\pth{\frac{v}{\la}}\sin\pth{\frac{u_\B}{\la}}\cro{  (V^{(2,1)} + V^{(2,2)}) F^{(1)} + \gg^{(3,e)} }_{\a} 
\\&\quad +\sin\pth{\frac{u_\B}{\la}}\pth{\mathcal{R}_1 + \mathcal{R}_2}.
\end{aligned}
\end{equation}
We will use the following claim: for all $T:\R\longrightarrow\R$ smooth, $2\pi$-periodic with $\int_0^{2\pi}T=0$ and for all $z\in\mathcal{Z}$ the sequence of functions $\pth{T\pth{\frac{z}{\la}}}_{\la\in(0,1]}$ converges weakly to 0 in $L^2(K)$ when $\la\to 0$ and where $K$ is any compact subset of $\R^3$. This follows from \eqref{spatial} and was already mentioned in Section \ref{section BG}. Here, we use this claim on each $\Si_t$ for $t\in[0,1]$ and $K=\Si_t\cap J^+_0(B_R)$, i.e the support of $V^{(2,1)}_\A$ and $V^{(2,2)}_\A$. Thus, we have the following weak limits in $L^2(K)$ when $\la\to 0$:
\begin{align*}
\sin^2\pth{\frac{u_\B}{\la}} & \rightharpoonup \half,
\\ \cos\pth{\frac{u_\A\pm u_\B}{\la}} & \rightharpoonup 0,
\\ \sin\pth{\frac{u_\B\pm 2u_\A}{\la}}  & \rightharpoonup 0,
\\ T\pth{\frac{v}{\la}}\sin\pth{\frac{u_\B}{\la}} & \rightharpoonup 0,
\end{align*}
where we used that $ T\pth{\frac{v}{\la}}\sin\pth{\frac{u_\B}{\la}}$ with $v\in\mathcal{I}$ and $T\in\{\cos,\sin\}$ is a linear combination of functions of the form $S\pth{\frac{z}{\la}}$ for $z\in\mathcal{Z}$ and $S$ periodic with zero mean. Since $\F_\A$ is uniformly bounded in $H^1_{loc}$ with respect to $\la$ (see \eqref{estim F}), we still get the weak convergence
\begin{align*}
T\pth{\frac{v}{\la}}\sin\pth{\frac{u_\B}{\la}} \cro{\gg^{(3,e)}} \rightharpoonup 0,
\end{align*}
where we again used the fact that the worst term in $\gg^{(3,e)}$ is a linear term in $\F_\A$. Using now \eqref{reste 1} and \eqref{reste 2}, we can take the weak limit in $L^2(K)$ when $\la\to 0$ of the equality \eqref{div G a} and get for all $\B\in\mathcal{A}$ 
\begin{align*}
2\D_{L_\B} (V^{(2,1)}_\B)_{\a}  & =  (V^{(2,1)}_\B)^\b\D_{[\a}(L_\B)_{\b]} - (V^{(2,1)}_\B)_{\a}\div_{\g_0}L_\B.
\end{align*}
Since $V^{(2,1)}_{\B\;\;|_{\Si_0}}=0$ this implies $V^{(2,1)}_\B=0$ everywhere, i.e that \eqref{pola F21} holds on $[0,1]\times \R^3$. We come back to the contracted Bianchi identities (plugging in particular $V^{(2,1)}_\C=0$):
\begin{align*}
0 & =  2 \sum_\A \cos\pth{\frac{2u_\A}{\la}} \Big(  2 \D_{L_\A} (V^{(2,2)}_\A)_{\a}  + \div_{\g_0}L_\A (V^{(2,2)}_\A)_{\a} + (V^{(2,2)}_\A)^\b\D_{[\b}(L_\A)_{\a]} \Big)
\\&\quad +  \sum_{\substack{v\in\mathcal{I}\\T\in\{\cos,\sin\}}} T\pth{\frac{v}{\la}}\cro{ V^{(2,2)} F^{(1)} + \gg^{(3,e)}}_{\a} + \mathcal{R}_1 + \mathcal{R}_2.
\end{align*}
We multiply this equality by $\cos\pth{\frac{2u_\B}{\la}}$:
\begin{align*}
0 & =  2 \cos^2\pth{\frac{2u_\B}{\la}} \Big(  2 \D_{L_\B} (V^{(2,2)}_\B)_{\a}  + \div_{\g_0}L_\B (V^{(2,2)}_\B)_{\a} + (V^{(2,2)}_\B)^\b\D_{[\b}(L_\B)_{\a]} \Big)
\\&\quad +  \sum_{\A\in\mathcal{A}\setminus\{\B\},\pm} \cos\pth{\frac{2(u_\A\pm u_\B)}{\la}}\Big(  2 \D_{L_\A} (V^{(2,2)}_\A)_{\a}  + \div_{\g_0}L_\A (V^{(2,2)}_\A)_{\a} + (V^{(2,2)}_\A)^\b\D_{[\b}(L_\A)_{\a]} \Big)
\\&\quad +  \sum_{\substack{v\in\mathcal{I}\\T\in\{\cos,\sin\}}} T\pth{\frac{v}{\la}}\cos\pth{\frac{2u_\B}{\la}}\cro{V^{(2,2)} F^{(1)}+ \gg^{(3,e)} }_{\a\b} +\cos\pth{\frac{2u_\B}{\la}}\pth{ \mathcal{R}_1 + \mathcal{R}_2}.
\end{align*}
Using the claim as above, taking the weak limit in $L^2(K)$ when $\la\to 0$ of this expression implies
\begin{align*}
2\D_{L_\C} (V^{(2,2)}_\C)_{\a}  & =  (V^{(2,2)}_\C)^\b\D_{[\a}(L_\C)_{\b]} - (V^{(2,2)}_\C)_{\a}\div_{\g_0}L_\C.
\end{align*}
Since $V^{(2,2)}_{\C\;\;|_{\Si_0}}=0$ this implies $V^{(2,2)}_\C=0$ everywhere. This concludes the proof of the lemma.
\end{proof}

\begin{lemma}
The generalised wave gauge condition \eqref{GWC} holds on $[0,1]\times \R^3$.
\end{lemma}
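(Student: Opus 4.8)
The plan is to propagate \eqref{GWC} by the same mechanism used to propagate the wave gauge in the standard Cauchy analysis of \eqref{EVE}, with $\Upsilon^\rho$ in the role of the usual gauge defect $\g_\la^{\mu\nu}\Ga(\g_\la)^\rho_{\mu\nu}$. First I would observe that, since the two preceding lemmas give $V^{(2,1)}_\A = V^{(2,2)}_\A = 0$ on $[0,1]\times\R^3$, every term of \eqref{R V} carries a factor $V^{(2,1)}_\A$ or $V^{(2,2)}_\A$, so $R(V)_{\a\b} = 0$ and hence $G(V)_{\a\b} = 0$; by Lemma \ref{lem G} the Einstein tensor of $\g_\la$ thus reduces to $G_{\a\b}(\g_\la) = G(\Upsilon)_{\a\b}$ on all of $[0,1]\times\R^3$, and the contracted Bianchi identities $\dive_{\g_\la}G(\g_\la) = 0$ read $\dive_{\g_\la}G(\Upsilon) = 0$.

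Next I would convert this into a wave equation for $\Upsilon^\rho$. From the computation in the proof of Lemma \ref{lem divG(Ups)} one has $\dive_{\g_\la}G(\Upsilon)_\a = \frac{\la^2}{2}\pth{(\g_\la)_{\rho\a}\tBox_{\g_\la}\Upsilon^\rho + \mathcal{B}_\a}$, where $\mathcal{B}$ is schematically $\pth{1 + \g_\la^{-1}\g_\la}\pth{\g_\la^{-1}\dr\Upsilon\,\dr\g_\la + \g_\la^{-1}\Upsilon\,\dr^2\g_\la + \g_\la^{-1}\g_\la^{-1}\Upsilon\,\dr\g_\la\,\dr\g_\la}$, i.e. a quantity linear in $\Upsilon$ and $\dr\Upsilon$ with coefficients controlled, for each fixed $\la$, by Proposition \ref{prop system}. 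Contracting with $\g_\la^{-1}$, the identity $\dive_{\g_\la}G(\Upsilon) = 0$ becomes a system of scalar wave equations, diagonal at principal order, of the form $\tBox_{\g_\la}\Upsilon^\rho = \cro{\g_\la^{-1}\dr\g_\la\,\dr\Upsilon + \pth{\g_\la^{-1}\dr^2\g_\la + \g_\la^{-1}\g_\la^{-1}\dr\g_\la\,\dr\g_\la}\Upsilon}^{\rho,\osc}$, which is linear and homogeneous in $\Upsilon$. This is exactly the situation of the singlephase case \cite{Touati2023a}: the only term of $\tBox_{\g_\la}\Upsilon$ not already present there, namely $\frac{1}{\la^2}\sum_{v\in\mathcal{I},\,T\in\{\cos,\sin\}}T\pth{\frac{v}{\la}}\cro{\gg^{(3,e)}}$ from \eqref{box upsilon}, is still linear in $\Upsilon$ because $\gg^{(3,e)}_{v,T}$ depends linearly on $\F_\A$, and it does not affect the hyperbolic structure.

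For the Cauchy data, Proposition \ref{prop ID main}(i) gives $\Upsilon^\rho = 0$ on $\Si_0$, whence also $\dr_i\Upsilon^\rho = 0$ on $\Si_0$. To obtain $\dr_t\Upsilon^\rho_{|_{\Si_0}} = 0$ I would use Proposition \ref{prop ID main}(ii): since the first and second fundamental forms of $\Si_0$ in $(\mathcal{M},\g_\la)$ solve the vacuum constraint equations, the Gauss–Codazzi relations force $G_{\a\b}(\g_\la)\T_\la^\a = 0$ on $\Si_0$. Substituting $G(\g_\la) = G(\Upsilon)$ and the expression \eqref{def G upsilon} — which on $\Si_0$, where $\Upsilon^\rho$ and $\dr_i\Upsilon^\rho$ vanish, collapses to an algebraic expression in $\psi^\rho \vcentcolon= \dr_t\Upsilon^\rho_{|_{\Si_0}}$ — one obtains (after dividing out $\la^2$) a linear system for $\psi^\rho$ whose $\la = 0$ limit forces first $\psi^i = 0$ and then $\psi^0 = 0$ using $(\g_0)_{00}\neq 0$; invertibility therefore persists for $\la$ and $\e$ small, so $\dr_t\Upsilon^\rho = 0$ on $\Si_0$. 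This is exactly the argument of Section 8.2 of \cite{Touati2023a}.

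Since $\Upsilon^\rho$ then solves a linear homogeneous second-order hyperbolic system with vanishing Cauchy data on $\Si_0$, a standard energy estimate together with finite speed of propagation (and the fact that $\Si_0$ is a Cauchy hypersurface for $\g_\la$, which is close to $\mathbf{m}$) gives $\Upsilon^\rho \equiv 0$ on $[0,1]\times\R^3$, that is \eqref{GWC}. Combined with \eqref{pola F1}--\eqref{pola F22} and \eqref{energy F1}, already propagated in the previous lemmas, Proposition \ref{prop expression of the ricci} and the system \eqref{eq F1}--\eqref{eq h} then yield $R_{\a\b}(\g_\la) = 0$, which completes the proof of Theorem \ref{theo evol}. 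The only genuinely delicate points — both bookkeeping rather than conceptual, and both handled as in \cite{Touati2023a} — are to check that, once the reduced system and all the propagated algebraic conditions have been used, the Bianchi identity leaves a wave equation for $\Upsilon$ that is truly hyperbolic and homogeneous linear, and to extract from the constraint equations the invertible algebraic system pinning down $\dr_t\Upsilon^\rho$ on $\Si_0$.
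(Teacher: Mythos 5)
Your proposal is correct and follows essentially the same route as the paper: use the already-propagated conditions $V^{(2,1)}_\A=V^{(2,2)}_\A=0$ to reduce $G_{\a\b}(\g_\la)$ to $G(\Upsilon)_{\a\b}$, deduce from the contracted Bianchi identities a linear homogeneous wave system $\tBox_{\g_\la}\Upsilon^\rho = A^{\rho\mu}_\si\dr_\mu\Upsilon^\si + B^\rho_\si\Upsilon^\si$, and combine $\Upsilon^\rho_{|_{\Si_0}}=0$ with the solved constraints ($G_{\T_\la\T_\la}=G_{\T_\la i}=0$ on $\Si_0$) to kill the normal derivative, then conclude by uniqueness. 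The only cosmetic difference is at the initial-data step: the paper resolves the resulting algebraic system exactly in two lines using $\g_\la(\T_\la,\T_\la)=-1$ (yielding $\dr_t\Upsilon^0=0$ and then $\T_\la\Upsilon^\rho=0$ with no smallness in $\la$ or $\e$), whereas you invoke perturbative invertibility from the $\la=0$ limit, which is valid but slightly weaker than necessary.
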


\begin{proof}
Thanks to the previous proposition we have $V^{(2,1)}_\A=V^{(2,2)}_\A=0$ so that \eqref{R V} implies that $R(V)=0$ and that the Einstein tensor of $\g_\la$ is given by
\begin{align}\label{final expression G}
G_{\a\b}(\g_\la) & = \frac{\la^2}{2} \bigg( (\g_\la)_{\rho(\a}\dr_{\b)}\Upsilon^\rho - (\g_\la)_{\a\b}\dr_\rho\Upsilon^\rho  + \Upsilon^\rho \dr_\rho (\g_\la)_{\a\b} - \half \g_\la^{\mu\nu}\Upsilon^\rho \dr_\rho (\g_\la)_{\mu\nu}(\g_\la)_{\a\b} \bigg).
\end{align}
From there, it is standard to deduce from the contracted Bianchi identities that $\Upsilon^\rho$ satisfies a linear system of the form
\begin{align}\label{system Ups}
\tBox_{\g_\la} \Upsilon^\rho & = A^{\rho\mu}_\si \dr_\mu \Upsilon^\si + B^\rho_\si \Upsilon^\si,
\end{align}
with $A^{\rho\mu}_\si$ and $B^{\rho}_\si$ regular coefficients. From Proposition \ref{prop ID main} recall that $\Upsilon^\rho_{|_{\Si_0}}=0$. Moreover again from Proposition \ref{prop ID main} we know that the constraint equations are solved by the induced metric and the second fundamental form so that $G(\g_\la)_{\T_\la \T_\la|_{\Si_0}} = 0$ and $G(\g_\la)_{\T_\la i|_{\Si_0}} = 0$, where $\T_\la$ is the future-directed unit normal for $\g_\la$ to $\Si_0$. From \eqref{final expression G} we have
\begin{align*}
G(\g_\la)_{\T_\la i|_{\Si_0}} & = \frac{\la^2}{2}(\g_\la)_{\rho i} \pth{ \T_\la\Upsilon^\rho - \T_\la^\rho \dr_t\Upsilon^0  },
\\ G(\g_\la)_{\T_\la \T_\la|_{\Si_0}} & = \frac{\la^2}{2} \pth{ 2(\T_\la)_\rho \T_\la\Upsilon^\rho + \dr_t\Upsilon^0  },
\end{align*}
where we used $\Upsilon^\rho_{|_{\Si_0}}=0$. The constraint equations being solved thus implies
\begin{align}
\T_\la\Upsilon^\rho & = \T_\la^\rho \dr_t\Upsilon^0 , \label{C1}
\\  \dr_t\Upsilon^0 & = - 2(\T_\la)_\rho \T_\la\Upsilon^\rho .\label{C2}
\end{align}
Thanks to \eqref{C1} and $\g_\la(\T_\la,\T_\la)=-1$, \eqref{C2} implies $\dr_t\Upsilon^0  = 0 $ so that \eqref{C1} becomes in turn $\T_\la\Upsilon^\rho=0$. Therefore, $\Upsilon^\rho$ satisfies the linear wave system \eqref{system Ups} and $\Upsilon^\rho_{|_{\Si_0}}=\T_\la\Upsilon^\rho_{|_{\Si_0}}=0$, which concludes the proof of the proposition.
\end{proof}

This concludes the proof of Theorem \ref{theo evol}, after the identifications
\begin{align*}
\sum_{\substack{w\in\mathcal{N}_1\cup\mathcal{N}_2\cup\mathcal{I}_2\\T\in\{\cos,\sin\}}} T\pth{\frac{w}{\la}}F^{(2,w,T)}_\la & =   \sum_\A \pth{ \sin\pth{\frac{u_\A}{\la}} \pth{ \F_\A + F^{(2,1)}_\A } + \cos\pth{\frac{2u_\A}{\la}} F^{(2,2)}_\A } 
\\&\quad + \la^2 \sum_{\A\neq \B,\pm}   \cos\pth{\frac{u_\A\pm u_\B}{\la}} F^{(2,\pm)}_{\A\B},
\\ \widetilde{\g_\la} & =   \h_\la  + \la \sum_{\substack{u\in\mathcal{N}\\T\in\{\cos,\sin\}}}T\pth{\frac{u}{\la}} \gg^{(3,h)}_{u,T} + \la \sum_{\substack{v\in\mathcal{I}\\T\in\{\cos,\sin\}}} T\pth{\frac{v}{\la}} \gg^{(3,e)}_{v,T}.
\end{align*}
The estimates \eqref{estim theo 1}-\eqref{estim theo 3} can then be derived from \eqref{estim F1 F21 F22}-\eqref{estim F} and \eqref{estim h}.

\section{Declaration}

This research received no specific grant from any funding agency in the public, private, or not-for-profit sectors.


\appendix

\section{Proofs of Section \ref{section expansion Ricci}}\label{appendix expansion Ricci}

In the following sections we will use the following trigonometric identities:
\begin{align}
\cos(a)\cos(b)  & = \half \sum_{\pm} \cos(a\pm b) ,\label{trigo 1}
\\ \sin(a) \sin(b)  & = \half \sum_{\pm}\mp \cos(a\pm b) , \label{trigo 2}
\\ \sin(a)\cos(b)  & = \half \sum_{\pm}\sin(a\pm b).\label{trigo 3}
\end{align}
We will also use the following identities on double sums:
\begin{align}
\sum_{\A\neq \B,\pm}  \cos\pth{\frac{u_\A\pm u_\B}{\la}} F^{(\pm)}_{\A\B} & = \half \sum_{\A\neq \B,\pm}  \cos\pth{\frac{u_\A\pm u_\B}{\la}}\pth{ F^{(\pm)}_{\A\B} + F^{(\pm)}_{\B\A}}, \label{double sum 1}
\\ \sum_{\A\neq \B,\pm} \sin\pth{\frac{u_\A\pm u_\B}{\la}} F^{(\pm)}_{\A\B} & = \half \sum_{\A\neq \B,\pm} \sin\pth{\frac{u_\A\pm u_\B}{\la}}\pth{ F^{(\pm)}_{\A\B} \pm F^{(\pm)}_{\B\A}}, \label{double sum 2}
\end{align}
which simply follow from 
\begin{align*}
\sum_{\A\neq \B} T_{\A\B} = \half \sum_{\A\neq \B}(T_{\A\B} + T_{\B\A}).
\end{align*}
Moreover, from \eqref{expansion metric} we can obtain the inverse of $\g_\lambda$: 
\begin{align*}
\g_\la^{\mu\nu} & =  \g_0^{\mu\nu} + \la (\g_\la^{\mu\nu})^{(1)}  + \la^2 (\g_\la^{\mu\nu})^{(2)}  + \la^3 (\g_\la^{\mu\nu})^{(\geq 3)} , 
\end{align*}
where
\begin{align}
(\g_\lambda^{\mu\nu})^{(1)}  & = - \sum_\A \cos\pth{\frac{u_\A}{\lambda}}(F^{(1)}_\A)^{\mu\nu},\label{inverse 1}
\\ (\g_\lambda^{\mu\nu})^{(2)} & = - \h_\lambda^{\mu\nu} -  \sum_\A \pth{\sin\left(\frac{u_\A}{\lambda} \right) \left( (\F_\A)^{\mu\nu} + (F^{(2,1)}_\A)^{\mu\nu} \right) + \cos\left( \frac{2u_\A}{\lambda}\right) (F^{(2,2)}_\A)^{\mu\nu}}\label{inverse 2}
\\&\quad +  \sum_{\A\neq\B,\pm} \cos\left(\frac{u_\A\pm u_\B}{\lambda}\right)\pth{  \half (F^{(1)}_\A)^{\nu\sigma} (F^{(1)}_\B)^\mu_{\sigma}  - (F^{(2,\pm)}_{\A\B} )^{\mu\nu}}\non
\\&\quad +  \sum_{\A} \cos^2\pth{\frac{u_\A}{\lambda}}  (F^{(1)}_\A)^{\nu\sigma} (F^{(1)}_\A)^\mu_{\sigma},\non
\end{align}
and where on the RHS of \eqref{inverse 1} and \eqref{inverse 2} the indexes are moved with respect to the background metric $\g_0$.

\subsection{Proof of Lemma \ref{lemma W}}\label{appendix proof lemma W}

The expansion of the quasi-linear wave operator $\tBox_{\g_\lambda}(\g_\lambda)_{\alpha\beta}$ follows from a systematic use of the exact formula
\begin{equation} \label{Box T f}
\begin{aligned}
\tBox_g \left( T\left( \frac{v}{\lambda} \right) f \right) & = \frac{1}{\la^2}T''\pth{ \frac{v}{\la}} g^{-1}(\d v, \d v) f  +  \frac{1}{\la}T'\pth{ \frac{v}{\la}} \pth{ 2g^{\mu\nu}\dr_\mu v \dr_\nu f + (\tBox_g v)f  }   + T\pth{\frac{v}{\la}}\tBox_g f ,  
\end{aligned}
\end{equation}
which holds for any Lorentzian metric $g$, real function $T$ and scalar functions on the manifold $v$ and $f$. From \eqref{inverse 1} and \eqref{inverse 2} we can also obtain the expansion
\begin{equation}\label{expansion eikonal}
\begin{aligned}
\g_\la^{-1}(\d u_\A,\d u_\A) & = - \la \sum_\B \cos\pth{\frac{u_\B}{\lambda}}(F^{(1)}_\B)_{L_\A L_\A}
\\&\quad - \la^2  (\h_\la)_{L_\A L_\A}  - \la^2  \sum_\B \sin\pth{\frac{u_\B}{\la}} \pth{ (\F_\B)_{L_\A L_\A}  + (F^{(2,1)}_\B)_{L_\A L_\A}  } 
\\&\quad  - \la^2  \sum_\B \cos\pth{ \frac{2u_\B}{\la}} (F^{(2,2)}_\B)_{L_\A L_\A} 
\\&\quad + \la^2  \sum_{\B\neq\C,\pm} \cos\pth{\frac{u_\B\pm u_\C}{\la}} \pth{  \half (F^{(1)}_\B)^{\si}_{L_\A} (F^{(1)}_\C)_{L_\A\si}   - (F^{(2,\pm)}_{\B\C} )_{L_\A L_\A} }
\\&\quad + \la^2  \sum_{\B} \cos^2\pth{\frac{u_\B}{\la}}  (F^{(1)}_\B)^{\si}_{L_\A} (F^{(1)}_\B)_{L_\A\si}  + \la^3 (\g_\la^{\mu\nu})^{(\geq 3)} \dr_\mu u_\A\dr_\nu u_\A.
\end{aligned}
\end{equation}
Thanks to \eqref{expansion metric} we have 
\begin{equation}\label{exp box g}
\begin{aligned}
\tBox_{\g_\la}(\g_\la)_{\a\b} & = \tBox_{\g_\la}(\g_0)_{\a\b} + \la \sum_\A  \tBox_{\g_\la}\pth{\cos\pth{ \frac{u_\A}{\la} } (F^{(1)}_\A)_{\a\b}  }
\\&\quad + \la^2 \sum_\A \tBox_{\g_\la}\pth{\sin\pth{\frac{u_\A}{\la}} \pth{ (\F_\A)_{\a\b}  + (F^{(2,1)}_\A)_{\a\b} }} 
\\&\quad + \la^2 \sum_\A \tBox_{\g_\la}\pth{\cos\pth{ \frac{2u_\A}{\la}} (F^{(2,2)}_\A)_{\a\b} }
\\&\quad + \la^2 \sum_{\A\neq \B,\pm}   \tBox_{\g_\la}\pth{\cos\pth{\frac{u_\A\pm u_\B}{\la}} (F^{(2,\pm)}_{\A\B})_{\a\b} }
\\&\quad + \la^2 \tBox_{\g_\la}(\h_\la)_{\a\b} + \la^3 \sum_{\substack{u\in\mathcal{N}\\T\in\{\cos,\sin\}}}\tBox_{\g_\la}\pth{T\pth{\frac{u}{\la}} (\gg^{(3,h)}_{u,T})_{\a\b}} 
\\&\quad + \la^3 \sum_{\substack{v\in\mathcal{I}\\T\in\{\cos,\sin\}}} \tBox_{\g_\la}\pth{T\pth{\frac{v}{\la}} (\gg^{(3,e)}_{v,T})_{\a\b}}.
\end{aligned}
\end{equation}
We expand each term in this expression, using without mention \eqref{Box T f}, \eqref{expansion eikonal} and more generally \eqref{inverse 1} and \eqref{inverse 2}. We will also use without mention the assumptions \eqref{assumption}. For the first term, we simply have
\begin{align}
\tBox_{\g_\la}(\g_0)_{\a\b} & = \tBox_{\g_0}(\g_0)_{\a\b} + \la\cro{ (\g_\la^{-1})^{(\geq 1)} \dr^2\g_0}_{\a\b}. \label{wave proof 1}
\end{align}
For the second term, we first apply \eqref{Box T f}
\begin{align*}
\la \sum_\A  \tBox_{\g_\la}\pth{\cos\pth{ \frac{u_\A}{\la} } (F^{(1)}_\A)_{\a\b}  } & =  - \frac{1}{\la}\sum_\A \cos\pth{\frac{u_\A}{\la}} \g_\la^{-1}(\d u_\A, \d u_\A) (F^{(1)}_\A)_{\a\b}
\\&\quad - \sum_\A \sin\pth{\frac{u_\A}{\la}} \pth{ 2\g_\la^{\mu\nu}\dr_\mu u_\A \dr_\nu (F^{(1)}_\A)_{\a\b} + (\tBox_{\g_\la} u_\A)(F^{(1)}_\A)_{\a\b}  }  
\\&\quad + \la \sum_\A  \cos\pth{\frac{u_\A}{\la}}\tBox_{\g_\la} (F^{(1)}_\A)_{\a\b}.
\end{align*}
Using now \eqref{expansion eikonal}, \eqref{trigo 1}, \eqref{trigo 3} and \eqref{double sum 1} we obtain
\begin{equation}\label{wave proof 2}
\begin{aligned}
&\la \sum_\A  \tBox_{\g_\la}\pth{\cos\pth{ \frac{u_\A}{\la} } (F^{(1)}_\A)_{\a\b}  } 
\\& = - \sum_\A \sin\pth{\frac{u_\A}{\la}} \pth{ -2 L_\A (F^{(1)}_\A)_{\a\b} + (\tBox_{\g_0} u_\A)(F^{(1)}_\A)_{\a\b}  }  
\\&\quad + \frac{1}{4} \sum_{\A\neq\B,\pm} \cos\pth{\frac{u_\A\pm u_\B}{\la}} \pth{ (F^{(1)}_\B)_{L_\A L_\A}(F^{(1)}_\A)_{\a\b} + (F^{(1)}_\A)_{L_\B L_\B}(F^{(1)}_\B)_{\a\b}  }
\\&\quad +\la\sum_\A \cos\pth{\frac{u_\A}{\la}} \pth{  (\h_\la)_{L_\A L_\A}  (F^{(1)}_\A)_{\a\b}  + \cro{\pth{F^{(2,\pm)} + (F^{(1)})^2 }F^{(1)} + \dr^2 F^{(1)}}_{\a\b}}
\\&\quad + \frac{\la}{2} \sum_{\A} \sin\pth{\frac{2u_\A}{\la}} (\F_\A)_{L_\A L_\A} (F^{(1)}_\A)_{\a\b} 
\\&\quad + \la \sum_\A \sin\pth{\frac{2u_\A}{\la}} \cro{ \pth{ F^{(2,1)} + \dr F^{(1)} + F^{(1)} }F^{(1)} }_{\a\b}
\\&\quad + \la\sum_{\A} \cos\pth{\frac{u_\A}{\la}} \cos\pth{ \frac{2u_\A}{\la}} (F^{(2,2)}_\A)_{L_\A L_\A} (F^{(1)}_\A)_{\a\b}  
\\&\quad + \la \sum_{\substack{v\in\mathcal{I}\\T\in\{\cos,\sin\}}}T\pth{\frac{v}{\la}} \Big\{ \pth{ \F + F^{(2,1)} + F^{(2,2)} + F^{(2,\pm)} }F^{(1)}
\\&\hspace{6cm} + \pth{ (F^{(1)})^2 + F^{(1)} + \dr F^{(1)} }F^{(1)} \Big\}_{\a\b}
\\&\quad + \la^2 \cro{ (\g_\la^{-1})^{(\geq 2)} \dr^{\leq 2} F^{(1)} }_{\a\b}^\osc.
\end{aligned}
\end{equation}
For the third, fourth and fifth term in \eqref{exp box g} we simply apply \eqref{Box T f}. We obtain
\begin{equation}\label{wave proof 3}
\begin{aligned}
&\la^2 \sum_\A \tBox_{\g_\la}\pth{\sin\pth{\frac{u_\A}{\la}} \pth{ (\F_\A)_{\a\b}  + (F^{(2,1)}_\A)_{\a\b} }} 
\\& =   \la \sum_\A \cos\pth{ \frac{u_\A}{\la}} \pth{ -2L_\A  + \tBox_{\g_0} u_\A  } \pth{ (\F_\A)_{\a\b}  + (F^{(2,1)}_\A)_{\a\b} } 
\\&\quad +\la \sum_{\substack{v\in\mathcal{I}\\T\in\{\cos,\sin\}}}T\pth{\frac{v}{\la}} \cro{ \pth{ \F  + F^{(2,1)} } F^{(1)} }_{\a\b} + \la^2 \sum_\A \sin\pth{\frac{u_\A}{\la}}\tBox_{\g_\la} (\F_\A)_{\a\b} 
\\&\quad + \la^2  \cro{ (\g_\la^{-1})^{(\geq 1)} \dr^{\leq 1} \pth{ \F  + F^{(2,1)}  }+ \g_\la^{-1} \dr^2F^{(2,1)}}_{\a\b}^\osc,
\end{aligned}
\end{equation}
and
\begin{equation}\label{wave proof 4}
\begin{aligned}
& \la^2 \sum_\A \tBox_{\g_\la}\pth{\cos\pth{ \frac{2u_\A}{\la}} (F^{(2,2)}_\A)_{\a\b} }
\\& =  -2 \la \sum_\A \sin\pth{ \frac{2u_\A}{\la}} \pth{ -2L_\A  + \tBox_{\g_\la} u_\A  } (F^{(2,2)}_\A)_{\a\b}
\\&\quad + \la \sum_{\substack{v\in\mathcal{I}\\T\in\{\cos,\sin\}}}T\pth{\frac{v}{\la}} \cro{ F^{(2,2)} F^{(1)} }_{\a\b} + \la^2  \cro{ \g_\la^{-1} \dr^{\leq 2}F^{(2,2)}}_{\a\b}^\osc,
\end{aligned}
\end{equation}
and
\begin{equation}\label{wave proof 5}
\begin{aligned}
&\la^2 \sum_{\A\neq \B,\pm}   \tBox_{\g_\la}\pth{\cos\pth{\frac{u_\A\pm u_\B}{\la}} (F^{(2,\pm)}_{\A\B})_{\a\b} }
\\& = -  \sum_{\A\neq \B,\pm} \cos\pth{ \frac{u_\A\pm u_\B}{\la}} \g_0^{-1}(\d (u_\A\pm u_\B), \d (u_\A\pm u_\B)) (F^{(2,\pm)}_{\A\B})_{\a\b}
\\&\quad  + \la \sum_{\A\neq \B,\pm_1} \cos\pth{ \frac{u_\A}{\la}}  (F^{(1)}_\B)_{L_\A L_\A} (F^{(2,\pm_1)}_{\A\B})_{\a\b}
\\&\quad +  \la \sum_{\substack{v\in\mathcal{I}\\T\in\{\cos,\sin\}}}T\pth{\frac{v}{\la}} \cro{\dr^{\leq 1}F^{(2,\pm)} + F^{(1)} F^{(2,\pm)}}_{\a\b}  + \la^2 \cro{ \g_\la^{-1}\dr^{\leq 2} F^{(2,\pm)}}_{\a\b}^\osc.
\end{aligned}
\end{equation}
The sixth term is left as it is. For the seventh and eighth we apply \eqref{Box T f} and obtain
\begin{equation}\label{wave proof 7}
\begin{aligned}
 \la^3 \sum_{\substack{u\in\mathcal{N}\\T\in\{\cos,\sin\}}}\tBox_{\g_\la}\pth{T\pth{\frac{u}{\la}} (\gg^{(3,h)}_{u,T})_{\a\b}} & = \la^2  \cro{ \g_\la^{-1}\dr^{\leq 1}\gg^{(3,h)} }_{\a\b}^\osc 
 \\&\quad  + \la^3 \sum_{\substack{u\in\mathcal{N}\\T\in\{\cos,\sin\}}}T\pth{\frac{u}{\la}}\tBox_{\g_\la} (\gg^{(3,h)}_{u,T})_{\a\b},
\end{aligned}
\end{equation}
and
\begin{equation}\label{wave proof 8}
\begin{aligned}
 \la^3 \sum_{\substack{v\in\mathcal{I}\\T\in\{\cos,\sin\}}} \tBox_{\g_\la}\pth{T\pth{\frac{v}{\la}} (\gg^{(3,e)}_{v,T})_{\a\b}}& = - \la \sum_{\substack{v\in\mathcal{I}\\T\in\{\cos,\sin\}}}T\pth{ \frac{v}{\la}} \g_0^{-1}(\d v, \d v) (\gg^{(3,e)}_{v,T})_{\a\b}
\\&\quad +  \la^2\cro{ \g_\la^{-1}\dr^{\leq 1}\gg^{(3,e)} }_{\a\b}^\osc 
\\&\quad + \la^3 \sum_{\substack{v\in\mathcal{I}\\T\in\{\cos,\sin\}}}T\pth{\frac{v}{\la}}\tBox_{\g_\la} (\gg^{(3,e)}_{v,T})_{\a\b},
\end{aligned}
\end{equation}
where we used that $T''=-T$ if $T\in\{\cos,\sin\}$. Collecting \eqref{wave proof 1}-\eqref{wave proof 8} concludes the proof of Lemma \ref{lemma W}.

\subsection{Proof of Lemma \ref{lemma P}}\label{appendix proof lemma P}

We recall \eqref{P dg dg}, which gives the expression of $P_{\alpha\beta}(\g_\la)(\dr \g_\lambda ,\dr \g_\lambda)$. It is of the form $\g_\la^{-1}\g_\la^{-1}\dr\g_\la\dr\g_\la$ so that schematically we have
\begin{align*}
\pth{P_{\alpha\beta}(\g_\la)(\dr \g_\lambda ,\dr \g_\lambda)}^{(0)} & = \g_0^{-1}\g_0^{-1} \pth{\dr\g_\la}^{(0)} \pth{\dr\g_\la}^{(0)}.
\end{align*}
Thanks to \eqref{expansion metric} we have
\begin{align}\label{dg0}
\pth{ \dr_\a (\g_\la)_{\mu\nu}}^{(0)} & = \dr_\a (\g_0)_{\mu\nu} - \sum_\A \sin\pth{\frac{u_\A}{\la}}\dr_\a u_\A (F^{(1)}_\A)_{\mu\nu}.
\end{align}
With \eqref{P dg dg}, \eqref{trigo 2} and \eqref{double sum 1} this implies
\begin{equation}\label{P0 proof}
\begin{aligned}
&\pth{P_{\alpha\beta}(\g_\la)(\dr \g_\lambda ,\dr \g_\lambda)}^{(0)} 
\\& = P_{\alpha\beta}(\g_0)(\dr \g_0 ,\dr \g_0) - \frac{1}{4}  \sum_{\A} \dr_\alpha u_\A \dr_\beta u_\A  \left| F^{(1)}_\A \right|^2_{\g_0} 
\\&\quad  + \sum_\A \sin\pth{\frac{u_\A}{\la}} \bigg(- 2\Ga(\g_0)^\mu_{(\a \rho} \dr^\rho u_\A (F^{(1)}_\A)_{\beta)\mu} - (F^{(1)}_\A)^{\mu\nu} \dr_{(\a}u_\A \pth{ \dr_\mu (\g_0)_{\b)\nu}  - \frac{1}{2} \dr_{\b)} (\g_0)_{\mu\nu} }  \bigg)
\\&\quad + \frac{1}{4}  \sum_{\A} \cos\pth{\frac{2u_\A}{\la}} \dr_\alpha u_\A \dr_\beta u_\A  \left| F^{(1)}_\A \right|^2_{\g_0} 
\\&\quad + \frac{1}{4} \sum_{\A\neq\B,\pm} \cos\pth{\frac{u_\A\pm u_\B}{\la}}  \bigg( \pm\dr_{(\alpha}u_\A (F^{(1)}_\A)_{L_\B\si} (F^{(1)}_\B)_{\beta)}^\si  \pm\dr_{(\alpha}u_\B (F^{(1)}_\B)_{L_\A\si} (F^{(1)}_\A)_{\beta)}^\si  
\\&\hspace{5cm} \pm\frac{1}{2} \dr_{(\alpha} u_\A\dr_{\beta)} u_\B   \left| F^{(1)}_\A\cdot F^{(1)}_\B \right|_{\g_0}  \pm (F^{(1)}_\A)_{(\alpha L_\B} (F^{(1)}_\B)_{\beta) L_\A}
\\&\hspace{8.5cm}  \mp \g_0^{-1}(\d u_\A,\d u_\B)  (F^{(1)}_\A)_{(\alpha}^\nu (F^{(1)}_\B)_{\nu\beta)}   \bigg),
\end{aligned}
\end{equation}
where we also used \eqref{assumption}. Moreover we also have schematically
\begin{align*}
\pth{P_{\alpha\beta}(\g_\la)(\dr \g_\lambda ,\dr \g_\lambda)}^{(1)} & =\underbrace{ \g_0^{-1}\g_0^{-1} \pth{\dr\g_\la}^{(0)} \pth{\dr\g_\la}^{(1)}}_{\vcentcolon = A} + \underbrace{\pth{\g_\la^{-1}}^{(1)}\g_0^{-1} \pth{\dr\g_\la}^{(0)} \pth{\dr\g_\la}^{(0)}}_{\vcentcolon=B} .
\end{align*}
We start with $A_{\a\b}$. Thanks to \eqref{expansion metric} we have
\begin{equation}\label{dg1}
\begin{aligned}
\pth{ \dr_\a (\g_\la)_{\mu\nu}}^{(1)} & = \sum_\A \cos\pth{\frac{u_\A}{\la}} \dr_\a (F^{(1)}_\A)_{\mu\nu}  +  \sum_\A  \cos\left(\frac{u_\A}{\lambda} \right) \dr_\a u_\A \left( (\F_\A)_{\mu\nu} + (F^{(2,1)}_\A)_{\mu\nu} \right)
\\&\quad -2 \sum_\A  \sin\left( \frac{2u_\A}{\lambda}\right) \dr_\a u_\A (F^{(2,2)}_\A)_{\mu\nu} 
\\&\quad - \sum_{\A\neq \B,\pm}   \sin\left(\frac{u_\A\pm u_\B}{\lambda}\right) \dr_\a(u_\A \pm u_\B)  (F^{(2,\pm)}_{\A\B})_{\mu\nu} .
\end{aligned}
\end{equation}
Now, \eqref{P dg dg}, \eqref{dg0} and \eqref{dg1} imply 
\begin{equation}\label{P1 A proof}
\begin{aligned}
A_{\a\b} & =  2 \sum_\A  \cos\pth{\frac{u_\A}{\la}}\Big( \Ga(\g_0)^{\mu}_{(\a\rho}  \dr^\rho u_\A\pth{(\F_\A)_{\beta)\mu} +  (F^{(2,1)}_\A)_{\beta)\mu}}   +  \cro{ \dr F^{(1)} + F^{(1)} F^{(2,\pm)} }_{\a\b}  \Big)
\\&\quad - 4 \sum_\A  \sin\pth{ \frac{2u_\A}{\la}} \pth{ \Ga(\g_0)^{\mu}_{(\a\rho}  \dr^\rho u_\A (F^{(2,2)}_\A)_{\beta)\mu} + \cro{F^{(1)} \dr F^{(1)}}_{\a\b}}
\\&\quad  + \sum_{\substack{u\in\mathcal{N}\\T\in\{\cos,\sin\}}}T\pth{\frac{u}{\la}}\dr_{(\a}u\cro{\pth{1+F^{(1)}}\pth{\F + F^{(2,1)} +  F^{(2,2)}}}_{\b)} 
\\&\quad + \sum_{\substack{v\in\mathcal{I}\\T\in\{\cos,\sin\}}}T\pth{ \frac{v}{\la}} \cro{F^{(2,\pm)} + F^{(1)} \pth{\dr F^{(1)} + \F + F^{(2,1)} + F^{(2,2)} + F^{(2,\pm)} } }_{\a\b}.
\end{aligned}
\end{equation}
It remains to compute $B_{\a\b}$:
\begin{equation}\label{P1 B proof}
\begin{aligned}
B_{\a\b} & =\sum_\A  \cos\pth{\frac{u_\A}{\la}}\cro{F^{(1)}+(F^{(1)})^3}_{\a\b} + \sum_\A \sin\pth{\frac{2u_\A}{\la}}\cro{ (F^{(1)})^2 }_{\a\b} 
\\&\quad +  \sum_{\substack{v\in\mathcal{I}\\T\in\{\cos,\sin\}}}T\pth{ \frac{v}{\la}} \cro{ (F^{(1)})^2+(F^{(1)})^3 }_{\a\b} + \sum_{\substack{u\in\mathcal{N}\\T\in\{\cos,\sin\}}}T\pth{\frac{u}{\la}}\dr_{(\a}u\cro{(F^{(1)})^3}_{\b)}.
\end{aligned}
\end{equation}
Collecting \eqref{P0 proof}, \eqref{P1 A proof} and \eqref{P1 B proof} concludes the proof of Lemma \ref{lemma P}.

\subsection{Proof of Lemma \ref{lemma H}}\label{appendix proof lemma H}

We recall that 
\begin{align*}
H^\rho & = \g_\la^{\rho\sigma} \g_\la^{\mu\nu} \pth{ \dr_\mu (\g_\la)_{\sigma\nu} - \half \dr_\sigma (\g_\la)_{\mu\nu} }.
\end{align*}
We will first expand the derivatives and then use the expansion of the inverse (see \eqref{inverse 1}-\eqref{inverse 2}) to obtain $H^\rho$. Thanks to \eqref{expansion metric} we obtain
\begin{align*}
\dr_\mu (\g_\la)_{\sigma\nu} - \half \dr_\sigma (\g_\la)_{\mu\nu} & = D_{\mu\nu\si}^{(0)} + \la D_{\mu\nu\si}^{(1)} + \la^2 D_{\mu\nu\si}^{(2)} + \la^3 D_{\mu\nu\si}^{(3)},
\end{align*}
where
\begin{align*}
D_{\mu\nu\si}^{(0)} & = \dr_\mu (\g_0)_{\sigma\nu} - \half \dr_\sigma (\g_0)_{\mu\nu} -  \sum_\A  \sin\left( \frac{u_\A}{\lambda} \right) \pth{ \dr_\mu u_\A (F^{(1)}_\A)_{\sigma\nu} - \half \dr_\sigma u_\A (F^{(1)}_\A)_{\mu\nu}  }  ,
\end{align*}
\begin{align*}
D_{\mu\nu\si}^{(1)} & =   \sum_\A  \cos\left( \frac{u_\A}{\lambda} \right)\pth{ \dr_\mu (F^{(1)}_\A)_{\sigma\nu} - \half \dr_\sigma (F^{(1)}_\A)_{\mu\nu} }   
\\&\quad +   \sum_\A \cos\left(\frac{u_\A}{\lambda} \right)\pth{\dr_\mu u_\A(\F_\A + F^{(2,1)}_\A)_{\sigma\nu} - \half \dr_\sigma u_\A(\F_\A + F^{(2,1)}_\A)_{\mu\nu}   }
\\&\quad -2  \sum_\A  \sin\left( \frac{2u_\A}{\lambda}\right) \pth{\dr_\mu u_\A(F^{(2,2)}_\A)_{\sigma\nu} - \half \dr_\sigma u_\A(F^{(2,2)}_\A)_{\mu\nu}}  
\\&\quad - \sum_{\A\neq \B,\pm}   \sin\left(\frac{u_\A\pm u_\B}{\lambda}\right)\pth{ \dr_\mu (u_\A\pm u_\B) (F^{(2,\pm)}_{\A\B})_{\sigma\nu} - \half \dr_\sigma (u_\A\pm u_\B) (F^{(2,\pm)}_{\A\B})_{\mu\nu} },
\end{align*}
\begin{align*}
D_{\mu\nu\si}^{(2)} & =   \sum_\A  \sin\left(\frac{u_\A}{\lambda} \right) \pth{\dr_\mu (\F_\A)_{\sigma\nu} - \half \dr_\sigma (\F_\A)_{\mu\nu} } 
\\&\quad +  \sum_{\substack{u\in\mathcal{N}\\T\in\{\cos,\sin\}}} T\pth{\frac{u}{\la}} \cro{ \dr F^{(2,1)} + \dr F^{(2,2)} }_{\mu\nu\si} +  \sum_{\substack{v\in\mathcal{I}\\T\in\{\cos,\sin\}}}T\pth{\frac{v}{\la}} \cro{ \dr F^{(2,\pm)} }_{\mu\nu\si}
\\&\quad +   \dr_\mu (\h_\lambda)_{\sigma\nu} - \half \dr_\sigma (\h_\lambda)_{\mu\nu} 
 +  \sum_{\substack{u\in\mathcal{N}\\T\in\{\cos,\sin\}}}T'\pth{\frac{u}{\la}}\pth{ \dr_\mu u(\gg^{(3,h)}_{u,T})_{\sigma\nu} - \half \dr_\sigma u(\gg^{(3,h)}_{u,T})_{\mu\nu} } 
\\&\quad +  \sum_{\substack{v\in\mathcal{I}\\T\in\{\cos,\sin\}}}T'\pth{\frac{v}{\la}}\pth{ \dr_\mu v(\gg^{(3,e)}_{v,T})_{\sigma\nu} - \half \dr_\sigma v(\gg^{(3,e)}_{v,T})_{\mu\nu} } ,
\end{align*}
and
\begin{align*}
D_{\mu\nu\si}^{(3)} & =   \sum_{\substack{u\in\mathcal{N}\\T\in\{\cos,\sin\}}}T\pth{\frac{u}{\la}} \pth{\dr_\mu (\gg^{(3,h)}_{u,T})_{\sigma\nu} - \half \dr_\sigma (\gg^{(3,h)}_{u,T})_{\mu\nu}}
\\&\quad +  \sum_{\substack{v\in\mathcal{I}\\T\in\{\cos,\sin\}}}T\pth{\frac{v}{\la}} \pth{\dr_\mu (\gg^{(3,e)}_{v,T})_{\sigma\nu} - \half \dr_\sigma (\gg^{(3,e)}_{v,T})_{\mu\nu}}.
\end{align*}
We have 
\begin{align}\label{H rho somme}
H^\rho & = \g_\la^{\rho\sigma} \g_\la^{\mu\nu} \pth{ D_{\mu\nu\si}^{(0)} + \la D_{\mu\nu\si}^{(1)} + \la^2 D_{\mu\nu\si}^{(2)} + \la^3 D_{\mu\nu\si}^{(3)} } .
\end{align}
Using the background wave coordinate condition \eqref{wave condition BG}, \eqref{assumption}, \eqref{trigo 3} and \eqref{double sum 2} we first obtain
\begin{equation}\label{ggD0}
\begin{aligned}
&\g_\la^{\rho\sigma} \g_\la^{\mu\nu} D_{\mu\nu\si}^{(0)} 
\\& = - \la \sum_\A\cos\pth{\frac{u_\A}{\la}} (F^{(1)}_\A)^{\mu\nu} \g_0^{\rho\sigma} \pth{\dr_\mu (\g_0)_{\sigma\nu} - \half \dr_\sigma (\g_0)_{\mu\nu} } 
\\&\quad - \frac{\la}{4}  \sum_{\A}\sin\pth{\frac{2u_\A}{\la}}  \dr^\rho u_\A  \left| F^{(1)}_\A\right|^2_{\g_0} 
\\&\quad - \frac{\la}{4}  \sum_{\A\neq\B,\pm} \sin\pth{ \frac{u_\A\pm u_\B}{\la}} \bigg( (F^{(1)}_\B)^{\nu}_{L_\A} (F^{(1)}_\A)_{\nu}^\rho \pm (F^{(1)}_\A)^{\nu}_{L_\B} (F^{(1)}_\B)_{\nu}^\rho 
\\&\hspace{7cm} + \half \dr^\rho (u_\A\pm u_\B) \left| F^{(1)}_\A\cdot F^{(1)}_\B\right|_{\g_0}    \bigg)
\\&\quad - \la^2 \g_0^{\rho\sigma} \h_\la^{\mu\nu} \Bigg(\dr_\mu (\g_0)_{\sigma\nu} - \half \dr_\sigma (\g_0)_{\mu\nu}  -  \sum_\A  \sin\left( \frac{u_\A}{\lambda} \right) \pth{ \dr_\mu u_\A (F^{(1)}_\A)_{\sigma\nu} - \half \dr_\sigma u_\A (F^{(1)}_\A)_{\mu\nu}  }\Bigg)
\\&\quad + \la^2 \sum_{\substack{w\in \mathcal{N}\cup\mathcal{I}\\T\in\{1,\cos,\sin\}}}T\pth{ \frac{w}{\la}} \cro{  \pth{ \F + F^{(2,1)} + F^{(2,2)} + F^{(2,\pm)} + (F^{(1)})^2 }\pth{1+F^{(1)}} }^\rho
\\&\quad + \la^3  \cro{ \pth{\g_\la^{-1} (\g_\la)^{(\geq 3)} + (\g_\la)^{(\geq 1)} (\g_\la)^{(\geq 2)} } \pth{ 1 + F^{(1)}} }^{\rho,\osc}.
\end{aligned}
\end{equation}
For the remaining terms we obtain
\begin{equation}\label{ggD1}
\begin{aligned}
&\g_\la^{\rho\sigma} \g_\la^{\mu\nu} D_{\mu\nu\si}^{(1)}  
\\& =   \sum_\A  \cos\left( \frac{u_\A}{\lambda} \right)\g_0^{\rho\sigma} \bigg(\Pol{\F_\A + F^{(2,1)}_\A}{u_\A}_\si 
 +  \g_0^{\mu\nu} \pth{ \dr_\mu (F^{(1)}_\A)_{\sigma\nu} - \half \dr_\sigma (F^{(1)}_\A)_{\mu\nu} }   \bigg)
\\&\quad -2  \sum_\A  \sin\left( \frac{2u_\A}{\lambda}\right)\g_0^{\rho\sigma} \Pol{F^{(2,2)}_\A}{u_\A}_\si - \sum_{\A\neq \B,\pm}   \sin\left(\frac{u_\A\pm u_\B}{\lambda}\right)\g_0^{\rho\sigma} \Pol{F^{(2,\pm)}_{\A\B}}{u_\A\pm u_\B}_\si
\\&\quad +\la \sum_{\substack{w\in \mathcal{N}\cup\mathcal{I}\\T\in\{1,\cos,\sin\}}}T\pth{ \frac{w}{\la}} \cro{  F^{(1)} \pth{ \dr F^{(1)}  + \F + F^{(2,1)} + F^{(2,2)} + F^{(2,\pm)} } }^\rho
\\&\quad +\la^2 \Big\{ \pth{ (\g_\la^{-1})^{(\geq 2)} + (\g_\la^{-1})^{(\geq 1)}(\g_\la^{-1})^{(\geq 1)}} \pth{ \dr F^{(1)}  + \F + F^{(2,1)} + F^{(2,2)} + F^{(2,\pm)} } \Big\}^{\rho,\osc}
\end{aligned}
\end{equation}
and
\begin{equation}\label{ggD2}
\begin{aligned}
&\g_\la^{\rho\sigma} \g_\la^{\mu\nu} D_{\mu\nu\si}^{(2)} 
\\& =   \g_\la^{\rho\sigma} \g_\la^{\mu\nu} \pth{ \dr_\mu (\h_\lambda)_{\sigma\nu} - \half \dr_\sigma (\h_\lambda)_{\mu\nu}  +  \sum_\A  \sin\left(\frac{u_\A}{\lambda} \right) \pth{\dr_\mu (\F_\A)_{\sigma\nu} - \half \dr_\sigma (\F_\A)_{\mu\nu} }  }
\\&\quad +    \sum_{\substack{u\in\mathcal{N}\\T\in\{\cos,\sin\}}}T'\pth{\frac{u}{\la}}\g_0^{\rho\sigma}  \Pol{\gg^{(3,h)}_{u,T}}{u}_\si
\\&\quad +  \g_\la^{\rho\sigma} \g_\la^{\mu\nu}  \sum_{\substack{v\in\mathcal{I}\\T\in\{\cos,\sin\}}}T'\pth{\frac{v}{\la}}\pth{ \dr_\mu v(\gg^{(3,e)}_{v,T})_{\sigma\nu} - \half \dr_\sigma v(\gg^{(3,e)}_{v,T})_{\mu\nu} }
\\&\quad +  \sum_{\substack{u\in\mathcal{N}\\T\in\{\cos,\sin\}}} T\pth{\frac{u}{\la}} \cro{ \dr F^{(2,1)} + \dr F^{(2,2)} }^\rho +  \sum_{\substack{v\in\mathcal{I}\\T\in\{\cos,\sin\}}}T\pth{\frac{v}{\la}} \cro{ \dr F^{(2,\pm)} }^\rho
\\&\quad + \la   \cro{ \g_\la^{-1}(\g_\la)^{(\geq 1)} \pth{\dr F^{(2,1)} + \dr F^{(2,2)} + \dr F^{(2,\pm)} + \gg^{(3,h)}} }^{\rho,\osc} .
\end{aligned}
\end{equation}
The term $\g_\la^{\rho\sigma} \g_\la^{\mu\nu}D_{\mu\nu\si}^{(3)} $ is left as it is. Collecting \eqref{ggD0}-\eqref{ggD2} and plugging them into \eqref{H rho somme} concludes the proof of Lemma \ref{lemma H}.

\subsection{Proof of Proposition \ref{prop expression of the ricci}}\label{appendix proof prop ricci}

We start with $R^{(0)}_{\a\b}$. From \eqref{Ricci tensor GWC}, Lemmas \ref{lemma W} and \ref{lemma P} we have
\begin{align*}
2R^{(0)}_{\a\b} & = - \tBox_{\g_0}(\g_0)_{\a\b} - \sum_\A \sin\pth{\frac{u_\A}{\la}} (W^{(0,1)}_\A)_{\a\b}  -  \sum_{\A\neq\B,\pm} \cos\pth{\frac{u_\A\pm u_\B}{\la}}  (W^{(0,\pm)}_{\A\B})_{\a\b}
\\&\quad + P_{\alpha\beta}(\g_0)(\dr \g_0, \dr \g_0)  - \frac{1}{4}  \sum_{\A} \dr_\alpha u_\A \dr_\beta u_\A  \left| F^{(1)}_\A \right|^2_{\g_0} 
\\&\quad + \sum_\A \sin\pth{\frac{u_\A}{\la}} (P^{(0,1)}_\A)_{\a\b} + \sum_\A \cos\pth{\frac{2u_\A}{\la}} (P^{(0,2)}_\A)_{\a\b}
\\&\quad + \sum_{\A\neq \B,\pm}\cos\pth{\frac{u_\A\pm u_\B}{\la}} (P^{(0,\pm)}_{\A\B})_{\a\b} + \pth{ H^\rho \dr_\rho (\g_\la)_{\a\b} + (\g_\la)_{\rho(\a} \dr_{\b)}H^\rho }^{(0)}.
\end{align*}
We use Lemma \ref{lemma H} to compute the gauge terms:
\begin{align*}
&\pth{ H^\rho \dr_\rho (\g_\la)_{\a\b} + (\g_\la)_{\rho(\a} \dr_{\b)}H^\rho }^{(0)} 
\\& =   \sum_\A\pth{-\sin\pth{\frac{u_\A}{\la}} (\g_0)_{\rho(\a}  \dr_{\b)}u_\A (H^{(1,1)}_\A)^\rho +2 \cos\pth{\frac{2u_\A}{\la}} (\g_0)_{\rho(\a} \dr_{\b)} u_\A (H^{(1,2)}_\A)^\rho} 
\\&\quad + \sum_{\A\neq\B,\pm}\cos\pth{\frac{u_\A\pm u_\B}{\la}}(\g_0)_{\rho(\a} \dr_{\b)}(u_\A\pm u_\B)(H^{(1,\pm)}_{\A\B})^\rho .
\end{align*}
Therefore we obtain
\begin{align*}
2R^{(0)}_{\a\b} & = \sum_\A \pth{2 F_\A^2 -  \frac{1}{4} \left| F^{(1)}_\A \right|^2_{\g_0} } \dr_\mu u_\A \dr_\nu u_\A 
\\&\quad + \sum_\A \sin\pth{\frac{u_\A}{\la}}\pth{ -  (W^{(0,1)}_\A)_{\a\b} + (P^{(0,1)}_\A)_{\a\b} - (\g_0)_{\rho(\a}  \dr_{\b)}u_\A (H^{(1,1)}_\A)^\rho}
\\&\quad + \sum_\A \cos\pth{\frac{2u_\A}{\la}} \pth{ (P^{(0,2)}_\A)_{\a\b} + 2(\g_0)_{\rho(\a} \dr_{\b)} u_\A (H^{(1,2)}_\A)^\rho}
\\&\quad + \sum_{\A\neq\B,\pm} \cos\pth{\frac{u_\A\pm u_\B}{\la}}\Big( -  (W^{(0,\pm)}_{\A\B})_{\a\b} + (P^{(0,\pm)}_{\A\B})_{\a\b} 
+(\g_0)_{\rho(\a} \dr_{\b)}(u_\A\pm u_\B)(H^{(1,\pm)}_{\A\B})^\rho \Big),
\end{align*}
where we also used \eqref{eq BG coordinates}. Using now \eqref{W 0 1}, \eqref{P 0 1}, \eqref{H 1 1}, \eqref{P 0 2}, \eqref{H 1 2}, \eqref{W 0 pm} and \eqref{H 1 pm} we obtain \eqref{R 0}-\eqref{I 0 pm}.

\saut
We now compute $R^{(1)}_{\a\b}$. From \eqref{Ricci tensor GWC}, Lemmas \ref{lemma W} and \ref{lemma P} we have
\begin{equation}\label{R1 proof}
\begin{aligned}
2R^{(1)}_{\a\b} & =  \sum_\A\cos\pth{\frac{u_\A}{\la}}\pth{ - (W^{(1,1)}_\A)_{\a\b} + (P^{(1,1)}_\A)_{\a\b} } 
\\&\quad + \sum_\A \sin\pth{\frac{2u_\A}{\la}}\pth{ - (W^{(1,2)}_\A)_{\a\b} + (P^{(1,2)}_\A)_{\a\b} }
\\&\quad - \sum_\A \cos\pth{\frac{u_\A}{\lambda}} \cos\pth{ \frac{2u_\A}{\lambda} } (F^{(2,2)}_\A)_{L_\A L_\A} (F^{(1)}_\A)_{\alpha\beta} 
\\&\quad - \half \sum_\A \sin\pth{\frac{2u_\A}{\la}} (\F_\A)_{L_\A L_\A} (F^{(1)}_\A)_{\a\b} 
\\&\quad + \sum_{\substack{v\in\mathcal{I}\\T\in\{\cos,\sin\}}} T\pth{\frac{v}{\la}}\pth{ - (W^{(1)}_{v,T})_{\a\b} + (P^{(1)}_{v,T})_{\a\b} }
\\&\quad + \sum_{\substack{u\in\mathcal{N}\\T\in\{\cos,\sin\}}}T\pth{\frac{u}{\la}} \dr_{(\a}u (\hat{P}^{(1)}_{u,T})_{\b)}  + \pth{ H^\rho \dr_\rho (\g_\la)_{\a\b} + (\g_\la)_{\rho(\a} \dr_{\b)}H^\rho }^{(1)}.
\end{aligned}
\end{equation}
We use Lemma \ref{lemma H} (and in particular \eqref{H 1 1}-\eqref{H 2}) to compute the gauge terms. We have
\begin{align*}
&\pth{ (\g_\la)_{\rho(\a} \dr_{\b)}H^\rho }^{(1)} 
\\& =  \sum_\A\cos\pth{\frac{u_\A}{\la}}\pth{ (\g_0)_{\rho(\a} \dr_{\b)}\pth{ \g_0^{\rho\sigma} \Pol{\F_\A + F^{(2,1)}_\A}{u_\A}_\si} + \cro{\dr^{\leq 2}F^{(1)} + (F^{(1)})^2}^\rho } 
\\&\quad -2 \sum_\A\sin\pth{\frac{2u_\A}{\la}}\pth{ (\g_0)_{\rho(\a}\dr_{\b)}\pth{ \g_0^{\rho\sigma} \Pol{F^{(2,2)}_\A}{u_\A}_\si } + \cro{F^{(1)}\dr F^{(1)}}^\rho }
\\&\quad -  \sum_{\substack{u\in\mathcal{N}\\T\in\{\cos,\sin\}}}T\pth{\frac{u}{\la}} \dr_{(\a}u
\\&\hspace{2.5cm} \times \bigg( \Pol{\gg^{(3,h)}_{u,T}}{u}_{\b)} 
\\&\hspace{3.5cm} + \Big\{\dr F^{(2,1)} + \dr F^{(2,2)}
\\&\hspace{4.5cm} +  \pth{ \F + F^{(2,1)} + F^{(2,2)} + F^{(2,\pm)} + (F^{(1)})^2 + \dr F^{(1)} }\pth{1+F^{(1)}} \Big\}_{\b)} \bigg)
\\&\quad +  \sum_{\substack{v\in\mathcal{I}\\T\in\{\cos,\sin\}}}T\pth{ \frac{v}{\la}} \Big\{ \dr F^{(2,\pm)}
 +  \pth{ \F + F^{(2,1)} + F^{(2,2)} + F^{(2,\pm)} + (F^{(1)})^2 + \dr F^{(1)} }\pth{1+F^{(1)}} \Big\}_{\a\b}
\\&\quad +(\g_0)_{\rho(\a} \pth{\dr_{\b)}\Upsilon^\rho }^{(-1)} 
\end{align*}
and
\begin{align*}
&\pth{ H^\rho \dr_\rho (\g_\la)_{\a\b} }^{(1)} 
\\& =   \sum_\A \cos\pth{\frac{u_\A}{\la}} \pth{ \dr_\rho (\g_0)_{\a\b} \g_0^{\rho\sigma} \Pol{\F_\A + F^{(2,1)}_\A}{u_\A}_\si  + \cro{\dr^{\leq 1}F^{(1)}+(F^{(1)})^3}_{\a\b}}
\\&\quad -2 \sum_\A\sin\pth{\frac{2u_\A}{\la}} \pth{ \dr_\rho (\g_0)_{\a\b}  \g_0^{\rho\sigma} \Pol{F^{(2,2)}_\A}{u_\A}_\si  + \cro{F^{(1)}\pth{F^{(2,1)} + \dr^{\leq 1}F^{(1)}}}_{\a\b}}
\\&\quad + \half \sum_\A \sin\pth{\frac{2u_\A}{\la}} \Pol{\F_\A}{u_\A}_{L_\A} (F^{(1)}_\A)_{\a\b} 
 + 2 \sum_\A \sin\pth{\frac{u_\A}{\la}}\sin\pth{\frac{2u_\A}{\la}} (F^{(2,2)}_\A)_{L_\A L_\A} (F^{(1)}_\A)_{\a\b} 
\\&\quad + \sum_{\substack{v\in\mathcal{I}\\T\in\{\cos,\sin\}}}T\pth{ \frac{v}{\la}} \cro{ F^{(1)} \pth{F^{(1)} + \F + F^{(2,1)} + F^{(2,2)} + \dr^{\leq 1}F^{(1)} + (F^{(1)})^2 } }_{\a\b}.
\end{align*}
We plug these two computations into \eqref{R1 proof}, use \eqref{W 1 1}, \eqref{P 1 1}, \eqref{W 1 2}, \eqref{P 1 2}, \eqref{W 1 v T}, \eqref{P 1 v T} and \eqref{Phat 1 u T} and obtain \eqref{R 1}-\eqref{R 1 e}.

\saut
We now compute $R^{(\geq 2)}_{\a\b}$. From Lemmas \ref{lemma W} and \ref{lemma P} we have
\begin{align*}
2 R^{(\geq 2)}_{\a\b} & = - W^{(\geq 2)}_{\a\b} + P^{(\geq 2)}_{\a\b}  +\pth{ H^\rho \dr_\rho (\g_\la)_{\a\b} + (\g_\la)_{\rho(\a} \dr_{\b)}H^\rho }^{(\geq 2)}.
\end{align*}
We use \eqref{W 2} and \eqref{P 2} to obtain
\begin{align*}
2 R^{(\geq 2)}_{\a\b} & = -\tBox_{\g_\la}(\h_\la)_{\a\b}  - \sum_\A \sin\pth{\frac{u_\A}{\la}}\tBox_{\g_\la}(\F_\A)_{\a\b} 
\\&\quad -  \la \sum_{\substack{u\in\mathcal{N}\\T\in\{\cos,\sin\}}}T\pth{\frac{u}{\la}}\tBox_{\g_\la} (\gg^{(3,h)}_{u,T})_{\a\b}  - \la  \sum_{\substack{v\in\mathcal{I}\\T\in\{\cos,\sin\}}} T\left( \frac{v}{\lambda} \right)\tBox_{\g_\la} (\gg^{(3,e)}_{v,T})_{\a\b}
\\&\quad +  \Big\{\pth{ \g_\la^{-1}\g_\la^{-1}\dr\g_\la\dr\g_\la }^{(\geq 2)} + \g_\la^{-1}\big(\dr^{\leq 1}\gg^{(3,h)} + \dr^{\leq 1}\gg^{(3,e)} + \dr^{\leq 2} F^{(1)} + \dr^{\leq 1} \F 
\\&\hspace{7cm}+ \dr^{\leq 2}F^{(2,1)} + \dr^{\leq 2}F^{(2,2)} + \dr^{\leq 2} F^{(2,\pm)} \big)  \Big\}_{\a\b}^\osc 
\\&\quad  +\pth{ H^\rho \dr_\rho (\g_\la)_{\a\b} + (\g_\la)_{\rho(\a} \dr_{\b)}H^\rho }^{(\geq 2)},
\end{align*}
which gives \eqref{R 2}. This concludes the proof of Proposition \ref{prop expression of the ricci}.


\section{Proof of Proposition \ref{prop constraint main}}\label{appendix proof prop constraint main}

\subsection{Conformal formulation of the constraint equations}\label{section conformal}

We introduce the notations
\begin{align*}
\HH(g,k) & \vcentcolon= R(g) - |k|^2_g + (\tr_g k)^2,
\\ \MM(g,k) & \vcentcolon= \div_g k - \d \tr_g k,
\end{align*}
In order to solve the constraint equations, we take inspiration from \cite{Corvino2006} and look for $(g_\la,k_\la)$ under the form
\begin{equation}\label{conformal ansatz}
\begin{aligned}
g_\la & = \ffi^4_\la \ga_\la,
\\ k_\la & = \ffi^2_\la \pth{  \ka_\la  +  \L_{\ga_\la} X_\la},
\end{aligned}
\end{equation}
where 
\begin{align}\label{def LX}
\L_g X \vcentcolon =  \LL_{X}g -  \half \pth{\div_{g}X} g ,
\end{align}
with $\LL_Xg$ is the Lie derivative. A straightforward computation implies that the constraint equations for $(g_\la,k_\la)$ rewrite
\begin{align}
8 \De_{\ga_\la}\ffi_\la & = \HH\pth{\ga_\la,\ka_\la} + \RR_\HH\pth{ \ga_\la,\ka_\la,\ffi_\la,X_\la},\label{hamiltonian}
\\ (\vDe_{\ga_\la} X_\la)_i& = -  \MM(\ga_\la,\ka_\la)_i  +  \RR_\MM\pth{ \ga_\la,\ka_\la,\ffi_\la,X_\la}_i ,\label{momentum}
\end{align}
where
\begin{align*}
\vDe_{g} X_i \vcentcolon= \De_g X_i + R_{ij}(g)X^{j}
\end{align*}
with $\De_g$ now acting on vector fields and $R_{ij}(g)$ the Ricci tensor of $g$, and where the remainders are given by
\begin{align}
\RR_\HH\pth{ \ga_\la,\ka_\la,\ffi_\la,X_\la} & \vcentcolon= ( \ffi_\la-1)\HH\pth{\ga_\la,\ka_\la} \non
\\&\quad +\ffi_\la \bigg( - \left|  \LL_{X_\la} \ga_\la \right|^2_{\ga_\la} -2 \left| \ka_\la \cdot \LL_{X_\la} \ga_\la \right|_{\ga_\la} \label{reste H} 
\\&\hspace{3cm}  + \frac{3}{2} \pth{\div_{\ga_\la} X_\la}^2 +2 \pth{\tr_{\ga_\la}\ka_\la}\pth{ \div_{\ga_\la} X_\la}\bigg),\non
\\ \RR_\MM\pth{ \ga_\la,\ka_\la,\ffi_\la,X_\la}_i & \vcentcolon= - 4 \ffi_\la^{-1} \ga_\la^{ab} \pth{ \ka_\la+ \LL_{X_\la}\ga_\la }_{ai}\dr_b \ffi_\la + 2 \ffi_\la^{-1} \pth{\div_{\ga_\la}X_\la}\dr_i \ffi_\la. \label{reste M}
\end{align}
The parameters $\ga_\la$ and $\ka_\la$ of the conformal formulation are defined by
\begin{align}
\ga_\la & \vcentcolon= g_0 + \la \sum_\A \cos\pth{\frac{u_\A}{\la}} \bar{F}^{(1)}_\A + \la^2 \sum_{\A\neq \B,\pm}\cos\pth{\frac{u_\A\pm u_\B}{\la}}\ga^{(2,\pm)}_{\A\B},\label{ga la}
\\ \ka_\la & \vcentcolon= k_0 + \sum_\A \sin\pth{ \frac{u_\A}{\la}}  \half |\nab u_\A| \bar{F}^{(1)}_\A  + \la\sum_\A\pth{\cos\pth{\frac{u_\A}{\la}} \ka^{(1,1)}_\A + \sin\pth{\frac{2u_\A}{\la}} \ka^{(1,2)}_\A} \label{ka la}
\\&\quad + \la\sum_{\A\neq\B,\pm}\sin\pth{\frac{u_\A\pm u_\B}{\la}}\ka^{(1,\pm)}_{\A\B}.\nonumber
\end{align}
The unknowns $\ffi_\la$ and $X_\la$ of the conformal formulation are of the form
\begin{align}
\ffi_\la & = 1 + \la^2 \ffi^{(2)} + \la^2 \tffi_\la + \la^3 \ffi^{(3)}\label{def ffi la},
\\ X_\la & = \la^2 X^{(2)} + \la^2 \tX_\la + \la^3 X^{(3)}.\label{def X la}
\end{align}
where
\begin{align}
\ffi^{(2)} & =  \sum_\A \pth{ \sin\pth{\frac{u_\A}{\la}} \ffi^{(2,1)}_\A + \cos\pth{\frac{2u_\A}{\la}}\ffi^{(2,2)}_\A } +  \sum_{\A\neq \B,\pm} \cos\pth{\frac{u_\A\pm u_\B}{\la}}\ffi^{(2,\pm)}_{\A\B},\label{def ffi 2}
\\ X^{(2)} & =  \sum_\A \pth{ \sin\pth{\frac{u_\A}{\la}} X^{(2,1)}_\A + \cos\pth{\frac{2u_\A}{\la}}X^{(2,2)}_\A }  + \sum_{\A\neq \B,\pm} \cos\pth{\frac{u_\A\pm u_\B}{\la}}X^{(2,\pm)}_{\A\B}.\label{def X 2}
\end{align}
In \eqref{def ffi la}-\eqref{def X la}, $\tffi_\la$ and $\tX_\la$ are non-oscillating remainders, while $\ffi^{(3)}$ and $X^{(3)}$ are oscillating but we don't need to be precise on their oscillating behaviour.

\subsection{The constraint hierarchy}\label{section constraint hierarchy}

We first compute useful expansions related to $\ga_\la$. The inverse of $\ga_\la$ satisfies 
\begin{align}\label{ga-1}
\ga_\la^{ij} = g_0^{ij} + \la(\ga_\la^{ij})^{(1)} + \la^2(\ga_\la^{ij})^{(2)} + \GO{\la^3}
\end{align}
with
\begin{align}
(\ga_\la^{ij})^{(1)} & = - \sum_\A \cos\pth{\frac{u_\A}{\la}} (\bar{F}^{(1)}_\A)^{ij},\label{ga-1 1}
\\ (\ga_\la^{ij})^{(2)}  & = -  \sum_{\A\neq \B,\pm}\cos\pth{\frac{u_\A\pm u_\B}{\la}} (\ga^{(2,\pm)}_{\A\B})^{ij} -  \sum_{\A,\B} \cos\pth{\frac{u_\A}{\la}} \cos\pth{\frac{u_\B}{\la}} (\bar{F}^{(1)}_\B)^{ik} (\bar{F}^{(1)}_\A)_{k}^j. \label{ga-1 2}
\end{align}
The Christofel symbols of $\ga_\la$ satisfy 
\begin{align}
\Ga(\ga_\la)^k_{ij} & =  \Ga(g_0)^k_{ij} + (\tilde{\Ga}^{(0)})^k_{ij} + \la (\Ga^{(1)})^k_{ij}  + \GO{\la^2}\label{christofel symbols}
\end{align}
with
\begin{align}
(\tilde{\Ga}^{(0)})^k_{ij} & = -\half\sum_\A  \sin\pth{\frac{u_\A}{\la}} \pth{\dr_{(i} u_\A (\bar{F}^{(1)}_\A)_{j)}^k - \dr^k  u_\A (\bar{F}^{(1)}_\A)_{ij} }, \label{Gatilde 0}
\\ (\Ga^{(1)})^k_{ij} & = \sum_\A \cos\pth{\frac{u_\A}{\la}} (\bar{Q}_\A)^k_{ij}  + \half \sum_\A \cos\pth{\frac{u_\A}{\la}} g_0^{k\ell} \pth{ \dr_{(i} u_\A (\ga^{(2,1)}_\A)_{j)\ell} - \dr_\ell u_\A (\ga^{(2,1)}_\A)_{ij} }\non
\\&\quad - \half  \sum_{\A\neq \B,\pm}\sin\pth{\frac{u_\A\pm u_\B}{\la}}g_0^{k\ell}\Big( \dr_{(i} (u_\A\pm u_\B)(\ga^{(2,\pm)}_{\A\B})_{j)\ell} \label{Ga 1}
 - \dr_\ell (u_\A\pm u_\B)(\ga^{(2,\pm)}_{\A\B})_{ij}\Big) 
\\&\quad + \half \sum_{\A,\B} \cos\pth{\frac{u_\A}{\la}}\sin\pth{\frac{u_\B}{\la}} (\bar{F}^{(1)}_\A)^{k\ell} \pth{ \dr_{(i} u_\B (\bar{F}^{(1)}_\B)_{j)\ell} - \dr_\ell u_\B (\bar{F}^{(1)}_\B)_{ij}},\non
\end{align}
and
\begin{align*}
(\bar{Q}_\A)^k_{ij} & =  \half  \pth{ g_0^{k\ell}\pth{ \dr_{(i} (\bar{F}^{(1)}_\A)_{j)\ell} - \dr_\ell (\bar{F}^{(1)}_\A)_{ij}} - (\bar{F}^{(1)}_\A)^{k\ell}\pth{ \dr_{(i} (g_0)_{j)\ell} - \dr_\ell (g_0)_{ij}} }.
\end{align*}
Note that \eqref{assum F1} implies 
\begin{equation}\label{barQ}
\begin{aligned}
(\bar{Q}_\A)^k_{kj} & = 0, 
\\ g_0^{ij}\dr_k u_\A (\bar{Q}_\A)^k_{ij} & =(\bar{F}^{(1)}_\A)_{ij}  \pth{ -   \dr^i \dr^j u_\A + \half\dr^\ell u_\A\dr_\ell g_0^{ij}}.
\end{aligned}
\end{equation}
We are now ready to compute the main terms in the equations \eqref{hamiltonian}-\eqref{momentum}.

\begin{lemma}\label{lem constraint expansion H}
We have 
\begin{align*}
 \HH(\ga_\la,\ka_\la) & = \pth{ \HH(\ga_\la,\ka_\la)}^{(0)} + \la \pth{ \HH(\ga_\la,\ka_\la)}^{(1)} + \la^2 \pth{ \HH(\ga_\la,\ka_\la)}^{(\geq 2)},
\end{align*}
where 
\begin{equation}\label{H 0}
\begin{aligned}
&\pth{ \HH(\ga_\la,\ka_\la)}^{(0)} 
\\& =  \sum_\A \sin\pth{\frac{u_\A}{\la}}  (\bar{F}^{(1)}_\A)_{ij}  \pth{ \dr^i \dr^j u_\A - \half\dr^\ell u_\A\dr_\ell g_0^{ij} - |\nab_{g_0} u_\A|_{g_0}k_0^{ij}}  
\\&\quad - 6 \sum_{\A}\cos\pth{\frac{2u_\A}{\la}}  |\nab_{g_0} u_\A|_{g_0}^2 F_\A^2 
\\&\quad + \sum_{\A\neq \B,\pm}\cos\pth{\frac{u_\A\pm u_\B}{\la}}\Big( |\nab_{g_0}(u_\A\pm u_\B)|^2_{g_0}\tr_{g_0}\ga^{(2,\pm)}_{\A\B} -(\ga^{(2,\pm)}_{\A\B})_{\nab_{g_0}(u_\A\pm u_\B)\nab_{g_0}(u_\A\pm u_\B)} \Big)
\\&\quad + \frac{1}{8}\sum_{\A\neq\B,\pm} \cos\pth{\frac{u_\A\pm u_\B}{\la}} \bigg( \pm 2 (\bar{F}^{(1)}_\A)_{i \nab_{g_0} u_\B} (\bar{F}^{(1)}_\B)_{\nab_{g_0} u_\A}^i 
\\&\hspace{5.5cm} + \left| \bar{F}^{(1)}_\A \cdot \bar{F}^{(1)}_\B \right|_{g_0} \Big( -2 |\nab_{g_0} u_\A|_{g_0}^2 -2 |\nab_{g_0} u_\B|_{g_0}^2 
\\&\hspace{9cm} \mp 3 |\nab_{g_0} u_\A\cdot \nab_{g_0} u_\B|_{g_0} 
\\&\hspace{9cm}\pm |\nab_{g_0} u_\A|_{g_0} |\nab_{g_0} u_\B|_{g_0} \Big)  \bigg).
\end{aligned}
\end{equation}
Moreover, the higher order terms satisfy
\begin{align}
\pth{ \HH(\ga_\la,\ka_\la)}^{(1)} & = \sum_{\substack{w\in\mathcal{N}\cup\mathcal{I}\\T\in\{\cos,\sin\}}} T\pth{\frac{w}{\la}} \cro{ \pth{ \ga_\la^{-1}\dr^2\ga_\la+ \ga_\la^{-1}\ga_\la^{-1}\pth{(\dr\ga_\la)^2 + (\ka_\la)^2} }^{(1)} }      ,\label{H 1}
\\\pth{ \HH(\ga_\la,\ka_\la)}^{(\geq 2)} & = \cro{ \pth{ \ga_\la^{-1}\dr^2\ga_\la+ \ga_\la^{-1}\ga_\la^{-1}\pth{(\dr\ga_\la)^2 + (\ka_\la)^2} }^{(\geq 2)} }^\osc.\label{H 2}
\end{align} 
\end{lemma}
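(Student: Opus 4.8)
The plan is to expand $\HH(\ga_\la,\ka_\la)$ directly from its definition, group the result by powers of $\la$, and read off the coefficients. The starting point is the schematic identity $\HH(g,k)=\cro{g^{-1}\dr^2 g+g^{-1}g^{-1}\pth{(\dr g)^2+k^2}}$, which follows from $R(g)=g^{ij}R_{ij}(g)$ with $R_{ij}(g)$ built from $\Ga(g)=\cro{g^{-1}\dr g}$ and $\dr\Ga(g)$, together with $|k|^2_g=g^{-1}g^{-1}kk$ and $(\tr_gk)^2=(g^{-1}k)^2$. Substituting the expansions \eqref{ga-1}--\eqref{ga-1 2} of $\ga_\la^{-1}$, \eqref{christofel symbols}--\eqref{Ga 1} of the Christoffel symbols, and the definition \eqref{ka la} of $\ka_\la$, then reducing products of trigonometric factors by \eqref{trigo 1}--\eqref{trigo 3} and grouping all higher-order contributions into $\la^2\pth{\HH(\ga_\la,\ka_\la)}^{(\geq 2)}$, one obtains an expansion whose terms of order $\la^{\geq 1}$ are manifestly of the schematic shape asserted in \eqref{H 1}--\eqref{H 2}. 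The only genuine check at those orders is that the phases occurring all lie in $\mathcal{N}\cup\mathcal{I}$: this holds because $\ga_\la$ and $\ka_\la$ carry only the phases $u_\A$ and $u_\A\pm u_\B$, because at order $\la^1$ at most triple combinations of these arise, and because a product of such oscillating factors is non-oscillating only when the phases cancel identically (here I use \eqref{spatial}), which a short case analysis of the monomials arising at order $\la^1$ shows does not happen — so $\pth{\HH(\ga_\la,\ka_\la)}^{(1)}$ has no non-oscillating term.

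For the term of order $\la^{-1}$, observe that $\ga_\la^{-1}$, $\ka_\la$ and $\tr_{\ga_\la}\ka_\la$ are all $O(1)$ and that $\Ga\Ga$-type products are $O(1)$; hence the only possible source of a $\la^{-1}$ contribution is the factor $\frac1\la$ produced by a derivative hitting the oscillating phases inside $(\tilde{\Ga}^{(0)})^k_{ij}$, so $\pth{\HH(\ga_\la,\ka_\la)}^{(-1)}$ equals the $\frac1\la$-part of $g_0^{ij}\bigl(\dr_k(\tilde{\Ga}^{(0)})^k_{ij}-\dr_i(\tilde{\Ga}^{(0)})^k_{jk}\bigr)$. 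I would then show this vanishes exactly as assumption \eqref{assumption} does in the spacetime computation: the seed properties \eqref{assum F1}, namely $\tr_{g_0}\bar F^{(1)}_\A=0$ and $(\bar F^{(1)}_\A)_{N_\A i}=0$ (equivalently $\dr^i u_\A(\bar F^{(1)}_\A)_{ij}=|\nab_{g_0}u_\A|_{g_0}(\bar F^{(1)}_\A)_{N_\A j}=0$ on $\Si_0$), imply the algebraic identities $(\tilde{\Ga}^{(0)})^k_{kj}=0$ and $g_0^{ij}(\tilde{\Ga}^{(0)})^k_{ij}=0$; since these hold as functions, their derivatives vanish and the $\frac1\la$-part above is zero.

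For the term of order $\la^0$ I would collect all contributions. The quantity $(\tr_{\ga_\la}\ka_\la)^2$ contributes only $(\tr_{g_0}k_0)^2$ at this order (again by $\tr_{g_0}\bar F^{(1)}_\A=0$). The quantity $-|\ka_\la|^2_{\ga_\la}$ contributes $-|k_0|^2_{g_0}$, a piece linear in $\sin(u_\A/\la)$, and the quadratic piece $-\frac14\sum_{\A,\B}\sin(u_\A/\la)\sin(u_\B/\la)|\nab_{g_0}u_\A|_{g_0}|\nab_{g_0}u_\B|_{g_0}\bigl|\bar F^{(1)}_\A\cdot\bar F^{(1)}_\B\bigr|_{g_0}$, which by \eqref{trigo 2}, the normalisation $|\bar F^{(1)}_\A|^2_{g_0}=8F_\A^2$ and the eikonal identity \eqref{eikonal init} produces a non-oscillating piece $-\sum_\A|\nab_{g_0}u_\A|_{g_0}^2F_\A^2$ together with $\cos(2u_\A/\la)$- and $\cos((u_\A\pm u_\B)/\la)$-pieces. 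Finally $R(\ga_\la)^{(0)}$ contributes $R(g_0)$, the cross terms between $\Ga(g_0)$ and $\tilde{\Ga}^{(0)}$ (and $\dr^2 g_0$ against $(\ga_\la^{-1})^{(1)}$, etc.) which are oscillating in $u_\A$, and the self-interaction of the leading oscillating perturbation $\sum_\A\cos(u_\A/\la)\bar F^{(1)}_\A$; by the standard high-frequency (Burnett) computation this self-interaction reduces, using \eqref{Gatilde 0}, \eqref{ga-1 1}, \eqref{barQ} and the trigonometric identities, to the oscillating terms displayed in \eqref{H 0} plus a non-oscillating piece proportional to $\sum_\A|\nab_{g_0}u_\A|_{g_0}^2F_\A^2$. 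Invoking the background Hamiltonian constraint \eqref{BG hamiltonian}, i.e. $R(g_0)-|k_0|^2_{g_0}+(\tr_{g_0}k_0)^2=2\sum_\A(\dr_t u_\A)^2F_\A^2$, together with \eqref{eikonal init}, the non-oscillating parts then cancel and what remains is exactly \eqref{H 0}.

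The main obstacle is the coefficient bookkeeping in this $\la^0$ computation: several distinct mechanisms — the genuinely quadratic term in $\tilde{\Ga}^{(0)}$, the $\frac1\la$-part of $\dr\tilde{\Ga}^{(0)}$ paired against the first correction $(\ga_\la^{-1})^{(1)}$ of the inverse metric, and the derivatives falling on $\bar F^{(1)}_\A$ rather than on $u_\A$ — all feed the same harmonics $1$, $\cos(2u_\A/\la)$ and $\cos((u_\A\pm u_\B)/\la)$, and one must carry each with its correct numerical coefficient for the non-oscillating backreaction to cancel and for the displayed coefficients in \eqref{H 0} (in particular the $-6|\nab_{g_0}u_\A|_{g_0}^2F_\A^2$ in front of $\cos(2u_\A/\la)$, and the precise $\ga^{(2,\pm)}_{\A\B}$- and $|\bar F^{(1)}_\A\cdot\bar F^{(1)}_\B|_{g_0}$-dependent coefficients of $\cos((u_\A\pm u_\B)/\la)$) to come out right. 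The seed normalisation $|\bar F^{(1)}_\A|^2_{g_0}=8F_\A^2$ is precisely what makes the backreaction cancellation work, exactly as $|F^{(1)}_\A|^2_{\g_0}=8F_\A^2$ does in \eqref{R 0}; this is the constraint-level manifestation of the transparency of Einstein's equations. The remaining verifications — that $\pth{\HH(\ga_\la,\ka_\la)}^{(1)}$ and $\pth{\HH(\ga_\la,\ka_\la)}^{(\geq 2)}$ have the stated schematic structure — are routine tracking of which tensors and phases can occur.
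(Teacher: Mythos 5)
Your proposal follows essentially the same route as the paper's proof: a direct coordinate expansion of $\HH(\ga_\la,\ka_\la)$, vanishing of the $\la^{-1}$ term via the seed identities $\tr_{g_0}\bar{F}^{(1)}_\A=0$ and $(\bar{F}^{(1)}_\A)_{N_\A i}=0$ (equivalently $(\tilde{\Ga}^{(0)})^k_{jk}=0$ and $g_0^{ij}(\tilde{\Ga}^{(0)})^k_{ij}=0$), cancellation of the non-oscillating $\la^0$ part against the background Hamiltonian constraint \eqref{BG hamiltonian} combined with \eqref{eikonal init}, and the observation that at order $\la^1$ only nontrivial phases in $\mathcal{N}\cup\mathcal{I}$ occur so that no resonant term appears, which is exactly how \eqref{H 1} and \eqref{H 2} are obtained in the paper. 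The only difference is completeness rather than method: the paper actually carries out the order-$\la^0$ bookkeeping (the decomposition of $\pth{R(\ga_\la)}^{(0)}$ into the three pieces coming from $g_0^{ij}\pth{\dr\Ga}^{(0)}$, $(\ga_\la^{-1})^{(1)}\pth{\dr\tilde{\Ga}^{(0)}}^{(-1)}$ and the quadratic Christoffel terms, plus the quadratic $\ka_\la$ contributions) to produce the precise coefficients in \eqref{H 0}, whereas you defer this computation; since you have correctly identified every input and cancellation mechanism, that remaining step is routine but must still be written out to confirm the displayed coefficients.
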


\begin{proof}
We start with the scalar curvature, using its expression in coordinates:
\begin{align*}
R(\ga) & = \ga^{ij} \pth{ \dr_k \Ga(\ga)^k_{ij} - \dr_i  \Ga(\ga)^k_{jk} +  \Ga(\ga)^k_{k\ell} \Ga(\ga)^\ell_{ij} -  \Ga(\ga)^k_{i\ell} \Ga(\ga)^\ell_{jk} }.
\end{align*}
Using \eqref{assum F1} we first obtain
\begin{align*}
\pth{ R(\ga_\la) }^{(-1)} & = g_0^{ij} \pth{ \dr_k (\tilde{\Ga}^{(0)})^k_{ij} - \dr_i  (\tilde{\Ga}^{(0)})^k_{jk} }^{(-1)}.
\\& =- \sum_\A  \cos\pth{\frac{u_\A}{\la}}\pth{ (\bar{F}^{(1)}_\A)_{\nab_{g_0} u_\A \nab_{g_0} u_\A} -|\nab_{g_0} u_\A|_{g_0}^2 \tr_{g_0}\bar{F}^{(1)}_\A  } 
\\& = 0.
\end{align*}
We have $\pth{ R(\ga_\la) }^{(0)} = I + II + III$ with
\begin{align*}
I & \vcentcolon = g_0^{ij} \pth{  \dr_k \Ga(\ga_\la)^k_{ij} - \dr_i  \Ga(\ga_\la)^k_{jk}  }^{(0)},
\\ II & \vcentcolon = \pth{\ga_\la^{ij}}^{(1)} \pth{  \dr_k (\tilde{\Ga}^{(0)})^k_{ij} - \dr_i (\tilde{\Ga}^{(0)})^k_{jk}  }^{(-1)},
\\ III & \vcentcolon = g_0^{ij} \pth{\Ga(\ga_\la)^k_{k\ell} \Ga(\ga_\la)^\ell_{ij} -  \Ga(\ga_\la)^k_{i\ell} \Ga(\ga_\la)^\ell_{jk}}^{(0)}.
\end{align*}
We start with $I$. Using $(\tilde{\Ga}^{(0)})^k_{jk} =0$ and $g_0^{ij}(\tilde{\Ga}^{(0)})^k_{ij}=0$ (which follow from \eqref{Gatilde 0} and \eqref{assum F1}) we have
\begin{equation}\label{R0 I}
\begin{aligned}
I & = g_0^{ij} \pth{  \dr_k \Ga(g_0)^k_{ij} - \dr_i  \Ga(g_0)^k_{jk}  }  -  (\tilde{\Ga}^{(0)})^k_{ij} \dr_k g_0^{ij}  + g_0^{ij} \pth{  \dr_k (\Ga^{(1)})^k_{ij} - \dr_i (\Ga^{(1)})^k_{jk}  }^{(-1)}
\\& =  g_0^{ij} \pth{  \dr_k \Ga(g_0)^k_{ij} - \dr_i  \Ga(g_0)^k_{jk}  }  -  (\tilde{\Ga}^{(0)})^k_{ij} \dr_k g_0^{ij}
\\&\quad + \sum_\A \sin\pth{\frac{u_\A}{\la}}  (\bar{F}^{(1)}_\A)_{ij}  \pth{ \dr^i \dr^j u_\A - \half\dr^\ell u_\A\dr_\ell g_0^{ij}}  
\\&\quad + \sum_{\A}\sin\pth{\frac{u_\A}{\la}}\pth{ |\nab u_\A|^2_{g_0}\tr_{g_0}\ga^{(2,1)}_{\A} -(\ga^{(2,1)}_{\A})_{\nab_{g_0} u_\A \nab_{g_0} u_\A} }
\\&\quad + \sum_{\A\neq \B,\pm}\cos\pth{\frac{u_\A\pm u_\B}{\la}}\Big( |\nab_{g_0}(u_\A\pm u_\B)|^2_{g_0}\tr_{g_0}\ga^{(2,\pm)}_{\A\B} -(\ga^{(2,\pm)}_{\A\B})_{\nab_{g_0}(u_\A\pm u_\B)\nab_{g_0}(u_\A\pm u_\B)} \Big)
\\&\quad - 4 \sum_{\A}\cos\pth{\frac{2u_\A}{\la}}  |\nab_{g_0} u_\A|_{g_0}^2 F_\A^2 
\\&\quad -  \frac{1}{8} \sum_{\A\neq\B,\pm}\cos\pth{\frac{u_\A\pm u_\B}{\la}}  |\nab_{g_0}(u_\A\pm u_\B)|^2_{g_0}\left| \bar{F}^{(1)}_\A \cdot \bar{F}^{(1)}_\B \right|_{g_0} ,
\end{aligned}
\end{equation}
where we have also used \eqref{Ga 1} and \eqref{barQ}. We now compute $II$, using \eqref{ga-1 1} and \eqref{Gatilde 0}:
\begin{equation}\label{R0 II}
\begin{aligned}
II & = -2 \sum_{\A}  |\nab_{g_0} u_\A|_{g_0}^2F_\A^2 -2 \sum_{\A} \cos\pth{\frac{2u_\A}{\la}}  |\nab_{g_0} u_\A|_{g_0}^2F_\A^2  
\\&\quad  -\frac{1}{8}  \sum_{\A\neq\B,\pm}  \cos\pth{\frac{u_\A\pm u_\B}{\la}} \pth{ |\nab_{g_0} u_\A|_{g_0}^2 + |\nab_{g_0} u_\B|_{g_0}^2 } \left| \bar{F}^{(1)}_\A \cdot \bar{F}^{(1)}_\B \right|_{g_0} .
\end{aligned}
\end{equation}
We now compute $III$ using \eqref{Gatilde 0}:
\begin{equation}\label{R0 III}
\begin{aligned}
III & = g_0^{ij} \pth{\Ga(g_0)^k_{k\ell} \Ga(g_0)^\ell_{ij} -  \Ga(g_0)^k_{i\ell} \Ga(g_0)^\ell_{jk}} - 2g_0^{ij} (\tilde{\Ga}^{(0)})^\ell_{ik} \Ga(g_0)^k_{j\ell}   - g_0^{ij}  (\tilde{\Ga}^{(0)})^k_{i\ell} (\tilde{\Ga}^{(0)})^\ell_{jk}
\\& = g_0^{ij} \pth{\Ga(g_0)^k_{k\ell} \Ga(g_0)^\ell_{ij} -  \Ga(g_0)^k_{i\ell} \Ga(g_0)^\ell_{jk}} - 2g_0^{ij} (\tilde{\Ga}^{(0)})^\ell_{ik} \Ga(g_0)^k_{j\ell} 
\\&\quad + \sum_{\A}  |\nab_{g_0} u_\A |_{g_0}^2 F_\A^2 - \sum_{\A} \cos\pth{\frac{2u_\A}{\la}}  |\nab_{g_0} u_\A |_{g_0}^2 F_\A^2 
\\&\quad + \frac{1}{8}\sum_{\A\neq\B,\pm} \cos\pth{\frac{u_\A\pm u_\B}{\la}} \bigg( \pm 2 (\bar{F}^{(1)}_\A)_{i \nab_{g_0} u_\B} (\bar{F}^{(1)}_\B)_{\nab_{g_0} u_\A}^i 
\\&\hspace{7cm} \mp  |\nab_{g_0} u_\A \cdot \nab_{g_0} u_\B |_{g_0}  \left| \bar{F}^{(1)}_\A \cdot \bar{F}^{(1)}_\B \right|_{g_0} \bigg).
\end{aligned}
\end{equation}
We now compute the $\la^0$ contribution of the quadratic terms in $\ka_\la$. From \eqref{assum F1} we have $\tr_{\ga_\la} \ka_\la = \tr_{g_0}k_0 + \GO{\la}$ so that 
\begin{align}\label{tr2 0}
\pth{ \pth{\tr_{\ga_\la} \ka_\la}^2}^{(0)} = (\tr_{g_0}k_0)^2
\end{align}
and
\begin{equation}\label{k2 0}
\begin{aligned}
\pth{ |\ka_\la|^2_{\ga_\la} }^{(0)} & = |k_0|^2_{g_0} + \sum_\A \sin\pth{ \frac{u_\A}{\la}}|\nab_{g_0} u_\A|_{g_0} \left| \bar{F}^{(1)}_\A \cdot k_0 \right|_{g_0}  
\\&\quad +  \sum_{\A} |\nab_{g_0} u_\A|_{g_0}^2  F_\A^2  -  \sum_{\A} \cos\pth{\frac{2u_\A}{\la}} |\nab_{g_0} u_\A|_{g_0}^2  F_\A^2
\\&\quad +  \frac{1}{8} \sum_{\A\neq\B,\pm}\mp\cos\pth{ \frac{u_\A\pm u_\B}{\la}}  |\nab_{g_0} u_\A|_{g_0} |\nab_{g_0} u_\B|_{g_0} \left| \bar{F}^{(1)}_\A \cdot \bar{F}^{(1)}_\B  \right|_{g_0}.
\end{aligned}
\end{equation}
Since the quadratic terms in $\ka_\la$ don't contribute to $\la^{-1}$ order we have proved that \[\pth{ \HH(\ga_\la,\ka_\la)}^{(-1)} =0.\] Moreover we have also proved that $\pth{ \HH(\ga_\la,\ka_\la)}^{(0)}$ is indeed given by \eqref{H 0}, putting together \eqref{R0 I}-\eqref{k2 0} and using in particular \eqref{BG hamiltonian} and \eqref{eikonal init} to cancel the resonant term. It only remains to prove \eqref{H 1}, since \eqref{H 2} follows from the definition of the Hamiltonian constraint operator $\HH$. In order to prove \eqref{H 1}, we only need to check that $\pth{ \HH(\ga_\la,\ka_\la)}^{(1)}$ is purely oscillating, i.e that is does not contain any resonant term. We use a schematic expression of $\pth{ \HH(\ga_\la,\ka_\la)}^{(1)}$ which follows from the definition of $\HH$:
\begin{equation}\label{H 1 rough}
\begin{aligned}
\pth{ \HH(\ga_\la,\ka_\la)}^{(1)} & = \pth{\ga_\la^{-1}}^{(0)}\pth{\dr^2\ga_\la}^{(1)} + \pth{\ga_\la^{-1}}^{(1)}\pth{\dr^2\ga_\la}^{(0)} + \pth{\ga_\la^{-1}}^{(2)}\pth{\dr^2\ga_\la}^{(-1)}
\\&\quad + \pth{\Ga(\ga_\la)}^{(0)}\pth{ \Ga(\ga_\la)}^{(1)} + \pth{ \ga_\la^{-1}\ka_\la}^{(0)}  \pth{ \ga_\la^{-1}\ka_\la}^{(1)} .
\end{aligned}
\end{equation}
We list every oscillating functions appearing in the five terms of the schematic expression \eqref{H 1 rough}:
\begin{itemize}
\item Thanks to \eqref{ga-1}, $\pth{\ga_\la^{-1}}^{(0)}\pth{\dr^2\ga_\la}^{(1)}$ contains $\cos\pth{\frac{u_\A}{\la}}$ and $\sin\pth{ \frac{u_\A\pm u_\B}{\la}}$.
\item Thanks to \eqref{ga-1 1}-\eqref{ga-1 2}, $ \pth{\ga_\la^{-1}}^{(1)}\pth{\dr^2\ga_\la}^{(0)}$ and $ \pth{\ga_\la^{-1}}^{(2)}\pth{\dr^2\ga_\la}^{(-1)}$ contain $\cos\pth{\frac{u_\A}{\la}}$, $\sin\pth{\frac{2u_\A}{\la}}$, $\sin\pth{ \frac{u_\A\pm u_\B}{\la}}$, $\cos\pth{ \frac{u_\A\pm2 u_\B}{\la}}$ and $\cos\pth{ \frac{u_\A\pm_1 u_\B \pm_2 u_\C}{\la}}$.
\item Thanks to \eqref{Gatilde 0}-\eqref{Ga 1}, $\pth{\Ga(\ga_\la)}^{(0)}\pth{ \Ga(\ga_\la)}^{(1)}$ contains $\cos\pth{\frac{u_\A}{\la}}$, $\sin\pth{\frac{2u_\A}{\la}}$, $\sin\pth{\frac{u_\A\pm u_\B}{\la}}$, $\cos\pth{ \frac{u_\A\pm2 u_\B}{\la}}$ and $\cos\pth{ \frac{u_\A\pm_1 u_\B \pm_2 u_\C}{\la}}$.
\item Thanks to \eqref{ga la}-\eqref{ka la} we see that the oscillating behaviour of $\ga_\la^{-1}\ka_\la$ is the same as the oscillating behaviour of $\Ga(\ga_\la)$ so that $\pth{ \ga_\la^{-1}\ka_\la}^{(0)}  \pth{ \ga_\la^{-1}\ka_\la}^{(1)}$ behaves as $\pth{\Ga(\ga_\la)}^{(0)}\pth{ \Ga(\ga_\la)}^{(1)}$.
\end{itemize}
This concludes the proof of the lemma.
\end{proof}

\begin{lemma}\label{lem constraint expansion M}
We have 
\begin{align*}
 \MM(\ga_\la,\ka_\la) & =  \pth{ \MM(\ga_\la,\ka_\la)}^{(0)} + \la \pth{ \MM(\ga_\la,\ka_\la)}^{(1)} + \la^2 \pth{ \MM(\ga_\la,\ka_\la)}^{(\geq 2)},
\end{align*}
where 
\begin{equation}\label{M 0}
\begin{aligned}
&\pth{ \MM(\ga_\la,\ka_\la)_i}^{(0)} 
\\& =  \sum_\A\sin\pth{ \frac{u_\A}{\la}} \bigg(  \dr_i u_\A \tr_{g_0}\ka^{(1,1)}_\A - (\ka^{(1,1)}_\A)_{\nab_{g_0} u_\A i} 
\\&\quad\hspace{2.5cm} + \half  \dr^b \pth{|\nab_{g_0} u_\A|_{g_0} \bar{F}^{(1)}_\A}_{bi} +\frac{1}{4} |\nab_{g_0} u_\A|_{g_0} (\bar{F}^{(1)}_\A)_{bc} \dr_i g_0^{bc} 
\\&\quad\hspace{2.5cm} +  \half |\nab_{g_0} u_\A|_{g_0} \pth{ \dr_a g_0^{ca} + \half g_0^{ab}g_0^{cd}\dr_d (g_0)_{ab} } (\bar{F}^{(1)}_\A)_{ic}  - \half \dr_i u_\A (\bar{F}^{(1)}_\A)_{bc} (k_0)^{bc} \bigg)
\\&\quad + \sum_\A\cos\pth{\frac{2u_\A}{\la}} \pth{ 2 (\ka^{(1,2)}_\A)_{\nab_{g_0} u_\A i} - 2 \dr_i u_\A \tr_{g_0}\ka^{(1,2)}_\A + 3 |\nab_{g_0} u_\A|_{g_0}  \dr_i u_\A F_\A^2}
\\&\quad +\sum_{\A\neq\B,\pm} \cos\pth{\frac{u_\A\pm u_\B}{\la}}\pth{ (\ka^{(1,\pm)}_{\A\B})_{(\nab_{g_0} u_\A \pm \nab_{g_0} u_\B)i} - \dr_i(u_\A\pm u_\B)\tr_{g_0}  \ka^{(1,\pm)}_{\A\B}}
\\&\quad - \half \sum_{\A\neq\B} \cos\pth{\frac{u_\A}{\la}}  \cos\pth{ \frac{u_\B}{\la}} |\nab_{g_0} u_\B|_{g_0} \bigg((\bar{F}^{(1)}_\A)^{b}_{\nab_{g_0} u_\B} (\bar{F}^{(1)}_\B)_{bi}  - \dr_i u_\B \left| \bar{F}^{(1)}_\A \cdot \bar{F}^{(1)}_\B\right|_{g_0}\bigg)
\\&\quad -\frac{1}{4} \sum_{\A\neq\B} \sin\pth{\frac{u_\A}{\la}} \sin\pth{ \frac{u_\B}{\la}}   |\nab_{g_0} u_\B|_{g_0}   \dr_i u_\A \left| \bar{F}^{(1)}_\A \cdot \bar{F}^{(1)}_\B \right|_{g_0}.
\end{aligned}
\end{equation}
Moreover, the higher order terms satisfy
\begin{align}
\pth{ \MM(\ga_\la,\ka_\la)}^{(1)} & = \sum_{\substack{w\in\mathcal{N}\cup\mathcal{I}\\T\in\{\cos,\sin\}}} T\pth{\frac{w}{\la}}   \cro{\pth{ \ga_\la^{-1}\dr\ka_\la + \ga_\la^{-1}\ga_\la^{-1} \ka_\la\dr\ga_\la }^{(1)}} \label{M 1}
\\ \pth{ \MM(\ga_\la,\ka_\la)}^{(\geq 2)} & = \cro{\pth{ \ga_\la^{-1}\dr\ka_\la + \ga_\la^{-1}\ga_\la^{-1} \ka_\la\dr\ga_\la }^{(\geq 2)}}^\osc.  \label{M 2}
\end{align} 
\end{lemma}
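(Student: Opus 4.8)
The proof runs along exactly the same lines as that of Lemma \ref{lem constraint expansion H}, with the momentum operator $\MM$ in place of the Hamiltonian operator $\HH$. The starting point is the coordinate expression
\begin{align*}
\MM(g,k)_i = g^{ab}\pth{\dr_a k_{bi} - \Ga(g)^c_{ab}k_{ci} - \Ga(g)^c_{ai}k_{bc}} - \dr_i\pth{g^{ab}k_{ab}},
\end{align*}
into which I would substitute the expansions \eqref{ga-1}--\eqref{ga-1 2} of $\ga_\la^{-1}$, \eqref{christofel symbols}--\eqref{Ga 1} of the Christoffel symbols of $\ga_\la$, and \eqref{ka la} of $\ka_\la$, expanding the derivatives of the oscillating coefficients via the elementary analogue of \eqref{Box T f} and reorganising the resulting double sums with \eqref{trigo 1}--\eqref{double sum 2}. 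Throughout, the seed identities \eqref{assum F1} are used systematically, in particular $(\bar{F}^{(1)}_\A)_{N_\A i}=0$, $\tr_{g_0}\bar{F}^{(1)}_\A=0$ and $\left|\bar{F}^{(1)}_\A\right|^2_{g_0}=8F_\A^2$.

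The order $\la^{-1}$ is dealt with first: the term $\dr_i(\ga_\la^{ab}(\ka_\la)_{ab})$ does not contribute since $\tr_{\ga_\la}\ka_\la = \tr_{g_0}k_0 + \GO{\la}$ by the first two identities in \eqref{assum F1}, and in $\div_{\ga_\la}\ka_\la$ the only $\la^{-1}$ term comes from differentiating the phase in $\sin(u_\A/\la)\half|\nab_{g_0}u_\A|_{g_0}\bar{F}^{(1)}_\A$; contracting the resulting $g_0^{ab}\cos(u_\A/\la)\half\dr_a u_\A|\nab_{g_0}u_\A|_{g_0}(\bar{F}^{(1)}_\A)_{bi}$ produces $|\nab_{g_0}u_\A|_{g_0}^2(\bar{F}^{(1)}_\A)_{N_\A i}=0$. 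Hence $\pth{\MM(\ga_\la,\ka_\la)}^{(-1)}=0$.

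The order $\la^0$ is the main computation. One collects: (i) $g_0^{ab}$ times the $\la^0$ part of $\dr_a\ka_{bi}-\Ga(\ga_\la)^c_{ab}\ka_{ci}-\Ga(\ga_\la)^c_{ai}\ka_{bc}$, which gathers $\dr_a(k_0)_{bi}$, the derivatives of $\half|\nab_{g_0}u_\A|_{g_0}\bar{F}^{(1)}_\A$ hitting its amplitude, the phase-derivatives of the $\la$-amplitudes $\cos(u_\A/\la)\ka^{(1,1)}_\A$, $\sin(2u_\A/\la)\ka^{(1,2)}_\A$, $\sin((u_\A\pm u_\B)/\la)\ka^{(1,\pm)}_{\A\B}$, and the contractions of $\tilde{\Ga}^{(0)}$ with $k_0$ and with $\sin(u_\C/\la)\half|\nab_{g_0}u_\C|_{g_0}\bar{F}^{(1)}_\C$; (ii) $(\ga_\la^{ab})^{(1)}$ times the $\la^{-1}$ part, which produces the $\cos(u_\A/\la)\cos(u_\B/\la)$ products, whose diagonal $\A=\B$ again vanishes since $(\bar{F}^{(1)}_\A)_{N_\A i}=0$; (iii) the corresponding contributions to $\dr_i\tr_{\ga_\la}\ka_\la$ coming from $\dr_i$ of the oscillating $\la^1$-part of $\tr_{\ga_\la}\ka_\la$. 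Adding these up, the non-oscillating (resonant) piece — a combination of $\div_{g_0}k_0-\d\tr_{g_0}k_0$ with the $\sum_\A F_\A^2|\nab_{g_0}u_\A|_{g_0}\dr_i u_\A$ arising from the diagonal $\sin^2(u_\A/\la)$ products — cancels by the background momentum constraint \eqref{BG momentum} together with $\dr_t u_\A=|\nab_{g_0}u_\A|_{g_0}$ from \eqref{eikonal init}, exactly as \eqref{BG hamiltonian} and \eqref{eikonal init} are used in Lemma \ref{lem constraint expansion H}, and what remains is precisely \eqref{M 0}; this cancellation, together with the careful bookkeeping of (i)--(iii), is the main obstacle. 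The fact that all diagonal terms are annihilated by \eqref{assum F1} or absorbed by the background constraint is also the reason the double sums in \eqref{M 0} run over $\A\neq\B$.

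Finally, \eqref{M 2} is immediate from the definition of $\MM$ (its $\la^{\geq 2}$ coefficient is visibly of the displayed schematic form), and it only remains to prove \eqref{M 1}, i.e. that $\pth{\MM(\ga_\la,\ka_\la)}^{(1)}$ carries no resonant term. Writing it schematically as a sum of products $(\ga_\la^{-1})^{(p)}(\dr\ka_\la)^{(q)}$ and $(\Ga(\ga_\la))^{(p)}(\ga_\la^{-1}\ka_\la)^{(q)}$ over the appropriate index ranges, I would list, as in the proof of Lemma \ref{lem constraint expansion H}, the oscillating functions entering each factor (for $\ga_\la^{-1}$ and $\Ga(\ga_\la)$ from \eqref{ga-1 1}--\eqref{ga-1 2} and \eqref{Gatilde 0}--\eqref{Ga 1}; for $\ka_\la$ and its derivatives the harmonics $u_\A$, $2u_\A$, $u_\A\pm u_\B$), and observe that all harmonics produced lie in $\mathcal{N}\cup\mathcal{I}$, using $u_\A\pm 2u_\B,\ u_\A\pm_1 u_\B\pm_2 u_\C\in\mathcal{I}_3$ and $2u_\A,3u_\A\in\mathcal{N}$. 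The point requiring care here is to rule out resonant contributions from diagonal products $\sin^2(u_\A/\la)$ or $\cos^2(u_\A/\la)$ at this order: each such product in the $\la^1$ coefficient is multiplied by a factor originating from $(\ga_\la^{ab})^{(1)}=-\sum_\A\cos(u_\A/\la)(\bar{F}^{(1)}_\A)^{ab}$, hence is genuinely oscillating, or else its amplitude vanishes by \eqref{assum F1}; no resonant term survives, which proves \eqref{M 1} and concludes the proof.
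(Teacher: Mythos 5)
Your proposal follows essentially the same route as the paper: the coordinate expression of $\MM$ (your covariant form of the divergence is equivalent to the paper's partially expanded one), vanishing of the $\la^{-1}$ coefficient via \eqref{assum F1}, cancellation of the resonant $\la^{0}$ piece against \eqref{BG momentum} combined with \eqref{eikonal init}, \eqref{M 2} by inspection, and a harmonic-bookkeeping argument for \eqref{M 1} (which the paper shortcuts by noting that $\ka_\la$ oscillates like $\dr\ga_\la$, so \eqref{M 1} follows from the argument already given for \eqref{H 1}). The only minor imprecision is attributing the resonant $\sum_\A F_\A^2|\nab_{g_0}u_\A|_{g_0}\dr_i u_\A$ term solely to diagonal $\sin^2(u_\A/\la)$ products: it also arises (in fact with the larger coefficient) from the $\cos^2(u_\A/\la)$ products in $(\ga_\la^{-1})^{(1)}\pth{\dr\ka_\la}^{(-1)}$, but the cancellation you invoke still goes through unchanged.
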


\begin{proof}
In coordinates we have
\begin{align*}
\MM(g,k)_i & =  g^{ab}\pth{ \dr_a k_{bi} - \dr_i  k_{ab}} + \pth{ \dr_a g^{ca} + \half g^{ab}g^{cd}\dr_d g_{ab} } k_{ic} - \half \dr_i g^{bc} k_{bc}   .
\end{align*}
Using again \eqref{assum F1} we easily obtain $\pth{ \MM(\ga_\la,\ka_\la)_i  }^{(-1)} = 0$. We now have $\pth{ \MM(\ga_\la,\ka_\la)_i}^{(0)}=I+II+III$ where
\begin{align*}
I & \vcentcolon = g_0^{ab}\pth{ \dr_a (\ka_\la)_{bi} - \dr_i  (\ka_\la)_{ab}}^{(0)},
\\ II & \vcentcolon = (\ga_\la^{ab})^{(1)} \pth{ \dr_a (\ka_\la)_{bi} - \dr_i  (\ka_\la)_{ab}}^{(-1)},
\\ III & \vcentcolon =  \pth{ \dr_a \ga_\la^{ca} + \half g_0^{ab}g_0^{cd}\dr_d (\ga_\la)_{ab} }^{(0)} \pth{(\ka_\la)_{ic}}^{(0)} - \half \pth{\dr_i \ga_\la^{bc}}^{(0)} \pth{(\ka_\la)_{bc}}^{(0)} .
\end{align*}
We start with $I$:
\begin{equation}\label{M0 I}
\begin{aligned}
I & = g_0^{ab}\pth{ \dr_a (k_0)_{bi} - \dr_i  (k_0)_{ab}}
\\&\quad + \sum_\A\sin\pth{ \frac{u_\A}{\la}} \bigg( \dr_i u_\A \tr_{g_0}\ka^{(1,1)}_\A - (\ka^{(1,1)}_\A)_{\nab_{g_0} u_\A i} 
\\&\hspace{4cm} + \half \pth{ \dr^b \pth{|\nab_{g_0} u_\A|_{g_0} \bar{F}^{(1)}_\A}_{bi} + |\nab_{g_0} u_\A|_{g_0} (\bar{F}^{(1)}_\A)_{ab} \dr_i g_0^{ab}  } \bigg)
\\&\quad + 2\sum_\A\cos\pth{\frac{2u_\A}{\la}} \pth{  (\ka^{(1,2)}_\A)_{\nab_{g_0} u_\A i} - \dr_i u_\A \tr_{g_0}\ka^{(1,2)}_\A}
\\&\quad +\sum_{\A\neq\B,\pm} \cos\pth{\frac{u_\A\pm u_\B}{\la}}\pth{ (\ka^{(1,\pm)}_{\A\B})_{(\nab_{g_0} u_\A \pm \nab_{g_0} u_\B)i} - \dr_i(u_\A\pm u_\B)\tr_{g_0}  \ka^{(1,\pm)}_{\A\B}}.
\end{aligned}
\end{equation}
We now compute $II$ and $III$ using \eqref{ga-1 1}:
\begin{align}
II & =  2 \sum_{\A} |\nab_{g_0} u_\A|_{g_0}  \dr_i u_\A F_\A^2 +  2 \sum_{\A} \cos\pth{\frac{2u_\A}{\la}} |\nab_{g_0} u_\A|_{g_0}  \dr_i u_\A F_\A^2 \label{M0 III}
\\&\quad - \frac{1}{4} \sum_{\A\neq\B,\pm} \cos\pth{\frac{u_\A\pm u_\B}{\la}}  |\nab_{g_0} u_\B|_{g_0} \pth{(\bar{F}^{(1)}_\A)^{b}_{\nab_{g_0} u_\B} (\bar{F}^{(1)}_\B)_{bi} - \dr_i u_\B \left| \bar{F}^{(1)}_\A \cdot \bar{F}^{(1)}_\B\right|_{g_0}}. \non
\\ III & =  \pth{ \dr_a g_0^{ca} + \half g_0^{ab}g_0^{cd}\dr_d (g_0)_{ab} } (k_0)_{ic} - \half \dr_i g_0^{bc} (k_0)_{bc} -  \sum_{\A}  |\nab_{g_0} u_\A|_{g_0}   \dr_i u_\A F_\A^2 \non
\\&\quad +  \half \sum_\A \sin\pth{ \frac{u_\A}{\la}} \bigg( |\nab_{g_0} u_\A|_{g_0} \pth{ \dr_a g_0^{ca} + \half g_0^{ab}g_0^{cd}\dr_d (g_0)_{ab} } (\bar{F}^{(1)}_\A)_{ic} \non
\\&\hspace{5cm} - \half  |\nab_{g_0} u_\A|_{g_0} (\bar{F}^{(1)}_\A)_{bc} \dr_i g_0^{bc}  - \dr_i u_\A (\bar{F}^{(1)}_\A)_{bc} (k_0)^{bc} \bigg) \label{M0 III}
\\&\quad  +   \sum_{\A}  \cos\pth{\frac{2u_\A}{\la}}   |\nab_{g_0} u_\A|_{g_0}   \dr_i u_\A F_\A^2 \non
\\&\quad -\frac{1}{4} \sum_{\A\neq\B} \sin\pth{\frac{u_\A}{\la}} \sin\pth{ \frac{u_\B}{\la}}   |\nab_{g_0} u_\B|_{g_0}   \dr_i u_\A \left| \bar{F}^{(1)}_\A \cdot \bar{F}^{(1)}_\B \right|_{g_0}. \non
\end{align}
Collecting \eqref{M0 I}-\eqref{M0 III} and using \eqref{BG momentum} and \eqref{eikonal init} to cancel the non-oscillating term we have proved that $\pth{ \MM(\ga_\la,\ka_\la)_i}^{(0)}$ is indeed given by \eqref{M 0}. It only remains to prove \eqref{M 1}, since \eqref{M 2} follows from the definition of the momentum constraint operator $\MM$. This actually follows from \eqref{H 1} since $\ka_\la$ has the same oscillating behaviour than a derivative of $\ga_\la$. In particular $\pth{ \MM(\ga_\la,\ka_\la)}^{(1)}$ and $\pth{ \HH(\ga_\la,\ka_\la)}^{(1)}$ share the same oscillating behaviour. This concludes the proof of the lemma.
\end{proof}

We now estimate the remainders in \eqref{hamiltonian}-\eqref{momentum}.

\begin{lemma}
We have
\begin{align*}
\RR_\HH\pth{ \ga_\la,\ka_\la,\ffi_\la,X_\la} & = \la\pth{\RR_\HH\pth{ \ga_\la,\ka_\la,\ffi_\la,X_\la}}^{(1)} + \la^2\pth{\RR_\HH\pth{ \ga_\la,\ka_\la,\ffi_\la,X_\la}}^{(\geq 2)},
\\ \RR_\MM\pth{ \ga_\la,\ka_\la,\ffi_\la,X_\la} & = \la\pth{\RR_\MM\pth{ \ga_\la,\ka_\la,\ffi_\la,X_\la}}^{(1)} + \la^2\pth{\RR_\MM\pth{ \ga_\la,\ka_\la,\ffi_\la,X_\la}}^{(\geq 2)},
\end{align*}
where
\begin{align}
\pth{\RR_\HH\pth{ \ga_\la,\ka_\la,\ffi_\la,X_\la}}^{(1)} & = \sum_{\substack{w\in\mathcal{N}\cup\mathcal{I}\\T\in\{\cos,\sin\}}} T\pth{\frac{w}{\la}} \cro{ \pth{1+\bar{F}^{(1)}} \pth{X^{(2,1)} + X^{(2,2)} + X^{(2,\pm)}} }  \label{reste H 1},
\\ \pth{\RR_\HH\pth{ \ga_\la,\ka_\la,\ffi_\la,X_\la}}^{(\geq 2)} & =  \pth{\ffi^{(2)} + \tffi_\la + \la\ffi^{(3)} }\HH\pth{\ga_\la,\ka_\la}  \label{reste H 2}
\\&\hspace{0.5cm} +\Big\{ \pth{ \ffi_\la (\ga_\la^{-1})^2 \pth{ \dr X_\la + \ga_\la^{-1} \dr\ga_\la X_\la} ( \ka_\la + \dr X_\la + \ga_\la^{-1} \dr\ga_\la X_\la)  }^{(\geq 2)}\Big\}^\osc \non,
\end{align}
and
\begin{align}
\pth{\RR_\MM\pth{ \ga_\la,\ka_\la,\ffi_\la,X_\la}}^{(1)} & = \sum_{\substack{w\in\mathcal{N}\cup\mathcal{I}\\T\in\{\cos,\sin\}}} T\pth{\frac{w}{\la}} \cro{ \pth{1+\bar{F}^{(1)}} \pth{\ffi^{(2,1)} + \ffi^{(2,2)} + \ffi^{(2,\pm)}} },  \label{reste M 1}
\\ \pth{\RR_\MM\pth{ \ga_\la,\ka_\la,\ffi_\la,X_\la}}^{(\geq 2)} & = \cro{ \pth{\ffi_\la^{-1} \ga_\la^{-1} \dr \ffi_\la (\ka_\la + \dr X_\la + \ga_\la^{-1} \dr\ga_\la X_\la)}^{(\geq 2)} }^\osc. \label{reste M 2}
\end{align}
\end{lemma}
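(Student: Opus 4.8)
The plan is to expand the explicit expressions \eqref{reste H}--\eqref{reste M} in powers of $\la$ using the structure \eqref{def ffi la}--\eqref{def X la} of the ansatz. Two facts drive the computation: $\ffi_\la-1$ and $X_\la$ are $\GO{\la^2}$, and a derivative falling on an oscillating coefficient of an object gains a factor $\la^{-1}$, so that $\dr X_\la$ and $\dr\ffi_\la$ are $\GO{\la}$, their $\la^1$ coefficients being the $\la^{-1}$ coefficients of $\dr X^{(2)}$, resp. $\dr\ffi^{(2)}$, whereas the derivatives of the non-oscillating remainders $\tX_\la,\tffi_\la$ and of the $\la^3$ terms $X^{(3)},\ffi^{(3)}$ stay $\GO{\la^2}$. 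Consequently $\dr\ga_\la$, $\ga_\la^{-1}$, $\ka_\la$, $\HH(\ga_\la,\ka_\la)$ and $\MM(\ga_\la,\ka_\la)$ are $\GO{1}$, whereas $\LL_{X_\la}\ga_\la$, $\div_{\ga_\la}X_\la$, $\dr X_\la$ and $\dr\ffi_\la$ are $\GO{\la}$ (the $X_\la^k\dr_k\ga_\la$ contribution to the Lie derivative being in fact $\GO{\la^2}$, since $\dr\ga_\la$ carries no $\la^{-1}$ term).

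First I would isolate the order $\la$ coefficients. In $\RR_\HH$ the term $(\ffi_\la-1)\HH(\ga_\la,\ka_\la)=\la^2(\ffi^{(2)}+\tffi_\la+\la\ffi^{(3)})\HH(\ga_\la,\ka_\la)$ is purely $\GO{\la^2}$, and so are $-\ffi_\la|\LL_{X_\la}\ga_\la|^2_{\ga_\la}$ and $\frac32\ffi_\la(\div_{\ga_\la}X_\la)^2$, being quadratic in $\GO{\la}$ quantities; hence $(\RR_\HH)^{(1)}$ is the $\la^1$ coefficient of $-2|\ka_\la\cdot\LL_{X_\la}\ga_\la|_{\ga_\la}+2(\tr_{\ga_\la}\ka_\la)(\div_{\ga_\la}X_\la)$ (the prefactor $\ffi_\la=1+\GO{\la^2}$ being irrelevant). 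The tensors $(\LL_{X_\la}\ga_\la)^{(1)}$ and $(\div_{\ga_\la}X_\la)^{(1)}$ are obtained by letting the derivative act on the phases of $X^{(2)}$ in \eqref{def X 2}, which lie in $\mathcal{N}_1\cup\mathcal{N}_2\cup\mathcal{I}_2$; they are oscillating with amplitudes of the schematic form $\cro{X^{(2,1)}+X^{(2,2)}+X^{(2,\pm)}}$, and contracting them against the $\la^0$ part of $\ka_\la$ (whose oscillating part is proportional to $\sin(u_\A/\la)\bar F^{(1)}_\A$) and reducing the products with \eqref{trigo 1}--\eqref{trigo 3}, \eqref{double sum 1}--\eqref{double sum 2} produces a sum of oscillating terms with phases in $\mathcal{N}\cup\mathcal{I}$. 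There is no resonant term: the identity $\tr_{g_0}\bar F^{(1)}_\A=0$ from \eqref{assum F1} makes $(\tr_{\ga_\la}\ka_\la)^{(0)}$ equal to the non-oscillating $\tr_{g_0}k_0$, and by \eqref{spatial} no combination of phases appearing in $\mathcal{Z}$ is constant. This is \eqref{reste H 1}. Since $\RR_\MM$ is linear in $\dr\ffi_\la$, it is $\GO{\la}$, and $(\RR_\MM)^{(1)}_i$ reduces to $-4\,g_0^{ab}(\ka_\la)^{(0)}_{ai}(\dr_b\ffi_\la)^{(1)}$ with $(\dr_b\ffi_\la)^{(1)}=(\dr_b\ffi^{(2)})^{(-1)}$ read off from \eqref{def ffi 2}; the same reduction gives \eqref{reste M 1}.

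For the higher order parts, the $(\geq 2)$ part of $(\ffi_\la-1)\HH(\ga_\la,\ka_\la)$ is exactly the first line of \eqref{reste H 2}, and writing the Lie derivative and divergence in coordinates through first derivatives of $X_\la$ and Christoffel symbols $\Ga(\ga_\la)=\cro{\ga_\la^{-1}\dr\ga_\la}$ yields the schematic identities $\LL_{X_\la}\ga_\la=\cro{\dr X_\la+\ga_\la^{-1}\dr\ga_\la X_\la}$, $\div_{\ga_\la}X_\la=\cro{\dr X_\la+\ga_\la^{-1}\dr\ga_\la X_\la}$ and $\tr_{\ga_\la}\ka_\la=\cro{\ga_\la^{-1}\ka_\la}$, the $|\cdot|_{\ga_\la}$-contractions only adding $\ga_\la^{-1}$ factors; then all four remaining terms of $\RR_\HH$ fit the common form $\{\ffi_\la(\ga_\la^{-1})^2(\dr X_\la+\ga_\la^{-1}\dr\ga_\la X_\la)(\ka_\la+\dr X_\la+\ga_\la^{-1}\dr\ga_\la X_\la)\}^\osc$, whose $(\geq 2)$ part is the second line of \eqref{reste H 2}, while the two terms of \eqref{reste M} fit $\cro{\ffi_\la^{-1}\ga_\la^{-1}\dr\ffi_\la(\ka_\la+\dr X_\la+\ga_\la^{-1}\dr\ga_\la X_\la)}^\osc$, whose $(\geq 2)$ part is \eqref{reste M 2}. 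The step needing the most care is the order $\la$ computation, where one must keep track precisely of the $\la^{-1}$ gains from differentiating oscillating coefficients and verify that the resulting coefficient is purely oscillating with phases confined to $\mathcal{N}\cup\mathcal{I}$; this is carried out exactly as in the singlephase setting of \cite{Touati2023a}, so I would only spell out the new products, the rest being routine bookkeeping.
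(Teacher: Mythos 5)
Your proposal is correct and follows essentially the same route as the paper: the same order counting ($\ffi_\la-1,\ X_\la=\GO{\la^2}$, $\dr X_\la,\dr\ffi_\la=\GO{\la}$ with $(\dr X_\la)^{(1)}=(\dr X^{(2)})^{(-1)}$ and $\ga_\la^{-1}\dr\ga_\la X_\la=\GO{\la^2}$), the same reduction of the order-$\la$ coefficients to $(g_0^{-1})^2(\dr X_\la)^{(1)}(\ka_\la)^{(0)}$ and $g_0^{-1}(\dr\ffi_\la)^{(1)}(\ka_\la)^{(0)}$, and the same schematic reading of the $(\geq 2)$ parts. The only difference is cosmetic: you spell out the trigonometric reduction and the non-resonance check slightly more explicitly than the paper does.
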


\begin{proof}
We start with $\RR_\HH\pth{ \ga_\la,\ka_\la,\ffi_\la,X_\la} $ defined in \eqref{reste H}. Since $\ffi_\la-1=\GO{\la^2}$ (see \eqref{def ffi la}) and $\HH\pth{\ga_\la,\ka_\la}=\GO{1}$ (see Lemma \ref{lem constraint expansion H}), $( \ffi_\la-1)\HH\pth{\ga_\la,\ka_\la}$ does not contribute to $\pth{\RR_\HH\pth{ \ga_\la,\ka_\la,\ffi_\la,X_\la}}^{(1)} $. We find that $\pth{\RR_\HH\pth{ \ga_\la,\ka_\la,\ffi_\la,X_\la}}^{(1)} $ is of the schematic form
\begin{align*}
\pth{\RR_\HH\pth{ \ga_\la,\ka_\la,\ffi_\la,X_\la}}^{(1)} & = \pth{ \ffi_\la (\ga_\la^{-1})^2 \pth{ \dr X_\la + \ga_\la^{-1} \dr\ga_\la X_\la} ( \ka_\la + \dr X_\la + \ga_\la^{-1} \dr\ga_\la X_\la)  }^{(1)}
\\& = (g_0^{-1})^2 \pth{ \dr X_\la }^{(1)} ( \ka_\la)^{(0)}
\end{align*}
where we also used $\ga_\la^{-1} \dr\ga_\la X_\la=\GO{\la^2}$ and $\dr X_\la=\GO{\la}$. Using now $\pth{ \dr X_\la }^{(1)}= \pth{ \dr X^{(2)} }^{(-1)}$, \eqref{def X 2} and \eqref{ka la} we finally obtain \eqref{reste H 1}. Moreover, \eqref{reste H 2} can be deduced from the schematic expression of $\RR_\HH\pth{ \ga_\la,\ka_\la,\ffi_\la,X_\la} $. We now turn to $\RR_\MM\pth{ \ga_\la,\ka_\la,\ffi_\la,X_\la} $ defined in \eqref{reste M}.
\begin{align*}
\pth{\RR_\MM\pth{ \ga_\la,\ka_\la,\ffi_\la,X_\la}}^{(1)} & = \pth{\ffi_\la^{-1} \ga_\la^{-1} \dr \ffi_\la (\ka_\la + \dr X_\la + \ga_\la^{-1} \dr\ga_\la X_\la) }^{(1)}
\\& = g_0^{-1} \pth{ \dr \ffi_\la}^{(1)} (\ka_\la)^{(0)}
\end{align*}
where we used $\dr\ffi_\la = \GO{\la}$. Using now $\pth{ \dr \ffi_\la }^{(1)}= \pth{ \dr \ffi^{(2)} }^{(-1)}$, \eqref{def ffi 2} and \eqref{ka la} we finally obtain \eqref{reste M 1}. Moreover, \eqref{reste H 2} can be deduced from the schematic expression of $\RR_\HH\pth{ \ga_\la,\ka_\la,\ffi_\la,X_\la} $. This concludes the proof of the lemma.
\end{proof}

Finally we expand the elliptic operators appearing on the RHS of \eqref{hamiltonian}-\eqref{momentum}.

\begin{lemma}\label{lem exp elliptic}
Let $f$ and $v$ be scalar functions and $Z$ a vector field on $\Si_0$ and let $T\in\{\cos,\sin\}$.
\begin{itemize}
\item[(i)] We have
\begin{align}\label{De ga la f}
\De_{\ga_\la} f & = \De_e f  + \cro{ (\ga_\la^{-1}-e^{-1})\dr^2 f + \ga_\la^{-1}\Ga(\ga_\la) \dr f }^\osc.
\end{align}
and
\begin{align}
\pth{\De_{\ga_\la}\pth{ T\pth{\frac{v}{\la}}f}}^{(-2)} & = T''\pth{\frac{v}{\la}} |\nab_{g_0}v|^2_{g_0} f \label{De ga la -2},
\\ \pth{\De_{\ga_\la}\pth{ T\pth{\frac{v}{\la}}f}}^{(-1)} & = T'\pth{\frac{v}{\la}}  \cro{ \dr^{\leq 1} f   } + \sum_\A T\pth{\frac{v}{\la}}\cos\pth{\frac{u_\A}{\la}} \cro{\bar{F}^{(1)} f} ,\label{De ga la -1}
\\ \pth{\De_{\ga_\la}\pth{ T\pth{\frac{v}{\la}}f}}^{(\geq 0)} & = \cro{\pth{\ga_\la^{-1}\dr^2 f + \ga_\la^{-1}\ga_\la^{-1}\dr\ga_\la\dr f}^{(\geq 0)}}^\osc.\label{De ga la 0}
\end{align}
\item[(ii)] We have
\begin{align}
\pth{\vDe_{\ga_\la} Z_i}^{(-1)} & = \sum_\A  \cos\pth{\frac{u_\A}{\la}}|\nab_{g_0} u_\A|^2_{g_0} (\bar{F}^{(1)}_\A)_{ij} Z^j , \label{vDe Z -1}
\\ \pth{\vDe_{\ga_\la} Z_i}^{(\geq 0)} & =  \De_eZ_i  + \Big\{(\ga_\la^{-1}-e^{-1})\dr^2 Z + \ga_\la^{-1}\Ga(\ga_\la) \dr Z \label{vDe Z 0}
\\&\hspace{2cm} + \pth{\pth{\ga_\la^{-1}\dr\Ga(\ga_\la)}^{(\geq 0)} + \ga_\la^{-1} \Ga(\ga_\la)\Ga(\ga_\la)} Z \Big\}^\osc_i. \non
\end{align}
and
\begin{align}
\pth{\vDe_{\ga_\la}\pth{T\pth{\frac{v}{\la}}Z}_i }^{(-2)} & = T''\pth{\frac{v}{\la}} |\nab_{g_0}v|^2_{g_0} Z_i \label{vDe osc -2}
\\ \pth{\vDe_{\ga_\la}\pth{T\pth{\frac{v}{\la}}Z}_i }^{(-1)} & =  T'\pth{\frac{v}{\la}}\cro{ \dr^{\leq 1} Z }_i + \sum_\A \cos\pth{\frac{u_\A}{\la}} T\pth{\frac{v}{\la}}\cro{ Z \bar{F}^{(1)} }_i \label{vDe osc -1}
\end{align}
and
\begin{equation}\label{vDe osc 0}
\begin{aligned}
&\pth{\vDe_{\ga_\la}\pth{T\pth{\frac{v}{\la}}Z}_i }^{(\geq 0)} 
\\& =   \Big\{ \pth{ \pth{\ga_\la^{-1}}^{(\geq 2)} + \pth{ \ga_\la^{-1} \Ga(\ga_\la)}^{(\geq 1)} + \pth{\ga_\la^{-1} \dr \Ga(\ga_\la)}^{(\geq 0)} + \ga_\la^{-1}\Ga(\ga_\la)\Ga(\ga_\la)} Z
\\&\hspace{5cm} +\pth{ \pth{\ga_\la^{-1}}^{(\geq 1)} + \ga_\la^{-1} \Ga(\ga_\la)}\dr^{\leq 1} Z  + \ga_\la^{-1}\dr^2 Z \Big\}_i^{\osc} .
\end{aligned}
\end{equation}
\end{itemize}
\end{lemma}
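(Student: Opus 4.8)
The plan is to prove all six identities by a direct computation from the coordinate expressions of the operators, using the expansions \eqref{ga-1}--\eqref{ga-1 2} of $\ga_\la^{-1}$, the expansions \eqref{christofel symbols}--\eqref{Ga 1} of the Christoffel symbols of $\ga_\la$, and the Riemannian analogue of the product formula \eqref{Box T f}. For part (i), I would first write $\De_{\ga_\la}f=\ga_\la^{ij}\dr_i\dr_j f-\ga_\la^{ij}\Ga(\ga_\la)^k_{ij}\dr_k f$; subtracting $\De_e f=e^{ij}\dr_i\dr_j f$ (the Euclidean Christoffel symbols vanishing) and grouping the rest immediately gives \eqref{De ga la f}. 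For the oscillating statements I would apply the exact identity
\[
\De_{\ga_\la}\!\pth{T\pth{\frac{v}{\la}}f}=\frac{1}{\la^{2}}T''\!\pth{\frac{v}{\la}}\ga_\la^{-1}(\d v,\d v)\,f+\frac{1}{\la}T'\!\pth{\frac{v}{\la}}\pth{2\ga_\la^{ij}\dr_i v\,\dr_j f+(\De_{\ga_\la}v)f}+T\pth{\frac{v}{\la}}\De_{\ga_\la}f ,
\]
and expand each factor in powers of $\la$. The $\la^{-2}$ coefficient comes only from the first term, and since the leading part of $\ga_\la^{-1}(\d v,\d v)$ is $g_0^{-1}(\d v,\d v)=|\nab_{g_0}v|^2_{g_0}$ we get \eqref{De ga la -2}. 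The $\la^{-1}$ coefficient receives exactly two contributions: the $\la^{1}$ correction $(\ga_\la^{ij})^{(1)}\dr_i v\,\dr_j v$ of the first term, which by \eqref{ga-1 1} and the identity $T''=-T$ produces the $\sum_\A T(v/\la)\cos(u_\A/\la)\cro{\bar{F}^{(1)}f}$ piece, and the leading part of the second term, which is of the background-dependent schematic form $T'(v/\la)\cro{\dr^{\leq 1}f}$; together these give \eqref{De ga la -1}. Everything of order $\la^0$ or higher is collected into the schematic expression \eqref{De ga la 0} with no further bookkeeping.

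For part (ii) I would proceed analogously, using $\vDe_{\ga_\la}Z_i=\ga_\la^{kl}\nab_k\nab_l Z_i+R_{ij}(\ga_\la)Z^j$ together with the coordinate expansion of the covariant Hessian of a $1$-form, $\nab_k\nab_l Z_i=\dr_k\dr_l Z_i-(\dr_k\Ga(\ga_\la)^m_{li})Z_m-[\,\Ga(\ga_\la)\dr Z\,]-[\,\Ga(\ga_\la)\Ga(\ga_\la)Z\,]$, and the coordinate formula for $R_{ij}(\ga_\la)$. The only source of a $\la^{-1}$ term is the differentiation of the $\la^0$ oscillating part $(\tilde{\Ga}^{(0)})^m_{li}$ of the connection, both inside $\ga_\la^{kl}\nab_k\nab_l Z_i$ and inside $R_{ij}(\ga_\la)$. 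Here I would use the seed properties \eqref{assum F1} --- equivalently the consequences $(\tilde{\Ga}^{(0)})^m_{mi}=0$, $g_0^{kl}(\tilde{\Ga}^{(0)})^m_{kl}=0$, and the vanishing of $(\bar{F}^{(1)}_\A)_{\nab_{g_0}u_\A\,i}$ --- to kill the cross terms $\dr_k u_\A\,\dr_{(l}u_\A(\bar{F}^{(1)}_\A)^m_{i)}$ and $\dr_k u_\A\,\dr^m u_\A(\bar{F}^{(1)}_\A)_{li}$, leaving exactly $\frac{1}{2\la}\sum_\A\cos(u_\A/\la)|\nab_{g_0}u_\A|^2_{g_0}(\bar{F}^{(1)}_\A)^m_i Z_m$ from each of the two sources; adding them reproduces \eqref{vDe Z -1}, and the Euclidean second-order part plus the remaining $\la^{\geq0}$ terms gives \eqref{vDe Z 0}. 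Finally, the oscillating identities \eqref{vDe osc -2}--\eqref{vDe osc 0} follow by combining the Leibniz rule $\De_{\ga_\la}(fZ_i)=(\De_{\ga_\la}f)Z_i+2\ga_\la^{kl}\dr_k f\,\nab_l Z_i+f\,(\De_{\ga_\la}Z)_i$ with $f=T(v/\la)$ and $R_{ij}(\ga_\la)(T(v/\la)Z^j)=T(v/\la)R_{ij}(\ga_\la)Z^j$, together with the already-established expansion of $\vDe_{\ga_\la}Z_i$: the $\la^{-1}$ part of $f\,\vDe_{\ga_\la}Z_i$ combines with the $(\ga_\la^{ij})^{(1)}\dr_i v\,\dr_j v$ correction exactly as in part (i) to produce the $\sum_\A\cos(u_\A/\la)T(v/\la)\cro{Z\bar{F}^{(1)}}$ term in \eqref{vDe osc -1}.

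The step I expect to be the main obstacle is precisely the identification of the $\la^{-1}$ coefficient of $\vDe_{\ga_\la}Z_i$ in \eqref{vDe Z -1}: unlike the scalar case, the contributions split between $\ga_\la^{kl}\nab_k\nab_l Z_i$ and $R_{ij}(\ga_\la)Z^j$, and both involve the symmetrized expression $(\tilde{\Ga}^{(0)})^m_{li}$, so one must keep careful track of the index symmetrizations and exploit the algebraic identities \eqref{assum F1} to obtain the cancellations. All terms of order $\la^0$ and higher are harmless and are simply grouped into the schematic brackets of \eqref{De ga la f}, \eqref{De ga la 0}, \eqref{vDe Z 0} and \eqref{vDe osc 0}; I would write out the $\la^{-1}$ computation in full and leave the remaining routine bookkeeping to the reader.
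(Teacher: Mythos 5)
Your proposal is correct and takes essentially the same route as the paper: both work from the coordinate expressions of $\De_{\ga_\la}$ and $\vDe_{\ga_\la}$, the elliptic analogue of \eqref{Box T f}, the expansions \eqref{ga-1}--\eqref{Ga 1}, and the algebraic identities \eqref{assum F1} to isolate the $\la^{-2}$ and $\la^{-1}$ coefficients, including the two half-contributions (Hessian and Ricci parts) that add up to \eqref{vDe Z -1}. The only cosmetic difference is that you derive \eqref{vDe osc -2}--\eqref{vDe osc 0} by a Leibniz rule combined with the already-established expansion of $\vDe_{\ga_\la}Z$, whereas the paper expands the coordinate formula applied directly to $T\pth{\frac{v}{\la}}Z$; this is the same computation, merely reorganized.
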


\begin{proof}
The identity \eqref{De ga la f} follows directly from the expression of the Laplace-Beltrami operator in coordinates. The following computation is the elliptic equivalent of \eqref{Box T f}:
\begin{align*}
\De_{\ga_\la}\pth{ T\pth{\frac{v}{\la}}f} & =  \frac{1}{\la^2} T''\pth{\frac{v}{\la}} \ga_\la^{-1}(\d v,\d v) f +  \frac{1}{\la}T'\pth{\frac{v}{\la}}  \pth{ 2\ga_\la^{ij}\dr_j v \dr_i f + (\De_{\ga_\la} v) f   }  +T\pth{\frac{v}{\la}}\De_{\ga_\la}f.
\end{align*}
We use the expansion of the inverse of $\ga_\la$ given by \eqref{ga-1} to obtain \eqref{De ga la -2}-\eqref{De ga la 0}. We now turn to the elliptic operator $\vDe_{\ga_\la}$ acting on vector fields. Thanks to the expression of the Ricci tensor in coordinates we find
\begin{equation}\label{vDe coordinates}
\begin{aligned}
\vDe_{g}Z_i & = g^{k\ell}  \dr_k \dr_\ell Z_i  - 2g^{k\ell} \Ga(g)_{\ell i}^a \dr_k Z_a - g^{k\ell} Z_a \pth{ \dr_k  \Ga(g)_{\ell i}^a - \Ga(g)_{ki}^{b}\Ga(g)_{\ell b}^a  }
\\&\quad + Z^{j}\dr_a \Ga(g)^a_{ij} - Z^{j} \dr_j \Ga(g)^a_{ai} + Z^{j}\Ga(g)^a_{ab} \Ga(g)^b_{ij} - Z^{j}\Ga(g)^a_{ib} \Ga(g)^b_{aj}.
\end{aligned}
\end{equation}
This already gives
\begin{align*}
\pth{ \vDe_{\ga_\la}Z_i }^{(-1)} & =  Z_b\pth{ - g_0^{k\ell}  \dr_k (\tilde{\Ga}^{(0)})_{\ell i}^b + g_0^{jb}\pth{\dr_a (\tilde{\Ga}^{(0)})^a_{ij} - \dr_j (\tilde{\Ga}^{(0)})^a_{ai}}}^{(-1)}
\\& = \sum_\A  \cos\pth{\frac{u_\A}{\la}}|\nab u_\A|^2 (\bar{F}^{(1)}_\A)_{ij} Z^j.
\end{align*}
Moreover this also gives \eqref{vDe Z 0}. Thanks to \eqref{vDe coordinates} we finally obtain
\begin{align*}
\vDe_{\ga_\la}\pth{T\pth{\frac{v}{\la}}Z}_i & = \frac{1}{\la^2} \ga_\la^{k\ell}  T''\pth{\frac{v}{\la}}\dr_k v \dr_\ell v Z_i 
\\&\quad +  \frac{1}{\la}T'\pth{\frac{v}{\la}}\pth{2 \ga_\la^{k\ell} \dr_\ell v \dr_k Z_i + \ga_\la^{k\ell}  \dr_k\dr_\ell v Z_i - 2 \ga_\la^{k\ell} \Ga(\ga_\la)_{\ell i}^a \dr_k v Z_a }
\\&\quad +  T\pth{\frac{v}{\la}}\pth{Z^{j}\dr_a \Ga(\ga_\la)^a_{ij} - Z^{j} \dr_j \Ga(\ga_\la)^a_{ai}  - \ga_\la^{k\ell} Z_a  \dr_k  \Ga(\ga_\la)_{\ell i}^a  }
\\&\quad + T\pth{\frac{v}{\la}}\pth{\ga_\la^{k\ell} Z_a \Ga(\ga_\la)_{ki}^{b}\Ga(\ga_\la)_{\ell b}^a  + Z^{j}\Ga(\ga_\la)^a_{ab} \Ga(\ga_\la)^b_{ij} -Z^{j}\Ga(\ga_\la)^a_{ib} \Ga(\ga_\la)^b_{aj}}
\\&\quad + T\pth{\frac{v}{\la}}\pth{ - 2\ga_\la^{k\ell} \Ga(\ga_\la)_{\ell i}^a \dr_kZ_a  + \ga_\la^{k\ell} \dr_k\dr_\ell Z_i }.
\end{align*}
We use the expansion of the inverse of $\ga_\la$ given by \eqref{ga-1} and of the Christofel symbols given by \eqref{christofel symbols} to deduce \eqref{vDe osc -2}-\eqref{vDe osc 0}. This concludes the proof of the lemma.
\end{proof}

We are now ready to define the constraint hierarchy, i.e to deduce from Lemmas \ref{lem constraint expansion H}-\ref{lem exp elliptic} a hierarchy of equations for the various terms in $\ffi_\la$ and $X_\la$ such that they solve \eqref{hamiltonian}-\eqref{momentum}. The hierarchy associated to the Hamiltonian constraint is
\begin{align}
8 \pth{ \De_{\ga_\la}\ffi^{(2)}}^{(-2)} & = \pth{ \HH(\ga_\la,\ka_\la) }^{(0)},\label{eq ffi2}
\\ 8 \pth{ \De_{\ga_\la}\ffi^{(3)}}^{(-2)} & =  \pth{ \HH(\ga_\la,\ka_\la) }^{(1)} + \pth{\RR_\HH\pth{ \ga_\la,\ka_\la,\ffi_\la,X_\la}}^{(1)} - 8 \pth{ \De_{\ga_\la}\ffi^{(2)}}^{(-1)},\label{eq ffi3}
\\ 8 \De_{\ga_\la}\tffi_\la & = \pth{\HH\pth{\ga_\la,\ka_\la}}^{(\geq 2)} + \pth{\RR_\HH\pth{ \ga_\la,\ka_\la,\ffi_\la,X_\la}}^{(\geq 2)} - 8 \pth{ \De_{\ga_\la}\ffi^{(2)}}^{(\geq 0)}  \label{eq tffi}
\\&\quad - 8 \pth{ \De_{\ga_\la}\ffi^{(3)}}^{(\geq -1)}.\non
\end{align}
The hierarchy associated to the momentum constraint is
\begin{align}
\pth{\vDe_{\ga_\la}X^{(2)} }^{(-2)} & = - \pth{ \MM(\ga_\la,\ka_\la) }^{(0)}, \label{eq X2}
\\ \pth{\vDe_{\ga_\la}X^{(3)} }^{(-2)} & = - \pth{ \MM(\ga_\la,\ka_\la) }^{(1)} + \pth{\RR_\MM\pth{ \ga_\la,\ka_\la,\ffi_\la,X_\la}}^{(1)} - \pth{\vDe_{\ga_\la}X^{(2)} }^{(-1)} \label{eq X3}
\\&\quad - \pth{\vDe_{\ga_\la}\tX_\la}^{(-1)}, \non
\\ \pth{\vDe_{\ga_\la}\tX_\la}^{(\geq 0)} & =  -\pth{\MM(\ga_\la,\ka_\la)}^{(\geq 2)}  + \pth{\RR_\MM\pth{ \ga_\la,\ka_\la,\ffi_\la,X_\la} }^{(\geq 2)} - \pth{\vDe_{\ga_\la}X^{(2)} }^{(\geq 0)} \label{eq tX}
\\&\quad  - \pth{\vDe_{\ga_\la}X^{(3)} }^{(\geq -1)} .\non
\end{align}

\begin{remark}\label{remark couplage}
The momentum hierarchy \eqref{eq X2}-\eqref{eq tX} differs from the Hamiltonian hierarchy \eqref{eq ffi2}-\eqref{eq tffi} because of the term $\pth{\vDe_{\ga_\la}\tX_\la}^{(-1)}$ in \eqref{eq X3}. The presence of this term is due to the fact that the operator $\vDe_{\ga_\la}$ acting on vector fields contains second derivatives of $\ga_\la$ and thus looses one power of $\la$ even when applied to a non-oscillating vector field such as $\tX_\la$. This is not the case for the operator $\De_{\ga_\la}$ acting on functions since it only contains up to first derivatives of $\ga_\la$.
\end{remark}

\subsection{Solving the hierarchy}\label{section solving constraint}

In this section we construct a solution $\pth{\ffi^{(2)}, \ffi^{(3)}, \tffi, X^{(2)}, X^{(3)}, \tX}$ of the equations \eqref{eq ffi2}-\eqref{eq tX}. 

\paragraph{The $\la^0$ equations.} 

We start by defining $\ffi^{(2)}$ and $X^{(2)}$ such that they solve \eqref{eq ffi2} and \eqref{eq X2}. On the one hand, \eqref{def ffi 2}-\eqref{def X 2} together with \eqref{De ga la -2} and \eqref{vDe osc -2} imply
\begin{align*}
8 \pth{ \De_{\ga_\la}\ffi^{(2)}}^{(-2)} & = - 8 \sum_\A  \sin\pth{\frac{u_\A}{\la}} |\nab_{g_0}u_\A|^2_{g_0} \ffi^{(2,1)}_\A -32 \sum_\A   \cos\pth{\frac{2u_\A}{\la}} |\nab_{g_0}u_\A|^2_{g_0} \ffi^{(2,2)}_\A
\\&\quad - 8 \sum_{\A\neq \B,\pm} \cos\pth{\frac{u_\A\pm u_\B}{\la}} |\nab_{g_0}(u_\A\pm u_\B)|^2_{g_0} \ffi^{(2,\pm)}_{\A\B},
\\ \pth{\vDe_{\ga_\la}X^{(2)} }^{(-2)}_i & = -\sum_\A  \sin\pth{\frac{u_\A}{\la}} |\nab_{g_0}u_\A|^2_{g_0} (X^{(2,1)}_\A)_i  -4 \sum_\A \cos\pth{\frac{2u_\A}{\la}}|\nab_{g_0}u_\A|^2_{g_0}(X^{(2,2)}_\A)_i  
\\&\quad - \sum_{\A\neq \B,\pm} \cos\pth{\frac{u_\A\pm u_\B}{\la}}|\nab_{g_0}(u_\A\pm u_\B)|^2_{g_0}(X^{(2,\pm)}_{\A\B})_i.
\end{align*}
On the other hand, \eqref{H 0} and \eqref{M 0} give the expression of the RHS of \eqref{eq ffi2} and \eqref{eq X2}. Therefore, solving \eqref{eq ffi2} and \eqref{eq X2} is equivalent to defining $\ffi^{(2,1)}_\A$, $ \ffi^{(2,2)}_\A$, $\ffi^{(2,\pm)}_{\A\B}$ by
\begin{align}
 \ffi^{(2,1)}_\A & \vcentcolon = \wc{\ffi}^{(2,1)}_{\A} , \label{ffi 2 1 def}
\\  \ffi^{(2,2)}_\A & \vcentcolon =  \frac{3}{16}  F_\A^2 , \label{ffi 2 2 def}
\\   \ffi^{(2,\pm)}_{\A\B} & \vcentcolon = -\frac{1}{8} \pth{\tr_{g_0}\ga^{(2,\pm)}_{\A\B} -(\ga^{(2,\pm)}_{\A\B})_{N^{(\pm)}_{\A\B}N^{(\pm)}_{\A\B}}} +  \wc{\ffi}^{(2,\pm)}_{\A\B}, \label{ffi 2 pm def}
\end{align}
and $(X^{(2,1)}_\A)_i$, $(X^{(2,2)}_\A)_i  $,  $(X^{(2,\pm)}_{\A\B})_i$ by
\begin{align}
(X^{(2,1)}_\A)_i & \vcentcolon = \frac{1}{ |\nab_{g_0}u_\A|^2_{g_0}} \pth{ \dr_i u_\A \tr_{g_0}\ka^{(1,1)}_\A - (\ka^{(1,1)}_\A)_{\nab_{g_0} u_\A i}  +  (\wc{X}^{(2,1)}_\A)_i }, \label{X 2 1 def}
\\ (X^{(2,2)}_\A)_i  & \vcentcolon = \frac{1}{4 |\nab_{g_0}u_\A|^2_{g_0}} \pth{ 2 (\ka^{(1,2)}_\A)_{\nab_{g_0} u_\A i} - 2 \dr_i u_\A \tr_{g_0}\ka^{(1,2)}_\A + 3 |\nab_{g_0} u_\A|_{g_0}  \dr_i u_\A F_\A^2},
\\ (X^{(2,\pm)}_{\A\B})_i & \vcentcolon = \frac{1}{|\nab_{g_0}(u_\A\pm u_\B)|^2_{g_0}}\pth{ (\ka^{(1,\pm)}_{\A\B})_{(\nab_{g_0} u_\A \pm \nab_{g_0} u_\B)i} - \dr_i(u_\A\pm u_\B)\tr_{g_0}  \ka^{(1,\pm)}_{\A\B} + (\wc{X}^{(2,\pm)}_{\A\B})_i } . \label{X 2 pm def}
\end{align}
where we recognized the expression of $\wc{\ffi}^{(2,1)}_{\A}$, $ \wc{\ffi}^{(2,\pm)}_{\A\B}$, $\wc{X}^{(2,1)}_\A$ and $\wc{X}^{(2,\pm)}_{\A\B}$ given by \eqref{wcffi 21}-\eqref{wcX 2pm}.

\paragraph{The $\la^1$ equations.} We now define $\ffi^{(3)}$ and $X^{(3)}$ such that they solve \eqref{eq ffi3} and \eqref{eq X3}. Thanks to \eqref{H 1}, \eqref{reste H 1}, \eqref{def ffi 2} and \eqref{De ga la -1} the equation \eqref{eq ffi3} rewrites
\begin{align}
\pth{ \De_{\ga_\la}\ffi^{(3)}}^{(-2)} & = \sum_{\substack{w\in\mathcal{N}\cup\mathcal{I}\\T\in\{\cos,\sin\}}} T\pth{\frac{w}{\la}} S^\HH_{w,T} , \label{eq ffi3 bis}
\end{align}
where each $S^\HH_{w,T}$ is of the form
\begin{align*}
S^\HH_{w,T} & = \bigg\{ \pth{ \ga_\la^{-1}\dr^2\ga_\la+ \ga_\la^{-1}\ga_\la^{-1}\pth{(\dr\ga_\la)^2 + (\ka_\la)^2} }^{(1)}  +  \dr\pth{  \ffi^{(2,1)}  +  \ffi^{(2,2)} + \ffi^{(2,\pm)} }
\\&\hspace{2cm} +  \pth{1+\bar{F}^{(1)}} \pth{X^{(2,1)} + X^{(2,2)} + X^{(2,\pm)} + \ffi^{(2,1)}  +  \ffi^{(2,2)} + \ffi^{(2,\pm)}}    \bigg\}.
\end{align*}
We now define $\ffi^{(3)}$ by
\begin{align}
\ffi^{(3)} & \vcentcolon = - \sum_{\substack{w\in\mathcal{N}\cup\mathcal{I}\\T\in\{\cos,\sin\}}} T\pth{\frac{w}{\la}}  \frac{S^\HH_{w,T}}{|\nab_{g_0}w|^2_{g_0} } . \label{def ffi 3}
\end{align}
Using \eqref{De ga la -2} and $T''=-T$ for $T\in\{\cos,\sin\}$ one can check that $\ffi^{(3)}$ solves \eqref{eq ffi3 bis} and thus \eqref{eq ffi3}. We now turn to \eqref{eq X3}, which thanks to \eqref{M 1}, \eqref{reste M 1}, \eqref{def X 2}, \eqref{vDe Z -1} and \eqref{vDe osc -1} rewrites
\begin{align}\label{eq X3 bis}
\pth{\vDe_{\ga_\la}X^{(3)} }^{(-2)}_i & = \sum_{\substack{w\in\mathcal{N}\cup\mathcal{I}\\T\in\{\cos,\sin\}}} T\pth{\frac{w}{\la}}  (S^\MM_{w,T})_i - \sum_\A  \cos\pth{\frac{u_\A}{\la}}|\nab_{g_0} u_\A|^2_{g_0} (\bar{F}^{(1)}_\A)_{ij} \tX_\la^j ,
\end{align}
where each $S^\MM_{w,T}$ is of the form
\begin{align*}
S^\MM_{w,T} & =  \bigg\{ \pth{ \ga_\la^{-1}\dr\ka_\la + \ga_\la^{-1}\ga_\la^{-1} \ka_\la\dr\ga_\la }^{(1)}+  \dr\pth{  X^{(2,1)}  +  X^{(2,2)} + X^{(2,\pm)}} 
\\&\hspace{1cm} +  \pth{1+\bar{F}^{(1)}} \pth{X^{(2,1)} + X^{(2,2)} + X^{(2,\pm)}+\ffi^{(2,1)} + \ffi^{(2,2)} + \ffi^{(2,\pm)}}  \bigg\}.
\end{align*}
In order to solve this equation, we further decompose $X^{(3)}$ into 
\begin{align}\label{def X 3}
X^{(3)}_i & \vcentcolon = - \sum_{\substack{w\in\mathcal{N}\cup\mathcal{I}\\T\in\{\cos,\sin\}}} T\pth{\frac{w}{\la}} \frac{ (S^\MM_{w,T})_i }{ |\nab_{g_0}w|^2_{g_0}} + \sum_\A \cos\pth{\frac{u_\A}{\la}} (\bar{F}^{(1)}_\A)_{ij} \tX_\la^j .
\end{align}
Using \eqref{vDe osc -2} and $T''=-T$ for $T\in\{\cos,\sin\}$ one can check that $X^{(3)}$ solves \eqref{eq X3 bis} and thus \eqref{eq X3}. Following Remark \ref{remark couplage}, note that $X^{(3)}$ depends on the remainder $\tX_\la$.

\paragraph{The $\la^2$ equations.} We conclude the resolution of \eqref{hamiltonian}-\eqref{momentum} by solving for $\tffi_\la$ and $\tX_\la$. This step is identical to the singlephase case considered in \cite{Touati2023b} since the multiphase aspect of the current construction does not change the nature of the equations for the remainders. We point out two harmless difference:
\begin{itemize}
\item The conformal formulation here is slightly different from the one used in \cite{Touati2023b}. In particular the elliptic operator for the vector equation is different but as in \cite{Touati2023b} we will benefit from \eqref{vDe Z 0} and invert $\De_e$.
\item The free tensors $\ga^{(2,\pm)}_{\A\B}$, $\ka^{(1,1)}_\A$, $\ka^{(1,2)}_\A$ and $\ka^{(1,\pm)}_{\A\B}$ are not strictly speaking present in \cite{Touati2023b} but they can easily be estimated with the assumption \eqref{estim ga kappa param}.
\end{itemize}
Therefore, following the same steps as Section 6 of \cite{Touati2023b} we obtain the existence of a unique couple $\pth{\tffi_\la,\tX_\la}\in\pth{H^{N-3}_\de}^2$ solving \eqref{hamiltonian}-\eqref{momentum} and satisfying moreover
\begin{align}\label{estim tffi tX}
\l \tffi_\la \r_{H^{k+2}_\de} + \l \tX_\la \r_{H^{k+2}_\de} \lesssim \frac{\e}{\la^k}
\end{align}
for $k\in\llbracket 0,N-5\rrbracket$.

\subsection{Final form of the solution}

In the previous section we have a constructed a solution $(\ffi_\la,X_\la)$ to the system \eqref{hamiltonian}-\eqref{momentum}. Therefore we have constructed a solution of the constraint equations $(g_\la,k_\la)$ of the form \eqref{conformal ansatz}. To conclude the proof of Proposition \ref{prop constraint main} we need to recover the expressions \eqref{g la}-\eqref{k la} as well as the estimates \eqref{estim reste cons 1}-\eqref{estim reste cons 2}.

\paragraph{The induced metric.} We have $g_\la=\ffi_\la^4\ga_\la$, so that \eqref{ga la} and \eqref{def ffi la} implies
\begin{align*}
g_\la & = g_0 + \la \sum_\A \cos\pth{\frac{u_\A}{\la}} \bar{F}^{(1)}_\A + 4\la^2\pth{ \ffi^{(2)} + \tffi_\la}g_0 + \la^2 \sum_{\A\neq \B,\pm}\cos\pth{\frac{u_\A\pm u_\B}{\la}}\ga^{(2,\pm)}_{\A\B}
\\&\quad + \la^3 \cro{ \pth{\ffi^{(2)} + \tffi_\la + \pth{\ffi_\la^4}^{(\geq 3)}}\pth{\bar{F}^{(1)}+\ga^{(2,\pm)}}  }^\osc.
\end{align*}
Using now \eqref{def ffi 2} we obtain
\begin{align*}
g_\la & = g_0 + \la \sum_\A \cos\pth{\frac{u_\A}{\la}} \bar{F}^{(1)}_\A + \la^2 \sum_\A  \sin\pth{\frac{u_\A}{\la}} 4\ffi^{(2,1)}_\A g_0 + \la^2 \sum_\A \cos\pth{\frac{2u_\A}{\la}}4\ffi^{(2,2)}_\A g_0 
\\&\quad + \la^2 \sum_{\A\neq \B,\pm} \cos\pth{\frac{u_\A\pm u_\B}{\la}}\pth{ \ga^{(2,\pm)}_{\A\B} + 4\ffi^{(2,\pm)}_{\A\B}g_0 }
\\&\quad + \la^2 \tffi_\la g_0 + \la^3 \cro{ \pth{\ffi^{(2)} + \tffi_\la + \pth{\ffi_\la^4}^{(\geq 3)}}\pth{\bar{F}^{(1)}+\ga^{(2,\pm)}}  }^\osc.
\end{align*}
Using now \eqref{ffi 2 pm def} we obtain
\begin{align*}
\ga^{(2,\pm)}_{\A\B} + 4\ffi^{(2,\pm)}_{\A\B}g_0 & = \bar{\PP}^{[1]}_{u_\A\pm u_\B}\pth{\ga^{(2,\pm)}_{\A\B}} + 4 \wc{\ffi}^{(2,\pm)}_{\A\B} g_0,
\end{align*}
where we recognized the operator $\bar{\PP}^{[1]}_{u_\A\pm u_\B}$ defined by \eqref{def P1(T)}. Using also \eqref{ffi 2 1 def} and \eqref{ffi 2 2 def}, and defining 
\begin{align*}
h_\la & \vcentcolon = \tffi_\la g_0 + \la \cro{ \pth{\ffi^{(2)} + \tffi_\la + \pth{\ffi_\la^4}^{(\geq 3)}}\pth{\bar{F}^{(1)}+\ga^{(2,\pm)}}  }^\osc
\end{align*}
allows us to recover the expression \eqref{g la} for $g_\la$. The estimate \eqref{estim reste cons 1} follows from \eqref{estim seed}, \eqref{estim ga kappa param}, the definition of $\ffi^{(2)}$ and $\ffi^{(3)}$ and \eqref{estim tffi tX}.

\paragraph{The second fundamental form.} We have
\begin{align*}
k_\la & = \ffi^2_\la\pth{  \ka_\la  +  \L_{\ga_\la} X_\la} .
\end{align*}
We first expand $\L_{\ga_\la}X_\la$. Thanks to \eqref{def LX}, \eqref{def X la} and \eqref{def X 2} we have
\begin{align*}
\L_{\ga_\la}X_\la & =   \la \sum_\A \cos\pth{\frac{u_\A}{\la}} \nab_{g_0}u_\A \tilde{\otimes} X^{(2,1)}_\A -2\la\sum_\A \sin\pth{\frac{2u_\A}{\la}}\nab_{g_0}u_\A \tilde{\otimes} X^{(2,2)}_\A
\\&\quad  - \la \sum_{\A\neq \B,\pm} \sin\pth{\frac{u_\A\pm u_\B}{\la}} \nab_{g_0}(u_\A\pm u_\B) \tilde{\otimes} X^{(2,1)}_\A 
\\&\quad + \la^2 \cro{\pth{ \pth{1+\ga_\la^{-1}\ga_\la}\pth{\dr X_\la + \ga_\la^{-1} X_\la\dr\ga_\la} }^{(\geq 2)}}^\osc,
\end{align*}
where we also used the notation introduced in \eqref{def produit}. Using now also \eqref{ka la} we obtain
\begin{align*}
k_\la & = k_0 + \sum_\A \sin\pth{ \frac{u_\A}{\la}}  \half |\nab u_\A| \bar{F}^{(1)}_\A   + \la\sum_\A\cos\pth{\frac{u_\A}{\la}}\pth{ \ka^{(1,1)}_\A  + \nab_{g_0}u_\A \tilde{\otimes} X^{(2,1)}_\A}
\\&\quad + \la\sum_\A \sin\pth{\frac{2u_\A}{\la}} \pth{ \ka^{(1,2)}_\A -2 \nab_{g_0}u_\A \tilde{\otimes} X^{(2,2)}_\A} 
\\&\quad + \la\sum_{\A\neq\B,\pm}\sin\pth{\frac{u_\A\pm u_\B}{\la}}\pth{ \ka^{(1,\pm)}_{\A\B}  - \nab_{g_0}(u_\A\pm u_\B) \tilde{\otimes} X^{(2,1)}_\A }
\\&\quad + \la^2 \cro{ \pth{ \ka_\la  +  \L_{\ga_\la} X_\la}^{(\geq 2)} +\pth{\ffi^2_\la}^{(\geq 2)}\pth{ \ka_\la  +  \L_{\ga_\la} X_\la}  }^\osc.
\end{align*}
We use now \eqref{X 2 1 def}-\eqref{X 2 pm def} to obtain
\begin{align*}
\ka^{(1,1)}_\A  + \nab_{g_0}u_\A \tilde{\otimes} X^{(2,1)}_\A & = \bar{\PP}^{[2]}_{u_\A}\pth{\ka^{(1,1)}_\A} + \frac{N_\A \tilde{\otimes} \wc{X}^{(2,1)}_\A}{ |\nab_{g_0}u_\A|_{g_0}} ,
\\ \ka^{(1,2)}_\A -2 \nab_{g_0}u_\A \tilde{\otimes} X^{(2,2)}_\A & = \bar{\PP}^{[2]}_{u_\A}\pth{\ka^{(1,2)}_\A}  -\frac{3}{2} |\nab_{g_0} u_\A|_{g_0} F_\A^2 N_\A\tilde{\otimes} N_\A,
\\ \ka^{(1,\pm)}_{\A\B}  - \nab_{g_0}(u_\A\pm u_\B) \tilde{\otimes} X^{(2,1)}_\A & =  \bar{\PP}^{[2]}_{u_\A\pm u_\B}\pth{\ka^{(1,\pm)}_{\A\B}} - \frac{N^{(\pm)}_{\A\B}\tilde{\otimes} \wc{X}^{(2,\pm)}_{\A\B}}{|\nab_{g_0}\pth{ u_\A\pm u_\B}|_{g_0}},
\end{align*}
where we recognized the operators $\bar{\PP}^{[2]}_{u_\A}$ and $\bar{\PP}^{[2]}_{u_\A\pm u_\B}$ defined by \eqref{def P2(T)}. Defining 
\begin{align*}
\tilde{k}^{cons}_\la & \vcentcolon = \cro{ \pth{ \ka_\la  +  \L_{\ga_\la} X_\la}^{(\geq 2)} +\pth{\ffi^2_\la}^{(\geq 2)}\pth{ \ka_\la  +  \L_{\ga_\la} X_\la}  }^\osc
\end{align*}
allows us to recover the expression \eqref{k la} for $k_\la$. The estimate \eqref{estim reste cons 2} follows from \eqref{estim seed}, \eqref{estim ga kappa param}, the definition of $\ffi^{(2)}$, $\ffi^{(3)}$, $X^{(2)}$, $X^{(3)}$ and \eqref{estim tffi tX}.

\end{document}